\DeclareMathOperator{\Id}{Id}
\DeclareMathOperator{\tr}{tr}
\DeclareMathOperator{\dvol}{dvol}
\DeclareMathOperator{\Ric}{Ric}
\DeclareMathOperator{\Rm}{Rm}
\DeclareMathOperator{\End}{End}
\DeclareMathOperator{\hash}{\sharp}
\DeclareMathOperator{\Met}{Met}
\newcommand{\Diff}{\mathrm{Diff}}
\newcommand{\opsi}{\overline{\psi}}
\newcommand{\cs}{\widetilde{s}}
\newcommand{\cB}{\widetilde{B}}
\newcommand{\cE}{\widetilde{E}}
\newcommand{\cN}{\widetilde{N}}
\newcommand{\cQ}{\widetilde{Q}}
\newcommand{\cS}{\widetilde{S}}
\newcommand{\cT}{\widetilde{T}}
\newcommand{\cU}{\widetilde{U}}
\newcommand{\cY}{\widetilde{Y}}
\newcommand{\cZ}{\widetilde{Z}}
\newcommand{\csigma}{\widetilde{\sigma}}
\newcommand{\cmF}{\widetilde{\mathcal{F}}}
\newcommand{\cmY}{\widetilde{\mathcal{Y}}}
\newcommand{\hg}{\widehat{g}}
\newcommand{\hv}{\widehat{v}}
\newcommand{\hA}{\widehat{A}}
\newcommand{\hE}{\widehat{E}}
\newcommand{\hJ}{\widehat{J}}
\newcommand{\hP}{\widehat{P}}
\newcommand{\hZ}{\widehat{Z}}
\newcommand{\hlambda}{\widehat{\lambda}}
\newcommand{\hkappa}{\widehat{\kappa}}
\newcommand{\hsigma}{\widehat{\sigma}}
\newcommand{\lp}{\langle}
\newcommand{\rp}{\rangle}
\newcommand{\lv}{\lvert}
\newcommand{\rv}{\rvert}
\newcommand{\mF}{\mathcal{F}}
\newcommand{\mM}{\mathcal{M}}
\newcommand{\mQ}{\mathcal{Q}}
\newcommand{\mS}{\mathcal{S}}
\newcommand{\mV}{\mathcal{V}}
\newcommand{\mY}{\mathcal{Y}}
\newcommand{\mZ}{\mathcal{Z}}
\newcommand{\kC}{\mathfrak{C}}
\newcommand{\kM}{\mathfrak{M}}
\newcommand{\bC}{\mathbb{C}}
\newcommand{\bN}{\mathbb{N}}
\newcommand{\bR}{\mathbb{R}}
\DeclareMathOperator{\tf}{tf}
\def\sideremark#1{\ifvmode\leavevmode\fi\vadjust{\vbox to0pt{\vss
 \hbox to 0pt{\hskip\hsize\hskip1em
 \vbox{\hsize3cm\tiny\raggedright\pretolerance10000
 \noindent #1\hfill}\hss}\vbox to8pt{\vfil}\vss}}}
\newcommand{\suchthat}{\mathrel{}\middle|\mathrel{}}
\newcommand{\comment}[1]{}
\newtheorem{thm}{Theorem}[section]
\newtheorem{prop}[thm]{Proposition}
\newtheorem{lem}[thm]{Lemma}
\newtheorem{cor}[thm]{Corollary}
\theoremstyle{definition}
\newtheorem{defn}[thm]{Definition}
\newtheorem{conj}[thm]{Conjecture}
\newtheorem{example}[thm]{Example}
\theoremstyle{remark}
\newtheorem{remark}[thm]{Remark}
\numberwithin{equation}{section}
\begin{document}

\title{The weighted $\sigma_k$-curvature of a smooth metric measure space}
\author{Jeffrey S. Case}
\address{109 McAllister Building \\ Penn State University \\ University Park, PA 16802}
\email{jscase@psu.edu}
\keywords{smooth metric measure space; $\sigma_k$-curvature; quasi-Einstein; weighted Einstein}
\subjclass[2010]{Primary 53C21; Secondary 35J60, 58E11}
\begin{abstract}
 We propose a definition of the weighted $\sigma_k$-curvature of a smooth metric measure space and justify it in two ways.  First, we show that the weighted $\sigma_k$-curvature prescription problem is governed by a fully nonlinear second order elliptic PDE which is variational when $k=1,2$ or the smooth metric measure space is locally conformally flat in the weighted sense.  Second, we show that, in the variational cases, quasi-Einstein metrics are stable with respect to the total weighted $\sigma_k$-curvature functional.  We also discuss related conjectures for weighted Einstein manifolds.
\end{abstract}
\maketitle

\section{Introduction}
\label{sec:intro}

In Riemannian geometry, the $\sigma_k$-curvatures are scalar Riemannian invariants which have proven to be useful tools for studying geometric and analytic properties of Riemannian manifolds.  For example, locally conformally flat Einstein metrics with nonzero scalar curvature locally extremize the total $\sigma_k$-curvature functional within their conformal class for generic values of $k$; when $k\leq2$, the same is true for all Einstein metrics with nonzero scalar curvature~\cite{Viaclovsky2000}.  This greatly expands the set of Riemannian functionals which one can use to study Einstein metrics and leads to new variational characterizations of such manifolds; e.g.\ \cite{GuanViaclovskyWang2003,GurskyViaclovsky2001}.  Moreover, one can classify all critical points of the total $\sigma_k$-curvature functional in the positive $k$-cone of the conformal class of the round metrics on the sphere~\cite{ChangGurskyYang2003b,Gonzalez2006c,LiLi2003,Viaclovsky2000}, providing an important first step to proving sharp fully nonlinear Sobolev inequalities~\cite{GuanWang2004}.  Since the $\sigma_1$-curvature is a dimensional multiple of the scalar curvature, these facts naturally generalize well-known properties of the Yamabe functional; cf.\ \cite{LeeParker1987}.

Roughly speaking, smooth metric measure spaces are Riemannian manifolds equipped with a smooth measure.  The geometric study of smooth metric measure spaces is based in large part on the $m$-Bakry--\'Emery Ricci tensor.  This tensor generalizes the Ricci tensor and thereby leads to the notion of a ``gradient Einstein-type manifold'', obtained by requiring the $m$-Bakry--\'Emery Ricci tensor to be a multiple of the metric.  Such manifolds include as special cases gradient Ricci solitons and static metrics in general relativity, and this framework provides a useful uniform approach to their study; cf.\ \cite{CaoChen2011,CatinoMastroliaMonticelliRigoli2014,ChenHe2011,HuangWei2013}.  Moreover, many of these manifolds admit a characterization as critical points of a generalization of the Yamabe functional~\cite{Case2010b,Case2013y}.  In particular, the family of sharp Gagliardo--Nirenberg inequalities studied by Del Pino and Dolbeault~\cite{DelPinoDolbeault2002} can be understood via such functionals~\cite{Case2011gns,Case2013y}.  This family of sharp Gagliardo--Nirenberg inequalities has proven useful for studying certain fast diffusion equations~\cite{CarlenCarrilloLoss2010,DelPinoDolbeault2002}.

In this article we introduce the weighted $\sigma_k$-curvatures as appropriate generalizations of the $\sigma_k$-curvatures to smooth metric measure spaces.  These curvatures form a family of scalar invariants on smooth metric measure spaces with properties which suitably generalize those algebraic and variational properties of the $\sigma_k$-curvatures which are important to the study of Einstein metrics.  We expect that the weighted $\sigma_k$-curvatures will find further use in studying smooth metric measure spaces in general, and quasi-Einstein and weighted Einstein manifolds in particular.  A more precise explanation of these results requires some notation and terminology.

A \emph{smooth metric measure space} is a five-tuple $(M^n,g,v,m,\mu)$ consisting of a Riemannian manifold $(M^n,g)$, a positive function $v\in C^\infty(M;\bR_+)$, a dimensional parameter $m\in\bR$, and an auxiliary curvature parameter $\mu\in\bR_+$.  The metric $g$, the function $v$ and the parameter $m$ together determine the \emph{weighted volume element} $d\nu:=v^m\dvol_g$ on $M$.  Roughly speaking, the dimensional parameter $m$ indicates that we want to regard $(M^n,g,d\nu)$ as an $(m+n)$-dimensional metric measure space, in the sense that we consider the \emph{$m$-Bakry--\'Emery Ricci tensor}
\[ \Ric_\phi^m := \Ric + \nabla^2\phi - \frac{1}{m}d\phi\otimes d\phi \]
as the weighted analogue of the Ricci tensor, where $\phi=-m\ln v$.  The auxiliary curvature parameter $\mu$ indicates that we want to regard $(M^n,g,d\nu)$ as the base of the warped product
\begin{equation}
 \label{eqn:intro/wp}
 \left(M^n\times F^m(\mu),g\oplus v^2h\right),
\end{equation}
where $(F^m(\mu),h)$ is the $m$-dimensional simply-connected spaceform with constant sectional curvature $\mu$.  Most weighted invariants can be regarded as the restriction to $M$ of Riemannian invariants on the warped product~\eqref{eqn:intro/wp} when the latter makes sense.  For example, $d\nu$ is the restriction of the Riemannian volume element of~\eqref{eqn:intro/wp} and the \emph{weighted scalar curvature}
\[ R_\phi^m := R + 2\Delta\phi - \frac{m+1}{m}\lv\nabla\phi\rv^2 + m(m-1)\mu e^{\frac{2\phi}{m}} \]
is the scalar curvature of~\eqref{eqn:intro/wp}.  The weighted $\sigma_k$-curvatures are defined in terms of the \emph{weighted Schouten tensor}
\[ P_\phi^m := \Ric_\phi^m - \frac{1}{m+n-2}J_\phi^mg, \]
where
\[ J_\phi^m := \frac{m+n-2}{2(m+n-1)}R_\phi^m . \]
Note that all of the tensors just defined make sense in the limits $m=0$ and $m=\infty$.  The weighted $\sigma_k$-curvatures in the case $m=0$ are the Riemannian $\sigma_k$-curvatures.  The weighted $\sigma_k$-curvatures in the case $m=\infty$ have been previously considered by the author~\cite{Case2014sd}.  Since the results of this article are already known in the limiting cases $m=0$ and $m=\infty$, we shall restrict our attention here to the cases $m\in\bR_+$.

Informally, given nonnegative integers $k,n\in\bN$ and a dimensional parameter $m\in\bR_+$, the \emph{$m$-weighted $k$-th elementary symmetric polynomial of $n$-variables} is the $k$-th elementary symmetric polynomial of $(m+n)$-variables; i.e.
\begin{equation}
 \label{eqn:intro/informal_sigmak_defn}
 \sigma_k^m(\lambda;\lambda_1,\dotsc,\lambda_n) := \sigma_k\Bigl(\underbrace{\frac{\lambda}{m},\dotsc,\frac{\lambda}{m}}_{\text{$m$ times}},\lambda_1,\dotsc,\lambda_n\Bigr) .
\end{equation}
This can made precise by evaluating the right-hand side of~\eqref{eqn:intro/informal_sigmak_defn} when $m\in\bN_0$ and then extending the definition by treating $m\in\bR_+$ as a formal variable; cf.\ Section~\ref{sec:algebra}.  We extend this definition to symmetric matrices by considering the eigenvalues of the matrix; i.e.\ if $\lambda\in\bR$ and if $P$ is a symmetric $n$-by-$n$ matrix, then we define
\[ \sigma_k^m(\lambda;P) := \sigma_k^m(\lambda;\lambda_1,\dotsc,\lambda_n), \]
where $\lambda_1,\dotsc,\lambda_n$ are the eigenvalues of $P$.

Given $k\in\bN$ and a smooth metric measure space $(M^n,g,v,m,\mu)$, the \emph{weighted $\sigma_k$-curvature} is
\[ \sigma_{k,\phi}^m := \sigma_k^m\left(Y_\phi^m;P_\phi^m\right), \]
where $Y_\phi^m:=J_\phi^m-\tr_g P_\phi^m$.  The most relevant cases are $k=1,2$, where we directly compute that
\begin{align*}
 \sigma_{1,\phi}^m & = J_\phi^m, \\
 \sigma_{2,\phi}^m & = \frac{1}{2}\left(\left(J_\phi^m\right)^2 - \lv P_\phi^m\rv^2 - \frac{1}{m}\left(Y_\phi^m\right)^2\right) .
\end{align*}
Given $\kappa\in\bR$ and a smooth metric measure space $(M^n,g,v,m,\mu)$, the \emph{weighted $\sigma_k$-curvature with scale $\kappa$} is
\[ \csigma_{k,\phi}^m := \sigma_k^m\left(Y_\phi^m+m\kappa v^{-1};P_\phi^m\right) . \]
With the goal of keeping our notation simple, we do not explicitly incorporate $\kappa$ into our notation for the weighted $\sigma_k$-curvatures with scale $\kappa$.  Instead, we always use tildes to denote quantities defined in terms of a possibly nonzero scale $\kappa$, and omit the tilde when we fix $\kappa=0$; i.e.\ $\sigma_{k,\phi}^m$ is the weighted $\sigma_k$-curvature with scale $\kappa=0$.  The role of the scale $\kappa$, especially when the auxiliary curvature parameter $\mu$ vanishes, is to specify the ``size'' of the function $v$.  Note that in the special cases $k=1,2$, it holds that
\begin{align*}
 \csigma_{1,\phi}^m & = \sigma_{1,\phi}^m + m\kappa v^{-1}, \\
 \csigma_{2,\phi}^m & = \sigma_{2,\phi}^m + \frac{m(m+n-2)}{m+n-3}\kappa v^{-1}\sigma_{1,\phi}^{m-1} + \frac{m(m-1)}{2}\kappa^2v^{-2},
\end{align*}
where $\sigma_{1,\phi}^{m-1}$ is the weighted $\sigma_1$-curvature of $(M^n,g,v,m-1,\mu)$; see Lemma~\ref{lem:csigma_to_sigma}.

Our study of the weighted $\sigma_k$-curvatures is focused on their variational properties within a weighted conformal class.  A \emph{weighted manifold} is a triple $(M^n,m,\mu)$ of a smooth manifold $M^n$, a dimensional parameter $m\in\bR_+$, and an auxiliary curvature parameter $\mu\in\bR$.  A \emph{weighted conformal class} $\kC$ on a weighted manifold $(M^n,m,\mu)$ is an equivalence class with respect to the equivalence relation
\[ \text{$(g,v)\sim(\hg,\hv)$ if and only if $(\hg,\hv)=(u^{-2}g,u^{-1}v)$ for some $u\in C^\infty(M;\bR_+)$.} \]
The weighted conformal class determined by $(M^n,g,v,m,\mu)$ is equivalent to the subset of the conformal class of the formal warped product~\eqref{eqn:intro/wp} determined by restricting attention to conformal factors which depend only on the base $M$.  We say that $\kC$ is \emph{locally conformally flat in the weighted sense} if the formal warped product~\eqref{eqn:intro/wp} is locally conformally flat; see Section~\ref{sec:smms} for an intrinsic characterization of this condition.  We say that the weighted $\sigma_k$-curvature $\csigma_{k,\phi}^m$ is \emph{variational in $\kC$} if there is a functional $\mS\colon\kC\to\bR$ such that
\[ \left.\frac{d}{dt}\right|_{t=0}\mS\left(e^{-\frac{2t\psi}{m+n}}g,e^{-\frac{t\psi}{m+n}}v\right) = \int_M \csigma_{k,\phi}^m\psi\,d\nu \]
for all $\psi\in C^\infty(M)$ and all $(g,v)\in\kC$.  Our first main result is a characterization of when the weighted $\sigma_k$-curvatures are variational (cf.\ \cite{BransonGover2008}).

\begin{thm}
 \label{thm:intro/variational}
 Fix $k\in\bN$ and $\kappa\in\bR$.  Let $\kC$ be a weighted conformal class on a weighted manifold $(M^n,m,\mu)$.  Then $\csigma_{k,\phi}^m$ is conformally variational in $\kC$ if and only if $k\leq 2$ or $\kC$ is locally conformally flat in the weighted sense.
\end{thm}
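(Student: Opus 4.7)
My plan is to follow the strategy Branson and Gover used for the Riemannian $\sigma_k$-curvatures, adapted to the weighted setting via the warped product interpretation of $(M^n,g,v,m,\mu)$. The functional $\mS\colon\kC\to\bR$ exists if and only if the one-form $(g,v)\mapsto\bigl[\psi\mapsto\int_M\csigma_{k,\phi}^m\psi\,d\nu\bigr]$ on $\kC$ is exact, equivalently (since $\kC$ is affine modulo gauge) is closed. Concretely, along the conformal flow $(g_t,v_t)=(e^{-2t\psi/(m+n)}g,e^{-t\psi/(m+n)}v)$ the linearization of $\csigma_{k,\phi}^m$ is a second-order linear operator $\mL_k\psi$ whose principal part is governed by the weighted Newton tensor $T_{k-1}(P_\phi^m)$. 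Integrating by parts against $d\nu$, the closedness criterion reduces to symmetry of the bilinear form $(\psi,\eta)\mapsto\int_M\eta\,\mL_k\psi\,d\nu$, which after integration by parts is equivalent to the vanishing of the weighted divergence of $T_{k-1}(P_\phi^m)$, possibly modulo terms involving $Y_\phi^m+m\kappa v^{-1}$ arising from the scale $\kappa$.

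For $k=1$ and $k=2$, the required symmetry holds unconditionally. For $k=1$, the functional is the total weighted scalar curvature $\int_M J_\phi^m\,d\nu$ plus the scale-dependent correction $m\kappa\int_M v^{-1}\,d\nu$; both are manifestly variational, and the displayed formula $\csigma_{1,\phi}^m=\sigma_{1,\phi}^m+m\kappa v^{-1}$ identifies the gradient. For $k=2$, I would construct $\mS$ in direct analogy with Viaclovsky's $\sigma_2$-Yamabe energy, using the explicit expansion of $\csigma_{2,\phi}^m$ in terms of $\sigma_{2,\phi}^m,\sigma_{1,\phi}^{m-1},$ and $v^{-1}$ to incorporate the $\kappa$-correction. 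In both cases the underlying reason for variationality is that $T_0$ and $T_1$ are weighted-divergence-free, the latter being a weighted twice-contracted second Bianchi identity applied to $P_\phi^m$.

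For $k\geq 3$ the weighted divergence of $T_{k-1}(P_\phi^m)$ picks up a contribution from the contraction of $P_\phi^m$ with the \emph{weighted Weyl tensor}, i.e.\ the trace-free part of the weighted Riemann tensor, which coincides with the Weyl tensor of the formal warped product~\eqref{eqn:intro/wp}. When $\kC$ is locally conformally flat in the weighted sense the weighted Weyl tensor vanishes, the divergence identity closes up, and the primitive $\mS$ exists by the same construction as in Branson--Gover. When $\kC$ is not locally conformally flat in the weighted sense, the weighted Weyl tensor is nonzero at some point; at such a point the antisymmetric part of $\mL_k$ is nonzero, and choosing $\psi,\eta$ supported in a small neighborhood produces a pair with $\int\eta\,\mL_k\psi\,d\nu\neq\int\psi\,\mL_k\eta\,d\nu$, obstructing the existence of $\mS$.

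The main obstacle I anticipate is the bookkeeping of how $v$ enters the linearization and the weighted divergence identities, especially in the presence of both the scale $\kappa$ and the auxiliary curvature $\mu$. A useful simplifying device is to first treat $m\in\bN$, identify $\csigma_{k,\phi}^m$ with the restriction of the Riemannian $\sigma_k$-curvature of the warped product~\eqref{eqn:intro/wp} to conformal factors that are pulled back from the base $M$, and invoke the classical Branson--Gover result on the $(m+n)$-dimensional total space; the statement for $m\in\bN$ then extends to all $m\in\bR_+$ because every tensorial quantity involved is polynomial in $m$ and agrees on the integer locus.
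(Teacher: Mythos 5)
Your overall frame (reduce existence of $\mS$ to closedness of the one-form, i.e.\ formal self-adjointness of the linearization, then identify a curvature obstruction that vanishes precisely when $k\leq2$ or in the weighted LCF case) is the paper's frame, but the obstruction you identify is the wrong tensor, and this breaks your necessity argument. The paper computes (Proposition~\ref{prop:div_T}, Proposition~\ref{prop:linearization_sigmak}) that
\[ \delta_\phi\cT_{k-1,\phi}^m = -\cs_{k-1,\phi}^m\,d\phi + \sum_{j=0}^{k-3}(-1)^j\cT_{k-3-j,\phi}^m\left(dP_\phi^m\cdot\cQ_{j+1,\phi}^m\right), \]
so $\cT_{k-1,\phi}^m$ is \emph{not} weighted-divergence-free even for $k=2$ (your claimed ``weighted twice-contracted Bianchi identity'' for $T_1$ is false: $\delta_\phi\cT_{1,\phi}^m=-\cs_{1,\phi}^m\,d\phi$ by Lemma~\ref{lem:div_T1}); this does not matter, because the term $\delta_\phi\bigl(\cT_{k-1,\phi}^m(\nabla\psi)\bigr)$ is automatically self-adjoint for any symmetric tensor. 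The genuine obstruction is the \emph{weighted Cotton} contraction $\cS_{k,\phi}^m=\sum_{\ell}(-1)^\ell\cT_{k-3-\ell,\phi}^m\bigl(dP_\phi^m\cdot\cQ_{\ell+1,\phi}^m\bigr)$, not a Weyl contraction. Consequently your necessity step fails as stated: nonvanishing of $A_\phi^m$ at a point does \emph{not} force the antisymmetric part of the linearization to be nonzero at the given representative, since one can have $dP_\phi^m=0$ while $A_\phi^m\neq0$ (the weighted analogue of harmonic-Weyl metrics). The paper's proof must exploit that self-adjointness is required at \emph{every} representative of $\kC$: it uses the transformation law $\widehat{dP_\phi^m}=dP_\phi^m-A_\phi^m(\cdot,\cdot,\nabla f,\cdot)$ of Lemma~\ref{lem:conf_change} together with a two-stage perturbation by conformal factors with prescribed two-jets (first a one-parameter family forcing $dP_\phi^m\cdot(P_\phi^m-\cZ_\phi^mg)=0$, then arbitrary Hessians forcing $dP_\phi^m=0$ for all representatives, whence $A_\phi^m=0$). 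Your sketch contains no substitute for this mechanism.

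Your final simplifying device is also unsound for this theorem. For $m\in\bN$ the warped-product reduction only transfers \emph{sufficiency} (restriction of an exact one-form is exact; failure of closedness on the total space says nothing about closedness on the subspace of basal conformal factors), and even then the fiber is noncompact when $\mu\leq0$. Worse, the ``extend from the integer locus'' step does not apply to the statement being proved: the hypothesis ``locally conformally flat in the weighted sense'' is the condition $A_\phi^m=0$, which depends on $m$, so fixing $(g,v,\mu)$ satisfying it at one non-integer $m$ gives you no control at integer values of $m$, and there is no polynomial identity in $m$ to continue. The paper avoids this entirely by working intrinsically: the identity for $\delta_\phi\cT_{k,\phi}^m$ is proved directly for all $m\in\bR_+$, and the sufficiency for $k\geq3$ follows because weighted LCF implies $dP_\phi^m=0$ (Lemma~\ref{lem:div_and_tr}), hence $\cS_{k,\phi}^m=0$. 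Note also that the converse in the body of the paper (Theorem~\ref{thm:variational}) carries the additional hypothesis $k\leq m+n$, which your argument would likewise need.
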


We prove Theorem~\ref{thm:intro/variational} by showing that, as a function of $\kC$, the linearization of $\csigma_{k,\phi}^m$ is formally self-adjoint for every representative of $\kC$; for details, see Section~\ref{sec:moduli} and Section~\ref{sec:variational}.

Our other main results concern the variational properties of the weighted $\sigma_k$-curvatures on weighted Einstein manifolds in the cases when $\csigma_{k,\phi}^m$ is variational.  A \emph{weighted Einstein manifold} is a smooth metric measure space $(M^n,g,v,m,\mu)$ for which there is a constant $\lambda\in\bR$ such that $P_\phi^m=\lambda g$.  For such manifolds, there is a scale $\kappa\in\bR$ such that $\csigma_{1,\phi}^m=(m+n)\lambda$; see~\cite[Lemma~9.1]{Case2013y} or Lemma~\ref{lem:we_kappa}.  We highlight here two special classes of weighted Einstein manifolds.  First, weighted Einstein manifolds with $\kappa=0$ are equivalent to quasi-Einstein manifolds.  Second, weighted Einstein manifolds for which the auxiliary curvature parameter $\mu$ vanishes are precisely the critical points of the weighted Yamabe functional~\cite{Case2013y} through variations of the metric and the measure; cf.\ Theorem~\ref{thm:full_Y1}.  Much more is known about quasi-Einstein manifolds than weighted Einstein manifolds, and for this reason we can prove more in the former setting.

Weighted Einstein manifolds can be understood in terms of the total weighted $\sigma_k$-curvature functionals.  Our best such results are for quasi-Einstein manifolds, and are most naturally stated in terms of the set
\[ \kC_1 := \left\{ (g,v)\in\kC \suchthat \int_M d\nu = 1 \right\} \]
of representatives of $\kC$ with respect to which $M$ has unit weighted volume.

\begin{thm}
 \label{thm:intro/stable_qe}
 Fix $k\in\bN$.  Let $\kC$ be a weighted conformal class on a closed weighted manifold $(M^n,m,\mu)$; if $k\geq3$, assume additionally that $\kC$ is locally conformally flat in the weighted sense.  Define the $\mF_k$-functional $\mF_k\colon\kC_1\to\bR$ by
 \[ \mF_k(g,v) := \int_M \sigma_{k,\phi}^m\,d\nu . \]
 Suppose that $(g,v)\in\kC_1$ is such that $P_\phi^m=\lambda g$ and $\sigma_{1,\phi}^m=(m+n)\lambda$ for some constant $\lambda>0$.  Then $(g,v)$ is a critical point of $\mF_k$, and moreover
 \begin{enumerate}
  \item if $k<\frac{m+n}{2}$, then
  \[ \left.\frac{d^2}{dt^2}\right|_{t=0}\mF_k\left(\gamma(t)\right) > 0 \]
  for all $\gamma\colon\bR\to\kC_1$ such that $\gamma(0)=(g,v)$ and $\gamma^\prime(0)\not=0$;
  \item if $\frac{m+n}{2}<k\leq m+n$, then
  \[ \left.\frac{d^2}{dt^2}\right|_{t=0}\mF_k\left(\gamma(t)\right) < 0 \]
  for all $\gamma\colon\bR\to\kC_1$ such that $\gamma(0)=(g,v)$ and $\gamma^\prime(0)\not=0$.
 \end{enumerate}
\end{thm}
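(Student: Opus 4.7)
\emph{Plan.} My plan is to verify that $(g,v)$ is a critical point of $\mF_k|_{\kC_1}$, compute its Hessian, and reduce it to a definite quadratic form via spectral theory of the weighted Laplacian $\Delta_\phi := \Delta - \langle\nabla\phi,\nabla\cdot\rangle$. The condition $P_\phi^m = \lambda g$ forces all $n$ eigenvalues of $P_\phi^m$ to equal $\lambda$, and the normalization $\sigma_{1,\phi}^m = (m+n)\lambda$ yields $Y_\phi^m = \sigma_{1,\phi}^m - \tr_g P_\phi^m = m\lambda$; so by~\eqref{eqn:intro/informal_sigmak_defn}, $\sigma_{k,\phi}^m = \binom{m+n}{k}\lambda^k =: \sigma_0 > 0$ for $k \leq m+n$. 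Using the scaling $\mF_k(\lambda^{-2}g,\lambda^{-1}v) = \lambda^{2k-(m+n)}\mF_k(g,v)$ together with the variational structure of Theorem~\ref{thm:intro/variational}, I would obtain along the conformal curve $(g_t,v_t) = (e^{-2t\psi/(m+n)}g,\,e^{-t\psi/(m+n)}v)$ the first-variation formula
\[
\frac{d}{dt}\Big|_{t=0}\mF_k(g_t,v_t) = \bigl(2k-(m+n)\bigr)\int_M \sigma_{k,\phi}^m\,\psi\,d\nu,
\]
which at $(g,v)$ is a constant multiple of $\int_M\psi\,d\nu$ and hence vanishes on the infinitesimal $\kC_1$-constraint $\{\psi : \int_M\psi\,d\nu = 0\}$.

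\emph{Second variation.} Because the first-variation formula persists in $t$ along the conformal curve (the generator $\psi$ is preserved) and $(d/dt)d\nu_t|_{t=0} = -\psi\,d\nu$, differentiating once more gives
\[
\frac{d^2}{dt^2}\Big|_{t=0}\mF_k(g_t,v_t) = \bigl(2k-(m+n)\bigr)\int_M\bigl[(\delta\sigma_{k,\phi}^m)\psi - \sigma_0\psi^2\bigr]d\nu.
\]
To compute $\delta\sigma_{k,\phi}^m$, I would exploit the fact that at $(Y,P) = (m\lambda,\lambda I)$ the weighted Newton transformation $\partial_P\sigma_k^m(Y;P)$ is a scalar multiple of the identity by symmetry, equal to $\binom{m+n-1}{k-1}\lambda^{k-1}I$; hence
\[
\delta\sigma_{k,\phi}^m = \binom{m+n-1}{k-1}\lambda^{k-1}\bigl(\delta Y_\phi^m + \tr_g\delta P_\phi^m\bigr).
\]
Combining with the conformal transformation formulas for $P_\phi^m$ and $Y_\phi^m$ (to be derived in Section~\ref{sec:smms} or by restriction from the warped product) reduces $\delta\sigma_{k,\phi}^m$ to an expression of the form $A\,\Delta_\phi\psi + B\,\psi$ for explicit constants $A,B$ depending on $m,n,\lambda$; substituting and integrating by parts yields a quadratic form $Q(\psi) = \int_M[\alpha|\nabla\psi|^2 + \beta\psi^2]\,d\nu$ on the constraint space.

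\emph{Sign analysis and main obstacle.} The sign of $Q$ on $\{\psi : \int_M\psi\,d\nu = 0\}$ is then controlled by a Lichnerowicz--Obata-type eigenvalue bound for $-\Delta_\phi$ on weighted Einstein manifolds with $\lambda > 0$, proved in parallel to the classical Lichnerowicz argument via the weighted Bochner formula. Combined with the overall prefactor $2k-(m+n)$, this yields the claimed signs: the Hessian is strictly positive for $k < (m+n)/2$ and strictly negative for $(m+n)/2 < k \leq m+n$. The principal obstacle is the explicit linearization --- identifying $A$ and $B$ and verifying that the eigenvalue bound is sharp enough to give strict definiteness uniformly in $k$ across the stated range. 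A conceptually transparent alternative, valid when $m$ is a positive integer, is to invoke the warped product identification: $\mF_k$ equals (a multiple of) the Riemannian $\sigma_k$-functional on $(M\times F^m(\mu),\,g\oplus v^2 h)$, quasi-Einstein corresponds to Einstein, and the Hessian reduces to Viaclovsky's classical second-variation computation restricted to base-dependent conformal variations; extension to all $m \in \bR_+$ follows by polynomial continuation in $m$.
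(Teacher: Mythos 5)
Your overall route is the same as the paper's: obtain the first variation $D\mF_k[\psi]=-\frac{m+n-2k}{m+n}\int_M\sigma_{k,\phi}^m\psi\,d\nu$ from the variational structure (Proposition~\ref{prop:mF_critical}; your version differs only by the harmless normalization factor $\frac{1}{m+n}$ in the identification of tangent vectors), differentiate again at the critical point, evaluate at the quasi-Einstein background using $T_{k-1,\phi}^m=\binom{m+n-1}{k-1}\lambda^{k-1}g$ and $\sigma_{k,\phi}^m=\binom{m+n}{k}\lambda^k$ (Proposition~\ref{prop:we_sigma2}), and close with the weighted Lichnerowicz--Obata bound. However, there is a genuine gap in your second-variation step. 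You differentiate along the unconstrained exponential curve $(e^{-2t\psi/(m+n)}g,e^{-t\psi/(m+n)}v)$ with the generator $\psi$ held fixed; this curve leaves $\kC_1$, since $\frac{d}{dt}\big|_{t=0}\int_M\psi\,d\nu_t=-\int_M\psi^2\,d\nu\neq0$, and your formula consequently retains the spurious term $-\sigma_0\int_M\psi^2\,d\nu$ coming from the variation of the measure. Along a curve that actually stays in $\kC_1$ the generator $\psi_t$ is \emph{not} preserved: the unit-volume constraint forces $\int_M\dot\psi\,d\nu=\int_M\psi^2\,d\nu$ at $t=0$, and this contribution exactly cancels the measure term, leaving the constrained Hessian $D^2\mF_k[\psi]=-\frac{m+n-2k}{m+n}\int_M\psi\,D\sigma_{k,\phi}^m[\psi]\,d\nu$ as in Proposition~\ref{prop:second_var_no_scale}, with no $\sigma_0\psi^2$ correction.

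This is not cosmetic, because the spurious term has the same sign as the conclusion you want in each regime: writing $H$ for the true constrained Hessian, your quantity equals $H+\frac{m+n-2k}{m+n}\sigma_0\int_M\psi^2\,d\nu$ (up to your overall normalization). For $k<\frac{m+n}{2}$ your quantity exceeds $H$, so proving it positive does not prove the theorem's assertion $H>0$; symmetrically, for $\frac{m+n}{2}<k\leq m+n$ your quantity is more negative than $H$, so its negativity is again weaker than what is claimed. The fix is exactly the cancellation above, or the standard observation that at a critical point of the \emph{restricted} functional the second derivative along curves in $\kC_1$ depends only on $\gamma'(0)$, so one must compute with a genuinely constrained curve. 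Once this is corrected, your linearization $D\sigma_{k,\phi}^m[\psi]=\binom{m+n-1}{k-1}\lambda^{k-1}\bigl(2\lambda\psi+\frac{m+n-2}{m+n}\Delta_\phi\psi\bigr)$ at the background is correct, and the argument closes as in the paper via the strict bound $\lambda_1(-\Delta_\phi)>\frac{2(m+n)}{m+n-2}\lambda$ of Bakry--Qian~\cite{BakryQian2000} applied to mean-free $\psi$; note that strictness of this eigenvalue bound (not merely $\geq$) is needed for strict definiteness. Finally, your proposed alternative---prove the case $m\in\bN$ via the warped product~\eqref{eqn:intro/wp} and Viaclovsky's computation, then extend by ``polynomial continuation in $m$''---cannot work as stated: the conclusion is a strict inequality, not a polynomial identity in $m$, and inequalities do not propagate by continuation from the integers to all $m\in\bR_+$.
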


In the case $k=\frac{m+n}{2}$, the functional $\mF_k$ is constant.  In the cases $k>m+n$, the functional $\mF_k$ is constant (and identically zero) if and only if $m\in\bN$.  When $k>m+n$ and $m$ is not an integer, the sign of the second variation of $\mF_k$ depends on the parity of the integer part of $m+n-k$; cf.\ \eqref{eqn:stable_qe_second_var}.

The proof of Theorem~\ref{thm:intro/stable_qe} depends on two ingredients.  First, one computes the first and second variations of the $\mF_k$-functional.  In particular, when $k\leq2$ or $\kC$ is locally conformally flat in the weighted sense, $(g,v)\in\kC_1$ is a critical point of the $\mF_k$-functional if and only if
\begin{equation}
 \label{eqn:intro/mFeuler}
 \sigma_{k,\phi}^m = c
\end{equation}
for some constant $c\in\bR$.  Second, one uses the weighted Lichnerowicz--Obata Theorem~\cite{BakryQian2000} to conclude that under the assumptions of Theorem~\ref{thm:intro/stable_qe}, the first eigenvalue $\lambda_1$ of the \emph{weighted Laplacian} $-\Delta_\phi:=-\Delta+\nabla\phi$ satisfies $\lambda_1>\frac{2(m+n)}{m+n-2}\lambda$.  Applying this to the second variation of the $\mF_k$-functional yields the result.

For weighted Einstein manifolds with $\mu=0$ and positive scale, one should instead consider the $\mY_k$-functional $\mY_k\colon\kC\times\bR_+\to\bR$ defined by
\begin{align*}
 \mY_k(g,v,\kappa) & := \mZ_k(g,v,\kappa)\left(\int v^{-1}\,d\nu\right)^{-\frac{2mk}{(m+n)(2m+n-2)}}\left(\int d\nu\right)^{-\frac{m+n-2k}{m+n}} , \\
 \mZ_k(g,v,\kappa) & := \kappa^{-\frac{2mk(m+n-1)}{(m+n)(2m+n-2)}}\int_M \csigma_{k,\phi}^m\,d\nu .
\end{align*}
If $(M^n,g,v,m,0)$ is a weighted Einstein manifold with scale $\kappa>0$, then $(g,v)$ is a critical point of the $\mY_k$-functional whenever $\csigma_{k,\phi}^m$ is variational.  Indeed, when $\csigma_{k,\phi}^m$ is variational, a triple $(g,v,\kappa)\in\kC\times\bR_+$ is a critical point of the $\mY_k$-functional if and only if
\begin{subequations}
 \label{eqn:intro/mYeuler}
 \begin{multline}
  \label{eqn:intro/mYeulerpw}
  \csigma_{k,\phi}^m + \frac{m}{m+n-2k}\kappa v^{-1}\cs_{k-1,\phi}^m \\ = \frac{\int\csigma_{k,\phi}^m\,d\nu}{\int d\nu} + \frac{m}{m+n-2k}\left(\frac{\int \cs_{k-1,\phi}^mv^{-1}\,d\nu}{\int v^{-1}\,d\nu}\right)\kappa v^{-1}
 \end{multline}
 and
 \begin{equation}
  \label{eqn:intro/mYeulerintegral}
  \int_M \csigma_{k,\phi}^m\,d\nu = \frac{(m+n)(2m+n-2)}{2k(m+n-1)}\int_M \kappa v^{-1}\cs_{k-1,\phi}^m\,d\nu ,
 \end{equation}
\end{subequations}
where $\cs_{k-1,\phi}^m$ is defined by Definition~\ref{defn:newton_endomorphism} and~\eqref{eqn:defn_cs} below.  Note that $\csigma_{k,\phi}^m$ and $\cs_{k-1,\phi}^m$ are both constant for weighted Einstein metrics.  However, such weighted Einstein manifolds need not have positive $m$-Bakry--\'Emery Ricci curvature (cf.\ Example~\ref{ex:model_positive}).  In particular, a new weighted Lichnerowicz--Obata theorem seems to be needed in order to prove that weighted Einstein manifolds with $\mu=0$ and positive scale are local extrema of the $\mY_k$-functionals whenever $\csigma_{k,\phi}^m$ is variational; for further discussion, including a conjectural form of the required weighted Lichnerowicz--Obata theorem, see Subsection~\ref{subsec:stable/we}.

In light of Theorem~\ref{thm:intro/stable_qe}, one might hope that, up to scaling, quasi-Einstein manifolds are the only critical points of the $\mF_k$--functionals which lie in the weighted elliptic $k$-cones.  For quasi-Einstein manifolds which are locally conformally flat in the weighted sense, this is true.

\begin{thm}
 \label{thm:intro/obata}
 Let $(M^n,g,v,m,\mu)$ be a closed smooth metric measure space which satisfies $P_\phi^m=\lambda g$ and $\sigma_{1,\phi}^m=(m+n)\lambda$ for some $\lambda\in\bR$ and which is locally conformally flat in the weighted sense.  Fix $k\in\bN$, let $\kC$ be the weighted conformal class containing $(g,v)$, and suppose that $(\hg,\hv)\in\kC$ is a critical point of the $\mF_k$-functional such that $\sigma_{j,\phi}^m(\hg,\hv)>0$ for all $1\leq j\leq k$.  Then $(\hg,\hv)=(c^2g,cv)$ for some constant $c\in\bR_+$.
\end{thm}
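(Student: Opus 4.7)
The plan is to reduce to a classical Obata-type rigidity theorem for constant $\sigma_k$-curvature metrics on space forms via the warped product picture, and then handle the non-integer $m$ case intrinsically. Since either $k\leq 2$ or $\kC$ is locally conformally flat in the weighted sense, the functional $\mF_k$ is conformally variational by Theorem~\ref{thm:intro/variational}, and its Euler--Lagrange equation~\eqref{eqn:intro/mFeuler} reduces to $\sigma_{k,\phi}^m(\hg,\hv) = c$ for some constant $c$. The hypothesis $\sigma_{j,\phi}^m(\hg,\hv) > 0$ for $1\leq j\leq k$ places $(\hg,\hv)$ in the positive weighted $k$-cone (so in particular $c>0$); this is the weighted analogue of the positive $k$-cone condition that drives classical $\sigma_k$ Obata theorems.

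To identify the model structure, I would use the quasi-Einstein hypothesis $P_\phi^m = \lambda g$ together with weighted local conformal flatness of $\kC$: for integer $m$, the formal warped product $(\oM,\og) = (M^n\times F^m(\mu),\, g\oplus v^2 h)$ is a locally conformally flat Einstein Riemannian manifold of constant sectional curvature determined by $\lambda$, hence a space form. Weighted conformal changes in $\kC$ correspond precisely to ordinary conformal changes of $\og$ by factors $u\in C^\infty(M)$ depending only on the base, and the weighted $\sigma_k$-invariants of $(\hg,\hv) = (u^{-2}g, u^{-1}v)$ agree with the ordinary $\sigma_k$-invariants of $u^{-2}\og$ on $\oM$. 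Thus the Euler--Lagrange equation becomes a constant $\sigma_k$-curvature equation in the positive $k$-cone on a space form. By the classical Obata-type rigidity for $\sigma_k$-curvature on closed space forms, $u^{-2}\og$ must be a pullback of $\og$ by a conformal diffeomorphism of the underlying space form; the requirement that $u$ be constant along each $F^m(\mu)$ fiber forces this diffeomorphism to commute with the fiber isometry group, hence to descend to a homothety of the base, yielding $(\hg,\hv) = (c^2g, cv)$.

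The main obstacle is that $m$ is only assumed to be a positive real, so $(\oM,\og)$ does not literally exist for generic $m$. I would address this by running an intrinsic weighted version of the argument: the weighted Newton tensor $\mT_{k-1,\phi}^m$ should be $\phi$-divergence-free on weighted locally conformally flat manifolds; the quasi-Einstein structure on $(g,v)$ furnishes a gradient weighted conformal vector field that plays the role of the position vector on a space form; and a weighted Pohozaev identity obtained by testing $\sigma_{k,\phi}^m(\hg,\hv) = c$ against this vector field, combined with weighted Newton--Maclaurin inequalities valid in the positive weighted $k$-cone, should force the conformal factor $u$ to be constant. Developing the weighted analogs of each step of the classical proof is the delicate point; as a backup, since the relevant identities are polynomial in $m$ and the conclusion is algebraic, a careful analytic continuation from the integer-$m$ warped product case would provide an alternative route.
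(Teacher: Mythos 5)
Your primary route has a genuine gap that your own hedge does not repair. The warped-product reduction is unavailable for generic $m\in\bR_+$, and even for integer $m$ it fails on its own terms: the fiber $F^m(\mu)$ is noncompact whenever $\mu\leq0$, so the total space $M\times F^m(\mu)$ need not be closed and the classical Obata--Viaclovsky rigidity for constant $\sigma_k$-curvature on closed space forms does not apply; moreover the claim that the resulting conformal diffeomorphism ``descends to a homothety of the base'' is asserted, not proved. The ``analytic continuation in $m$'' backup is not viable either: the theorem concerns a single smooth metric measure space with one fixed real $m$, so there is no family of solutions to continue along, and the key inequalities (the weighted Newton--Maclaurin inequalities, Theorem~\ref{thm:weighted_newton}) are inequalities with equality characterizations, which do not propagate from integer $m$ by polynomial identities --- the paper must prove them directly for real $m$. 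Your intrinsic sketch also misstates the crucial divergence identity: the weighted Newton tensor is \emph{not} $\phi$-divergence-free (Lemma~\ref{lem:div_T1} gives $\delta_\phi \cT_{1,\phi}^m=-\cs_{1,\phi}^m\,d\phi$ already for $k=1$); the correct object is the trace-modified tensor $E_{k,\phi}^m:=T_{k,\phi}^m-\frac{m+n-k}{m+n}\sigma_{k,\phi}^m g$, which by Lemma~\ref{lem:divE} satisfies the twisted identity $\delta_\phi E_{k,\phi}^m-\frac{1}{m}\tr E_{k,\phi}^m\,d\phi=0$ once $\sigma_{k,\phi}^m$ is constant (which Proposition~\ref{prop:mF_critical} supplies at a critical point).

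That said, your intrinsic outline is pointing at exactly the paper's argument (Theorem~\ref{thm:obata/qe}), so the missing content is the execution rather than the idea: with $u=v\hv^{-1}$, the quasi-Einstein hypothesis on the background and Lemma~\ref{lem:2we_to_wKe} (with $\kappa=\hkappa=0$) express $P_\phi^m-Z_\phi^m g$ of the critical structure as $-(m+n-2)\bigl(\nabla^2u+\frac{1}{m}\lp\nabla u,\nabla\phi\rp g\bigr)/u$, and pairing this with $E_{k,\phi}^m$ and integrating by parts against $d\nu$ kills the left-hand side by the twisted divergence identity; the weighted Maclaurin inequality (Corollary~\ref{cor:inner_product_maclaurin}), valid in the positive weighted elliptic cone where your hypothesis $\sigma_{j,\phi}^m(\hg,\hv)>0$ places you, then gives a signed integrand. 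But note two further points you omit. First, equality in Corollary~\ref{cor:inner_product_maclaurin} yields only that the critical structure is itself quasi-Einstein --- not that $u$ is constant --- so you still need the classification of two quasi-Einstein structures in one weighted conformal class (Proposition~\ref{prop:wKe_rigidity}), where closedness and vanishing scales force $u$ constant; your Pohozaev step cannot conclude directly. Second, the algebraic machinery (Theorem~\ref{thm:weighted_newton} and its corollaries) carries the standing hypothesis $m\geq k-1$, which any complete write-up must track.
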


The proof of Theorem~\ref{thm:intro/obata} is analogous to Obata's proof of the classification of conformally Einstein metrics with constant scalar curvature on closed manifolds~\cite{Obata1971} and its generalization by Viaclovsky~\cite{Viaclovsky2000} to the $\sigma_k$-curvatures; for details, see Subsection~\ref{subsec:obata/qe}.  The condition $\sigma_{j,\phi}^m(\hg,\hv)>0$, $1\leq j\leq k$, implies that the PDE~\eqref{eqn:intro/mFeuler} is elliptic at $(\hg,\hv)$.  By developing some additional integral estimates, we expect Theorem~\ref{thm:intro/obata} to also hold for on the round hemisphere $(S_+^n,d\theta^2,1,m,1)$; cf.\ \cite{ChangGurskyYang2003b,Gonzalez2006c}.  Indeed, we expect further study of the PDE $\sigma_{k,\phi}^m=f$ to lead to a sharp fully nonlinear Sobolev inequality in this setting; see Conjecture~\ref{conj:sobolev/qe}.

Many of the ideas in the proof of Theorem~\ref{thm:intro/obata} can be used to study the analogous classification for weighted Einstein manifolds.  More precisely, the proof of Obata's theorem~\cite{Obata1971} begins by using the variational structure of the $\sigma_k$-curvatures to find a $(0,2)$-tensor which is divergence-free for any metric $g$ with $\sigma_k$-constant and by using the assumption that $g$ is conformally Einstein to show that the trace-free part of the Schouten tensor is in the image of the adjoint of the divergence operator on trace-free $(0,2)$-tensors.  The proof ends by using the Newton inequalities and the specific form of the divergence-free $(0,2)$-tensor to conclude that $g$ is Einstein.  The first step carries through for solutions of~\eqref{eqn:intro/mFeuler} (resp.\ \eqref{eqn:intro/mYeuler}) which are conformally quasi-Einstein (resp.\ conformally weighted Einstein manifolds with $\mu=0$), though with a more complicated stand-in for the divergence operator and its adjoint; see Section~\ref{sec:obata}.  At present, while Theorem~\ref{thm:weighted_newton} asserts the weighted Newton inequalities, we can only carry out the second step in the setting of quasi-Einstein manifolds.  Indeed, carrying out the second step for weighted Einstein manifolds is even problematic in the case $k=1$ (cf.\ \cite[Conjecture~1.5]{Case2013y}).  Nevertheless, we expect that the analogue of Theorem~\ref{thm:intro/obata} for weighted Einstein manifolds is true.  Inspired by the sharp Gagliardo--Nirenberg inequalities of Del Pino and Dolbeault~\cite{DelPinoDolbeault2002}, we expect further study of the $\mY_k$-functionals to lead to sharp fully nonlinear Gagliardo--Nirenberg inequalities; see Conjecture~\ref{conj:sobolev/we}.

The fact that quasi-Einstein manifolds and weighted Einstein manifolds are critical points of the $\mF_k$- and $\mY_k$-functionals, respectively, within a weighted conformal class indicates that these functionals should be useful in studying such manifolds in a general variational context.  Our final main result verifies this expectation, at least in the cases $k=1,2$.  To be more precise, let
\[ \kM(M^n,m,\mu) := \left\{ (g,v) \suchthat g\in\Met(M), v\in C^\infty(M), v>0 \right\} \]
denote the space of \emph{metric-measure structures} on $(M^n,m,\mu)$ and denote
\[ \kM_1(M^n,m,\mu) := \left\{ (g,v)\in\kM \suchthat \int_M v^m\,\dvol = 1 \right\} . \]
It is clear that we can extend the $\mF_k$- and $\mY_k$-functionals to functionals on $\kM_1$ and $\kM\times\bR_+$, respectively.  Weighted Einstein manifolds are related to the critical points of these functionals in the following way:

\begin{thm}
 \label{thm:intro/full}
 Let $(M^n,m,\mu)$ be a weighted manifold.
 \begin{enumerate}
  \item $(g,v)\in\kM_1$ is a critical point of $\mF_1\colon\kM_1\to\bR$ if and only if $P_\phi^m=\lambda g$ and $\sigma_{1,\phi}^m=(m+n)\lambda$ for some $\lambda\in\bR$.
  \item If $(g,v)\in\kM_1$ satisfies $P_\phi^m=\lambda g$ and $\sigma_{1,\phi}^m=(m+n)\lambda$ for some $\lambda\in\bR$, then $(g,v)$ is a critical point of $\mF_2\colon\kM_1\to\bR$.
 \end{enumerate}
 Suppose additionally that $\mu=0$.
 \begin{enumerate}\addtocounter{enumi}{2}
  \item $(g,v,\kappa)\in\kM\times\bR_+$ is a critical point of $\mY_1\colon\kM\times\bR_+\to\bR$ if and only if $(g,v)$ is a weighted Einstein metric-measure structure with scale $\kappa$.
  \item If $(g,v)\in\kM$ is a weighted Einstein manifold with scale $\kappa>0$, then $(g,v,\kappa)$ is a critical point of $\mY_2\colon\kM\times\bR_+\to\bR$.
 \end{enumerate}
\end{thm}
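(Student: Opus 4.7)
The plan is to compute the \emph{unrestricted} first variations of $\mF_k$ on $\kM_1$ and of $\mY_k$ on $\kM\times\bR_+$, parametrizing the tangent space at $(g,v)$ by a symmetric $(0,2)$-tensor $h=\delta g$ and a function $w=\delta v$ (plus a scalar $\dot\kappa$ in the $\mY_k$-case). For each functional I extract a pair $(A,B)$ of Euler--Lagrange operators so that
\[
 \delta\mF_k(h,w) = \int_M \langle A, h\rangle\,d\nu + \int_M B\,w\,d\nu,
\]
and similarly for $\mY_k$ with an additional scalar piece corresponding to $\dot\kappa$. The unit-weighted-volume constraint on $\kM_1$ enters through a Lagrange multiplier which, by construction, is absorbed into the scalar $\lambda$.

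For part (1): starting from $\mF_1 = \int J_\phi^m\,d\nu$, I use the Palatini-type formula for $\delta R$ together with $\delta d\nu = (\frac12\tr_g h + mv^{-1}w)\,d\nu$ and standard integration by parts to land on $A = c_1(P_\phi^m - \lambda g)$ and $B = c_2(\sigma_{1,\phi}^m - (m+n)\lambda)$ for nonzero constants $c_1,c_2$, with $\lambda$ the Lagrange multiplier. The tangential conformal direction (i.e.\ Theorem~\ref{thm:intro/variational} for $k=1$) already gives $\sigma_{1,\phi}^m = $ const; the transverse directions in $h$ promote this to $P_\phi^m = \lambda g$, and tracing identifies the constants.

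For part (2): the first variation of $\sigma_{2,\phi}^m$ is computed by linearizing
\[
 \sigma_{2,\phi}^m = \tfrac12\bigl((J_\phi^m)^2 - |P_\phi^m|^2 - \tfrac1m (Y_\phi^m)^2\bigr),
\]
producing EL operators polynomial in $P_\phi^m$, $J_\phi^m$, $Y_\phi^m$ together with a second-order operator on $h,w$ that is eliminated, after integration by parts, via the weighted contracted Bianchi identity for $P_\phi^m$. On a weighted Einstein metric with $\sigma_{1,\phi}^m = (m+n)\lambda$, one has $Y_\phi^m = m\lambda$, all weighted curvatures are constant, and the weighted divergences vanish, so $A$ and $B$ vanish identically. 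This handles the ``if'' direction, which is all that part (2) asserts.

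For parts (3) and (4): I extend the computation to $\mY_k$. The chosen exponents make $\mY_k$ invariant under $(g,v,\kappa)\mapsto(c^2g,cv,c^{-1}\kappa)$, so the three EL pieces (in $h$, $w$, $\dot\kappa$) are ``$L^2$-orthogonal to the scaling direction'' and must be analyzed modulo that direction. For part (3), varying all three arguments of
\[
 \mY_1(g,v,\kappa) = \kappa^{-\frac{2m(m+n-1)}{(m+n)(2m+n-2)}}\Bigl(\int\csigma_{1,\phi}^m\,d\nu\Bigr)\Bigl(\int v^{-1}d\nu\Bigr)^{-\frac{2m}{(m+n)(2m+n-2)}}\Bigl(\int d\nu\Bigr)^{-\frac{m+n-2}{m+n}}
\]
yields, after using $\csigma_{1,\phi}^m = \sigma_{1,\phi}^m + m\kappa v^{-1}$ and Lemma~\ref{lem:we_kappa}, the system $P_\phi^m=\lambda g$, $\csigma_{1,\phi}^m=(m+n)\lambda$, and the integral identity~\eqref{eqn:intro/mYeulerintegral} for $k=1$, which is the weighted Einstein condition with scale $\kappa$. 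Part (4) is the analogous ``if'' direction for $\mY_2$: substitute $\csigma_{2,\phi}^m=$ const, $\cs_{1,\phi}^m=$ const on a weighted Einstein structure and verify that \eqref{eqn:intro/mYeulerpw} and \eqref{eqn:intro/mYeulerintegral} both hold by direct computation using the formula relating $\csigma_{2,\phi}^m$ to $\sigma_{1,\phi}^{m-1}$ displayed in the introduction.

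The main obstacle is the variation computation for $\mF_2$ (and hence $\mY_2$) on the full space $\kM$, rather than within a conformal class. Independent $h$ and $w$ force one to track weighted divergence terms carefully and to invoke the weighted Bianchi identity both in its standard form and in its contraction with $d\phi$; the essential use of the hypothesis $\sigma_{1,\phi}^m=(m+n)\lambda$ is precisely in making these weighted divergences vanish. The $\mY_k$ case additionally requires bookkeeping of the normalization exponents so that the $\dot\kappa$-piece matches \eqref{eqn:intro/mYeulerintegral} exactly.
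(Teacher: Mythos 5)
Your overall skeleton --- compute the unrestricted first variations of $\mF_k$ and $\mY_k$ on the full space, extract Euler--Lagrange operators in the tensor and scalar directions, and evaluate at (quasi-)weighted Einstein structures --- is exactly the paper's strategy in Section~\ref{sec:full}, and your treatments of parts (1)--(3) match the paper's Corollary~\ref{cor:R1dot}, Proposition~\ref{prop:we_kappa0_crit}, and Theorem~\ref{thm:full_Y1} in outline.  But part (4), which the paper itself flags as the most delicate claim, has a genuine gap as you describe it.  You propose to verify part (4) by checking~\eqref{eqn:intro/mYeulerpw} and~\eqref{eqn:intro/mYeulerintegral}; those equations characterize critical points of $\mY_2$ only \emph{within a weighted conformal class} (Proposition~\ref{prop:mY_critical}), i.e.\ they are the $\psi$-part of the Euler system and say nothing about the directions transverse to $\kC$, which is precisely where the content of part (4) lies.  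Moreover, your proposed mechanism for the transverse directions --- that ``the weighted divergences vanish'' at a weighted Einstein structure --- is false when $\kappa>0$: by Proposition~\ref{prop:we_sigma2} one has $B_\phi^m=\kappa v^{-1}\tr A_\phi^m = m\kappa v^{-1}P_\phi^{m-1}-\frac{m(m+n-3)}{m+n-2}\lambda\kappa v^{-1}g$, and since $P_\phi^{m-1}$ contains $v^{-1}\nabla^2v$ it is generically non-constant, so the Bach-type term in the $h$-direction Euler--Lagrange operator does \emph{not} vanish at a weighted Einstein structure with positive scale.

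What actually makes part (4) work (Theorem~\ref{thm:full_Y2}) is a cancellation your plan never produces: the variation of the cross-term $\kappa v^{-1}\cs_{1,\phi}^m$ in $\csigma_{2,\phi}^m$ --- computed in the paper via the change-of-$m$ device, Corollary~\ref{cor:change_TkM} and Proposition~\ref{prop:var_changem}, applied to $\mF_1^{(m-1)}$ --- contributes a term $m\kappa v^{-1}P_\phi^{m-1}$ that combines with the $B_\phi^m$ coming from $D\mF_2$ (Corollary~\ref{cor:S2dot}) to form the scale-$\kappa$ Bach tensor
\[
 \cB_\phi^m = B_\phi^m - m\kappa v^{-1}\left(P_\phi^{m-1}-\frac{m+n-3}{m+n-2}P_\phi^m\right),
\]
which \emph{does} vanish when $P_\phi^m=\lambda g$ and $\cY_\phi^m=m\lambda$.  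Remark~\ref{rk:Y2_unique} emphasizes that this cancellation is what singles out the $\mY_2$-functional: other functionals with the same conformal Euler equation~\eqref{eqn:intro/mYeuler} need not admit weighted Einstein critical points, so no argument that only verifies~\eqref{eqn:intro/mYeuler} can prove part (4).  A related imprecision appears in your part (2): the second-order terms are not ``eliminated by the weighted contracted Bianchi identity'' after integration by parts --- they survive in the Euler--Lagrange operator as the weighted Bach tensor, $E_{2,\phi}^m=\tf_\phi\bigl(B_\phi^m+\frac{m+n-4}{m+n-2}T_{2,\phi}^m\bigr)$; this is harmless for part (2) only because $\kappa=0$ forces $B_\phi^m=0$ by~\eqref{eqn:we_bach}, but it is the same oversight that becomes fatal in part (4).
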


The final claim of Theorem~\ref{thm:intro/full} is the most noteworthy, as it explains the seemingly complicated definition of the $\mY_k$-functional and its Euler equation~\eqref{eqn:intro/mYeuler}.  More precisely, while Theorem~\ref{thm:intro/variational} guarantees that there is a functional on $\kC$ for which its critical points are precisely those metric-measure structures with $\csigma_{2,\phi}^m$ constant, a computation involving the behavior of the weighted Bach tensor of a weighted Einstein manifold with $\mu=0$ implies that such manifolds are only critical points of the $\mY_2$-functional; cf.\ Remark~\ref{rk:Y2_unique}.

This article is organized as follows.  In Section~\ref{sec:algebra} we establish the key algebraic properties of the $m$-weighted elementary symmetric polynomials.  In Section~\ref{sec:smms} we discuss the necessary background for smooth metric measure spaces, including some important facts about weighted Einstein manifolds.  In Section~\ref{sec:moduli} we set up a useful formalism for studying the space of metric-measure structures.  In Section~\ref{sec:variational} we prove Theorem~\ref{thm:intro/variational}.  In Section~\ref{sec:stable} we prove Theorem~\ref{thm:intro/stable_qe} and discuss its analogue for weighted Einstein manifolds.  In Section~\ref{sec:obata} we prove the ellipticity of~\eqref{eqn:intro/mFeuler} and~\eqref{eqn:intro/mYeuler} within the appropriate elliptic cones, prove Theorem~\ref{thm:intro/obata}, and discuss our related conjectures for weighted Einstein manifolds and sharp fully nonlinear Sobolev inequalities.  In Section~\ref{sec:full} we prove Theorem~\ref{thm:intro/full}.

\subsection*{Acknowledgments}
This work was begun during a month-long visit to the Centre de Recerca Matem\'atica.  The author would like to thank the CRM for providing a productive research environment.  It was also partially supported by NSF Grant DMS-1004394.

\section{Algebraic preliminaries}
\label{sec:algebra}

We begin our study with a discussion of the algebraic properties of the $m$-weighted $k$-th elementary symmetric polynomials of $n$-variables.  Importantly, provided $m$ is sufficiently large relative to $k$ (cf.\ Theorem~\ref{thm:weighted_newton} below), these invariants possess all of the properties expected from the informal definition~\eqref{eqn:intro/informal_sigmak_defn}.  This section makes this assertion precise.  We begin with the formal definition of the weighted elementary symmetric polynomials.

\begin{defn}
 Fix $m\in\bR_+$ and $k,n\in\bN_0$.  The \emph{$m$-weighted $k$-th elementary symmetric polynomial $\sigma_k^m$ of $n$ variables} is the function $\sigma_k^m\colon\bR\times\bR^n$ defined recursively by
 \begin{align*}
  \sigma_0^m(\lambda;\Lambda) & = 1, \quad \text{if $k=0$}, \\
  \sigma_k^m(\lambda;\Lambda) & = \frac{1}{k}\sum_{j=0}^{k-1}(-1)^j\sigma_{k-1-j}^m(\lambda;\Lambda) N_{j+1}^m(\lambda;\Lambda), \quad \text{if $k\geq1$},
 \end{align*}
 where $\Lambda=(\lambda_1,\dotsc,\lambda_n)\in\bR^n$ and $N_k^m\colon\bR\times\bR^n\to\bR$ is defined by
 \[ N_k^m(\lambda;\Lambda) = m\left(\frac{\lambda}{m}\right)^k + \sum_{j=1}^n \lambda_j^k . \]
\end{defn}

One may also regard the weighted elementary symmetric polynomials as perturbations of the elementary symmetric polynomials through lower order terms.  In other words, $\sigma_k^m(\lambda;\Lambda)$ and $\sigma_k(\Lambda)$ differ by an inhomogeneous polynomial in $\Lambda$ of degree $k-1$.  The precise relationship is as follows.

\begin{lem}
 \label{lem:reln_to_trivial}
 Fix $m\in\bR_+$ and $k,n\in\bN_0$.  Then
 \begin{equation}
  \label{eqn:reln_to_trivial}
  \sigma_k^m\left(\lambda;\Lambda\right) = \sum_{j=0}^k\binom{m}{j}\left(\frac{\lambda}{m}\right)^j\sigma_{k-j}(\Lambda)
 \end{equation}
 for all $(\lambda;\Lambda)\in\bR\times\bR^n$.
\end{lem}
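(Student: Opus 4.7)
The plan is to prove the identity by induction on $k$, using the classical Newton identities and a rational-function-in-$m$ argument. The key observation is that when $m$ is a positive integer, the right-hand side of~\eqref{eqn:reln_to_trivial} is literally the ordinary $k$-th elementary symmetric polynomial evaluated on the $(m+n)$-tuple $(\lambda/m,\dotsc,\lambda/m,\lambda_1,\dotsc,\lambda_n)$ in which $\lambda/m$ appears $m$ times; indeed, expanding the generating function $\prod_i(1+t\lambda_i)$ and using the binomial theorem for the $m$ repeated entries yields
\[ \sum_{j=0}^{k}\binom{m}{j}\left(\frac{\lambda}{m}\right)^j \sigma_{k-j}(\Lambda) = \sigma_k\Bigl(\underbrace{\tfrac{\lambda}{m},\dotsc,\tfrac{\lambda}{m}}_{m\text{ times}},\lambda_1,\dotsc,\lambda_n\Bigr) \]
for every $m\in\bN_{\geq 1}$.

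First I would observe that, with the same interpretation of the tuple, the $(j+1)$-st power sum of this $(m+n)$-tuple is exactly $N_{j+1}^m(\lambda;\Lambda)$. Therefore the classical Newton identity applied to this tuple reads
\[ k\,e_k = \sum_{j=0}^{k-1}(-1)^j e_{k-1-j}\,N_{j+1}^m(\lambda;\Lambda), \]
where $e_k$ denotes the right-hand side of~\eqref{eqn:reln_to_trivial}. But this is precisely the defining recursion for $\sigma_k^m$. An induction on $k$ with the trivial base case $k=0$ then establishes the identity for every $m\in\bN_{\geq 1}$.

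Next I would extend the identity from positive integers to all $m\in\bR_+$. Both sides of~\eqref{eqn:reln_to_trivial}, viewed as functions of $m$ with $(\lambda;\Lambda)$ fixed, are rational in $m$: the binomial coefficients $\binom{m}{j}$ are polynomials in $m$, the factors $(\lambda/m)^j$ are rational in $m$, and the iterates of the defining recursion for $\sigma_k^m$ involve only $N_{j+1}^m$ which is a polynomial in $m^{-1}$ with polynomial coefficients in $\lambda$ and $\Lambda$. Since two rational functions that agree at infinitely many points must coincide, the identity holds for all $m\in\bR_+$.

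The argument involves no serious obstacle; the only mild bookkeeping is verifying the generating-function expansion and making precise the sense in which both sides are rational in $m$. Everything else is a direct induction using the defining recursion and the classical Newton identity.
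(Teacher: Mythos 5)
Your proof is correct, but it takes a genuinely different route from the paper. The paper proves~\eqref{eqn:reln_to_trivial} directly for all real $m$ by induction on $k$: it substitutes the inductive hypothesis into the defining recursion, producing a double sum, and then collapses that sum using the polynomial identity $\sum_{j=0}^k(-1)^{k-j}\binom{m}{j}=\binom{m-1}{k}$ together with the classical recursion for the unweighted $\sigma_k$. You instead first establish the identity for integer $m$ by recognizing the right-hand side as $\sigma_k$ of the $(m+n)$-tuple (via the generating function), observing that the power sums of that tuple are exactly $N_{j+1}^m(\lambda;\Lambda)$, and invoking the classical Newton identity to see that the right-hand side satisfies the same recursion as $\sigma_k^m$; you then extend to all $m\in\bR_+$ by noting that, for fixed $(\lambda;\Lambda)$, both sides are rational functions of $m$ (indeed Laurent polynomials in $m$) agreeing at infinitely many points. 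Both steps are sound: the base cases match, the recursions coincide at each integer $m\geq1$, and two rational functions agreeing on $\bN$ coincide wherever defined, in particular on $\bR_+$. Your argument is in effect a rigorous implementation of the ``formal variable'' heuristic the paper states informally after~\eqref{eqn:intro/informal_sigmak_defn}, and it buys conceptual transparency: no double-sum bookkeeping, and the combinatorial identity the paper needs is replaced by the classical Newton identities. What the paper's direct computation buys in exchange is a single uniform induction with no detour through the integer case, and it avoids having to make the rationality-in-$m$ claim precise (which in your write-up is the one place requiring care: one should note that the denominators involve only powers of $m$, so both sides are honest rational functions with no poles on $\bR_+$, which you do address). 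Note also that the paper itself uses essentially your generating-function mechanism later, in Proposition~\ref{prop:generating_function}, so your proof could alternatively be phrased as deriving Lemma~\ref{lem:reln_to_trivial} and that proposition in the reverse of the paper's logical order; since the paper's proof of Proposition~\ref{prop:generating_function} cites Lemma~\ref{lem:reln_to_trivial}, your self-contained derivation avoids any circularity.
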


\begin{proof}
 The proof is by induction.  Clearly~\eqref{eqn:reln_to_trivial} holds if $k=0$.  Suppose that~\eqref{eqn:reln_to_trivial} holds for some $k\in\bN_0$.  The definition of the weighted elementary symmetric polynomials and the inductive hypothesis imply that
 \begin{equation}
  \label{eqn:rtt1}
  (k+1)\sigma_{k+1}^m(\lambda;\Lambda) = \sum_{j=0}^k\sum_{\ell=0}^{k-j}(-1)^\ell\binom{m}{j}\left(\frac{\lambda}{m}\right)^j\sigma_{k-j-\ell}(\Lambda)N_{\ell+1}^m(\lambda;\Lambda) .
 \end{equation}
 Using the identity $\sum_{j=0}^k(-1)^{k-j}\binom{m}{j}=\binom{m-1}{k}$, we compute that
 \begin{multline}
  \label{eqn:rtt2}
  \sum_{j=0}^k\sum_{\ell=0}^{k-j}(-1)^\ell\binom{m}{j}\left(\frac{\lambda}{m}\right)^{j+\ell+1}\sigma_{k-j-\ell}(\Lambda) \\ = \sum_{j=0}^k\binom{m-1}{j}\left(\frac{\lambda}{m}\right)^{j+1}\sigma_{k-j}(\Lambda) .
 \end{multline}
 Using the recursive definition of the elementary symmetric polynomials, we compute that
 \begin{multline}
  \label{eqn:rtt3}
  \sum_{j=0}^k\sum_{\ell=0}^{k-j}(-1)^\ell\binom{m}{j}\left(\frac{\lambda}{m}\right)^j\sigma_{k-j-\ell}(\Lambda)\sum_{s=1}^n\lambda_s^{\ell+1} \\ = \sum_{j=0}^k(k+1-j)\binom{m}{j}\left(\frac{\lambda}{m}\right)^j\sigma_{k+1-j}(\Lambda) .
 \end{multline}
 Combining~\eqref{eqn:rtt1}, \eqref{eqn:rtt2} and~\eqref{eqn:rtt3} yields the desired result.
\end{proof}

There is a similar relationship between weighted elementary symmetric polynomials when the value $\lambda$ is changed.

\begin{cor}
 \label{cor:reln_to_genl}
 Fix $m\in\bR_+$ and $k,n\in\bN_0$.  Then
 \[ \sigma_k^m\left(\lambda_1+\lambda_2;\Lambda\right) = \sum_{j=0}^k\binom{m}{j}\left(\frac{\lambda_1}{m}\right)^j\sigma_{k-j}^{m-j}\left(\frac{m-j}{m}\lambda_2;\Lambda\right) \]
 for all $\mu_1,\lambda_2\in\bR$ and $\Lambda\in\bR^n$.
\end{cor}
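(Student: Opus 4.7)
The plan is to reduce both sides to expansions in the ordinary elementary symmetric polynomials $\sigma_p(\Lambda)$ via Lemma~\ref{lem:reln_to_trivial} and then match coefficients. Applying the lemma with $\lambda=\lambda_1+\lambda_2$ and expanding $(\lambda_1+\lambda_2)^j$ by the binomial theorem, the left-hand side becomes
\[
\sum_{j=0}^{k}\binom{m}{j}\left(\frac{\lambda_1+\lambda_2}{m}\right)^{j}\sigma_{k-j}(\Lambda) = \sum_{j=0}^{k}\sum_{r=0}^{j}\binom{m}{j}\binom{j}{r}\left(\frac{\lambda_1}{m}\right)^{r}\left(\frac{\lambda_2}{m}\right)^{j-r}\sigma_{k-j}(\Lambda).
\]

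Next, I would apply Lemma~\ref{lem:reln_to_trivial} again to each factor $\sigma_{k-j}^{m-j}\!\left(\tfrac{m-j}{m}\lambda_2;\Lambda\right)$ on the right-hand side of the claimed identity, with parameter $m-j$ and value $\tfrac{m-j}{m}\lambda_2$. Crucially, when this value is divided by $m-j$ in the formula from the lemma, the factor of $m-j$ cancels and one is left with $\lambda_2/m$. This gives
\[
\sigma_{k-j}^{m-j}\!\left(\tfrac{m-j}{m}\lambda_2;\Lambda\right) = \sum_{i=0}^{k-j}\binom{m-j}{i}\left(\frac{\lambda_2}{m}\right)^{i}\sigma_{k-j-i}(\Lambda),
\]
so the right-hand side of the corollary turns into a double sum in $(j,i)$ whose summand involves $\sigma_{k-j-i}(\Lambda)$.

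Now I would match coefficients of $\left(\tfrac{\lambda_1}{m}\right)^{a}\!\left(\tfrac{\lambda_2}{m}\right)^{b}\sigma_{p}(\Lambda)$ with $a+b+p=k$. On the left, setting $j-r=b$ and $r=a$ forces $j=a+b$, giving the coefficient $\binom{m}{a+b}\binom{a+b}{a}$. On the right, setting $j=a$ and $i=b$ gives the coefficient $\binom{m}{a}\binom{m-a}{b}$. Thus the corollary reduces to the purely combinatorial identity
\[
\binom{m}{a+b}\binom{a+b}{a} = \binom{m}{a}\binom{m-a}{b},
\]
which is the standard ``trinomial revision'' $\frac{m!}{a!\,b!\,(m-a-b)!}$ rewritten two different ways. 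Since both sides are polynomials in $m$ of degree $a+b$, the identity is valid for all $m\in\bR_+$ (indeed all $m\in\bR$) under the convention $\binom{m}{j}=m(m-1)\cdots(m-j+1)/j!$ used throughout the paper.

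There is no real obstacle here; the proof is a bookkeeping exercise once the two applications of Lemma~\ref{lem:reln_to_trivial} are carried out, and the only nontrivial input is the trinomial revision identity. The one point requiring a tiny bit of care is keeping the substitution $\tfrac{m-j}{m}\lambda_2$ in the argument straight, so that the $(m-j)$ appearing in the denominator of the formula in Lemma~\ref{lem:reln_to_trivial} cancels correctly to yield $\lambda_2/m$ uniformly across all terms.
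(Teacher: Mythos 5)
Your proof is correct and follows essentially the same route as the paper: the paper likewise expands $\sigma_k^m(\lambda_1+\lambda_2;\Lambda)$ via Lemma~\ref{lem:reln_to_trivial} and the binomial theorem and then recombines using~\eqref{eqn:reln_to_trivial}, which is exactly your double expansion and regrouping. You have merely made explicit the coefficient matching (the trinomial revision identity $\binom{m}{a+b}\binom{a+b}{a}=\binom{m}{a}\binom{m-a}{b}$, valid polynomially in $m$) that the paper leaves implicit.
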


\begin{proof}
 Lemma~\ref{lem:reln_to_trivial} and the binomial theorem yield
 \[ \sigma_k^m\left(\lambda_1+\lambda_2;\Lambda\right) = \sum_{j=0}^k\sum_{s=0}^j\binom{m}{j}\binom{j}{s}\left(\frac{\lambda_1}{m}\right)^{j-s}\left(\frac{\lambda_2}{m}\right)^s\sigma_{k-j}(\Lambda) . \]
 Combining this with~\eqref{eqn:reln_to_trivial} yields the desired result.
\end{proof}

A useful fact is that the relationship between an elementary symmetric polynomial of $\Lambda=(\lambda_1,\dotsc,\lambda_n)$ and the corresponding elementary symmetric polynomial of $\Lambda(i):=(\lambda_1,\dotsc,\lambda_{i-1},\lambda_{i+1},\dotsc,\lambda_n)$ persists to the weighted case.

\begin{lem}
 \label{lem:reln_to_remove}
 Fix $m\in\bR_+$ and $k,n\in\bN_0$.  Then
 \begin{equation}
  \label{eqn:reln_to_remove}
  \sigma_k^m\left(\lambda;\Lambda\right) = \sigma_k^m\left(\lambda;\Lambda(i)\right) + \lambda_i\sigma_{k-1}^m\left(\lambda;\Lambda(i)\right) .
 \end{equation}
 for all $\lambda\in\bR$, $\Lambda=(\lambda_1,\dotsc,\lambda_n)\in\bR^n$, and $1\leq i\leq n$.
\end{lem}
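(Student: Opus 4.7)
My plan is to reduce the identity to the classical one for unweighted elementary symmetric polynomials via the expansion in Lemma~\ref{lem:reln_to_trivial}. Recall that for the ordinary elementary symmetric polynomials one has the well-known recursion
\[ \sigma_\ell(\Lambda) = \sigma_\ell(\Lambda(i)) + \lambda_i \sigma_{\ell-1}(\Lambda(i)), \]
valid for all $\ell \geq 0$ with the convention $\sigma_{-1} := 0$. The idea is to plug this into the formula
\[ \sigma_k^m(\lambda;\Lambda) = \sum_{j=0}^k \binom{m}{j}\left(\frac{\lambda}{m}\right)^j \sigma_{k-j}(\Lambda) \]
given by Lemma~\ref{lem:reln_to_trivial} and then recognize the two resulting sums as $\sigma_k^m(\lambda;\Lambda(i))$ and $\lambda_i\sigma_{k-1}^m(\lambda;\Lambda(i))$, again via Lemma~\ref{lem:reln_to_trivial}.

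More concretely, substituting the classical recursion term by term yields
\[ \sigma_k^m(\lambda;\Lambda) = \sum_{j=0}^k \binom{m}{j}\left(\frac{\lambda}{m}\right)^j \sigma_{k-j}(\Lambda(i)) + \lambda_i \sum_{j=0}^k \binom{m}{j}\left(\frac{\lambda}{m}\right)^j \sigma_{k-j-1}(\Lambda(i)). \]
The first sum is exactly $\sigma_k^m(\lambda;\Lambda(i))$ by Lemma~\ref{lem:reln_to_trivial}. In the second sum the $j=k$ term vanishes because $\sigma_{-1}=0$, so reindexing it as a sum from $j=0$ to $j=k-1$ identifies it with $\sigma_{k-1}^m(\lambda;\Lambda(i))$, again by Lemma~\ref{lem:reln_to_trivial}. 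Combining the two identifications gives~\eqref{eqn:reln_to_remove}.

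There is no real obstacle here: once Lemma~\ref{lem:reln_to_trivial} is available, the proof is a one-line bookkeeping computation, since the only place $\Lambda$ appears on the right-hand side of the expansion is through the unweighted $\sigma_{k-j}$, for which the removal-of-a-variable identity is classical. An alternative route would be induction on $k$ directly from the recursive definition, using $N_\ell^m(\lambda;\Lambda) = N_\ell^m(\lambda;\Lambda(i)) + \lambda_i^\ell$, but the approach above is cleaner and avoids rewriting the recursion.
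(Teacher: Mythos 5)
Your proof is correct and is essentially the paper's argument: the paper also reduces to the classical ($m=0$) removal identity and transfers it to general $m$ via Lemma~\ref{lem:reln_to_trivial}, only stating this in two terse sentences where you spell out the term-by-term substitution and reindexing. No gaps; the explicit handling of the $j=k$ term via $\sigma_{-1}=0$ is exactly the bookkeeping the paper leaves to the reader.
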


\begin{proof}
 It is readily verified that~\eqref{eqn:reln_to_remove} holds when $m=0$.  Lemma~\ref{lem:reln_to_trivial} then implies that~\eqref{eqn:reln_to_remove} holds in general.
\end{proof}

Our interest is in considering the weighted elementary symmetric polynomials of a self-adjoint endomorphism of $\bR^n$.  These are defined in terms of the eigenvalues of the endomorphism.  In this context, there are also natural weighted analogues of the Newton transforms.

\begin{defn}
 \label{defn:newton_endomorphism}
 Fix $m\in\bR_+$ and $k,n\in\bN_0$.  Let $\mM_n$ denote the space of self-adjoint endomorphisms of $\bR^n$ with its standard inner product.
 \begin{enumerate}
  \item The \emph{$m$-weighted $k$-th elementary symmetric function} $\sigma_k^m\colon\bR\times\mM_n\to\bR$ is defined by $\sigma_k^m(\lambda;P):=\sigma_k^m\left(\lambda;\Lambda(P)\right)$, where $\Lambda(P):=(\lambda_1,\dotsc,\lambda_n)$ is the list of the eigenvalues (with multiplicity) of $P$.
  \item The \emph{$m$-weighted $k$-th Newton transform} $T_k^m\colon\bR\times\mM_n\to\mM_n$ is defined by
  \[ T_k^m(\lambda;P) = \sum_{j=0}^k (-1)^j\sigma_{k-j}^m(\lambda;P)\,P^j . \]
  \item The \emph{$m$-weighted $k$-th Newton scalar} $s_k^m\colon\bR\times\mM_n\to\bR$ is defined by
  \[ s_k^m\left(\lambda;P\right) = \sum_{j=0}^k (-1)^j \left(\frac{\lambda}{m}\right)^j \sigma_{k-j}^m\left(\lambda;P\right) . \]
 \end{enumerate}
\end{defn}

Informally, the $m$-weighted $k$-th elementary symmetric function $\sigma_k^m\left(\lambda;P\right)$ is the $k$-th elementary symmetric function of the $(m+n)\times (m+n)$ block-diagonal matrix
\begin{equation}
 \label{eqn:informal_block_diagonal}
 P \oplus \frac{\lambda}{m}\Id_m,
\end{equation}
where $\Id_m$ is the $m\times m$ identity matrix.  The $k$-Newton transform $T_k$ of~\eqref{eqn:informal_block_diagonal} decomposes as
\[ T_k = T_k^m \oplus s_k^m\Id_m . \]

The eigenvalues of the weighted Newton transforms are readily computed in terms of the weighted elementary symmetric functions.

\begin{lem}
 \label{lem:eigenvalues_T}
 Fix $m\in\bR_+$ and $k,n\in\bN_0$.  Given $\lambda\in\bR$ and $P\in\mM_n$, the eigenvalues of $T_k^m(\lambda;P)$ are $\sigma_k^m\left(\lambda;\Lambda(i)\right)$ for $1\leq i\leq n$, where $\Lambda=\Lambda(P)$ are the eigenvalues of $P$ and $\Lambda(i)$ is as in Lemma~\ref{lem:reln_to_remove}.
\end{lem}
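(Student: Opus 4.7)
My plan is to exploit the fact that $T_k^m(\lambda;P)$ is a scalar polynomial in $P$, so it shares the eigenbasis of $P$, and then reduce to the recursion of Lemma~\ref{lem:reln_to_remove}.

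\textbf{Step 1: Diagonalize.} Since $P\in\mM_n$ is self-adjoint, choose an orthonormal eigenbasis $v_1,\dotsc,v_n$ with $Pv_i=\lambda_i v_i$. The coefficients $(-1)^j\sigma_{k-j}^m(\lambda;P)$ in the definition of $T_k^m(\lambda;P)$ are scalars, so each $v_i$ is automatically an eigenvector of $T_k^m(\lambda;P)$ with eigenvalue
\[ \tau_i := \sum_{j=0}^k (-1)^j\sigma_{k-j}^m(\lambda;\Lambda)\,\lambda_i^j, \]
where $\Lambda=\Lambda(P)$. The problem therefore reduces to showing $\tau_i=\sigma_k^m\bigl(\lambda;\Lambda(i)\bigr)$.

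\textbf{Step 2: Apply the ``remove one variable'' identity.} By Lemma~\ref{lem:reln_to_remove},
\[ \sigma_{k-j}^m(\lambda;\Lambda) = \sigma_{k-j}^m\bigl(\lambda;\Lambda(i)\bigr) + \lambda_i\,\sigma_{k-j-1}^m\bigl(\lambda;\Lambda(i)\bigr) \]
for each $0\le j\le k$, interpreting $\sigma_{-1}^m:=0$. Substituting into the expression for $\tau_i$ splits it as
\[ \tau_i = \sum_{j=0}^k (-1)^j\lambda_i^j\sigma_{k-j}^m\bigl(\lambda;\Lambda(i)\bigr) + \sum_{j=0}^k (-1)^j\lambda_i^{j+1}\sigma_{k-j-1}^m\bigl(\lambda;\Lambda(i)\bigr). \]

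\textbf{Step 3: Telescope.} Shift the index $\ell=j+1$ in the second sum; it becomes $-\sum_{\ell=1}^{k+1}(-1)^\ell\lambda_i^\ell\sigma_{k-\ell}^m(\lambda;\Lambda(i))$. The terms with $\ell=k+1$ vanish because $\sigma_{-1}^m=0$, and the remaining terms with $1\le\ell\le k$ exactly cancel the corresponding terms of the first sum. What survives is the $j=0$ term, namely $\sigma_k^m\bigl(\lambda;\Lambda(i)\bigr)$, so $\tau_i=\sigma_k^m\bigl(\lambda;\Lambda(i)\bigr)$ as required.

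There is essentially no obstacle here: the only thing to watch carefully is the bookkeeping of the telescoping sum and the convention $\sigma_{-1}^m\equiv 0$ that makes the boundary term disappear. The algebraic content is entirely packaged in Lemma~\ref{lem:reln_to_remove}; once $P$ is diagonalized, that recursion is exactly what is needed to pass from $\sigma_{k-j}^m(\lambda;\Lambda)$ to $\sigma_k^m(\lambda;\Lambda(i))$.
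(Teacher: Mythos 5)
Your proof is correct and is in substance the same as the paper's: both diagonalize (reduce to the eigenvalues of $P$, using that $T_k^m(\lambda;P)$ is a polynomial in $P$ with scalar coefficients) and then invoke Lemma~\ref{lem:reln_to_remove} as the sole algebraic input. The paper merely organizes the computation as an induction on $k$ via the recursion $T_{k+1}^m(\lambda;P)=\sigma_{k+1}^m(\lambda;P)I-PT_k^m(\lambda;P)$, which is precisely your telescoping sum rolled up one step at a time, so the two arguments coincide.
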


\begin{proof}
 The proof is by induction.  It is clear that the conclusion holds when $k=0$.  Suppose that the eigenvalues of $T_k^m(\lambda;P)$ are $\sigma_k^m\left(\lambda;\Lambda(i)\right)$ for $1\leq i\leq n$.  Note that
 \begin{equation}
  \label{eqn:T_recursive}
  T_{k+1}^m(\lambda;P) = \sigma_{k+1}^m(\lambda;P)I - PT_k^m(\lambda;P)
 \end{equation}
 for $I$ the identity endomorphism.  The conclusion follows readily from Lemma~\ref{lem:reln_to_remove} and the inductive hypothesis.
\end{proof}

\begin{remark}
 It follows immediately from Definition~\ref{defn:newton_endomorphism} that the analogue
 \begin{equation}
  \label{eqn:s_recursive}
  s_{k+1}\left(\lambda;P\right) = \sigma_{k+1}\left(\lambda;P\right) - \frac{\lambda}{m}s_k^m\left(\lambda;P\right)
 \end{equation}
 of~\eqref{eqn:T_recursive} holds.
\end{remark}

The values of the weighted Newton scalars are also readily computed from the weighted elementary symmetric functions.

\begin{lem}
 \label{lem:s_to_sigma-1}
 Fix $m\in\bR_+$ and $k,n\in\bN_0$.  Then
 \begin{equation}
  \label{eqn:s_to_sigma-1}
  s_k^m\left(\lambda;P\right) = \sigma_k^{m-1}\left(\frac{m-1}{m}\lambda;P\right)
 \end{equation}
 for all $\lambda\in\bR$ and $P\in\mM_n$.
\end{lem}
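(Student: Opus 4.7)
The formula should be proved by reducing everything to the polynomial identity expansions provided by Lemma~\ref{lem:reln_to_trivial}. The underlying intuition is that if one thinks of $\sigma_k^m(\lambda;P)$ as the $k$-th elementary symmetric function of the list consisting of the eigenvalues of $P$ together with $m$ copies of $\lambda/m$, then the Newton scalar $s_k^m(\lambda;P)$ should correspond to deleting one copy of $\lambda/m$ from that list, leaving $m-1$ copies. Expressed as a weighted invariant of $P$, that deletion amounts to replacing $m$ by $m-1$ and rescaling the weighted argument to $(m-1)\lambda/m$, which is exactly the claim.

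The plan is to carry out this reduction directly. First, substitute the expansion from Lemma~\ref{lem:reln_to_trivial} into the definition of $s_k^m$:
\[
s_k^m(\lambda;P) = \sum_{j=0}^k(-1)^j\left(\frac{\lambda}{m}\right)^j\sum_{\ell=0}^{k-j}\binom{m}{\ell}\left(\frac{\lambda}{m}\right)^\ell\sigma_{k-j-\ell}(\Lambda),
\]
where $\Lambda=\Lambda(P)$. Reindex by setting $p=j+\ell$ so that the double sum collapses into
\[
s_k^m(\lambda;P) = \sum_{p=0}^k\left(\frac{\lambda}{m}\right)^p\sigma_{k-p}(\Lambda)\sum_{j=0}^p(-1)^j\binom{m}{p-j}.
\]
The inner sum is precisely the combinatorial identity $\sum_{j=0}^p(-1)^{p-j}\binom{m}{j}=\binom{m-1}{p}$ already invoked in the proof of Lemma~\ref{lem:reln_to_trivial}, which holds for formal $m$ via Pascal's rule. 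Applying Lemma~\ref{lem:reln_to_trivial} a second time in the other direction immediately identifies the resulting expression as $\sigma_k^{m-1}\bigl(\tfrac{m-1}{m}\lambda;P\bigr)$.

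An equivalent approach, which highlights the geometric meaning, is to first establish the weighted analogue of Lemma~\ref{lem:reln_to_remove},
\[
\sigma_k^m(\lambda;P) = \sigma_k^{m-1}\left(\tfrac{m-1}{m}\lambda;P\right) + \tfrac{\lambda}{m}\,\sigma_{k-1}^{m-1}\left(\tfrac{m-1}{m}\lambda;P\right),
\]
by matching coefficients via Lemma~\ref{lem:reln_to_trivial} and Pascal's identity, and then to substitute this recursion into the defining sum of $s_k^m$; the alternating signs produce a telescope in which every term cancels except $\sigma_k^{m-1}\bigl(\tfrac{m-1}{m}\lambda;P\bigr)$. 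I expect the only real obstacle to be bookkeeping in the reindexing, since the needed binomial identity is already available in the paper and Pascal's rule extends verbatim to formal $m\in\bR_+$.
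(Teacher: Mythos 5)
Your proof is correct, but it takes a genuinely different route from the paper's. The paper argues by induction on $k$: it expands $k\,\sigma_k^{m-1}\bigl(\tfrac{m-1}{m}\lambda;P\bigr)$ via the recursive definition in terms of the power sums, uses the observation $N_{j+1}^{m-1}\bigl(\tfrac{m-1}{m}\lambda;\Lambda\bigr)=N_{j+1}^m(\lambda;\Lambda)-(\lambda/m)^{j+1}$ together with the inductive hypothesis $s_j^m=\sigma_j^{m-1}$ for $j<k$, and then closes the induction with a double-sum manipulation. You instead work in closed form: substituting Lemma~\ref{lem:reln_to_trivial} into the defining sum for $s_k^m$, reindexing by $p=j+\ell$, and evaluating the inner alternating sum with $\sum_{j=0}^p(-1)^{p-j}\binom{m}{j}=\binom{m-1}{p}$ gives $s_k^m(\lambda;P)=\sum_{p=0}^k\binom{m-1}{p}(\lambda/m)^p\sigma_{k-p}(\Lambda)$, which a second application of Lemma~\ref{lem:reln_to_trivial} with parameters $m-1$ and $\tfrac{m-1}{m}\lambda$ identifies as $\sigma_k^{m-1}\bigl(\tfrac{m-1}{m}\lambda;P\bigr)$, since the relevant ratio $\tfrac{m-1}{m}\lambda/(m-1)$ is again $\lambda/m$. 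I have checked the reindexing and both identifications, and your alternative telescoping route via the one-step removal identity $\sigma_k^m(\lambda;P)=\sigma_k^{m-1}\bigl(\tfrac{m-1}{m}\lambda;P\bigr)+\tfrac{\lambda}{m}\,\sigma_{k-1}^{m-1}\bigl(\tfrac{m-1}{m}\lambda;P\bigr)$ also checks out (it follows from Pascal's rule and makes rigorous the ``delete one copy of $\lambda/m$'' heuristic, in exact parallel with Lemma~\ref{lem:reln_to_remove}). The tradeoffs: your computation is shorter, avoids induction entirely, and exhibits the structural reason the identity holds, whereas the paper's induction works directly from the recursive definition and matches the style of the surrounding proofs; your route does require the small caveat---which you correctly flag---that Lemma~\ref{lem:reln_to_trivial} is stated for $m\in\bR_+$ but holds formally in $m$ (its proof uses only Pascal's rule), since you invoke it at the parameter $m-1$, which may lie in $(-1,0]$ when $m\leq1$.
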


\begin{proof}
 The proof is by induction.  It is clear that~\eqref{eqn:s_to_sigma-1} holds when $k=0$.  Suppose now that~\eqref{eqn:s_to_sigma-1} holds for all $j\in\{0,\dotsc,k-1\}$.  We thus compute that
 \begin{align*}
  k\sigma_k^{m-1}\left(\frac{m-1}{m}\lambda;P\right) & = \sum_{j=0}^{k-1}(-1)^js_{k-1-j}^m\left(\lambda;P\right)\left( N_{j+1}^m(\lambda;P) - \left(\frac{\lambda}{m}\right)^{j+1}\right) \\
  & = \sum_{j=0}^{k-1}\sum_{\ell=0}^{k-1-j}(-1)^{j+\ell}\left(\frac{\lambda}{m}\right)^\ell\sigma_{k-1-j-\ell}^m\left(\lambda;P\right)N_{j+1}^m(\lambda;P) \\
   & \quad - \sum_{j=0}^{k-1}\sum_{\ell=0}^{k-1-j}(-1)^{j+\ell}\left(\frac{\lambda}{m}\right)^{j+\ell+1}\sigma_{k-1-j-\ell}^m\left(\lambda;P\right) .
 \end{align*}
 Switching the order of the first summation and computing the second summation by summing over $\ell$ and then $j+\ell$ yields the desired result.
\end{proof}

Lemma~\ref{lem:s_to_sigma-1} yields another useful interpretation of the $m$-weighted Newton scalars.

\begin{lem}
 \label{lem:vary_kappa}
 Fix $m\in\bR_+$ and $k,n\in\bN_0$.  Let $\lambda\in\bR$ and $P\in\mM_n$.  Consider the function $S_k^m\colon\bR\to\bR$ defined by $S_k^m(\kappa):=\sigma_k^m\left(\lambda+m\kappa;P\right)$.  Then
 \[ \frac{dS_k^m}{d\kappa} = ms_{k-1}^m\left(\lambda+m\kappa;P\right) . \]
\end{lem}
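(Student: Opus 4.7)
The plan is to prove the formula by a direct computation using the explicit polynomial expansion from Lemma~\ref{lem:reln_to_trivial}, followed by a re-assembly step that invokes Lemma~\ref{lem:s_to_sigma-1}. Since $\sigma_k^m(\cdot;P)$ depends on $\kappa$ only through its first argument, this is essentially a univariate calculation in $\kappa$ with $P$ held fixed.

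First, I would apply Lemma~\ref{lem:reln_to_trivial} to write
\[ S_k^m(\kappa) = \sum_{j=0}^k \binom{m}{j}\left(\frac{\lambda}{m} + \kappa\right)^j \sigma_{k-j}(P), \]
which is a polynomial in $\kappa$ (the binomial coefficients $\binom{m}{j}$ are defined for all real $m$ in the usual way and the $\sigma_{k-j}(P)$ are constants). Differentiating term by term and using the identity $j\binom{m}{j} = m\binom{m-1}{j-1}$, I would obtain
\[ \frac{dS_k^m}{d\kappa} = m\sum_{i=0}^{k-1}\binom{m-1}{i}\left(\frac{\lambda + m\kappa}{m}\right)^i \sigma_{k-1-i}(P), \]
after reindexing with $i = j-1$.

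Next, I would recognize the sum on the right as an instance of Lemma~\ref{lem:reln_to_trivial} with $m$ replaced by $m-1$, $k$ replaced by $k-1$, and first argument $\olambda := \frac{m-1}{m}(\lambda + m\kappa)$, since $\frac{\olambda}{m-1} = \frac{\lambda + m\kappa}{m}$. Thus
\[ \frac{dS_k^m}{d\kappa} = m\,\sigma_{k-1}^{m-1}\!\left(\tfrac{m-1}{m}(\lambda + m\kappa);\,P\right). \]

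Finally, Lemma~\ref{lem:s_to_sigma-1} identifies $\sigma_{k-1}^{m-1}\!\left(\tfrac{m-1}{m}(\lambda+m\kappa);P\right)$ with $s_{k-1}^m(\lambda+m\kappa;P)$, yielding the claimed formula. The proof is therefore a short chain of identities; no genuine obstacle appears, the only mild care needed is the reindexing and the correct use of the real-parameter binomial identity $j\binom{m}{j} = m\binom{m-1}{j-1}$, which is valid for $m \in \bR_+$ since both sides are polynomials in $m$ that agree on $\bN_0$.
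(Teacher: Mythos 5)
Your proof is correct, and it takes a slightly but genuinely different route from the paper's. The paper expands $S_k^m(\kappa)$ as a power series in $\kappa$ using Corollary~\ref{cor:reln_to_genl}, so the coefficients are themselves weighted polynomials $\sigma_{k-j}^{m-j}\bigl(\tfrac{m-j}{m}\lambda;P\bigr)$; after differentiating term by term, it invokes Corollary~\ref{cor:reln_to_genl} a second time to resum the series into $m\sigma_{k-1}^{m-1}\bigl(\tfrac{m-1}{m}(\lambda+m\kappa);P\bigr)$, and finishes with Lemma~\ref{lem:s_to_sigma-1}. You instead expand directly in the \emph{unweighted} elementary symmetric polynomials via Lemma~\ref{lem:reln_to_trivial}, differentiate the finite polynomial $\sum_{j}\binom{m}{j}\bigl(\tfrac{\lambda}{m}+\kappa\bigr)^j\sigma_{k-j}(P)$, and then recognize the result via Lemma~\ref{lem:reln_to_trivial} again with $(m,k)$ replaced by $(m-1,k-1)$ and first argument $\tfrac{m-1}{m}(\lambda+m\kappa)$ --- your observation that $\tfrac{1}{m-1}\cdot\tfrac{m-1}{m}(\lambda+m\kappa)=\tfrac{\lambda+m\kappa}{m}$ is exactly what makes this resummation work, and it also shows the formula is insensitive to any degeneracy at $m=1$. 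Both arguments hinge on the same binomial identity $j\binom{m}{j}=m\binom{m-1}{j-1}$ (the paper uses it implicitly when differentiating $\binom{m}{j}\kappa^j$) and both end identically with Lemma~\ref{lem:s_to_sigma-1}. What your route buys is self-containedness: you work with a finite sum whose coefficients $\sigma_{k-j}(P)$ are constants in $\kappa$, so no infinite series or second resummation step appears, and Corollary~\ref{cor:reln_to_genl} is bypassed entirely. What the paper's route buys is that it stays within the calculus of weighted polynomials, the pattern reused later (e.g., in Lemma~\ref{lem:mZ_vary_kappa} and Lemma~\ref{lem:we_csk-1_linearization}). One trivial boundary remark applicable to both proofs: for $k=0$ the sum is empty, both sides vanish, and $s_{-1}^m$ is read as $0$.
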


\begin{proof}
 Expanding $S_k^m(\kappa)$ as a power series in $\kappa$ via Corollary~\ref{cor:reln_to_genl} and differentiating yields
 \[ \frac{dS_k^m}{d\kappa} = m\sum_{j=0}^\infty\binom{m-1}{j}\kappa^j\sigma_{k-1-j}^{m-1-j}\left(\frac{m-1-j}{m}\lambda;P\right) . \]
 Corollary~\ref{cor:reln_to_genl} then implies that
 \[ \frac{dS_k^m}{d\kappa} = m\sigma_{k-1}^{m-1}\left(\frac{m-1}{m}\left(\lambda+m\kappa\right);P\right) . \]
 The final conclusion now follows from Lemma~\ref{lem:s_to_sigma-1}.
\end{proof}

One difference between the weighted and unweighted case is that the weighted elementary symmetric functions are not recovered by taking inner products between the endomorphism and the associated weighted Newton transforms.  A specific relationship is as follows.

\begin{lem}
 \label{lem:newton_inner_product}
 Fix $m\in\bR_+$ and $k,n\in\bN_0$.  Then
 \[ \left\lp T_k^m(\lambda;P), P-\frac{\lambda}{m}I \right\rp = (k+1)\sigma_{k+1}^m(\lambda;P) - (m+n-k)\frac{\lambda}{m}\sigma_k^m(\lambda;P) \]
 for all $\lambda\in\bR$ and $P\in\mM_n$, where $I\in\mM_n$ is the identity and $\lp A,B\rp:=\tr (AB)$ is the standard inner product on $\mM_n$.
\end{lem}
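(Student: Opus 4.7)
The plan is to expand both sides of the inner product using the definition of $T_k^m$, combine the resulting identities for $\tr(T_k^m P)$ and $\tr(T_k^m)$, and then reduce using the recursion~\eqref{eqn:s_recursive} for the weighted Newton scalars.

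First, I would expand $\tr(T_k^m(\lambda;P)\cdot P)$ directly. Since $T_k^m(\lambda;P)=\sum_{j=0}^k(-1)^j\sigma_{k-j}^m(\lambda;P)P^j$ and $\tr P^{j+1}=N_{j+1}^m(\lambda;P)-m(\lambda/m)^{j+1}$, the recursive definition of the weighted elementary symmetric functions immediately gives
\[
\tr\bigl(T_k^m(\lambda;P)\cdot P\bigr) = (k+1)\sigma_{k+1}^m(\lambda;P) - \lambda\, s_k^m(\lambda;P),
\]
where the second term is recognized as $\lambda\, s_k^m$ directly from the definition of $s_k^m$.

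Next, I would compute $\tr T_k^m(\lambda;P)$. By Lemma~\ref{lem:eigenvalues_T} this equals $\sum_{i=1}^n\sigma_k^m(\lambda;\Lambda(i))$. Expanding each summand via Lemma~\ref{lem:reln_to_trivial} and using the standard identity $\sum_i\sigma_{k-j}(\Lambda(i))=(n-(k-j))\sigma_{k-j}(\Lambda)$ together with $j\binom{m}{j}=m\binom{m-1}{j-1}$, I would obtain
\[
\tr T_k^m(\lambda;P) = (n-k)\sigma_k^m(\lambda;P) + \lambda\,\sigma_{k-1}^{m-1}\bigl(\tfrac{m-1}{m}\lambda;P\bigr),
\]
and then rewrite the last term as $\lambda\, s_{k-1}^m(\lambda;P)$ via Lemma~\ref{lem:s_to_sigma-1}.

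Finally, I would assemble $\lp T_k^m,P-\tfrac{\lambda}{m}I\rp=\tr(T_k^m P)-\tfrac{\lambda}{m}\tr T_k^m$ from the two previous formulas, which produces
\[
(k+1)\sigma_{k+1}^m - \lambda s_k^m - \tfrac{\lambda}{m}(n-k)\sigma_k^m - \tfrac{\lambda^2}{m}s_{k-1}^m.
\]
Using the recursion $s_k^m+\tfrac{\lambda}{m}s_{k-1}^m=\sigma_k^m$ from~\eqref{eqn:s_recursive} to collapse the two $s$-terms into $\lambda\sigma_k^m$, the remaining expression simplifies to $(k+1)\sigma_{k+1}^m-(m+n-k)\tfrac{\lambda}{m}\sigma_k^m$, as claimed. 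No single step looks delicate, but the main bookkeeping obstacle is tracking the shift from $m$ to $m-1$ in the trace computation and correctly invoking Lemma~\ref{lem:s_to_sigma-1} to convert it back into a weighted Newton scalar — after that identification the proof is purely algebraic.
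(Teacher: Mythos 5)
Your proof is correct, but it takes a genuinely different (and heavier) route than the paper's. The paper proves the lemma in one stroke: since $\tr P^j = N_j^m(\lambda;\Lambda(P)) - m(\lambda/m)^j$, one has $\lp P^j, P-\frac{\lambda}{m}I\rp = N_{j+1}^m - \frac{\lambda}{m}N_j^m$, and summing this against $(-1)^j\sigma_{k-j}^m$ the recursive definition of $\sigma_{k+1}^m$ produces the first term, while the $N_j^m$-sum yields $(m+n-k)\sigma_k^m$ directly (the $j=0$ term contributes $N_0^m\sigma_k^m=(m+n)\sigma_k^m$, and the index-shifted remainder is $-k\sigma_k^m$, again by the recursion) --- no weighted Newton scalars appear at all. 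You instead split the pairing as $\tr(T_k^mP)-\frac{\lambda}{m}\tr T_k^m$, compute the first trace by the same recursion (correctly identifying the correction term as $\lambda s_k^m$ from the definition of $s_k^m$), and compute $\tr T_k^m$ via the eigenvalue description of Lemma~\ref{lem:eigenvalues_T}, the expansion of Lemma~\ref{lem:reln_to_trivial}, the classical identity $\sum_i\sigma_{k-j}(\Lambda(i))=(n-k+j)\sigma_{k-j}(\Lambda)$, and Lemma~\ref{lem:s_to_sigma-1}, before collapsing everything with~\eqref{eqn:s_recursive}. All of these steps check out; indeed your formula $\tr T_k^m=(n-k)\sigma_k^m+\lambda s_{k-1}^m$ is equivalent, via~\eqref{eqn:s_recursive}, to the identity $\tr T_k^m=(m+n-k)\sigma_k^m-ms_k^m$ that the paper records later in the proof of Lemma~\ref{lem:divE}, so a side benefit of your route is that it establishes that trace formula along the way. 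The cost is the detour through the $m\mapsto m-1$ shift and the eigenvalue lemma, which the paper's single observation renders unnecessary. One cosmetic point: at $k=0$ your formulas involve $s_{-1}^m$, which you should declare to be zero (equivalently, note that the relevant sums are empty); with that convention the edge case is consistent.
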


\begin{proof}
 This follows immediately from the definitions of $\sigma_k^m$ and $T_k^m$ and the observation that
 \[ \left\lp P^j, P - \frac{\lambda}{m}I\right\rp = N_{j+1}^m\left(\lambda;\Lambda(P)\right) - \frac{\lambda}{m}N_j^m\left(\lambda;\Lambda(P)\right) \]
 for all $j\in\bN_0$.
\end{proof}

Nevertheless, there is a simple and useful relationship between inner products involving weighted Newton transforms and weighted elementary symmetric polynomials.

\begin{cor}
 \label{cor:newton_inner_product}
 Fix $m\in\bR_+$ and $k,n\in\bN_0$.  Define $E_k^m\colon\bR\times\mM_n\to\mM_n$ by
 \begin{equation}
  \label{eqn:weighted_tracefree_newton}
  E_k^m := T_k^m - \frac{m+n-k}{m+n}\sigma_k^m\,I
 \end{equation}
 for $I\in\mM_n$ the identity.  Then
 \begin{equation}
  \label{eqn:newton_inner_product_E}
  \left\lp E_k^m\left(\lambda;P\right), P - \frac{\lambda}{m}I\right\rp = (k+1)\sigma_{k+1}^m(\lambda;P) - \frac{m+n-k}{m+n}\sigma_1^m(\lambda;P)\sigma_k^m(\lambda;P)
 \end{equation}
 for all $\lambda\in\bR$ and $P\in\mM_n$.
\end{cor}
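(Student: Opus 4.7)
The plan is to derive this as a direct algebraic consequence of Lemma~\ref{lem:newton_inner_product} together with the trace identity $\lp I, P - \tfrac{\lambda}{m}I\rp = \tr P - \tfrac{n\lambda}{m}$. The key auxiliary observation is that, by Lemma~\ref{lem:reln_to_trivial},
\[ \sigma_1^m(\lambda;P) = \sigma_1(\Lambda(P)) + m\cdot\tfrac{\lambda}{m} = \tr P + \lambda, \]
so that $\tr P - \tfrac{n\lambda}{m} = \sigma_1^m(\lambda;P) - \tfrac{m+n}{m}\lambda$.

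From the definition $E_k^m = T_k^m - \tfrac{m+n-k}{m+n}\sigma_k^m I$, I would split
\[ \lp E_k^m, P - \tfrac{\lambda}{m}I\rp = \lp T_k^m, P - \tfrac{\lambda}{m}I\rp - \tfrac{m+n-k}{m+n}\sigma_k^m\lp I, P - \tfrac{\lambda}{m}I\rp, \]
apply Lemma~\ref{lem:newton_inner_product} to the first term, and substitute the trace identity above into the second term. The calculation then reads
\[ (k+1)\sigma_{k+1}^m - (m+n-k)\tfrac{\lambda}{m}\sigma_k^m - \tfrac{m+n-k}{m+n}\sigma_k^m\Bigl(\sigma_1^m - \tfrac{m+n}{m}\lambda\Bigr), \]
and one checks that the two $\tfrac{\lambda}{m}(m+n-k)\sigma_k^m$ contributions cancel, leaving precisely the right-hand side of~\eqref{eqn:newton_inner_product_E}.

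There is no real obstacle here: the content of the result is that the particular coefficient $\tfrac{m+n-k}{m+n}$ in the definition~\eqref{eqn:weighted_tracefree_newton} of $E_k^m$ is exactly what is needed to eliminate the $\lambda$-dependent correction term appearing in Lemma~\ref{lem:newton_inner_product}, replacing it with the geometrically natural factor $\tfrac{m+n-k}{m+n}\sigma_1^m\sigma_k^m$. Thus the whole proof is a one-line substitution plus a cancellation check, and it can be presented without invoking any further structural results.
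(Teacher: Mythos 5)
Your proposal is correct and matches the paper's intended argument: the paper states this as a corollary of Lemma~\ref{lem:newton_inner_product} without a written proof, precisely because the derivation is the one-line substitution you give, using $\sigma_1^m(\lambda;P)=\tr P+\lambda$ so that $\lp I,P-\tfrac{\lambda}{m}I\rp=\sigma_1^m-\tfrac{m+n}{m}\lambda$ and checking that the $\tfrac{\lambda}{m}(m+n-k)\sigma_k^m$ terms cancel. Your closing remark about the coefficient $\tfrac{m+n-k}{m+n}$ being chosen exactly to effect this cancellation is also the right way to understand the definition of $E_k^m$.
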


The significance of Corollary~\ref{cor:newton_inner_product} is contained in Corollary~\ref{cor:inner_product_maclaurin} below, which concludes that~\eqref{eqn:newton_inner_product_E} has a sign under a natural condition on $\lambda$ and $P$.  The intuition motivating the consideration of~\eqref{eqn:newton_inner_product_E} is as follows: one can regard~\eqref{eqn:weighted_tracefree_newton} as defining the trace-free part of the weighted Newton transform and the left-hand side of~\eqref{eqn:newton_inner_product_E} as the inner product of $E_k^m$ with $P$.  On the one hand, in the unweighted setting, the inner product of the trace-free part of a Newton transform with the underlying endomorphism is well-known to have a sign when the endomorphism lies in one of the G{\aa}rding cones (cf.\ \cite[Lemma~23]{Viaclovsky2000}).  On the other hand,  if $P\in\End(\bR^n)$, $\lambda\in\bR$, and $m\in\bN$, then the trac-efree part of the $k$-th Newton transform of~\eqref{eqn:informal_block_diagonal} is
\[ E_k := E_k^m \oplus \left(-\frac{1}{m}\tr E_k^m\right)I_m . \]
It follows that
\[ \left\lp E_k, P\oplus \frac{\lambda}{m}I_m\right\rp = \left\lp E_k^m, P - \frac{\lambda}{m}I_n\right\rp . \]

\subsection{The weighted Newton inequalities}
\label{subsec:algebra/netwon}

In this subsection we show that the $m$-weighted elementary symmetric polynomials of $n$-variables satisfy the same Newton inequalities as the elementary symmetric polynomials of $(m+n)$-variables.  Our proof of this fact is similar to the usual proof of the Newton inequalities (cf.\ \cite{HardyLittlewoodPolya1952}).  To that end, we use the following generating function for the weighted elementary symmetric polynomials.

\begin{prop}
 \label{prop:generating_function}
 Fix $m\in\bR_+$, $k,n\in\bN_0$, $\lambda\in\bR$, and $\Lambda=(\lambda_1,\dotsc,\lambda_n)\in\bR^n$.  Then
 \[ \left(1+\frac{\lambda t}{m}\right)^m\prod_{i=1}^n\left(1+\lambda_it\right) = \sum_{j=0}^\infty \sigma_j^m\left(\lambda;\Lambda\right)t^j \]
 for all $t\in(-M,M)$, where
 \[ M = \begin{cases}
         \infty, & \text{if $\lambda=0$ or $m\in\bN_0$,} \\
         \frac{m}{\lv\lambda\rv}, & \text{otherwise} .
        \end{cases} \]
\end{prop}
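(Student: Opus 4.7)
The plan is to recognize the right-hand side of Lemma~\ref{lem:reln_to_trivial} as a Cauchy product of two well-known generating functions, and then to read the claimed identity off directly. Specifically, I would write
\[ \sigma_k^m(\lambda;\Lambda) = \sum_{j=0}^k \binom{m}{j}\left(\frac{\lambda}{m}\right)^j \sigma_{k-j}(\Lambda) \]
from Lemma~\ref{lem:reln_to_trivial}, using the convention $\sigma_\ell(\Lambda)=0$ for $\ell>n$ so that the upper limit of the sum can be extended to $\infty$.

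Next, I would recall the two classical generating functions that will be combined. On the one hand,
\[ \prod_{i=1}^n(1+\lambda_i t) = \sum_{\ell=0}^n \sigma_\ell(\Lambda)\,t^\ell, \]
which is a polynomial identity and hence valid for all $t\in\bR$. On the other hand, the generalized binomial series gives
\[ \left(1+\frac{\lambda t}{m}\right)^m = \sum_{j=0}^\infty \binom{m}{j}\left(\frac{\lambda}{m}\right)^j t^j \]
for $|t|<M$, where $M=m/\lv\lambda\rv$ for general real $m$, and the series terminates (so $M=\infty$) when either $\lambda=0$ or $m$ is a nonnegative integer. This is precisely where the case distinction in the statement of $M$ comes from, and I would justify it by appealing to the standard radius-of-convergence result for the binomial series, together with the fact that both exceptional cases make one of the two factors a polynomial.

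Finally, I would multiply these two series using the Cauchy product, which is justified for $|t|<M$ by absolute convergence of the binomial series on that interval (combined with absolute convergence — in fact, finiteness — of the polynomial $\prod_{i=1}^n(1+\lambda_i t)$). The coefficient of $t^k$ in the product is
\[ \sum_{j=0}^k \binom{m}{j}\left(\frac{\lambda}{m}\right)^j\sigma_{k-j}(\Lambda), \]
which by Lemma~\ref{lem:reln_to_trivial} equals $\sigma_k^m(\lambda;\Lambda)$, yielding the claimed identity.

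There is no real obstacle here beyond bookkeeping: the essential algebraic content is already encoded in Lemma~\ref{lem:reln_to_trivial}, and the only point that requires care is the piecewise description of the radius $M$, which follows cleanly by separating the two special cases in which the binomial factor is polynomial.
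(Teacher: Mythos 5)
Your proposal is correct and follows essentially the same route as the paper: both multiply the polynomial generating function $\prod_{i=1}^n(1+\lambda_i t)=\sum_{j=0}^n\sigma_j(\Lambda)t^j$ against the binomial series for $\left(1+\frac{\lambda t}{m}\right)^m$ and identify the Cauchy-product coefficients with $\sigma_k^m(\lambda;\Lambda)$ via Lemma~\ref{lem:reln_to_trivial}. Your added care about absolute convergence and the case analysis for the radius $M$ merely makes explicit what the paper leaves implicit.
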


\begin{proof}
 It is well-known that
 \begin{align*}
  \prod_{i=1}^n\left(1+\lambda_it\right) & = \sum_{j=0}^n\sigma_j(\Lambda)t^j, \\
  \left(1+\frac{\lambda t}{m}\right)^m & = \sum_{j=0}^\infty\binom{m}{j}\left(\frac{\lambda}{m}\right)^j t^j
 \end{align*}
 for all $t\in(-M,M)$.  Multiplying these expressions yields
 \[ \left(1+\frac{\lambda t}{m}\right)^m\prod_{i=1}^n\left(1+\lambda_it\right) = \sum_{k=0}^\infty\sum_{j=0}^k\binom{m}{j}\left(\frac{\lambda}{m}\right)^j\sigma_{k-j}(\Lambda)t^k . \]
 The conclusion follows from Lemma~\ref{lem:reln_to_trivial}.
\end{proof}

We are now ready to prove the weighted Newton inequalities.

\begin{thm}
 \label{thm:weighted_newton}
 Let $k\in\bN$ and let $m\in[k-1,\infty)$.  Then
 \begin{equation}
  \label{eqn:weighted_newton}
  \sigma_{k-1}^m\left(\lambda;\Lambda\right)\sigma_{k+1}^m\left(\lambda;\Lambda\right) \leq \frac{k(m+n-k)}{(k+1)(m+n-k+1)}\left(\sigma_k^m(\lambda;\Lambda)\right)^2
 \end{equation}
 for all $\lambda\in\bR$ and $\Lambda\in\bR^n$.  Moreover, equality holds if and only if one of the following holds:
 \begin{enumerate}
  \item $\Lambda=(\lambda/m,\dotsc,\lambda/m)$;
  \item $\lambda=0$ and at most $k-1$ components of $\Lambda$ are nonzero;
  \item $m=k-1$ and $\Lambda=(0,\dotsc,0)$.
 \end{enumerate}
\end{thm}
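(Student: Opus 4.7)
My plan is to mimic the classical proof of Newton's inequalities (cf.~\cite{HardyLittlewoodPolya1952}) using the generating function $F(t) = (1+\lambda t/m)^m\prod_{i=1}^n(1+\lambda_i t) = \sum_{j\geq 0}\sigma_j^m(\lambda;\Lambda)\,t^j$ from Proposition~\ref{prop:generating_function}. When $m\in\bN$, $F$ is a polynomial of degree $m+n$ whose roots $-m/\lambda$ (with multiplicity $m$) and $-1/\lambda_i$ are all real, and by Lemma~\ref{lem:reln_to_trivial} the sequence $\{\sigma_j^m\}$ is the classical elementary-symmetric sequence for the $(m+n)$-tuple $\bigl(\underbrace{\lambda/m,\ldots,\lambda/m}_m,\lambda_1,\ldots,\lambda_n\bigr)$. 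Hence \eqref{eqn:weighted_newton} is an instance of the classical Newton inequality for $m+n$ real variables, and its equality case (all $m+n$ variables coincide) yields item~(1). Items~(2) and~(3) appear as degeneracies of the generating function: at $\lambda=0$ the problem reduces to classical Newton for $\Lambda$ alone (with equality precisely when at most $k-1$ of the $\lambda_i$ are nonzero), and at $m=k-1$ the binomial coefficient $\binom{m}{k}$ vanishes, so that $\sigma_k^m$ and $\sigma_{k+1}^m$ both vanish when $\Lambda=0$ and both sides of \eqref{eqn:weighted_newton} are trivially zero.

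To move from integer $m$ to general $m\in[k-1,\infty)$, I would use Leibniz's rule to extract the explicit factorization
\[
F^{(k-1)}(t) \;=\; (1+\lambda t/m)^{m-k+1}\,R_k(t),
\]
where $R_k(t)$ is a polynomial of degree $n$ in $t$ whose coefficients involve $m(m-1)\cdots(m-j+1)$-type factors that extend polynomially in $m$ and are well-defined for real $m\geq k-1$. At integer $m$, Rolle's theorem applied to $F$ forces $R_k$ to have $n$ real roots; a further $n-1$ applications of Rolle to the reverse polynomial of $R_k$ isolate a quadratic expression in $\sigma_{k-1}^m,\sigma_k^m,\sigma_{k+1}^m$ with real roots, whose non-negative discriminant is exactly~\eqref{eqn:weighted_newton}.

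The crux of the proof is to show that $R_k(t)$ remains real-rooted for every $m\in[k-1,\infty)$, not just for integer $m$. Since real-rootedness is not preserved under arbitrary continuous deformation of coefficients, one cannot simply interpolate from the integer case. I would attempt this either by (a) exhibiting $R_k$ directly as the derivative of a concrete real-rooted expression to which a Rolle-type argument applies in the analytic (rather than polynomial) regime; or (b) invoking a P\'olya-frequency / convolution argument, using that the two factors of $F$ each satisfy a Newton-type inequality with parameters $m$ and $n$ respectively---the factor $(1+\lambda t/m)^m$ having equality by the direct identity $\binom{m}{k-1}\binom{m}{k+1}=\tfrac{k(m-k)}{(k+1)(m-k+1)}\binom{m}{k}^2$, and the factor $\prod(1+\lambda_i t)$ by classical Newton for $\Lambda$. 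As a sanity check, one can verify the inequality explicitly for $k=1$ by completing the square in $\lambda$ to write the gap in \eqref{eqn:weighted_newton} as $\tfrac{n}{2m(m+n)}\bigl(\lambda-\tfrac{m\sigma_1(\Lambda)}{n}\bigr)^2$ plus the classical Newton gap $\tfrac{n-1}{2n}\sigma_1(\Lambda)^2-\sigma_2(\Lambda)$, which is manifestly non-negative with equality exactly matching items (1)--(3).
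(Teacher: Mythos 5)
Your skeleton matches the paper's proof up to the point you yourself flag as the crux, and it is precisely there that the argument is missing rather than merely deferred. The paper resolves the real-rootedness of your $R_k$ (its $Q_{k-1}$) by exactly your option (a), and the point you treat as an open difficulty is in fact a non-issue: no interpolation or deformation from integer $m$ is needed, because Rolle's theorem applies directly to the analytic function $P(t)=(1+\lambda t/m)^m\prod(1+\lambda_i t)$ for every real $m$. One regards $P$ as analytic on $\bC\setminus\left\{-\frac{m}{\lambda}+is \suchthat s\leq0\right\}$; on each of the two real rays $x\geq-\frac{m}{\lambda}$ and $x\leq-\frac{m}{\lambda}$ the function is a fixed unimodular constant times a real-valued function, and since $m-s>0$ for all $s\leq k-1$ (this is where $m\geq k-1$ enters), the branch point itself is a zero of $P^{(s)}$ on both rays. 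Rolle between consecutive zeros on each ray, together with the degree count $\deg Q_{s+1}=n-\ell$, forces $Q_{s+1}$ to be real-rooted, and induction on $s$ finishes the step. By contrast, your option (b) is a dead end: for $m\notin\bN_0$ the coefficients $\binom{m}{j}(\lambda/m)^j$ of the factor $(1+\lambda t/m)^m$ change sign for $j>m$, so this sequence is not a P\'olya frequency sequence (by Edrei's characterization of PF generating functions), and log-concavity of coefficients is not preserved under convolution without PF-type hypotheses; your correct identity $\binom{m}{k-1}\binom{m}{k+1}=\frac{k(m-k)}{(k+1)(m-k+1)}\binom{m}{k}^2$ does not rescue this.

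There is a second gap in how you extract \eqref{eqn:weighted_newton} once $R_k$ is known to be real-rooted: your plan of ``$n-1$ further applications of Rolle to the reverse polynomial of $R_k$'' only makes sense for honest polynomials, whereas $\sigma_{k-1}^m,\sigma_k^m,\sigma_{k+1}^m$ are Taylor coefficients of the product $(1+\lambda t/m)^{m-k+1}R_k(t)$, not of $R_k$ alone, so reversal does not apply. The paper avoids this entirely: using Proposition~\ref{prop:generating_function} it reinterprets $P^{(k-1)}$ as the generating function of the weighted data $\bigl(\frac{m-k+1}{m}\lambda;\Lambda^{(k-1)}\bigr)$ with dimensional parameter $m-k+1$, where $\Lambda^{(k-1)}$ is built from the roots of $Q_{k-1}$, deduces $p_j^{m-k+1}\bigl(\frac{m-k+1}{m}\lambda;\Lambda^{(k-1)}\bigr)=\bigl(\frac{m+n-k+1}{m+n}\bigr)^j\,p_{k+j-1}^m/p_{k-1}^m$, and then invokes the $k=1$ case with $j\in\{0,1,2\}$. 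Thus the completing-the-square computation you relegate to a ``sanity check'' is actually the load-bearing base case of the whole induction. Note also two loose ends your sketch does not address and the paper does: the degenerate subcase $p_{k-1}^m=0$ (when $Q_{k-1}$ has a zero root), and the propagation of the equality characterization back through the Rolle steps, i.e.\ showing $\lambda_j^{(k-1)}=\lambda/m$ for all $j$ forces $\lambda_j=\lambda/m$ for all $j$.
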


\begin{remark}
 If $m\not\in\bN_0$, the assumption $m\geq k-1$ is necessary.  This can be seen by computing both sides of~\eqref{eqn:weighted_newton} with $\Lambda=0$ and $\lambda\in\bR$ arbitrary.
\end{remark}

\begin{proof}
 Set $p_k^m:=(m+n)^k\binom{m+n}{k}^{-1}\sigma_k^m$, so that, as a functional inequality on $\bR\times\bR^n$, \eqref{eqn:weighted_newton} is equivalent to
 \begin{equation}
  \label{eqn:weighted_newton_p}
  p_{k-1}^mp_{k+1}^m \leq \left(p_k^m\right)^2 .
 \end{equation}
 If $m\in\bN_0$, the conclusion follows from the usual Newton inequality~\cite{HardyLittlewoodPolya1952}.  Suppose now that $m\not\in\bN_0$.  We separate the proof into three cases.
 
 First, suppose that $\lambda=0$.  Lemma~\ref{lem:reln_to_trivial} and the (unweighted) Newton inequalities imply that
 \[ \sigma_{k-1}^m\sigma_{k+1}^m \leq \frac{k(n-k)}{(k+1)(n-k+1)}\left(\sigma_k^m\right)^2 \leq \frac{k(m+n-k)}{(k+1)(m+n-k+1)}\left(\sigma_k^m\right)^2 \]
 with equality if and only if at most $k-1$ components of $\Lambda$ are nonzero.
 
 Second, suppose $k=1$.  Write $\Lambda=(\lambda_1,\dotsc,\lambda_n)$.  We compute that
 \begin{align*}
  p_2^m(\lambda;\Lambda) & = \frac{m+n}{m+n-1}\left( \left(p_1^m(\lambda;\Lambda)\right)^2 - \frac{\lambda^2}{m} - \sum_{s=1}^n\lambda_s^2\right) \\
  & \leq \frac{m+n}{m+n-1}\left( \left(p_1^m(\lambda;\Lambda)\right)^2 - \frac{\lambda^2}{m} - \frac{1}{n}\left(\sum_{s=1}^n \lambda_s\right)^2 \right) \\
  & = \left(p_1^m(\lambda;\Lambda)\right)^2 - \frac{m}{n(m+n-1)}\left(\sum_{s=1}^n \lambda_s-\frac{n\lambda}{m}\right)^2
 \end{align*}
 with equality if and only if $\lambda_1=\dotsb=\lambda_n$.  The conclusion readily follows.
 
 Third, suppose $\lambda\not=0$ and $k\geq2$.  Set
 \[ P(t) = \left(1+\frac{\lambda t}{m}\right)^m\prod_{j=1}^n\left(1+\lambda_jt\right) . \]
 By Proposition~\ref{prop:generating_function},
 \begin{equation}
  \label{eqn:gf0}
  P(t)=\sum_{j=0}^\infty\binom{m+n}{j}\left(\frac{t}{m+n}\right)^j p_j^m .
 \end{equation}
 Write $\Lambda=(\lambda_1,\dotsc,\lambda_n)$ and let $\ell=\left|\left\{j\suchthat\lambda_j=0\right\}\right|$ denote the number of components of $\Lambda$ which vanish.  Up to reindexing the components of $\Lambda$, we see that
 \[ P(t) = \left(1+\frac{\lambda t}{m}\right)^mQ_0(t) \]
 for $Q_0$ a polynomial of degree $n-\ell$ with $Q_0(0)=1$ and roots $r_j=-1/\lambda_j$ such that $r_1^{(0)}\leq \dotsb \leq r_{n-\ell}^{(0)}$ are all nonzero.
 
 For any $s\in\{0,\dotsc,k-1\}$, we compute from~\eqref{eqn:gf0} that
 \begin{equation}
  \label{eqn:gfs}
  \frac{d^sP}{dt^s} = \frac{(m+n)!}{(m+n)^s(m+n-s)!}\sum_{j=0}^\infty\binom{m+n-s}{j}\left(\frac{t}{m+n}\right)^j p_{j+s}^m .
 \end{equation}
 On the other hand, by regarding $P$ as an analytic function in $\bC\setminus\left\{-\frac{m}{\lambda}+is\suchthat s\leq0\right\}$ and applying Rolle's Theorem along the real rays $x\geq-\frac{m}{\lambda}$ and $x\leq-\frac{m}{\lambda}$, we deduce that
 \begin{equation}
  \label{eqn:PQ0}
  \frac{d^sP}{dt^s} = \left(1+\frac{\lambda t}{m}\right)^{m-s}Q_s(t)
 \end{equation}
 for $Q_s$ a polynomial of degree $n-\ell$ with roots $r_1^{(s)}\leq\dotsb\leq r_{n-\ell}^{(s)}$, at most one of which is zero.
 
 We now consider two subcases.  Suppose first that there is a $j\in\{1,\dotsc,n-\ell\}$ such that $r_j^{(k-1)}=0$.  Then $Q_{k-1}(t)=ct+O(t^2)$ near $t=0$ for some $c\not=0$.  Comparing~\eqref{eqn:gfs} and~\eqref{eqn:PQ0} yields $p_{k-1}^m=0$ and $p_k^m\not=0$, from which the desired conclusion readily follows.
 
 Suppose instead that $r_j^{(k-1)}\not=0$ for all $j\in\{1,\dotsc,n-\ell\}$.  Define
 \[ \lambda_j^{(k-1)} = \begin{cases}
                         -1/r_j^{(k-1)}, & \text{if $j\leq n-\ell$} \\
                         0, & \text{otherwise} .
                        \end{cases} \]
 and set $\Lambda^{(k-1)}:=(\lambda_1^{(k-1)},\dotsc,\lambda_n^{(k-1)})$.  Proposition~\ref{prop:generating_function} and comparison of~\eqref{eqn:gfs} and~\eqref{eqn:PQ0} imply that
 \[ p_j^{m-k+1}\left(\frac{m-k+1}{m}\lambda;\Lambda^{(k-1)}\right) = \left(\frac{m+n-k+1}{m+n}\right)^j\frac{p_{k+j-1}^m(\lambda;\Lambda)}{p_{k-1}^m(\lambda;\Lambda)} \]
 for all $j\in\bN_0$.  In particular, applying this to the cases $j\in\{0,1,2\}$ yields~\eqref{eqn:weighted_newton_p} with equality if and only if $\lambda_j^{(k-1)}=\frac{\lambda}{m}$ for all $j\in\{1,\dotsc,n\}$.  Our applications of Rolle's Theorem imply that the latter conclusion is equivalent to $\lambda_j=\frac{\lambda}{m}$ for all $j\in\{1,\dotsc,n\}$, as desired. 
\end{proof}

\subsection{Weighted elliptic cones}
\label{subsec:algebra/cones}

An important feature of the elementary symmetric polynomials is that they are monotone with respect to a single variable within the G{\aa}rding cones; this is closely related to the ellipticity of the $\sigma_k$-curvature prescription problem (cf.\ \cite{CaffarelliNirenbergSpruck1985,Garding1959,Viaclovsky2000}).  There are similar cones in the weighted case; we state our definition only in terms of the space $\mM_n$ of self-adjoint endomorphisms of $\bR^n$, from which one could easily formulate analogous definitions in terms of the weighted elementary symmetric polynomials.

\begin{defn}
 Fix $m\in\bR_+$ and $k,n\in\bN_0$.  The \emph{positive (resp.\ negative) $m$-weighted elliptic $k$-cone} is the set $\Gamma_k^{m,+}$ (resp.\ $\Gamma_k^{m,-}$) defined by
 \[ \Gamma_k^{m,\pm} := \left\{ (\lambda;P)\in\bR\times\mM_n \suchthat (\pm1)^j\sigma_j^m(\lambda;P)>0 \text{ for all $1\leq j\leq k$} \right\} . \]
\end{defn}

One useful fact is that~\eqref{eqn:newton_inner_product_E} has a sign in the appropriate weighted elliptic cone.

\begin{prop}
 \label{prop:weighted_maclaurin}
 Let $k,n\in\bN_0$ and $m\in[k-1,\infty)$.  Then
 \begin{equation}
  \label{eqn:weighted_maclaurin}
  (\pm1)^{k+1}\sigma_{k+1}^m(\lambda;P) \leq (\pm1)^{k+1}\frac{m+n-k}{(m+n)(k+1)}\sigma_1^m(\lambda;P)\sigma_k^m(\lambda;P)
 \end{equation}
 for all $(\lambda;P)\in\Gamma_k^{m,\pm}$.  Moreover, equality holds if and only if $P=\frac{\lambda}{m}I$.
\end{prop}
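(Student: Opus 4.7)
The plan is to deduce~\eqref{eqn:weighted_maclaurin} from the weighted Newton inequality (Theorem~\ref{thm:weighted_newton}) by telescoping, as in the classical derivation of Maclaurin's inequality from Newton's.  Following the normalization in the proof of Theorem~\ref{thm:weighted_newton}, I would set
\[ p_j^m := \frac{(m+n)^j}{\binom{m+n}{j}}\sigma_j^m(\lambda;P) \]
for $0\leq j\leq k+1$.  A short binomial calculation shows that~\eqref{eqn:weighted_maclaurin} is equivalent to $(\pm1)^{k+1}p_{k+1}^m \leq (\pm1)^{k+1}p_1^m p_k^m$, and Theorem~\ref{thm:weighted_newton} gives $p_{j-1}^m p_{j+1}^m \leq (p_j^m)^2$ for $1\leq j\leq k$ (available since $m\geq k-1\geq j-1$).

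In the positive cone one has $p_j^m>0$ for $0\leq j\leq k$, and the Newton inequalities telescope in the standard way:
\[ \frac{p_{k+1}^m}{p_k^m} \leq \frac{p_k^m}{p_{k-1}^m} \leq \dotsb \leq \frac{p_1^m}{p_0^m} = p_1^m, \]
which gives the result.  For the negative cone, I would set $q_j := (-1)^j p_j^m$, so $q_j>0$ for $0\leq j\leq k$ and $q_{j-1}q_{j+1}\leq q_j^2$ still holds; the same telescoping yields $q_{k+1}\leq q_1 q_k$, which rearranges to the $(-)$ case.

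For the equality statement, equality in the telescoping chain forces equality at each step of Newton's inequality for $1\leq j\leq k$.  By Theorem~\ref{thm:weighted_newton} each such equality falls into one of three cases: (a) $P=(\lambda/m)I$; (b) $\lambda=0$ with at most $j-1$ nonzero eigenvalues of $P$; or (c) $m=j-1$ and $P=0$.  Using Lemma~\ref{lem:reln_to_trivial} one checks directly that (b) and (c) each force $\sigma_j^m=0$, contradicting the hypothesis $(\lambda;P)\in\Gamma_k^{m,\pm}$; hence $P=(\lambda/m)I$.  The converse is immediate from Lemma~\ref{lem:reln_to_trivial}, which yields $\sigma_j^m(\lambda;P)=\binom{m+n}{j}(\lambda/m)^j$ when $P=(\lambda/m)I$, and equality in~\eqref{eqn:weighted_maclaurin} follows by inspection.

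All of the substantive work is carried by Theorem~\ref{thm:weighted_newton}; the only real obstacles are the sign bookkeeping needed to run the telescoping argument in the negative cone and the verification that the two degenerate equality cases (b) and (c) cannot occur inside a weighted elliptic cone.
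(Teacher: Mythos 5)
Your proposal is correct and takes essentially the same route as the paper: the paper also reduces \eqref{eqn:weighted_maclaurin} to the normalized form $(\pm1)^{k+1}p_{k+1}^m\leq(\pm1)^{k+1}p_1^mp_k^m$ and chains the weighted Newton inequalities of Theorem~\ref{thm:weighted_newton}, merely packaging your telescoping as an induction on $k$ (using $\Gamma_{k+1}^{m,\pm}\subset\Gamma_k^{m,\pm}$ and the positivity $(\pm1)^{k+1}p_{k+1}^m>0$ to handle the signs that you absorb via $q_j=(-1)^jp_j^m$). Your explicit verification that the degenerate equality cases of Theorem~\ref{thm:weighted_newton} are excluded by the cone condition is a detail the paper leaves implicit, not a different argument.
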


\begin{proof}
 The proof is by induction.  Observe that, as a functional inequality on $\Gamma_k^{m,\pm}$, \eqref{eqn:weighted_maclaurin} is equivalent to
 \begin{equation}
  \label{eqn:weighted_maclaurin_p}
  (\pm1)^{k+1}p_{k+1}^m \leq (\pm1)^{k+1}p_1^mp_k^m .
 \end{equation}
 Theorem~\ref{thm:weighted_newton} implies that~\eqref{eqn:weighted_maclaurin_p} holds when $k=1$.  Suppose now that~\eqref{eqn:weighted_newton_p} holds in $\Gamma_k^{m,\pm}$.  Since $\Gamma_{k+1}^{m,\pm}\subset\Gamma_k^{m,\pm}$, it follows from Theorem~\ref{thm:weighted_newton} and the inductive hypothesis that
 \[ p_{k}^mp_{k+2}^m \leq \left((\pm1)^{k+1}p_{k+1}^m\right)^2 \leq p_1^mp_k^mp_{k+1}^m \]
 in $\Gamma_{k+1}^{m,\pm}$.  Dividing both sides by $(\pm1)^kp_k^m>0$ yields the desired result.
\end{proof}

\begin{cor}
 \label{cor:inner_product_maclaurin}
 Let $k,n\in\bN_0$ and $m\in[k-1,\infty)$.  Then
 \[ (\pm1)^{k+1}\left\lp E_k^m(\lambda;P), P-\frac{\lambda}{m}I\right\rp \leq 0 \]
 for all $(\lambda;P)\in\Gamma_k^{m,\pm}$.  Moreover, equality holds if and only if $P=\frac{\lambda}{m}I$.
\end{cor}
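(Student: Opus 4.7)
The plan is essentially immediate: the claim is just a repackaging of Proposition~\ref{prop:weighted_maclaurin} via the identity of Corollary~\ref{cor:newton_inner_product}. Specifically, Corollary~\ref{cor:newton_inner_product} expresses
\[ \left\lp E_k^m(\lambda;P), P-\frac{\lambda}{m}I\right\rp = (k+1)\sigma_{k+1}^m(\lambda;P) - \frac{m+n-k}{m+n}\sigma_1^m(\lambda;P)\sigma_k^m(\lambda;P), \]
so the sign statement we want is equivalent to the assertion that
\[ (\pm 1)^{k+1}\Bigl[(k+1)\sigma_{k+1}^m - \frac{m+n-k}{m+n}\sigma_1^m\sigma_k^m\Bigr]\leq 0 \]
on $\Gamma_k^{m,\pm}$.

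First, I would invoke Proposition~\ref{prop:weighted_maclaurin}, which under the hypothesis $m\geq k-1$ gives exactly
\[ (\pm 1)^{k+1}\sigma_{k+1}^m(\lambda;P)\leq (\pm 1)^{k+1}\frac{m+n-k}{(m+n)(k+1)}\sigma_1^m(\lambda;P)\sigma_k^m(\lambda;P) \]
for $(\lambda;P)\in\Gamma_k^{m,\pm}$. Multiplying through by $k+1$ and rearranging yields the inequality displayed above, and applying Corollary~\ref{cor:newton_inner_product} to recognize the left-hand side gives the desired bound.

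For the equality case, I would simply transport the equality characterization from Proposition~\ref{prop:weighted_maclaurin}: the proposition asserts equality precisely when $P=\frac{\lambda}{m}I$, and the rewriting above is an equivalence, so the same condition characterizes equality in the corollary. There is no real obstacle here; the only thing to check carefully is that the factor $k+1>0$ and the identity of Corollary~\ref{cor:newton_inner_product} preserves the equivalence of the two inequality statements, which is trivial.
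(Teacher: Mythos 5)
Your proposal is correct and matches the paper's own argument, which likewise derives the corollary immediately by combining the identity of Corollary~\ref{cor:newton_inner_product} with the inequality and equality characterization of Proposition~\ref{prop:weighted_maclaurin}. Nothing further is needed.
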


\begin{proof}
 This follows immediately from Corollary~\ref{cor:newton_inner_product} and Proposition~\ref{prop:weighted_maclaurin}.
\end{proof}

Another useful fact is that the weighted Newton transform and the weighted Newton scalar have a sign in the corresponding weighted elliptic cone.  This result encodes the relationship between the weighted elliptic cones and ellipticity of the weighted $\sigma_k$-curvatures; see Proposition~\ref{prop:linearization_sigmak} below.

\begin{cor}
 \label{cor:weighted_elliptic}
 Let $k\in\bN$, $n\in\bN_0$ and $m\in[k-1,\infty)$.  Suppose $(\lambda;P)\in\Gamma_k^{m,\pm}$.  Then
 \[ (\pm1)^{k-1}T_{k-1}^m(\lambda;P)>0 \quad\text{and}\quad (\pm1)^{k-1}s_{k-1}^m(\lambda;P)>0 . \]
\end{cor}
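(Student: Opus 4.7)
The plan is to reduce each assertion to a sign statement about a single weighted elementary symmetric polynomial and then apply the weighted Newton and Maclaurin inequalities already established.

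For the first claim, Lemma~\ref{lem:eigenvalues_T} identifies the eigenvalues of $T_{k-1}^m(\lambda;P)$ with $\sigma_{k-1}^m(\lambda;\Lambda(i))$ for $i=1,\dots,n$, where $\Lambda=\Lambda(P)$, so it suffices to show that $(\pm1)^{k-1}\sigma_{k-1}^m(\lambda;\Lambda(i))>0$ for each $i$. For the second claim, Lemma~\ref{lem:s_to_sigma-1} gives $s_{k-1}^m(\lambda;P)=\sigma_{k-1}^{m-1}(\tfrac{m-1}{m}\lambda;P)$, so the claim again amounts to checking the sign of a single weighted elementary symmetric polynomial. Both quantities can be understood uniformly as the eigenvalues of the unweighted Newton transform of the formal block matrix $P\oplus\tfrac{\lambda}{m}I_m$ from~\eqref{eqn:informal_block_diagonal}, and the block decomposition $T_{k-1}=T_{k-1}^m\oplus s_{k-1}^m I_m$ explained in Section~\ref{sec:algebra} makes this correspondence explicit.

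The heart of the argument is thus a weighted cone-reduction statement: if $(\lambda;\Lambda)\in\Gamma_k^{m,\pm}$, then $(\lambda;\Lambda(i))\in\Gamma_{k-1}^{m,\pm}$ for each $i$, and analogously $(\tfrac{m-1}{m}\lambda;\Lambda)\in\Gamma_{k-1}^{m-1,\pm}$. I would prove these by induction on $k$ using the recursive identity from Lemma~\ref{lem:reln_to_remove},
\[ \sigma_j^m(\lambda;\Lambda(i))=\sigma_j^m(\lambda;\Lambda)-\lambda_i\sigma_{j-1}^m(\lambda;\Lambda(i)), \]
combined with the weighted Maclaurin inequality (Proposition~\ref{prop:weighted_maclaurin}) to control the sign of the subtracted term at each stage. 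The base case $k=1$ is immediate from the cone definition, since $T_0^m=I$ and $s_0^m=1$. The $s$-statement is handled by the same induction after the substitution $(m,\lambda)\mapsto(m-1,\tfrac{m-1}{m}\lambda)$, which is legal precisely because the hypothesis $m\ge k-1$ gives $m-1\ge k-2$.

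The main obstacle is the inductive step of the cone reduction: one must rule out $\sigma_j^m(\lambda;\Lambda(i))$ passing through zero as the configuration is deformed inside $\Gamma_k^{m,\pm}$. This is exactly where the hypothesis $m\ge k-1$ becomes essential, as it is the range in which Theorem~\ref{thm:weighted_newton} provides a rigid equality case, forcing the relevant weighted Newton inequalities to be strict on the interior of $\Gamma_k^{m,\pm}$ and thereby preventing any sign change.
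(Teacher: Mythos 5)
Your reduction is the right one and matches the paper's: Lemma~\ref{lem:eigenvalues_T} converts the assertion about $T_{k-1}^m$ into signs of $\sigma_{k-1}^m\left(\lambda;\Lambda(i)\right)$, the recursion~\eqref{eqn:s_recursive} together with Lemma~\ref{lem:s_to_sigma-1} handles $s_{k-1}^m$, and the ``cone-reduction'' statement you isolate is exactly what the paper proves by induction on $k$. The gap is in your inductive step. Proposition~\ref{prop:weighted_maclaurin} is the wrong tool: its hypothesis is membership in the cone, so applying it to $(\lambda;\Lambda(i))$ --- the data whose cone membership you are trying to establish --- is circular, while applying it to $(\lambda;\Lambda)$ says nothing about the term $\lambda_i\sigma_{j-1}^m\left(\lambda;\Lambda(i)\right)$, whose sign is genuinely uncontrollable since $\lambda_i$ may be negative even in the positive cone. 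What makes the argument work is that Theorem~\ref{thm:weighted_newton} holds \emph{unconditionally} for $m\geq k-1$, with no cone hypothesis, so it can be applied to $\Lambda(i)$ mid-induction. The mechanism, absent from your sketch, is multiplicative: one multiplies the strict inequality $(\pm1)^{k+1}\left[\sigma_{k+1}^m\left(\lambda;\Lambda(i)\right)+\lambda_i\sigma_k^m\left(\lambda;\Lambda(i)\right)\right]>0$ (from Lemma~\ref{lem:reln_to_remove}) by the inductively known positive quantity $(\pm1)^{k-1}\sigma_{k-1}^m\left(\lambda;\Lambda(i)\right)$, then uses Theorem~\ref{thm:weighted_newton} to absorb the product $\sigma_{k-1}^m\sigma_{k+1}^m$ into $\left(\sigma_k^m\right)^2$, yielding $\sigma_k^m\left(\lambda;\Lambda(i)\right)\sigma_k^m\left(\lambda;\Lambda\right)>0$; the sign of the problematic term is never needed. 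The same multiplication, with Lemma~\ref{lem:s_to_sigma-1} justifying the Newton inequality at parameter $m-1$, disposes of the $s$-statement.

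Your fallback --- deforming the configuration inside $\Gamma_k^{m,\pm}$ and ruling out zero crossings --- is also not available as stated: it requires connectedness of the cone (or at least a path to a reference configuration along which the signs are known), which the paper never establishes and which is not automatic for noninteger $m$, since G{\aa}rding's hyperbolicity theory, the usual source of convexity of the elliptic cones, does not apply when $\sigma_k^m$ is not an honest elementary symmetric polynomial in $m+n$ real variables. Relatedly, you misidentify the role of $m\geq k-1$: it is not about rigidity of the equality case forcing strictness, but is needed for the inequality of Theorem~\ref{thm:weighted_newton} to hold at all (see the remark following that theorem); the proof of the corollary uses only the non-strict Newton inequality combined with the strict inequalities defining the cone.
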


\begin{proof}
 Let $\Lambda=\Lambda(P)$ denote the eigenvalues of $P$.  By Lemma~\ref{lem:eigenvalues_T}, the conclusion is equivalent to the assertions
 \begin{subequations}
  \label{eqn:we1}
  \begin{align}
   \label{eqn:we1/T} (\pm1)^{k-1}\sigma_{k-1}^m\left(\lambda;\Lambda(i)\right) & > 0\qquad\text{for all $1\leq i\leq n$} , \\
   \label{eqn:we1/s} (\pm1)^{k-1}s_{k-1}^m\left(\lambda;P\right) & > 0.
  \end{align}
 \end{subequations}
 We prove~\eqref{eqn:we1} by induction on $k$.  Clearly~\eqref{eqn:we1} holds when $k=1$.  Suppose that~\eqref{eqn:we1} holds for all $(\lambda;P)\in\Gamma_k^{m,\pm}$.  Let $(\lambda;P)\in\Gamma_{k+1}^{m,\pm}$.  In particular, $(\pm1)^{k+1}\sigma_{k+1}^m(\lambda;P)>0$.  Lemma~\ref{lem:reln_to_remove} and~\eqref{eqn:s_recursive} respectively imply that
 \begin{subequations}
  \label{eqn:we2}
  \begin{align}
   \label{eqn:we2/T} (\pm1)^{k+1}\left[\sigma_{k+1}^m\left(\lambda;\Lambda(i)\right) + \lambda_i\sigma_k^m\left(\lambda;\Lambda(i)\right)\right] & > 0, \\
   \label{eqn:we2/s} (\pm1)^{k+1}\left[s_{k+1}^m\left(\lambda;P\right) + \frac{\lambda}{m}s_k^m\left(\lambda;P\right) \right] & > 0
  \end{align}
 \end{subequations}
 for all $1\leq i\leq n$.  By the inductive hypothesis, $(\pm1)^{k-1}\sigma_{k-1}^m\left(\lambda;\Lambda(i)\right)>0$ and $(\pm1)^{k-1}s_{k-1}^m\left(\lambda;P\right)>0$.  Multiplying these to both sides of~\eqref{eqn:we2/T} and~\eqref{eqn:we2/s}, respectively, and then using Theorem~\ref{thm:weighted_newton} (and Lemma~\ref{lem:s_to_sigma-1} for the inequality involving $s_k^m$) and Lemma~\ref{lem:reln_to_remove} in succession yields
 \begin{align*}
  0 & < \sigma_k^m\left(\lambda;\Lambda(i)\right)\left[ \sigma_k^m\left(\lambda;\Lambda(i)\right) + \lambda_i\sigma_{k-1}^m\left(\lambda;\Lambda(i)\right) \right] = \sigma_k^m\left(\lambda;\Lambda(i)\right)\sigma_k^m\left(\lambda;P\right) , \\
  0 & < s_k^m(\lambda;P)\left( s_k^m(\lambda;P) + \frac{\lambda}{m}s_{k-1}^m(\lambda;P)\right) = s_{k}^m(\lambda;P)\sigma_{k}^m(\lambda;P) .
 \end{align*}
 The conclusion now follows from the assumption $(\lambda;P)\in\Gamma_{k+1}^{m,\pm}$.
\end{proof}
\section{Smooth metric measure spaces}
\label{sec:smms}

Recall that a smooth metric measure space is a five-tuple $(M^n,g,v,m,\mu)$ consisting of a Riemannian manifold $(M^n,g)$, a positive function $v\in C^\infty(M;\bR_+)$, a dimensional parameter $m\in\bR_+$, and an auxiliary curvature parameter $\mu\in\bR$.  The geometric study of smooth metric measure spaces is based on \emph{weighted local invariants} of smooth metric measure spaces; i.e.\ tensor-valued functions on the space $\kM(M,m,\mu)$ of metric-measure structures on $(M^n,m,\mu)$ which are invariant with respect to the natural action of the diffeomorphism group of $M$ (see Section~\ref{sec:moduli} for further discussion).  Many weighted local invariants can be realized as local Riemannian invariants of the formal warped product~\eqref{eqn:intro/wp}.  In this section, we intrinsically define and discuss the weighted local invariants which are important to our study of the weighted $\sigma_k$-curvatures, as well as the properties of these invariants for weighted Einstein manifolds.

Among the most familiar weighted local invariants of a smooth metric measure space $(M^n,g,v,m,\mu)$ are the \emph{Bakry--\'Emery Ricci tensor}
\[ \Ric_\phi^m := \Ric + \nabla^2\phi - \frac{1}{m}d\phi\otimes d\phi, \]
the \emph{weighted Laplacian}
\[ \Delta_\phi := \Delta - \nabla\phi , \]
and the \emph{weighted volume element}
\[ d\nu := v^m\,\dvol_g . \]
The Bakry--\'Emery Ricci tensor plays an important role in the comparison geometry of smooth metric measure spaces (cf.\ \cite{Wei_Wylie}).  This is because of its appearance in the weighted Bochner formula for the weighted Laplacian on functions.  Note also that the weighted Laplacian is the natural formally self-adjoint (rough) Laplacian on smooth metric measure spaces, in that $\Delta_\phi=-\nabla^\ast\nabla$, where $\nabla^\ast$ is the adjoint of the Levi-Civita connection $\nabla$ of $g$ with respect to the $L^2$-inner product induced by the weighted volume element.

Another important local invariant of a smooth metric measure space is the \emph{weighted scalar curvature}
\[ R_\phi^m := R + 2\Delta\phi - \frac{m+1}{m}\lv\nabla\phi\rv^2 + m(m-1)\mu e^{\frac{2}{m}\phi} . \]
Among the reasons that $R_\phi^m$ is the natural analogue of the scalar curvature are that it is the scalar curvature of the warped product~\eqref{eqn:intro/wp}, it plays the role of the scalar curvature in O'Neill's submersion theorem~\cite{Lott2007} and in the weighted Weitzenb\"ock formula for the Dirac operator on spinors~\cite{Perelman1}, and the variational properties of the total weighted scalar curvature functional are closely related to sharp Sobolev inequalities~\cite{Case2013y} and special Einstein-type structures~\cite{Case2010b,Case2013y}.

Two smooth metric measure spaces $(M^n,g,v,m,\mu)$ and $(M^n,\hg,\hv,m,\mu)$ are \emph{pointwise conformally equivalent} if there is a positive function $u\in C^\infty(M;\bR_+)$ such that $\hg=u^{-2}g$ and $\hv=u^{-1}v$.  Given $(M^n,g,v,m,\mu)$, we denote by $\kC$ the set of all $(\hg,\hv)$ such that $(M^n,\hg,\hv,m,\mu)$ is pointwise conformally equivalent to $(M^n,g,v,m,\mu)$.  A smooth metric measure space $(M^n,g,v,m,\mu)$, $m\not=1$, is \emph{locally conformally flat in the weighted sense} if around every point there is a neighborhood $U$ such that the restriction $(U,g\rv_{TU},v\rv_U,m,\mu)$ is conformally equivalent to $(B,g_{-\mu},1,m,\mu)$, where $B$ is an open set in the simply-connected spaceform $(X^n,g_{-\mu})$ with constant sectional curvature $-\mu$.  A smooth metric measure space $(M^n,g,v,1,\mu)$ is \emph{locally conformally flat in the weighted sense} if around every point there is a neighborhood $U$ such that the restriction $(U,g\rv_{TU},v\rv_U,m,\mu)$ is conformally equivalent to $(B,g_c,1,1,\mu)$, where $B$ is an open set in a simply-connected spaceform $(X^n,g_c)$.

In order to discuss the conformal properties of smooth metric measure spaces, it is convenient to consider the following modifications of the weighted scalar curvature, the Bakry--\'Emery Ricci tensor, and the Riemann curvature tensor $\Rm$:
\begin{align*}
 J_\phi^m & := \frac{m+n-2}{2(m+n-1)}R_\phi^m, \\
 P_\phi^m & := \Ric_\phi^m - \frac{1}{m+n-2}J_\phi^mg, \\
 A_\phi^m & := \Rm - \frac{1}{m+n-2}P_\phi^m\wedge g .
\end{align*}
Here $\wedge\colon \Gamma\left(S^2T^\ast M\right)\times \Gamma\left(S^2T^\ast M\right)\to \Gamma\left(\Lambda^2S^2T^\ast M\right)$ denotes the Kulkarni--Nomizu product.  We call $P_\phi^m$ the \emph{weighted Schouten tensor} and $A_\phi^m$ the \emph{weighted Weyl tensor}.  By Lemma~\ref{lem:conf_change} below, the weighted Weyl tensor is a weighted conformal invariant of smooth metric measure spaces, so that, in a weighted conformal class, the Riemann curvature tensor is completely controlled by the weighted Schouten tensor.  Indeed, it is straightforward to check that a smooth metric measure space with $n\geq3$ and $m+n\not=3$ is locally conformally flat in the weighted sense if and only if $A_\phi^m=0$; cf.\ \cite[Lemma~6.6]{Case2011t}.

The scalar invariant $J_\phi^m$ should be regarded as the weighted analogue of the trace of the Schouten tensor.  However, it is not the trace of the weighted Schouten tensor.  The following formula for the difference $Y_\phi^m:=J_\phi^m-\tr P_\phi^m$ will be useful.

\begin{lem}
 \label{lem:eval_Yphim}
 Let $(M^n,g,v,m,\mu)$ be a smooth metric measure space.  Then
 \[ Y_\phi^m = \Delta_\phi\phi - \frac{m}{m+n-2}J_\phi^m + m(m-1)\mu v^{-2} . \]
\end{lem}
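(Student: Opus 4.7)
The plan is to unfold every definition appearing in $Y_\phi^m$ and reduce the identity to a bookkeeping exercise. There is no conceptual obstacle here; the main care required is tracking the several coefficients that depend on $m+n$.

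\smallskip

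First, I would take the $g$-trace of the weighted Schouten tensor. Since $\tr_g\Ric_\phi^m = R + \Delta\phi - \tfrac{1}{m}\lv\nabla\phi\rv^2$ and $\tr_g(J_\phi^m g) = nJ_\phi^m$, the definition of $P_\phi^m$ gives
\[
\tr_g P_\phi^m = R + \Delta\phi - \frac{1}{m}\lv\nabla\phi\rv^2 - \frac{n}{m+n-2}J_\phi^m,
\]
so that
\[
Y_\phi^m = \frac{m+2n-2}{m+n-2}J_\phi^m - R - \Delta\phi + \frac{1}{m}\lv\nabla\phi\rv^2.
\]

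\smallskip

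Second, I would eliminate $R$ in favor of $J_\phi^m$. Inverting $J_\phi^m = \tfrac{m+n-2}{2(m+n-1)}R_\phi^m$ and using the intrinsic formula for $R_\phi^m$ together with the identity $e^{2\phi/m} = v^{-2}$ (from $\phi = -m\ln v$) yields
\[
R = \frac{2(m+n-1)}{m+n-2}J_\phi^m - 2\Delta\phi + \frac{m+1}{m}\lv\nabla\phi\rv^2 - m(m-1)\mu v^{-2}.
\]

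\smallskip

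Finally, I would substitute this expression into the formula for $Y_\phi^m$ and simplify. The $J_\phi^m$ coefficients collapse via
\[
\frac{m+2n-2}{m+n-2} - \frac{2(m+n-1)}{m+n-2} = -\frac{m}{m+n-2};
\]
the $\Delta\phi$ contributions combine to $+\Delta\phi$; the $\lv\nabla\phi\rv^2$ contributions combine to $-\lv\nabla\phi\rv^2$; and the auxiliary-curvature contribution produces $+m(m-1)\mu v^{-2}$. Recognizing $\Delta\phi - \lv\nabla\phi\rv^2 = \Delta_\phi\phi$ finishes the computation and yields the stated identity.
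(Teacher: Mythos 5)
Your proof is correct, and it is the same argument as the paper's: the paper simply packages the computation as the single identity $R_\phi^m = \tr\Ric_\phi^m + \Delta_\phi\phi + m(m-1)\mu v^{-2}$ and notes the result follows, which is exactly your trace-and-eliminate-$R$ bookkeeping with $R_\phi^m = \tfrac{2(m+n-1)}{m+n-2}J_\phi^m$. All of your coefficients check out, including the collapse to $-\tfrac{m}{m+n-2}J_\phi^m$ and the identification $\Delta\phi - \lv\nabla\phi\rv^2 = \Delta_\phi\phi$.
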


\begin{proof}
 The definitions of the weighted scalar curvature and the Bakry--\'Emery Ricci tensor yield
 \[ R_\phi^m = \tr\Ric_\phi^m + \Delta_\phi\phi + m(m-1)\mu v^{-2}, \]
 from which the result readily follows.
\end{proof}

We also need two tensors formed by taking certain derivatives of the weighted Schouten tensor.  The \emph{weighted Cotton tensor} $dP_\phi^m\in\Gamma\left(\Lambda^2T^\ast M\otimes T^\ast M\right)$ is defined by
\[ dP_\phi^m(x,y,z) := \nabla_x P_\phi^m(y,z) - \nabla_y P_\phi^m(x,z) \]
for all $x,y,z\in T_pM$ and all $p\in M$.  The \emph{weighted Bach tensor $B_\phi^m\in\Gamma\left(S^2T^\ast M\right)$} is defined by
\[ B_\phi^m(x,y) := (\delta_\phi dP_\phi^m)(x,y) - \frac{1}{m}d\phi(y)\,\tr dP_\phi^m(\cdot,x,\cdot) + \left\lp A_\phi^m(\cdot,x,\cdot,y), P_\phi^m - \frac{Y_\phi^m}{m}g\right\rp \]
for all $x,y,z\in T_pM$ and all $p\in M$, where we define
\[ (\delta_\phi dP_\phi^m)(x,y) := \sum_{i=1}^n \nabla_{e_i}dP_\phi^m(e_i,x,y) - dP_\phi^m(\nabla\phi,x,y) \]
for $\{e_i\}_1^n$ an orthonormal basis for $T_pM$; see~\cite{Case2011t,Case2011o} for further discussion. The following identities involving the weighted Schouten, weighted Cotton, and weighted Weyl tensors are useful; see~\cite{Case2011t} for their derivations.

\begin{lem}
 \label{lem:div_and_tr}
 Let $(M^n,g,v,m,\mu)$ be a smooth metric measure space.  Then
 \begin{align*}
  \tr dP_\phi^m & = P_\phi^m(\nabla\phi) + dY_\phi^m - \frac{1}{m}Y_\phi^m\,d\phi , \\
  \tr A_\phi^m & = \frac{m}{m+n-2}P_\phi^m - \nabla^2\phi + \frac{1}{m}d\phi\otimes d\phi + \frac{1}{m+n-2}Y_\phi^mg, \\
  \delta_\phi P_\phi^m & = dJ_\phi^m - \frac{1}{m}Y_\phi^m\,d\phi , \\
  \delta_\phi A_\phi^m & = \frac{m+n-3}{m+n-2}dP_\phi^m - v^{-1}dv\wedge\tr A_\phi^m .
 \end{align*}
\end{lem}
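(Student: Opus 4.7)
The plan is to prove the four identities by reducing each to a classical Riemannian identity plus controlled correction terms. There are two natural routes. When $m \in \bN$, one can realize $\Ric_\phi^m$, $J_\phi^m$, $P_\phi^m$, and $A_\phi^m$ as the restrictions to $M$ of the Ricci, trace of the Schouten, Schouten, and Weyl tensors of the warped product \eqref{eqn:intro/wp}, and then specialize the classical trace and Bianchi identities. Since the resulting identities are polynomial in $m$, they extend by analytic continuation to all $m \in \bR_+$. The more robust approach, and the one I would write up, is to compute intrinsically on $M$, so that the dependence on $m$ is manifest from the start.

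\emph{Identity (2):} I would begin by tracing $A_\phi^m = \Rm - \frac{1}{m+n-2}\, P_\phi^m \wedge g$ via the standard identity $\tr(T \wedge g) = (n-2)T + (\tr T) g$ for any symmetric $2$-tensor $T$. This yields
\[
\tr A_\phi^m = \Ric - \frac{n-2}{m+n-2}\, P_\phi^m - \frac{1}{m+n-2}(\tr P_\phi^m)\, g.
\]
Substituting $\Ric = \Ric_\phi^m - \nabla^2 \phi + \frac{1}{m}\, d\phi \otimes d\phi$, then writing $\Ric_\phi^m = P_\phi^m + \frac{1}{m+n-2} J_\phi^m\, g$ and using Lemma \ref{lem:eval_Yphim} in the form $\tr P_\phi^m = J_\phi^m - Y_\phi^m$, the coefficients of $P_\phi^m$ and $g$ collapse to the stated expression.

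\emph{Identity (3):} Start from the contracted second Bianchi identity $\divsymb \Ric = \tfrac{1}{2}\, dR$ applied to $\Ric = \Ric_\phi^m - \nabla^2\phi + \tfrac{1}{m}\, d\phi \otimes d\phi$. Commuting covariant derivatives produces the standard identities $\divsymb (\nabla^2 \phi) = d(\Delta \phi) + \Ric(\nabla \phi, \cdot)$ and $\divsymb(d\phi \otimes d\phi) = (\Delta\phi)\, d\phi + \nabla^2\phi(\nabla\phi,\cdot)$. Converting the ordinary divergence to the weighted divergence $\delta_\phi T = \divsymb T - T(\nabla\phi, \cdot)$ generates exactly the Ricci term needed to cancel against $\Ric(\nabla\phi, \cdot)$. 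Rearranging using the definition $J_\phi^m = \frac{m+n-2}{2(m+n-1)} R_\phi^m$ and Lemma \ref{lem:eval_Yphim} identifies the right-hand side as $dJ_\phi^m - \tfrac{1}{m} Y_\phi^m\, d\phi$.

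\emph{Identity (1):} This one follows by tracing the definition of $dP_\phi^m$:
\[
\tr dP_\phi^m(y) = \sum_i (\nabla_{e_i} P_\phi^m)(e_i, y) - \nabla_y (\tr P_\phi^m).
\]
The first sum equals $\delta_\phi P_\phi^m(y) + P_\phi^m(\nabla\phi, y)$, and the second equals $dJ_\phi^m(y) - dY_\phi^m(y)$ by Lemma \ref{lem:eval_Yphim}. Substituting identity (3) cancels the $dJ_\phi^m$ terms and delivers the claim. For identity (4), I would differentiate $A_\phi^m = \Rm - \frac{1}{m+n-2} P_\phi^m \wedge g$, compute $\delta_\phi$ of each summand using the second Bianchi identity $\divsymb \Rm = d\Ric$ plus $\delta_\phi = \divsymb - \iota_{\nabla\phi}$, and expand $\divsymb(P_\phi^m \wedge g)$ in terms of $\delta_\phi P_\phi^m$ and $d(\tr P_\phi^m)$. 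The $\delta_\phi P_\phi^m$ contribution assembles into $\tfrac{m+n-3}{m+n-2}\, dP_\phi^m$ exactly as in the classical Weyl identity, and the remaining $\iota_{\nabla\phi}$ terms regroup as $-v^{-1} dv \wedge \tr A_\phi^m$ after using identity (2) to eliminate $\nabla^2 \phi$ and $d\phi \otimes d\phi$ in favor of $\tr A_\phi^m$. The main obstacle throughout is pure bookkeeping: keeping the sign conventions for $\delta_\phi$ consistent, tracking the interaction between $\nabla\phi = -m\, v^{-1}\, dv$ and the weight $v^m$, and verifying that the coefficients of the $Y_\phi^m$, $d\phi$, and $P_\phi^m(\nabla\phi, \cdot)$ terms combine into the tidy form stated; identity (4) is the most delicate, and the warped-product descent is a useful sanity check on the final coefficients.
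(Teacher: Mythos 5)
Your proposal is correct, but note that the paper itself contains no proof of this lemma: it simply cites the author's earlier work for the derivations, so your write-up supplies an argument where the paper gives none. I checked your intrinsic route and it closes. Identity (2) is pure algebra from $\tr(T\wedge g)=(n-2)T+(\tr T)g$; one small misattribution is that $\tr P_\phi^m=J_\phi^m-Y_\phi^m$ is just the definition of $Y_\phi^m$, not Lemma~\ref{lem:eval_Yphim} --- the latter is, however, genuinely needed in identity (3), where after the Bianchi and commutation bookkeeping one must convert $\Delta_\phi\phi+m(m-1)\mu v^{-2}$ into $Y_\phi^m+\tfrac{m}{m+n-2}J_\phi^m$; with that input your cancellation scheme for (3) works exactly as described, and (1) then follows from (3) as you say. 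For identity (4), your sketch is workable but the one genuinely delicate point deserves to be made explicit rather than filed under bookkeeping: when you rewrite $d\Ric$ in weighted terms, the commutator term $d(\nabla^2\phi)(x,y,z)=-\Rm(x,y,\nabla\phi^\sharp\text{-slot})$ produces a full Riemann contraction, and this must \emph{cancel} against the insertion term $-\Rm(\cdot,\cdot,\nabla\phi,\cdot)$ coming from $\delta_\phi\Rm=\divsymb\Rm-\iota_{\nabla\phi}\Rm$; the cancellation occurs only when the sign conventions in the contracted second Bianchi identity and in the Ricci identity are compatible, so that both contractions land in the same slot of $\Rm$. With consistent conventions one arrives at
\begin{equation*}
 \delta_\phi A_\phi^m = \frac{m+n-3}{m+n-2}\,dP_\phi^m + \frac{1}{m}\,d\phi\wedge\left(\tr A_\phi^m - \frac{1}{m}\,d\phi\otimes d\phi\right),
\end{equation*}
and since $d\phi\wedge(d\phi\otimes d\phi)=0$ and $d\phi=-m\,v^{-1}dv$, the stated identity follows; with an incompatible pairing the two curvature terms add instead of cancelling, leaving a stray $2A_\phi^m(\cdot,\cdot,\nabla\phi,\cdot)$ that cannot be absorbed into $\tr A_\phi^m$. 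This is a trap, not a gap --- the computation does close --- but a final write-up should display the cancellation. Your warped-product alternative for integer $m$ with extension to $m\in\bR_+$ is also sound, since after clearing denominators both sides are rational in $m$ and agree for all sufficiently large integers.
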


Note that Lemma~\ref{lem:div_and_tr} implies that $dP_\phi^m$ vanishes if $(M^n,g,v,m,\mu)$ is locally conformally flat in the weighted sense.

We also need to know the behavior of these weighted curvatures under pointwise conformal transformations; see~\cite{Case2011t} for derivations.

\begin{lem}
 \label{lem:conf_change}
 Let $(M^n,g,v,m,\mu)$ and $(M^n,\hg,\hv,m,\mu)$ be pointwise conformally equivalent smooth metric measure spaces.  Define $f\in C^\infty(M)$ by $\hg=e^{-\frac{2f}{m+n-2}}g$.  Then
 \begin{align*}
  e^{-\frac{2f}{m+n-2}}\hJ_\phi^m & = J_\phi^m + \Delta_\phi f - \frac{1}{2}\lv\nabla f\rv^2, \\
  \hP_\phi^m & = P_\phi^m + \nabla^2f + \frac{1}{m+n-2}df\otimes df - \frac{1}{2(m+n-2)}\lv\nabla f\rv^2g, \\
  \hA_\phi^m & = e^{\frac{2f}{m+n-2}}A_\phi^m, \\
  \widehat{dP_\phi^m} & = dP_\phi^m - A_\phi^m(\cdot,\cdot,\nabla f,\cdot) .
 \end{align*}
\end{lem}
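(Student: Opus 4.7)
The plan is to derive all four identities from the warped-product interpretation of the weighted curvatures. When $m\in\bN$, the quantities $J_\phi^m$, $P_\phi^m$, $A_\phi^m$ and $dP_\phi^m$ coincide with $(m+n-2)$ times the classical Schouten scalar, $(m+n-2)$ times the classical Schouten tensor, the classical Weyl tensor, and the classical Cotton tensor on the warped product $(M\times F^m(\mu), g\oplus v^2 h)$, restricted to the horizontal directions over $M$. The pointwise conformal change $(g,v)\mapsto (u^{-2}g, u^{-1}v)$ with $u=e^{f/(m+n-2)}$ lifts to the pure conformal rescaling $g\oplus v^2 h\mapsto u^{-2}(g\oplus v^2 h) = \hg\oplus\hv^2 h$ of the warped product. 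I would therefore invoke the classical conformal transformation laws in total dimension $N=m+n$, with $\omega=-f/(m+n-2)$, and restrict to $M$, using the pullback identities $\Delta^{\mathrm{wp}}f=\Delta_\phi f$ and $|\nabla f|^2_{g\oplus v^2 h}=|\nabla f|_g^2$ for $f\in C^\infty(M)$. For general $m\in\bR_+$ the identities then follow by the polynomial-in-$m$ continuation principle developed in Section~\ref{sec:algebra}, applied to both sides of each identity at fixed $(g,v,\mu,f)$.

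A parallel purely intrinsic computation is also available. Writing $\hphi=\phi+\tfrac{m}{m+n-2}f$, the standard Riemannian conformal change formulas for $\Ric$, $R$, $\nabla^2$ and $\Delta$, together with $\mu e^{2\hphi/m}=e^{2f/(m+n-2)}\mu e^{2\phi/m}$, yield $\widehat{R}_\phi^m$, hence $\widehat{J}_\phi^m$, and then $\hP_\phi^m$ directly from its definition once like terms are collected. The Weyl identity follows because the extra terms introduced into $\hP_\phi^m\wedge\hg$ by the Schouten transformation coincide precisely with those produced by the transformation of $\widehat{\Rm}$, so that $\widehat{\Rm}-(m+n-2)^{-1}\hP_\phi^m\wedge\hg$ inherits the classical conformal invariance of the Weyl tensor. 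For the Cotton identity, one differentiates the Schouten transformation, accounts for the Christoffel-difference tensor $\hnabla-\nabla$ acting on $\hP_\phi^m$, and regroups.

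The main obstacle is the Cotton identity. Differentiating $\hP_\phi^m$ and converting $\hnabla$-derivatives to $\nabla$-derivatives produces third derivatives of $f$, cross terms such as $\nabla^2 f\otimes df$ and $d\phi\otimes \nabla^2 f$, and pieces arising from the $\mu$-dependent contribution $m(m-1)\mu v^{-2}$ in $R_\phi^m$; verifying that these combine precisely into $-A_\phi^m(\cdot,\cdot,\nabla f,\cdot)$ requires the second Bianchi identity together with the trace identities of Lemma~\ref{lem:div_and_tr} that encode the lower-order corrections distinguishing the weighted Weyl tensor from its classical counterpart. Once this cancellation is executed, the remaining three identities reduce to routine algebra, and the conclusion follows.
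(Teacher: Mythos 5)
The paper itself contains no proof of this lemma: it defers to \cite{Case2011t}, where the four identities are obtained by exactly the direct intrinsic computation you outline as your second route (setting $\hphi=\phi+\tfrac{m}{m+n-2}f$ and transforming each term of $R_\phi^m$ and $\Ric_\phi^m$, uniformly in $m\in\bR_+$). Your primary, warped-product route is therefore genuinely different from the cited derivation, and it is viable: the horizontal components of $\Rm$, $\Ric$, and $\nabla P$ on $\bigl(M\times F^m(\mu),g\oplus v^2h\bigr)$ reduce to the corresponding base quantities, and $J_\phi^m$, $P_\phi^m$, $A_\phi^m$, $dP_\phi^m$ are respectively $(m+n-2)J$, $(m+n-2)P$, $W$, and $(m+n-2)C$ of the warped product restricted to $M$ --- note the factor $m+n-2$ on the Cotton tensor, which your sketch omits. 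What your route buys is that for integer $m$ \emph{all four} identities are immediate from the classical laws in dimension $N=m+n$; what it costs is the continuation step, and there your appeal to Section~\ref{sec:algebra} is not quite right: the principle there concerns the weighted symmetric polynomials and does not literally apply. What you actually need is the (easy, but necessary) observation that since $\phi=-m\ln v$, both sides of each identity are rational functions of $m$ at fixed $(g,v,f,\mu)$, so their agreement at the infinitely many integers $m\geq 2$ forces agreement for all $m\in\bR_+$ away from the pole $m+n=2$.

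Your third paragraph, however, manufactures an obstacle that your own first route dissolves. The Cotton identity requires no second Bianchi identity and no input from Lemma~\ref{lem:div_and_tr} in the warped-product picture: the classical transformation law $\hat C=C+W(\cdot,\cdot,\nabla\omega,\cdot)$ with $\omega=-f/(m+n-2)$, combined with the factor $m+n-2$ above and the fact that $\nabla f$ is horizontal, yields $\widehat{dP_\phi^m}=dP_\phi^m-A_\phi^m(\cdot,\cdot,\nabla f,\cdot)$ on the spot; the brute-force differentiation you describe is only needed if you insist on the intrinsic route, and even then it is the finite computation carried out in \cite{Case2011t} (note Lemma~\ref{lem:div_and_tr} is itself proved there, so invoking it is not circular, though you should say so). Two final cautions when executing the restriction: the Hessian of a base function has a nonzero fiber block $v\lp\nabla v,\nabla f\rp h$, and it is precisely this block that converts $\Delta$ into $\Delta_\phi$ in the identity for $\hJ_\phi^m$; and you should track the conformal weight of the Weyl tensor against the paper's index conventions, since with $\hg=e^{2\omega}g$ the classical $(0,4)$ law reads $\hat W=e^{2\omega}W$, and you must check that the sign in the exponent of the third displayed identity comes out as stated.
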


\subsection{The weighted $\sigma_k$-curvatures}
\label{subsec:smms/defn}

We are now prepared to define the weighted $\sigma_k$-curvatures of a smooth metric measure space.

\begin{defn}
 Fix $k\in\bN_0$ and $\kappa\in\bR$.  The \emph{weighted $\sigma_k$-curvature (with scale $\kappa$)} of a smooth metric measure space $(M^n,g,v,m,\mu)$ is
 \begin{equation}
  \label{eqn:defn_csigma}
  \csigma_{k,\phi}^m := \sigma_k^m\left(Y_\phi^m+m\kappa v^{-1};P_\phi^m\right) .
 \end{equation}
 The \emph{$k$-th weighted Newton tensor (with scale $\kappa$)} of $(M^n,g,v,m,\mu)$ is
 \begin{equation}
  \label{eqn:defn_cT}
  \cT_{k,\phi}^m := T_k^m\left(Y_\phi^m+m\kappa v^{-1};P_\phi^m\right) .
 \end{equation}
 The \emph{$k$-th weighted Newton scalar (with scale $\kappa$)} of $(M^n,g,v,m,\mu)$ is
 \begin{equation}
  \label{eqn:defn_cs}
  \cs_{k,\phi}^m := s_k^m\left(Y_\phi^m+m\kappa v^{-1};P_\phi^m\right) .
 \end{equation}
\end{defn}

We shall omit the tilde from our notation and denote by $\sigma_{k,\phi}^m$, $T_{k,\phi}^m$, and $s_{k,\phi}^m$ the weighted $\sigma_k$-curvature, the $k$-th weighted Newton tensor, and the $k$-th weighted Newton scalar, respectively, with scale $\kappa=0$.  In order to give more succinct derivations, given a smooth metric measure space $(M^n,g,v,m,\mu)$, a parameter $\kappa\in\bR$, and a nonnegative integer $k$, we denote
\begin{align*}
 \cY_\phi^m & := Y_\phi^m+m\kappa v^{-1}, \\
 \cZ_\phi^m & := \frac{1}{m}\cY_\phi^m , \\
 \cN_{k,\phi}^m & := \tr\left(P_\phi^m\right)^k + m\bigl(\cZ_\phi^m\bigr)^k .
\end{align*}

As discussed in Section~\ref{sec:variational} below, the variational properties of the weighted $\sigma_k$-curvatures are closely related to the properties of the weighted divergence of the $k$-th weighted Newton tensor.  For example, the fact~\cite{Case2013y} that the weighted $\sigma_1$-curvature is variational is closely related to the following formula for the divergence of the first weighted Newton tensor.

\begin{lem}
 \label{lem:div_T1}
 Let $(M^n,g,v,m,\mu)$ be a smooth metric measure space and fix $\kappa\in\bR$.  Then
 \[ \delta_\phi \cT_{1,\phi}^m = -\cs_{1,\phi}^m\,d\phi . \]
\end{lem}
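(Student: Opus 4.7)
The plan is to prove this by direct computation, using the already-stated identities in Lemma~\ref{lem:div_and_tr} together with the definitions of the weighted Newton transforms and the weighted Newton scalars. The core observation is that the $k=1$ case is simple enough that $\cT_{1,\phi}^m$ and $\cs_{1,\phi}^m$ admit explicit closed-form expressions, reducing the statement to a one-line manipulation.

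First I would expand the definitions. Since $\sigma_1^m(\lambda;P) = \lambda + \tr P$ and $T_1^m(\lambda;P) = \sigma_1^m(\lambda;P) I - P$, one has
\[
\cT_{1,\phi}^m = \csigma_{1,\phi}^m\, g - P_\phi^m,\qquad \csigma_{1,\phi}^m = J_\phi^m + m\kappa v^{-1},
\]
using $\sigma_{1,\phi}^m = J_\phi^m$. Similarly, Lemma~\ref{lem:s_to_sigma-1} gives $\cs_{1,\phi}^m = \tfrac{m-1}{m}\cY_\phi^m + \tr P_\phi^m$, which simplifies via $Y_\phi^m = J_\phi^m - \tr P_\phi^m$ to
\[
\cs_{1,\phi}^m = J_\phi^m - \tfrac{1}{m}Y_\phi^m + (m-1)\kappa v^{-1}.
\]

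Next I would apply $\delta_\phi$ to $\cT_{1,\phi}^m$. For any smooth function $f$, the formula defining $\delta_\phi$ yields $\delta_\phi(fg) = df - f\,d\phi$, so
\[
\delta_\phi \cT_{1,\phi}^m = d\csigma_{1,\phi}^m - \csigma_{1,\phi}^m\,d\phi - \delta_\phi P_\phi^m.
\]
Lemma~\ref{lem:div_and_tr} gives $\delta_\phi P_\phi^m = dJ_\phi^m - \tfrac{1}{m}Y_\phi^m\,d\phi$. To handle $d\csigma_{1,\phi}^m$, I use $\phi = -m\ln v$, which gives $d(v^{-1}) = \tfrac{1}{m}v^{-1}\,d\phi$, hence $d\csigma_{1,\phi}^m = dJ_\phi^m + \kappa v^{-1}\,d\phi$.

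Substituting and cancelling $dJ_\phi^m$, one arrives at
\[
\delta_\phi \cT_{1,\phi}^m = \left( \kappa v^{-1} - \csigma_{1,\phi}^m + \tfrac{1}{m}Y_\phi^m\right)d\phi = -\cs_{1,\phi}^m\,d\phi,
\]
by the explicit formulas for $\csigma_{1,\phi}^m$ and $\cs_{1,\phi}^m$ obtained in the first step. The only bookkeeping pitfall is the sign of $d(v^{-1})$ relative to $d\phi$ and the extra $(m-1)\kappa v^{-1}$ term that arises from passing from $\cY_\phi^m$ to the ``$s$'' variant; once those are tracked correctly, the identity falls out of Lemma~\ref{lem:div_and_tr} with no further curvature computation.
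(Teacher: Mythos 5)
Your proposal is correct and is essentially the paper's own proof: both expand $\cT_{1,\phi}^m=\csigma_{1,\phi}^m\,g-P_\phi^m$, apply $\delta_\phi$ using the identity $\delta_\phi P_\phi^m=dJ_\phi^m-\frac{1}{m}Y_\phi^m\,d\phi$ from Lemma~\ref{lem:div_and_tr} together with $d(v^{-1})=\frac{1}{m}v^{-1}\,d\phi$, and then recognize the resulting coefficient $J_\phi^m+(m-1)\kappa v^{-1}-\frac{1}{m}Y_\phi^m$ as $\cs_{1,\phi}^m$ from~\eqref{eqn:defn_cs}. Your version merely spells out the cancellation of $dJ_\phi^m$ and the bookkeeping that the paper compresses into one line.
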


\begin{proof}
 By definition~\eqref{eqn:defn_cT}, we have that
 \[ \cT_{1,\phi}^m = \left(J_\phi^m+m\kappa v^{-1}\right)g - P_\phi^m . \]
 Lemma~\ref{lem:div_and_tr} implies that
 \[ \delta_\phi \cT_{1,\phi}^m = -\left(J_\phi^m + (m-1)\kappa v^{-1} - \frac{Y_\phi^m}{m}\right)d\phi . \]
 Comparing with~\eqref{eqn:defn_cs} yields the desired result.
\end{proof}

Lemma~\ref{lem:div_T1} allows us to compute the weighted divergence of the weighted Newton tensors of all orders.

\begin{prop}
 \label{prop:div_T}
 Let $(M^n,g,v,m,\mu)$ be a smooth metric measure space and fix $\kappa\in\bR$.  Let $k\in\bN_0$.  Then
 \[ \delta_\phi\cT_{k,\phi}^m = -\cs_{k,\phi}^m\,d\phi + \sum_{j=0}^{k-2} (-1)^j\cT_{k-2-j,\phi}^m\left( dP_\phi^m \cdot \cQ_{j+1,\phi}^m \right) , \]
 where
 \[ \cQ_{\ell,\phi}^m := \bigl(P_\phi^m\bigr)^\ell - \bigl(\cZ_\phi^m\bigr)^\ell g \]
 and $\cdot\colon \Gamma\left(\otimes^3T^\ast M\right)\times \Gamma\left(S^2T^\ast M\right) \to \Gamma\left(T^\ast M\right)$ denotes the fiber-wise contraction of the second argument into the first and third components of the first argument; i.e.\ $(A\cdot T)_\gamma:=A_{\alpha\gamma\beta}T^{\alpha\beta}$.
\end{prop}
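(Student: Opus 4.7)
The plan is to argue by induction on $k$, exploiting the recursion $\cT_{k+1,\phi}^m = \csigma_{k+1,\phi}^m\, g - P_\phi^m\,\cT_{k,\phi}^m$, which is the weighted analogue of~\eqref{eqn:T_recursive}. The base cases are $k=0$ (where $\cT_{0,\phi}^m = g$, $\cs_{0,\phi}^m = 1$, and a direct computation gives $\delta_\phi g = -d\phi$) and $k=1$ (which is Lemma~\ref{lem:div_T1}). Applying $\delta_\phi$ to the recursion and using the scalar identity $\delta_\phi(fg) = df - f\,d\phi$ reduces the inductive step to computing $d\csigma_{k+1,\phi}^m$ and $\delta_\phi(P_\phi^m\,\cT_{k,\phi}^m)$ separately.

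For the first, I would view $\csigma_{k+1,\phi}^m = \sigma_{k+1}^m(\cY_\phi^m; P_\phi^m)$ as a function of its two arguments and apply the chain rule. Lemma~\ref{lem:vary_kappa} identifies the partial derivative in $\cY$ as $\cs_{k,\phi}^m$, while Lemma~\ref{lem:reln_to_trivial} combined with Lemma~\ref{lem:eigenvalues_T} and the classical identity $\partial \sigma_j/\partial P_{ij} = (T_{j-1})^{ij}$ identifies the matrix partial derivative as $(\cT_{k,\phi}^m)^{ij}$. This yields the weighted analogue of the classical differential identity,
\[
d\csigma_{k+1,\phi}^m = \cs_{k,\phi}^m\,d\cY_\phi^m + (\cT_{k,\phi}^m)^{\alpha\beta}\,\nabla P_{\alpha\beta}.
\]

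For the second, since $P_\phi^m$ and $\cT_{k,\phi}^m$ commute as endomorphisms, a Leibniz expansion in indices gives
\[
\delta_\phi\bigl(P_\phi^m\,\cT_{k,\phi}^m\bigr)_\gamma = (\delta_\phi P_\phi^m)^\alpha\,(\cT_{k,\phi}^m)_{\alpha\gamma} + (P_\phi^m)^{i\alpha}\,\nabla_i(\cT_{k,\phi}^m)_{\alpha\gamma}.
\]
The first summand is evaluated using Lemma~\ref{lem:div_and_tr}. For the second, I would expand $\cT_{k,\phi}^m = \sum_{\ell=0}^k(-1)^\ell\,\csigma_{k-\ell,\phi}^m\,(P_\phi^m)^\ell$, apply Leibniz to each $\nabla_i(P_\phi^m)^\ell$, and then use the Cotton identity $\nabla_iP_{jk} - \nabla_jP_{ik} = (dP_\phi^m)_{ijk}$ together with the symmetry of $P_\phi^m$ to swap the contracted derivative index $i$ with the free index $\gamma$. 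Each swap produces one term with $dP_\phi^m$ sandwiched between powers of $P_\phi^m$; the no-swap contributions contain only $\nabla P_\phi^m$ with both indices contracted against $(\cT_{k,\phi}^m)$-type factors.

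Assembling the pieces, the $\nabla P$-contribution from $d\csigma_{k+1,\phi}^m$ cancels the no-swap contribution from $(P_\phi^m)^{i\alpha}\nabla_i(\cT_{k,\phi}^m)_{\alpha\gamma}$; the remaining $d\cY_\phi^m$ piece combined with $\delta_\phi P_\phi^m$, the identity $d\phi = -m\,v^{-1}dv$, and the recursion~\eqref{eqn:s_recursive} in the form $\cs_{k+1,\phi}^m = \sigma_{k+1,\phi}^m - (\cY_\phi^m/m)\,\cs_{k,\phi}^m$, produces precisely the coefficient $-\cs_{k+1,\phi}^m$ of $d\phi$; and the swap contributions reassemble into the claimed sum. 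The main obstacle is this last reassembly: the commutator output is a double sum of the shape $\csigma_{k-\ell,\phi}^m\,P^a\cdot dP_\phi^m\cdot P^b$ with $a+b = \ell - 1$, and verifying that after reindexing it collapses into $\sum_{j=0}^{k-1}(-1)^j\,\cT_{k-1-j,\phi}^m(dP_\phi^m\cdot \cQ_{j+1,\phi}^m)$ is a combinatorial exercise in grouping the weighted Newton coefficients. The subtraction $-(\cZ_\phi^m)^\ell g$ hidden inside $\cQ_{\ell,\phi}^m$ is forced by the $\cZ_\phi^m$-shift in $d\cY_\phi^m$: it absorbs the trace-like contributions that would otherwise obstruct the identification, mirroring the analogous cancellation in Viaclovsky's unweighted computation.
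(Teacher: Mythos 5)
Your proposal is correct, and it takes a genuinely different route in organization, though not in toolkit. The paper does not induct on the statement: it proves two direct summation identities --- \eqref{eqn:pf_div_T_nabla_sigma}, expressing $d\csigma_{k,\phi}^m$ through the power sums $d\cN_{j+1,\phi}^m$ (itself proved by induction from the recursive definition of $\csigma_{k,\phi}^m$), and \eqref{eqn:pf_div_T_div_P}, a closed formula for $\delta_\phi\left(P_\phi^m\right)^k+\bigl(\cZ_\phi^m\bigr)^k\,d\phi$ obtained via the same Cotton-identity swap you use --- and then combines them through a single Leibniz expansion of $\cT_{k,\phi}^m=\sum_j(-1)^j\csigma_{k-j,\phi}^m\left(P_\phi^m\right)^j$; the double-sum reassembly you defer as ``a combinatorial exercise'' is precisely what these two displays dispose of in one line. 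Your substitute for the first identity, $d\csigma_{k+1,\phi}^m=\cs_{k,\phi}^m\,d\cY_\phi^m+\lp\cT_{k,\phi}^m,\nabla P_\phi^m\rp$, is valid (it follows from Lemma~\ref{lem:reln_to_trivial} and Lemma~\ref{lem:vary_kappa} as you say, since $T_k^m(\lambda;P)=\sum_j\binom{m}{j}(\lambda/m)^jT_{k-j}(P)$ with $T_{k-j}$ the unweighted Newton transforms) and is arguably cleaner than the paper's power-sum bookkeeping; I checked that the step producing $\delta_\phi\cT_{2,\phi}^m$ closes exactly as you predict, using~\eqref{eqn:s_recursive}, $d\phi=-mv^{-1}dv$, and the formula for $\tr dP_\phi^m$ in Lemma~\ref{lem:div_and_tr} --- it is this last trace formula, rather than the $\cZ_\phi^m$-shift in $d\cY_\phi^m$ alone, that generates the $-\bigl(\cZ_\phi^m\bigr)^{j+1}g$ part of $\cQ_{j+1,\phi}^m$.

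One structural remark: your induction is vestigial. Since $\delta_\phi$ contracts the derivative index against the factor $P_\phi^m$ in $P_\phi^m\cT_{k,\phi}^m$, the quantity you must compute is $(P_\phi^m)^{i\alpha}\nabla_i(\cT_{k,\phi}^m)_{\alpha\gamma}$, which does not factor through $\delta_\phi\cT_{k,\phi}^m$; once you expand $\cT_{k,\phi}^m$ in powers of $P_\phi^m$, as your plan requires, you are carrying out the direct computation at level $k+1$ and the inductive hypothesis is never invoked. The argument is therefore really a direct one, with the cases $k=0,1$ serving only as consistency checks, and you could drop the inductive framing without loss.
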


\begin{proof}
 We first show by induction that
 \begin{equation}
  \label{eqn:pf_div_T_nabla_sigma}
  d\csigma_{k,\phi}^m = \sum_{j=0}^{k-1}\frac{(-1)^j}{j+1}\csigma_{k-1-j,\phi}^m\,d\cN_{j+1,\phi}^m
 \end{equation}
 for all $k\in\bN_0$, with the convention that the empty sum equals zero.  Clearly~\eqref{eqn:pf_div_T_nabla_sigma} holds for $k=0$.  Suppose that~\eqref{eqn:pf_div_T_nabla_sigma} holds for some $k\in\bN_0$.  Using the definition
 \begin{equation}
  \label{eqn:defn_csigma_expanded}
  (k+1)\csigma_{k+1,\phi}^m = \sum_{j=0}^k(-1)^j\csigma_{k-j,\phi}^m\cN_{j+1,\phi}^m
 \end{equation}
 of the weighted $\sigma_k$-curvature, we compute that
 \begin{align*}
  (k+1)d\csigma_{k+1,\phi}^m & = \sum_{j=0}^k(-1)^j\csigma_{k-j,\phi}^m\,d\cN_{j+1,\phi}^m \\
   & \quad + \sum_{j=0}^k\sum_{\ell=0}^{k-1-j} \frac{(-1)^{j+\ell}}{\ell+1}\csigma_{k-1-j-\ell,\phi}^m\cN_{j+1,\phi}^m\,d\cN_{\ell+1,\phi}^m \\
   & = (k+1)\sum_{j=0}^k\frac{(-1)^j}{j+1}\csigma_{k-j,\phi}^m\,d\cN_{j+1,\phi}^m,
 \end{align*}
 as desired.

 We next show that
 \begin{multline}
  \label{eqn:pf_div_T_div_P}
  \delta_\phi\left(P_\phi^m\right)^k+\bigl(\cZ_\phi^m\bigr)^k\,d\phi = \sum_{j=0}^{k-1}\frac{1}{j+1}\left(P_\phi^m\right)^{k-1-j}\left(\nabla\cN_{j+1,\phi}^m\right) \\ + \sum_{j=0}^{k-2}\left(P_\phi^m\right)^{k-2-j}\left(dP_\phi^m\cdot \cQ_{j+1,\phi}^m\right)
 \end{multline}
 for all $k\in\bN_0$.  By definition,
 \begin{equation}
  \label{eqn:pf_div_T_div_P_step1}
  \delta_\phi\left(P_\phi^m\right)^k = \left(P_\phi^m\right)^{k-1}\left(\delta_\phi P_\phi^m\right) + \sum_{j=1}^{k-1} \left(P_\phi^m\right)^{k-1-j}\left((\nabla P_\phi^m)\cdot\left(P_\phi^m\right)^j\right) .
 \end{equation}
 Observe that
 \begin{equation}
  \label{eqn:pf_div_T_div_P_step2}
  (\nabla P_\phi^m)\cdot\left(P_\phi^m\right)^j = \frac{1}{j+1}d\cN_{j+1,\phi}^m - \bigl(\cZ_\phi^m\bigr)^j\,d\cY_\phi^m + dP_\phi^m\cdot\left(P_\phi^m\right)^j .
 \end{equation}
 Using Lemma~\ref{lem:div_and_tr} to evaluate $\tr dP_\phi^m:=dP_\phi^m\cdot g$, using Lemma~\ref{lem:div_T1} to evaluate $\delta_\phi P_\phi^m$, and inserting~\eqref{eqn:pf_div_T_div_P_step2} into~\eqref{eqn:pf_div_T_div_P_step1} yields~\eqref{eqn:pf_div_T_div_P}.

 Finally, the definition~\eqref{eqn:defn_cT} of the $k$-th weighted Newton tensor implies that
 \[ \delta_\phi\cT_{k,\phi}^m = \sum_{j=0}^k (-1)^j\left[ \left(P_\phi^m\right)^j(\nabla\csigma_{k-j,\phi}^m) + \csigma_{k-j,\phi}^m\delta_\phi\left(P_\phi^m\right)^j \right] . \]
 Using~\eqref{eqn:pf_div_T_nabla_sigma} and~\eqref{eqn:pf_div_T_div_P} to evaluate the right-hand side of the above display yields the desired result.
\end{proof}

\subsection{Weighted Einstein manifolds}
\label{subsec:smms/we}

We conclude this section with a brief discussion of weighted Einstein manifolds and their properties as needed in Theorem~\ref{thm:intro/stable_qe}, Theorem~\ref{thm:intro/obata}, Theorem~\ref{thm:intro/full}, and related discussions.

\begin{defn}
 A \emph{weighted Einstein manifold} is a smooth metric measure space $(M^n,g,v,m,\mu)$ such that $P_\phi^m=\lambda g$ for some $\lambda\in\bR$.
\end{defn}

The following lemma states that for every weighted Einstein manifold, there is a scale for which it has constant weighted $\sigma_1$-curvature (cf.\ \cite[Proposition~5]{Kim_Kim}).

\begin{lem}
 \label{lem:we_kappa}
 Let $(M^n,g,v,m,\mu)$ be such that $P_\phi^m=\lambda g$ for $\lambda\in\bR$.  Then there is a unique constant $\kappa\in\bR$ such that $\csigma_{1,\phi}^m=(m+n)\lambda$.
\end{lem}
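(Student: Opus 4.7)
The plan is to combine the weighted Einstein condition $P_\phi^m = \lambda g$ with the weighted second Bianchi identity to obtain an exact differential expression whose vanishing gives the constant $\kappa$.

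Since $P_\phi^m = \lambda g$ with $\lambda$ constant, a direct computation using the definition of $\delta_\phi$ on a symmetric $2$-tensor yields
\[ \delta_\phi P_\phi^m = -\lambda\, d\phi . \]
On the other hand, Lemma~\ref{lem:div_and_tr} asserts that
\[ \delta_\phi P_\phi^m = dJ_\phi^m - \frac{1}{m}Y_\phi^m\,d\phi . \]
Taking the trace of $P_\phi^m = \lambda g$ gives $\tr P_\phi^m = n\lambda$, and hence $Y_\phi^m = J_\phi^m - n\lambda$. Equating the two expressions for $\delta_\phi P_\phi^m$ and substituting $Y_\phi^m = J_\phi^m - n\lambda$ yields
\[ dJ_\phi^m = \frac{1}{m}\bigl(J_\phi^m - (m+n)\lambda\bigr)\,d\phi . \]

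Next I would substitute $\phi = -m\ln v$, so that $d\phi = -m\,v^{-1}\,dv$, and set $F := J_\phi^m - (m+n)\lambda$. The displayed identity then becomes
\[ v\,dF + F\,dv = 0, \quad\text{i.e.,}\quad d(vF) = 0 . \]
Assuming $M$ is connected, this forces $vF$ to equal a constant; calling this constant $-m\kappa$ gives
\[ J_\phi^m + m\kappa v^{-1} = (m+n)\lambda . \]
Since $\csigma_{1,\phi}^m = \sigma_{1,\phi}^m + m\kappa v^{-1} = J_\phi^m + m\kappa v^{-1}$, this is exactly the claimed identity, and uniqueness of $\kappa$ is immediate from the fact that $v$ is strictly positive.

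The argument is essentially a one-line Bianchi-type computation, so the main thing to get right is the sign conventions in the definition of $\delta_\phi$ and the passage between $\phi$ and $v$; there is no deeper obstacle. If $M$ is disconnected, one applies the argument component by component, but the uniqueness statement still holds globally since the constant $\kappa$ is determined on any single component.
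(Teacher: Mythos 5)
Your proof is correct and takes essentially the same route as the paper: the paper's entire proof is the single identity $\delta_\phi\left(P_\phi^m-\lambda g\right) - \frac{1}{m}\tr\left(P_\phi^m-\lambda g\right)d\phi = v^{-1}d\left(\left(J_\phi^m-(m+n)\lambda\right)v\right)$, obtained from Lemma~\ref{lem:div_and_tr}, which is exactly your computation packaged into one line, after which $P_\phi^m=\lambda g$ forces $\left(J_\phi^m-(m+n)\lambda\right)v$ to be constant and positivity of $v$ gives uniqueness. One small correction to your closing remark: on a disconnected $M$ the constant may differ between components, so it is \emph{existence} of a single global $\kappa$ that can fail (not uniqueness that survives); since $M$ is implicitly connected here, as in the paper, this does not affect the argument.
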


\begin{defn}
 The \emph{scale} of a weighted Einstein manifold $(M^n,g,v,m,\mu)$ satisfying $P_\phi^m=\lambda g$ is the constant $\kappa\in\bR$ such that $\csigma_{1,\phi}^m=(m+n)\lambda$.
\end{defn}

\begin{proof}[Proof of Lemma~\ref{lem:we_kappa}]
 We must find a $\kappa\in\bR$ such that $J_\phi^m+m\kappa v^{-1}=(m+n)\lambda$.  Lemma~\ref{lem:div_and_tr} implies that
 \[ \delta_\phi\left(P_\phi^m-\lambda g\right) - \frac{1}{m}\tr\left(P_\phi^m-\lambda g\right)\,d\phi = v^{-1}d\left(\left(J_\phi^m-(m+n)\lambda\right)v\right), \]
 from which the conclusion readily follows.
\end{proof}

A special case of weighted Einstein manifolds already studied in the literature are quasi-Einstein manifolds~\cite{Case_Shu_Wei}, a class of manifolds which include static metrics in general relativity, the bases of Einstein warped product manifolds, and gradient Ricci solitons.

\begin{defn}
 \label{defn:qe}
 A \emph{quasi-Einstein manifold} is a weighted Einstein manifold with scale $\kappa=0$.
\end{defn}

\begin{remark}
 It is readily checked that if $(M^n,g,v,m,\mu)$ is a quasi-Einstein manifold in the sense of Definition~\ref{defn:qe}, then $\Ric_\phi^m=\frac{2(m+n-1)}{m+n-2}\lambda g$; i.e.\ $(M^n,g)$ is quasi-Einstein in the sense of~\cite{Case_Shu_Wei}.
 
 Conversely, if $(M^n,g)$ is such that $\Ric_\phi^m=\lambda g$ for some $\phi\in C^\infty(M)$ and some constants $\lambda\in\bR$ and $m\in\bR_+$, it is known~\cite{Kim_Kim} that there is a constant $\mu\in\bR$ such that $(M^n,g,e^{-\phi/m},m,\mu)$ satisfies $R_\phi^m=(m+n)\lambda$.  It follows that $(M^n,g,e^{-\phi/m},m,\mu)$ is quasi-Einstein in the sense of Definition~\ref{defn:qe}.
\end{remark}

We next discuss the positively-curved flat models of quasi-Einstein and weighted Einstein manifolds; i.e.\ the smooth metric measure spaces which are locally conformally flat in the weighted sense and are weighted Einstein manifolds with nonnegative scale and $\lambda>0$.  In what follows, we regard $S^n\subset\bR^{n+1}$ as the set
\[ S^n = \left\{ x \in \bR^{n+1} \suchthat \lv x\rv^2 = 1\right\} \]
and let $x_1,\dotsc,x_{n+1}$ denote both the standard coordinates on $\bR^{n+1}$ and their restriction to $S^{n+1}$.  The upper hemisphere $S_+^n$ is defined by
\[ S_+^n = \left\{ x\in S^n \suchthat x_{n+1}\geq0 \right\} . \]
Note that in both of the examples below the function $v$ is allowed to vanish on a set of measure zero.  Thus these spaces are not examples of smooth metric measure spaces on closed manifolds as defined in this article.

First we discuss the model spaces for quasi-Einstein manifolds.  Additional examples of quasi-Einstein manifolds are discussed, for example, in~\cite{Case2010a,HePetersenWylie2010}.

\begin{example}
 \label{ex:model_zero}
 Fix $n\in\bN$ and $m\in\bR$.  The \emph{positive elliptic $m$-Gaussian} is the quasi-Einstein manifold $(S_+^n,d\theta^2,\cos r,m,1)$, where $d\theta^2$ is the round metric of constant sectional curvature $1$ on the $n$-sphere and $r$ denotes the geodesic distance from $(0,\dotsc,0,1)$.  Indeed, using the well-known facts
 \begin{equation}
  \label{eqn:cosr_facts}
  \begin{split}
  \nabla^2\cos r & = -\cos r\,d\theta^2, \\
  \lv\nabla\cos r\rv^2 & = 1-\cos^2 r,
  \end{split}
 \end{equation}
 we readily compute that the positive elliptic $m$-Gaussian satisfies
 \begin{align*}
  P_\phi^m & = \frac{m+n-2}{2}g, \\
  J_\phi^m & = \frac{(m+n)(m+n-2)}{2} .
 \end{align*}
 
 Given $a\in\bR$ and $\xi\in\bR^{n+1}$ such that $\lv\xi\rv^2<1$ and $\xi_{n+1}=0$, denote by $u$ the function on $S_+^n$ given by
 \begin{equation}
  \label{eqn:uqe}
  u(\zeta) = \frac{a}{\sqrt{1-\lv\xi\rv^2}}\left(1+\xi\cdot\zeta\right) .
 \end{equation}
 Consider the metric-measure structure $(\hg,\hv)=(u^{-2}d\theta^2,u^{-1}\,\cos r)$.  Using ``hats'' to denote weighted invariants determined by $(\hg,\hv)$, it is straightforward to check that
 \begin{align*}
  \hP_\phi^m & = \frac{m+n-2}{2}a^2\hg, \\
  \hJ_\phi^m & = \frac{(m+n)(m+n-2)}{2}a^2 .
 \end{align*}
 Thus $(\hg,\hv)$ is a quasi-Einstein metric-measure structure which is pointwise conformally equivalent to the positive elliptic $m$-Gaussian.  Indeed, all such quasi-Einstein metrics are given by conformally rescaling by a function of the form~\eqref{eqn:uqe}; see Proposition~\ref{prop:wKe_rigidity} below.  Note that if we choose $\xi_{n+1}\not=0$, then $\hJ_\phi^m$ is not constant; more precisely, $(\hg,\hv)$ is a weighted Einstein metric-measure structure with nonzero scale.
\end{example}

Next we discuss the model spaces for weighted Einstein manifolds with positive scale.  Further examples of weighted Einstein manifolds will be discussed elsewhere.

\begin{example}
 \label{ex:model_positive}
 Fix $n\in\bN$ and $m\in\bR$.  The \emph{standard $m$-weighted $n$-sphere} is the weighted Einstein manifold $(S^n,d\theta^2,1+\cos r,m,0)$, where $d\theta^2$ denotes the round metric of constant sectional curvature $1$ and $r$ denotes the geodesic distance from $(0,\dotsc,0,1)$.  Indeed, using the facts~\eqref{eqn:cosr_facts}, we readily compute that the standard $m$-weighted $n$-sphere satisfies
 \begin{align*}
  P_\phi^m & = \frac{m+n-2}{2}, \\
  J_\phi^m + m(m+n-2)v^{-1} & = \frac{(m+n)(m+n-2)}{2} .
 \end{align*}
 
 Given $a\in\bR$ and $\xi\in\bR^{n+1}$ such that $\lv\xi\rv^2<1$, denote by $u$ the function on $S^n$ given by
 \[ u(\zeta) = \frac{a}{\sqrt{1-\lv\xi\rv}^2}\left(1+\xi\cdot\zeta\right) . \]
 Consider the metric-measure structure $(\hg,\hv)=(u^{-2}d\theta^2,u^{-1}(1+\cos r))$.  Using ``hats'' to denote weighted invariants determined by $(\hg,\hv)$, it is straightforward to check that
 \begin{align*}
  \hP_\phi^m & = \frac{m+n-2}{2}a^2\hg, \\
  \hJ_\phi^m + m(m+n-2)\left(\frac{a(1-\xi_{n+1})}{\sqrt{1-\lv\xi\rv^2}}\right)\hv^{-1} & = \frac{(m+n)(m+n-2)}{2}a^2 .
 \end{align*}
 Thus $(\hg,\hv)$ is a weighted Einstein metric-measure structure with positive scale which is pointwise conformally equivalent to the standard $m$-weighted $n$-sphere.  Indeed, all such weighted Einstein metrics are given by a function of this form; see Proposition~\ref{prop:wKe_rigidity} below.
\end{example}

\begin{remark}
 Note that the space of quasi-Einstein metrics in the weighted conformal class of the $n$-dimensional positive elliptic $m$-Gaussian is $(n+1)$-dimensional, while the space of weighted Einstein metrics in the weighted conformal class of the standard $m$-weighted $n$-sphere is $(n+2)$-dimensional.  This and the relation between weighted Einstein manifolds and sharp Gagliardo--Nirenberg inequalities~\cite{Case2011gns} provide strong evidence that weighted Einstein metrics are, from the point of view of conformal geometry, the more natural Einstein-type structure to study on smooth metric measure spaces.
\end{remark}

Lemma~\ref{lem:we_kappa} asserts that a weighted Einstein manifold $(M^n,g,v,m,\mu)$ with scale $\kappa$ has constant weighted $\sigma_1$-curvature.  Such manifolds also have constant weighted $\sigma_k$-curvatures and constant weighted Newton tensors.  However, the weighted Bach tensor is not necessarily constant, though it can be computed in terms of only the two-jets of $g$ and $v$.

\begin{prop}
 \label{prop:we_sigma2}
 Let $(M^n,g,v,m,\mu)$ be a weighted Einstein manifold with scale $\kappa\in\bR$ and such that $P_\phi^m=\lambda g$.  Let $k\in\bN_0$.  Then
 \begin{align}
  \label{eqn:we_csigmak} \csigma_{k,\phi}^m & = \binom{m+n}{k}\lambda^k, \\
  \label{eqn:we_csk} \cs_{k,\phi}^m & = \binom{m+n-1}{k}\lambda^k, \\
  \label{eqn:we_cTk} \cT_{k,\phi}^m & = \binom{m+n-1}{k}\lambda^kg, \\
  \label{eqn:we_bach} B_\phi^m & = m\kappa v^{-1}P_\phi^{m-1} - \frac{m(m+n-3)}{m+n-2}\lambda\kappa v^{-1}g,
 \end{align}
 where
 \begin{equation}
  \label{eqn:P-1}
  P_\phi^{m-1} = P_\phi^m + v^{-1}\nabla^2v + \frac{1}{m(m+n-2)}Y_\phi^m g
 \end{equation}
 is the weighted Schouten tensor of $(M^n,g,v,m-1,\mu)$.\end{prop}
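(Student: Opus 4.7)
The plan is to dispatch (1)--(3) as purely algebraic consequences of Section~\ref{sec:algebra} applied to a single crucial observation, verify (5) by a direct calculation with the definition of $P_\phi^m$, and reduce (4) to the identity $\text{tr}\,A_\phi^m$ from Lemma~\ref{lem:div_and_tr} by using the weighted Einstein hypothesis to kill the first-order terms in the weighted Bach tensor.

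The key algebraic input is the computation of $\cY_\phi^m$. Since $P_\phi^m=\lambda g$ gives $\tr P_\phi^m = n\lambda$, and since the scale $\kappa$ is characterized by $\csigma_{1,\phi}^m=(m+n)\lambda$, the definition $\sigma_1^m(\lambda;P)=\lambda+\tr P$ immediately yields $\cY_\phi^m=m\lambda$ and hence $\cZ_\phi^m=\lambda$. Substituting $\cY_\phi^m=m\lambda$ and $\Lambda(P_\phi^m)=(\lambda,\dotsc,\lambda)$ ($n$ entries) into Lemma~\ref{lem:reln_to_trivial} and applying Vandermonde's identity gives \eqref{eqn:we_csigmak}. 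Lemma~\ref{lem:s_to_sigma-1} then gives \eqref{eqn:we_csk} by the same argument with $m-1$ in place of $m$. For \eqref{eqn:we_cTk}, Lemma~\ref{lem:eigenvalues_T} shows that every eigenvalue of $\cT_{k,\phi}^m$ equals $\sigma_k^m\bigl(m\lambda;\lambda,\dotsc,\lambda\bigr)$ with $n-1$ repetitions, which is $\binom{m+n-1}{k}\lambda^k$, so $\cT_{k,\phi}^m$ is this scalar times the identity.

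Identity \eqref{eqn:P-1} is a direct computation from the definition of $\Ric_\phi^m$ with $\phi=-m\ln v$: the pointwise formula $\Ric_\phi^m=\Ric-mv^{-1}\nabla^2 v$ shows $\Ric_\phi^{m-1}-\Ric_\phi^m=v^{-1}\nabla^2 v$, and Lemma~\ref{lem:eval_Yphim} controls the difference $J_\phi^{m-1}-J_\phi^m$ in terms of $Y_\phi^m$ so that the trace part of $P_\phi^{m-1}-P_\phi^m-v^{-1}\nabla^2 v$ matches $\tfrac{1}{m(m+n-2)}Y_\phi^m g$. (Equivalently, one may invoke the warped-product interpretation: $P_\phi^m$ and $P_\phi^{m-1}$ are restrictions of the Schouten tensors of formal warped products of dimensions $m+n$ and $m+n-1$, and their difference has precisely this form.)

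Formula \eqref{eqn:we_bach} is the main computation. Since $P_\phi^m=\lambda g$ is parallel, $dP_\phi^m=0$, so the first two terms in the definition of $B_\phi^m$ vanish and $B_\phi^m(x,y)=\lp A_\phi^m(\cdot,x,\cdot,y), P_\phi^m-\tfrac{1}{m}Y_\phi^m g\rp$. Using $\cY_\phi^m=m\lambda$, one has $Y_\phi^m=m\lambda-m\kappa v^{-1}$, whence $P_\phi^m-\tfrac{1}{m}Y_\phi^m g=\kappa v^{-1} g$, and the inner product collapses to $B_\phi^m=\kappa v^{-1}\tr A_\phi^m$. Substituting the formula for $\tr A_\phi^m$ from Lemma~\ref{lem:div_and_tr}, noting that $-\nabla^2\phi+\tfrac{1}{m}d\phi\otimes d\phi=mv^{-1}\nabla^2 v$ when $\phi=-m\ln v$, and then eliminating $v^{-1}\nabla^2 v$ by \eqref{eqn:P-1} gives
\[ \tr A_\phi^m = mP_\phi^{m-1}-\frac{m(m+n-3)}{m+n-2}\lambda g, \]
after a cancellation of the $Y_\phi^m g$ contributions. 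Multiplying by $\kappa v^{-1}$ yields \eqref{eqn:we_bach}. The main obstacle is exactly this last cancellation: getting the coefficient $\tfrac{m(m+n-3)}{m+n-2}$ requires carefully tracking how the $Y_\phi^m$ term in $\tr A_\phi^m$ and the $Y_\phi^m$ term in \eqref{eqn:P-1} combine with the $\tfrac{m}{m+n-2}P_\phi^m$ piece.
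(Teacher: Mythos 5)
Your proof is correct and follows essentially the same route as the paper's: derive $\cY_\phi^m=m\lambda$ from the scale normalization $\csigma_{1,\phi}^m=(m+n)\lambda$, obtain \eqref{eqn:we_csigmak}--\eqref{eqn:we_cTk} from the algebraic lemmas of Section~\ref{sec:algebra}, deduce \eqref{eqn:P-1} from Lemma~\ref{lem:eval_Yphim}, and reduce \eqref{eqn:we_bach} to $B_\phi^m=\kappa v^{-1}\tr A_\phi^m$ via $dP_\phi^m=0$ before combining Lemma~\ref{lem:div_and_tr} with \eqref{eqn:P-1}. The only difference is expository: you spell out the Vandermonde computations and the exact cancellation of the $Y_\phi^m g$ terms (yielding the coefficient $\tfrac{m(m+n-3)}{m+n-2}$) that the paper leaves implicit.
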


\begin{proof}
 To begin, consider an arbitrary smooth metric measure space $(M^n,g,v,m,\mu)$.  Lemma~\ref{lem:eval_Yphim} implies that the weighted Schouten scalar $J_\phi^{m-1}$ of $(M^n,g,v,m-1,\mu)$ satisfies
 \begin{equation}
  \label{eqn:J-1}
  J_\phi^{m-1} = \frac{m+n-3}{m+n-2}\left(J_\phi^m - \frac{1}{m}Y_\phi^m\right) .
 \end{equation}
 This readily yields~\eqref{eqn:P-1}.

 Now suppose that $(M^n,g,v,m,\mu)$ is a weighted Einstein manifold with scale $\kappa$.  Since $\csigma_{1,\phi}^m=\tr P_\phi^m+\cY_\phi^m$, we see that
 \begin{equation}
  \label{eqn:we_cY}
  m\lambda = \cY_\phi^m = Y_\phi^m + m\kappa v^{-1} .
 \end{equation}
 This yields~\eqref{eqn:we_csigmak}, \eqref{eqn:we_csk}, and~\eqref{eqn:we_cTk}.    From the definition of the weighted Bach tensor, we see that $B_\phi^m=\kappa v^{-1}\tr A_\phi^m$.  Combining this observation with Lemma~\ref{lem:div_and_tr} and~\eqref{eqn:P-1} yields~\eqref{eqn:we_bach}.
 \end{proof}

In the special case of a closed weighted Einstein smooth metric measure space $(M^n,g,v,m,0)$, Proposition~\ref{prop:we_sigma2} yields a formula for $\int v^{-1}d\nu$ in terms of only $\lambda$, $\kappa$, and the weighted volume.

\begin{prop}
 \label{prop:we_integral_relation}
 Let $(M^n,g,v,m,0)$ be a closed weighted Einstein manifold with scale $\kappa\in\bR$ and such that $P_\phi^m=\lambda g$.  Then
 \begin{equation}
  \label{eqn:we_integral}
  \lambda\int_M d\nu = \frac{2m+n-2}{2(m+n-1)}\kappa\int_M v^{-1}\,d\nu .
 \end{equation}
\end{prop}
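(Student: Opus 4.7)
The plan is to integrate a weighted Laplacian identity for $\phi = -m\ln v$ over the closed manifold and read off the claimed identity from the resulting scalar relation. The key point is that $\int_M \Delta_\phi f\,d\nu = 0$ for any smooth $f$ on a closed $M$, since $(\Delta_\phi f) v^m\,\dvol_g = \divsymb(v^m \nabla f)\,\dvol_g$.

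First, I would extract pointwise expressions for $Y_\phi^m$ and $J_\phi^m$. From Lemma~\ref{lem:we_kappa} one has $\csigma_{1,\phi}^m = (m+n)\lambda$, i.e. $J_\phi^m + m\kappa v^{-1} = (m+n)\lambda$, and since $P_\phi^m = \lambda g$ gives $\tr P_\phi^m = n\lambda$, the definition $Y_\phi^m = J_\phi^m - \tr P_\phi^m$ yields
\[
 J_\phi^m = (m+n)\lambda - m\kappa v^{-1}, \qquad Y_\phi^m = m\lambda - m\kappa v^{-1}.
\]
Next, I would use Lemma~\ref{lem:eval_Yphim} with $\mu = 0$, namely $Y_\phi^m = \Delta_\phi\phi - \frac{m}{m+n-2}J_\phi^m$, to solve for $\Delta_\phi\phi$ in terms of $\lambda$ and $\kappa v^{-1}$. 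Substituting the two identities above and combining the $\lambda$ and $\kappa v^{-1}$ terms gives
\[
 \Delta_\phi\phi = \frac{2m(m+n-1)}{m+n-2}\lambda - \frac{m(2m+n-2)}{m+n-2}\kappa v^{-1}.
\]

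Finally, I would integrate this pointwise identity against $d\nu$ over the closed manifold $M$. Since the left-hand side has vanishing weighted integral, the right-hand side must too, which rearranges immediately to \eqref{eqn:we_integral} after dividing by $\frac{2m(m+n-1)}{m+n-2}$. There is no real obstacle; the only points requiring care are the algebraic simplification of the $\Delta_\phi\phi$ formula and the verification that $\phi = -m\ln v$ is smooth on $M$ (which holds since $v > 0$), so that the weighted Laplacian identity $\int_M \Delta_\phi\phi\,d\nu = 0$ applies.
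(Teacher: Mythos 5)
Your proof is correct and follows essentially the same route as the paper: both rest on Lemma~\ref{lem:eval_Yphim} with $\mu=0$ together with $\int_M \Delta_\phi\phi\,d\nu=0$ on a closed manifold. The only cosmetic difference is order of operations --- the paper first integrates to get the general identity~\eqref{eqn:we_integral_nocsigma} and then substitutes $\csigma_{1,\phi}^m=(m+n)\lambda$ and $\cY_\phi^m=m\lambda$, whereas you substitute the weighted Einstein values pointwise and then integrate; your algebra (both coefficients of $\Delta_\phi\phi$) checks out.
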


\begin{proof}
 Given any closed smooth metric measure space $(M^n,g,v,m,0)$ and any $\kappa\in\bR$, Lemma~\ref{lem:eval_Yphim} implies that
 \begin{equation}
  \label{eqn:we_integral_nocsigma}
  (2m+n-2)m\kappa\int_M v^{-1}\,d\nu = \int_M \left(m\csigma_{1,\phi}^m + (m+n-2)\cY_\phi^m\right)d\nu .
 \end{equation}
 The conclusion~\eqref{eqn:we_integral} then follows from~\eqref{eqn:we_csigmak} and~\eqref{eqn:we_cY}.
\end{proof}
\section{The space of metric-measure structures}
\label{sec:moduli}

In this section we briefly discuss a natural formalism for studying the space of metric-measure structures on weighted manifolds.  Recall that the \emph{space of metric-measure structures} on $(M^n,m,\mu)$ is
\[ \kM(M,m,\mu) := \Met(M)\times C^\infty(M;\bR_+) , \]
where $\Met(M)$ is the space of Riemannian metrics on $M$ and $C^\infty(M;\bR_+)$ is the space of positive smooth functions on $M$.  When the weighted manifold $(M^n,m,\mu)$ is clear by context, we denote by $\kM$ its space of metric-measure structures.  Note that, as a set, $\kM(M,m,\mu)$ depends only on $M$.  The role of the parameters $m$ and $\mu$ is to determine the geometry of elements of $\kM$, especially in the definitions of weighted invariants.  The fact that $\kM(M,m,\mu)=\kM(M,m^\prime,\mu)$ as sets for any $m,m^\prime\in\bR$ provides a useful way to relate metric-measure structures for different values of $m$.

It is clear that a weighted conformal class $\kC$ on a weighted manifold is a subset of $\kM$; moreover, any $(g,v)\in\kM$ uniquely determines a weighted conformal class $\kC=[g,v]$.  Just as many geometric variational problems are most naturally posed under a volume constraint, we consider the sets
\begin{align*}
 \kM_1 & := \left\{ (g,v)\in\kM \suchthat \int_M d\nu(g,v) = 1 \right\} , \\
 \kC_1 & := \kM_1 \cap \kC 
\end{align*}
consisting of unit-volume metric-measure structures within $\kM$ and $\kC$, respectively.

A \emph{weighted invariant} is a function $I$ defined on $\kM(M,m,\mu)$ which is invariant with respect to the action of the diffeomorphism group $\Diff(M)$ of $M$.  One special class of weighted invariants consists of \emph{weighted functionals}, namely maps $\mS\colon\kM\to\bR$ such that $\mS(f^\ast g,f^\ast v)=\mS(g,v)$ for every $f\in\Diff(M)$ and every $(g,v)\in\kM$.  Another special class consists of \emph{weighted scalar invariants}, namely maps $I_\phi^m\colon\kM\to C^\infty(M)$ such that $I_\phi^m(f^\ast g,f^\ast v)=f^\ast\bigl(I_\phi^m(g,v)\bigr)$ for every $f\in\Diff(M)$ and every $(g,v)\in\kM$.  For example, the weighted volume element $d\nu$ is a volume-element-valued weighted invariant, the weighted $\sigma_k$-curvatures are weighted scalar invariants, and the total weighted $\sigma_k$-curvature functionals obtained by integrating the weighted $\sigma_k$-curvature with respect to $d\nu$ are weighted functionals.

Let $(g,v)\in\kM$ and consider the formal tangent space $T_{(g,v)}\kM$ consisting of derivatives $\gamma^\prime(0)$ of smooth paths $\gamma\colon\bR\to\kM$ such that $\gamma(0)=(g,v)$.  We denote by
\[ T\kM := \bigcup_{(g,v)\in\kM} T_{(g,v)}\kM \]
the formal tangent bundle of $\kM$.  We identify $T_{(g,v)}\kM\cong \Gamma\bigl(S^2T^\ast M\bigr)\oplus C^\infty(M)$ via the bijection
\[ \Psi_{(g,v)}^m\colon T_{(g,v)}\kM \to \Gamma\left(S^2T^\ast M\right)\oplus C^\infty(M) \]
defined as follows: Given $\gamma^\prime(0)\in T_{(g,v)}\kM$, set
\begin{subequations}
 \label{eqn:TkM_geometric}
 \begin{align}
  \label{eqn:TkM_geometric_h} h & = v^2\left.\frac{\partial}{\partial t}\right|_{t=0}\left( v_t^{-2}g_t\right), \\
  \label{eqn:TkM_geometric_psi} \psi & = -\left.\frac{\partial}{\partial t}\right|_{t=0}\frac{d\nu(g_t,v_t)}{d\nu(g,v)},
 \end{align}
\end{subequations}
for $(g_t,v_t):=\gamma(t)$.  Then set $\Psi_{(g,v)}^m\left(\gamma^\prime(0)\right):=(h,\psi)$.  The inverse is
\begin{multline}
 \label{eqn:TkM}
 \bigl(\Psi_{(g,v)}^m\bigr)^{-1}(h,\psi) = \gamma^\prime(0) \\ := \left( h-\frac{2}{m+n}\left(\psi+\frac{1}{2}\tr_gh\right)g, -\frac{1}{m+n}\left(\psi+\frac{1}{2}\tr_gh \right)v \right) .
\end{multline}
From~\eqref{eqn:TkM_geometric_h} we observe that $h=0$ if and only if $\bigl(\Psi_{(g,v)}^m\bigr)^{-1}(h,\psi)$ is tangent to a curve in the weighted conformal class $[g,v]$.  When the weighted manifold $(M^n,m,\mu)$ and metric-measure structure $(g,v)\in\kM$ are clear from context, we omit the function $\Psi_{(g,v)}^m$ and simply identify $T_{(g,v)}\kM$ with $\Gamma\left(S^2T^\ast M\right)\oplus C^\infty(M)$.

The splitting $S^2T^\ast M=S_0^2T^\ast M\oplus \bR g$ of $S^2T^\ast M\cong T_g\Met(M)$ into trace-free and pure trace parts is the Riemannian analogue of the decomposition~\eqref{eqn:TkM_geometric}, especially in that the $S_0^2T^\ast M$-component of a formal tangent vector $\gamma^\prime(0)\in S^2T^\ast M$ vanishes if and only if $\gamma^\prime(0)$ is tangent to a curve in the weighted conformal class of $\gamma(0)$.  This motivates us to define a $C^\infty(M)$-linear map $\tf_\phi\colon \Gamma\left(S^2T^\ast M\right)\to \Gamma\left(S^2T^\ast M\right)$ on any smooth metric measure space $(M^n,g,v,m,\mu)$ which produces the ``weighted trace-free component'' of a given tensor field.  While it is unclear how to define this map in general, it is clear on a case-by-case basis: We denote
\begin{align*}
 \tf_\phi g & := 0 , \\
 \tf_\phi \left(P_\phi^m\right)^k & := \left(P_\phi^m\right)^k - \frac{1}{m+n}N_{k,\phi}^mg, \\
 \tf_\phi \nabla^2u & := \nabla^2u - \frac{1}{m+n}\Delta_\phi u\,g, \\
 \tf_\phi B_\phi^m & := B_\phi^m
\end{align*}
for all $k\in\bN_0$ and all $u\in C^\infty(M)$.  These definitions are motivated by considering the trace-free part of the analogous Riemannian invariants in $(m+n)$-dimensions.

Formally, the first variation (or linearization) of a weighted invariant $I$ is the exterior derivative $DI$.  For example, if $I_\phi^m\colon\kM\to C^\infty(M)$ is a weighted scalar invariant, its first variation is the map $DI_\phi^m\colon T\kM\to C^\infty(M)$ defined by
\[ DI_\phi^m[h,\psi] := \left.\frac{\partial}{\partial t}\right|_{t=0}I_\phi^m\left(\gamma(t)\right) \]
for all $(h,\psi)\in T_{(g,v)}\kM$ and all $(g,v)\in\kM$, where $\gamma\colon\bR\to\kM$ is a smooth path such that $\gamma(0)=(g,v)$ and $\gamma^\prime(0)=\bigl(\Psi_{(g,v)}^m\bigr)^{-1}(h,\psi)$.  A special case of interest is the linearization of the restriction of a weighted scalar invariant $I_\phi^m$ to a weighted conformal class $\kC$, which we regard as a map $DI_\phi^m\colon T\kC\to C^\infty(M)$.  Our convention~\eqref{eqn:TkM_geometric} is such that for any $(g,v)\in\kC$, we may identify $T_{(g,v)}\kC\cong C^\infty(M)$ by
\[ T_{(g,v)}\kC = \left(\Psi_{(g,v)}^m\right)^{-1}\left\{ (0,\psi) \suchthat \psi\in C^\infty(M) \right\} . \]
Equivalently, we identify a function $\psi\in C^\infty(M)$ with the tangent vector to the curve
\[ \gamma(t) = \left(e^{-\frac{2t\psi}{m+n}}g,e^{-\frac{t\psi}{m+n}}v\right) \]
at $t=0$.  Note also that the identification $T_{(g,v)}\kC\cong C^\infty(M)$ gives rise to the identification
\begin{equation}
 \label{eqn:TkC1_identification}
 T_{(g,v)}\kC_1 \cong \left\{ \psi\in C^\infty(M) \suchthat \int_M \psi\,d\nu = 0 \right\} .
\end{equation}
When $(M^n,m,\mu)$ and $(g,v)\in\kC$ are clear by context, we simply identify $T_{(g,v)}\kC$ with $C^\infty(M)$ and we identify $T_{(g,v)}\kC_1$ with mean-free elements of $C^\infty(M)$.

We say that $DI_\phi^m$ is \emph{formally self-adjoint} if for each $(g,v)\in\kC$, the operator $DI_\phi^m\colon C^\infty(M)\to C^\infty(M)$ is formally self-adjoint with respect to the natural $L^2$-inner product on $C^\infty(M)$ induced by $d\nu(g,v)$.

Let $\kC$ be a weighted conformal class on a weighted manifold $(M^n,m,\mu)$.  A weighted scalar invariant $I_\phi^m$ is \emph{conformally variational (on $\kC$)} if there is a weighted functional $\mS$ such that
\begin{equation}
 \label{eqn:conf_var_el}
 D\mS[\psi] = \int_M I_\phi^m\psi\,d\nu
\end{equation}
for all $\psi\in T_{(g,v)}\kC$ and all $(g,v)\in\kC$.  This property is equivalent to the self-adjointness of $DI_\phi^m$ (cf.\ \cite{Bourguignon1985,BransonGover2008}).

\begin{lem}
 \label{lem:var_to_sa}
 Let $\kC$ be a weighted conformal class on $(M^n,m,\mu)$.  A weighted scalar invariant $I_\phi^m$ is conformally variational if and only if its linearization $DI_\phi^m$ is formally self-adjoint.  Moreover, if $I_\phi^m$ is conformally variational, then
 \begin{equation}
  \label{eqn:var_to_sa_comp}
  D\left(\int_M I_\phi^m\,d\nu\right)[\psi] = -\int_M \left( I_\phi^m - DI_\phi^m[1]\right)\psi\,d\nu
 \end{equation}
 for all $\psi\in T_{(g,v)}\kC$ and all $(g,v)\in\kC$.
\end{lem}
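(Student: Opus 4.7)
The plan is to argue both directions of the equivalence using the standard symmetry-of-mixed-partials argument for the exterior derivative of a 1-form on $\kC$, and then derive the integral identity by a direct computation involving the variation of the weighted volume element.

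For the forward implication, suppose $\mS\colon\kC\to\bR$ satisfies $D\mS[\psi]=\int_M I_\phi^m\psi\,d\nu$ for all $\psi\in T\kC$. Given $(g,v)\in\kC$ and $\psi_1,\psi_2\in T_{(g,v)}\kC$, I would consider the two-parameter family
\[ \gamma(s,t) = \left(e^{-\frac{2(s\psi_1+t\psi_2)}{m+n}}g,\,e^{-\frac{(s\psi_1+t\psi_2)}{m+n}}v\right)\in\kC, \]
and compute both $\partial_t\partial_s\mS(\gamma(s,t))\rv_{(0,0)}$ and $\partial_s\partial_t\mS(\gamma(s,t))\rv_{(0,0)}$. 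Each first derivative yields an integral of $I_\phi^m$ against the corresponding $\psi_i$, and the second derivative produces a $DI_\phi^m$-term plus a $-\psi_1\psi_2 I_\phi^m$-term from differentiating $d\nu(\gamma(s,t))=e^{-(s\psi_1+t\psi_2)}d\nu$; see \eqref{eqn:TkM_geometric_psi}. Equating the two mixed partials cancels the symmetric term $\int I_\phi^m\psi_1\psi_2\,d\nu$ and leaves $\int DI_\phi^m[\psi_2]\,\psi_1\,d\nu=\int DI_\phi^m[\psi_1]\,\psi_2\,d\nu$, which is the self-adjointness of $DI_\phi^m$.

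For the converse, I would construct $\mS$ by integrating the 1-form $\omega_{(g,v)}[\psi]:=\int_M I_\phi^m\psi\,d\nu$ along paths in $\kC$. Fix a basepoint $(g_0,v_0)\in\kC$; each $(\hg,\hv)\in\kC$ is joined to it by the canonical path $\gamma(t)=(e^{-\frac{2tf}{m+n}}g_0,e^{-\frac{tf}{m+n}}v_0)$ with $f$ the weighted conformal factor relating the two, and I would set
\[ \mS(\hg,\hv):=\int_0^1\int_M I_\phi^m(\gamma(t))\,f\,d\nu(\gamma(t))\,dt. \]
To verify $D\mS[\psi]=\int_M I_\phi^m\psi\,d\nu$ at an arbitrary point $(g,v)\in\kC$, I would differentiate along a perturbation of the endpoint and use Fubini together with the same mixed-partials computation as above; the self-adjointness hypothesis is exactly what is needed to show that $\mS$ is well defined (independent of path) and that its derivative recovers $\omega$. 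Since $\kC$ is an affine space modeled on $C^\infty(M)$, there are no topological obstructions.

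For the moreover-statement, I would differentiate directly. Writing $d\nu_t$ for the weighted volume element along the curve $t\mapsto(e^{-\frac{2t\psi}{m+n}}g,e^{-\frac{t\psi}{m+n}}v)$, the computation in the paragraph preceding \eqref{eqn:TkM_geometric_psi} gives $d\nu_t=e^{-t\psi}d\nu$, hence $\partial_t d\nu_t\rv_{t=0}=-\psi\,d\nu$. Combining this with the product rule yields
\[ D\left(\int_M I_\phi^m\,d\nu\right)[\psi]=\int_M DI_\phi^m[\psi]\,d\nu-\int_M I_\phi^m\psi\,d\nu. \]
Applying self-adjointness of $DI_\phi^m$ to the first term with the constant function $1$ gives $\int_M DI_\phi^m[\psi]\,d\nu=\int_M\psi\,DI_\phi^m[1]\,d\nu$, and \eqref{eqn:var_to_sa_comp} follows immediately.

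The main subtlety, rather than an obstacle, is keeping the identifications $T_{(g,v)}\kC\cong C^\infty(M)$ consistent as the basepoint moves along $\gamma(t)$; the tangent vector that the paper labels $\psi_1$ at the basepoint is not literally constant in $s$ when viewed through $\Psi^m_{\gamma(0,t)}$, so a brief check that $\Psi^m_{\gamma(s,t)}$ sends the velocity of the $s$-curve to $\psi_1$ at every $(s,t)$ is needed. The explicit exponential parametrization above is chosen precisely so that this identification is transparent.
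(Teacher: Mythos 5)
Your proposal is correct and takes essentially the same approach as the paper: your mixed-partials computation is precisely the paper's verification that the one-form $\Omega[\psi]:=\int_M I_\phi^m\psi\,d\nu$ is closed if and only if $DI_\phi^m$ is formally self-adjoint (the symmetric term $\int I_\phi^m\psi_1\psi_2\,d\nu$ coming from the variation of $d\nu$ cancels in the antisymmetrization), and your radial path-integral construction of $\mS$ merely makes explicit the Poincar\'e-lemma step that the paper invokes through the contractibility of $\kC$. Your derivation of \eqref{eqn:var_to_sa_comp}, differentiating the volume element via \eqref{eqn:TkM_geometric_psi} and then applying self-adjointness against the constant function $1$, is likewise the paper's argument.
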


\begin{proof}
 Consider the one-form $\Omega\colon T\kM\to\bR$ defined by
 \[ \Omega[\psi] := \int_M I_\phi^m\psi\,d\nu \]
 for all $\psi\in T_{(g,v)}\kC$ and all $(g,v)\in\kC$.  By definition, $I_\phi^m$ is conformally variational if and only if $\Omega$ is exact.  Since $\kC$ is contractible, $\Omega$ is exact if and only if it is closed.  A straightforward computation shows that the formal exterior derivative $D\Omega\colon \Lambda^2T_{(g,v)}\kC\to\bR$ of $\Omega$ is
 \[ D\Omega[\psi_1,\psi_2] = \int_M \left( \psi_2 DI_\phi^m[\psi_1] - \psi_1 DI_\phi^m[\psi_2]\right)\, d\nu \]
 for all $\psi_1,\psi_2\in T_{(g,v)}\kC$ and all $(g,v)\in\kC$.  Thus $\Omega$ is closed if and only if $DI_\phi^m$ is formally self-adjoint, yielding the first assertion.  The second assertion follows immediately from the self-adjointness of $DI_\phi^m$ and~\eqref{eqn:TkM_geometric_psi}.
\end{proof}

If we restrict our attention to conformally variational weighted scalar invariants $I_\phi^m$ which are homogeneous with respect to homotheties in $\kC$, then $DI_\phi^m[1]$ is a constant multiple of $I_\phi^m$, and hence~\eqref{eqn:var_to_sa_comp} generically identifies the functional $\mS$ in the definition~\eqref{eqn:conf_var_el} of the conformally variational property.  For this reason, one frequently restricts attention to homogeneous invariants in the Riemannian setting (cf.\ \cite{BransonGover2008}).  However, due to our goal of producing weighted functionals which include weighted Einstein manifolds among their critical points, we do not impose this homogeneity requirement; see Section~\ref{sec:full} for a detailed discussion.

When computing the linearizations of the total weighted scalar curvature functionals, it is useful to take advantage of the fact that $\kM(M,m,\mu)=\kM(M,m^\prime,\mu)$ as sets for all $m,m^\prime\in\bR$.  Hence $T\kM(M,m,\mu)=T\kM(M,m^\prime,\mu)$ as sets for all $m,m^\prime\in\bR$.  However, the identification $T_{(g,v)}\kM(M,m,\mu)\cong\Gamma\left(S^2T^\ast M\right)\oplus C^\infty(M)$ via the function $\Psi_{(g,v)}^m$ depends on $m$.  These observations allow us to relate the maps $\Psi_{(g,v)}^m$ and $\Psi_{(g,v)}^{m^\prime}$.

\begin{lem}
 \label{lem:change_TkM}
 Fix $m,m^\prime\in\bR_+$ and $\mu\in\bR$.  Let $M^n$ be a smooth manifold.  Given a metric-measure structure $(g,v)\in\kM(M,m,\mu)=\kM(M,m^\prime,\mu)$, define
 \[ \Phi_m^{m^\prime}\colon\Gamma(S^2T^\ast M)\oplus C^\infty(M) \to \Gamma\left(S^2T^\ast M\right)\oplus C^\infty(M) \]
 by
 \[ \Phi_{m}^{m^\prime}\left(h,\psi\right) = \left( h, \frac{m^\prime+n}{m+n}\psi - \frac{m-m^\prime}{2(m+n)}\tr_g h\right) . \]
 Then $\bigl(\Psi_{(g,v)}^{m^\prime}\bigr)^{-1}\circ\Phi_{m}^{m^\prime}\circ\Psi_{(g,v)}^m\colon T_{(g,v)}\kM(M,m,\mu)\to T_{(g,v)}\kM(M,m^\prime,\mu)$ is the identity map.
\end{lem}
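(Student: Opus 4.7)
The plan is to show directly that $\Phi_m^{m'}\circ\Psi_{(g,v)}^m = \Psi_{(g,v)}^{m'}$ as maps on $T_{(g,v)}\kM$; the claim of the lemma is then immediate from this identity by applying $\bigl(\Psi_{(g,v)}^{m'}\bigr)^{-1}$ on the left. Concretely, I would take an arbitrary $\gamma'(0)\in T_{(g,v)}\kM$, represented by a smooth path $t\mapsto(g_t,v_t)$ in $\kM$ with $(g_0,v_0)=(g,v)$, and abbreviate $g'_0:=\partial_t|_{t=0}g_t$ and $v'_0:=\partial_t|_{t=0}v_t$. Then I would compute the two images $(h,\psi):=\Psi_{(g,v)}^m\bigl(\gamma'(0)\bigr)$ and $(h',\psi'):=\Psi_{(g,v)}^{m'}\bigl(\gamma'(0)\bigr)$ explicitly from the defining formulas~\eqref{eqn:TkM_geometric}.

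The first step is to note that the formula~\eqref{eqn:TkM_geometric_h} for the $S^2T^\ast M$-component gives
\[ h = v^2\left.\frac{\partial}{\partial t}\right|_{t=0}\bigl(v_t^{-2}g_t\bigr) = g'_0 - 2v^{-1}v'_0\,g, \]
and this expression makes no reference to the dimensional parameter; hence $h'=h$. The second step is to expand the $C^\infty(M)$-component~\eqref{eqn:TkM_geometric_psi} using $d\nu(g_t,v_t)=v_t^m\,\dvol_{g_t}$, which yields
\[ \psi = -m\,v^{-1}v'_0 - \frac{1}{2}\tr_g g'_0, \qquad \psi' = -m'\,v^{-1}v'_0 - \frac{1}{2}\tr_g g'_0. \]
Subtracting gives $\psi'-\psi=-(m'-m)v^{-1}v'_0$, so the content of the lemma is to solve for $v^{-1}v'_0$ in terms of $(h,\psi)$.

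For the third step, I would combine $\tr_g h=\tr_g g'_0-2nv^{-1}v'_0$ with the formula for $\psi$ above to obtain
\[ \psi+\frac{1}{2}\tr_g h = -(m+n)\,v^{-1}v'_0, \]
which is the same identity implicitly used in the inverse formula~\eqref{eqn:TkM}. Substituting this into the expression for $\psi'$ yields
\[ \psi' = \psi + \frac{m'-m}{m+n}\left(\psi+\frac{1}{2}\tr_g h\right) = \frac{m'+n}{m+n}\psi - \frac{m-m'}{2(m+n)}\tr_g h, \]
which is exactly the second component of $\Phi_m^{m'}(h,\psi)$. Together with $h'=h$, this establishes $\Psi_{(g,v)}^{m'}=\Phi_m^{m'}\circ\Psi_{(g,v)}^m$, and hence the desired identity.

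There is no real obstacle here: the argument is purely a matter of unwinding the two identifications $\Psi_{(g,v)}^m$ and $\Psi_{(g,v)}^{m'}$ of the same abstract tangent vector $\gamma'(0)$ and checking that the difference between them is captured precisely by the linear map $\Phi_m^{m'}$. The only bookkeeping subtlety is getting the sign and the coefficient $\frac{m-m'}{2(m+n)}$ correct, and this is pinned down by the key identity $\psi+\tfrac{1}{2}\tr_g h=-(m+n)v^{-1}v'_0$ noted above.
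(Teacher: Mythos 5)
Your proof is correct and follows essentially the same route as the paper: both arguments unwind the two identifications~\eqref{eqn:TkM_geometric} along a common curve and compare the $\psi$-components via the ratio of the weighted volume elements $v_t^m\dvol_{g_t}$ and $v_t^{m'}\dvol_{g_t}$, the $h$-component being manifestly independent of the dimensional parameter. The only cosmetic difference is that the paper first records a finite-$t$ multiplicative identity expressing $d\nu^{(m')}(\gamma(t))/d\nu^{(m')}(\gamma(0))$ in terms of the $m$-ratio and $\det\bigl((v^{-2}g)^{-1}v_t^{-2}g_t\bigr)$ and then differentiates, whereas you differentiate directly and eliminate $v^{-1}v_0'$ via the identity $\psi+\tfrac{1}{2}\tr_g h=-(m+n)v^{-1}v_0'$; the two computations are the same at $t=0$.
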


\begin{proof}
 Let $\gamma\colon\bR\to\kM$ be a smooth curve such that $\gamma(0)=(g,v)$.  Let $d\nu^{(m)}$ and $d\nu^{(m^\prime)}$ denote the weighted volume elements on $\kM(M,m,\mu)$ and $\kM(M,m^\prime,\mu)$, respectively.  A straightforward computation yields
 \[ \frac{d\nu^{(m^\prime)}\left(\gamma(t)\right)}{d\nu^{(m^\prime)}\left(\gamma(0)\right)} = \left(\frac{d\nu^{(m)}\left(\gamma(t)\right)}{d\nu^{(m)}\left(\gamma(0)\right)}\right)^{\frac{m^\prime+n}{m+n}}\left(\det \left((v^{-2}g)^{-1}v_t^{-2}g_t\right)\right)^{\frac{m-m^\prime}{2(m+n)}} . \]
 for $(g_t,v_t)=\gamma(t)$.  The conclusion readily follows from~\eqref{eqn:TkM_geometric}.
\end{proof}

Lemma~\ref{lem:change_TkM} can be reformulated as a statement about linearizations of weighted invariants:

\begin{cor}
 \label{cor:change_TkM}
 Fix $m,m^\prime\in\bR_+$ and $\mu\in\bR$.  Let $M^n$ be a smooth manifold.  Given a metric-measure structure $(g,v)\in\kM(M,m,\mu)=\kM(M,m^\prime,\mu)$ and a weighted invariant $I$, denote by $D^{(m)}I$ and $D^{(m^\prime)}I$ the linearizations of $I$ when regarded as functions of
 \begin{align*}
  \Gamma\left(S^2T^\ast M\right)\oplus C^\infty(M) & \cong T_{(g,v)}\kM(M,m,\mu), \\
  \Gamma\left(S^2T^\ast M\right)\oplus C^\infty(M) & \cong T_{(g,v)}\kM(M,m^\prime,\mu),
 \end{align*}
 respectively.  Then
 \[ D^{(m)}I = D^{(m^\prime)}I\circ\Phi_m^{m^\prime} . \]
\end{cor}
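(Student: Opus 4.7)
The proof is essentially a direct consequence of Lemma~\ref{lem:change_TkM}, so my plan is to reduce the statement to an unwrapping of definitions rather than a fresh computation. The key conceptual point is that the linearization of a weighted invariant $I$ is intrinsically a map $DI\colon T\kM\to(\text{values})$ defined by $DI[X]=\frac{d}{dt}\rv_{t=0}I(\gamma(t))$ for any curve $\gamma$ with $\gamma^\prime(0)=X$. This map depends only on the underlying set $T\kM=T_{(g,v)}\kM$, which by the discussion preceding Lemma~\ref{lem:change_TkM} is the same for $(M,m,\mu)$ and $(M,m^\prime,\mu)$. The parameter $m$ enters only through the identification $\Psi_{(g,v)}^m$ used to represent a tangent vector as a pair $(h,\psi)\in\Gamma(S^2T^\ast M)\oplus C^\infty(M)$.

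The plan is as follows. First, I would fix $(g,v)\in\kM$ and $(h,\psi)\in\Gamma(S^2T^\ast M)\oplus C^\infty(M)$, and by definition write
\[ D^{(m)}I(h,\psi) = DI\left[\left(\Psi_{(g,v)}^m\right)^{-1}(h,\psi)\right], \qquad D^{(m^\prime)}I(h,\psi) = DI\left[\left(\Psi_{(g,v)}^{m^\prime}\right)^{-1}(h,\psi)\right], \]
where the inner $DI$ on both sides is the intrinsic directional derivative on $T_{(g,v)}\kM$. Second, I would apply Lemma~\ref{lem:change_TkM}, which states precisely that $\left(\Psi_{(g,v)}^{m^\prime}\right)^{-1}\circ\Phi_m^{m^\prime}\circ\Psi_{(g,v)}^m$ is the identity on $T_{(g,v)}\kM$; equivalently,
\[ \left(\Psi_{(g,v)}^m\right)^{-1}(h,\psi) = \left(\Psi_{(g,v)}^{m^\prime}\right)^{-1}\left(\Phi_m^{m^\prime}(h,\psi)\right). \]
Substituting this into the first expression for $D^{(m)}I(h,\psi)$ and recognizing the right-hand side as $D^{(m^\prime)}I\bigl(\Phi_m^{m^\prime}(h,\psi)\bigr)$ completes the proof.

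Since $(h,\psi)$ and $(g,v)$ were arbitrary, this yields the claimed identity of maps. There is no real obstacle here: the only subtlety is being careful that the value of $I$ at a point of $\kM$ does not depend on whether we view $(g,v)$ as belonging to $\kM(M,m,\mu)$ or $\kM(M,m^\prime,\mu)$ — but for a weighted invariant $I$ (not a weighted volume-element-valued object, and not something like $d\nu$ itself) this is automatic from the fact that $I$ is defined as a function on the set $\kM$. Thus the corollary amounts to a change-of-variables identity for derivatives, transferred from the identification of tangent spaces established in Lemma~\ref{lem:change_TkM}.
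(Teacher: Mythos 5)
Your proposal is correct and matches the paper's intent exactly: the paper states this corollary without proof, presenting it as an immediate reformulation of Lemma~\ref{lem:change_TkM}, and your unwrapping of the definitions via $\left(\Psi_{(g,v)}^m\right)^{-1}(h,\psi)=\left(\Psi_{(g,v)}^{m^\prime}\right)^{-1}\left(\Phi_m^{m^\prime}(h,\psi)\right)$ is precisely the implicit argument. Your remark that the value of a weighted invariant at a point of $\kM$ is independent of $m$ as a set-theoretic matter is the right (and only) subtlety to flag.
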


Since $\kC(M,m,\mu)=\kC(M,m^\prime,\mu)$ as sets, Corollary~\ref{cor:change_TkM} also applies to linearizations of weighted invariants within weighted conformal classes.  This observation is also reflected in the fact that $\Phi_m^{m^\prime}$ acts as the identity in its first component.

Corollary~\ref{cor:change_TkM} is quite useful for computing the linearizations of the weighted $\sigma_k$-curvatures for arbitrary scales $\kappa$.  The reason for this is that it is straightforward to compute the linearizations of the weighted $\sigma_k$-curvatures with scale $\kappa=0$ (cf.\ Section~\ref{sec:full}), while one can also exhibit the weighted $\sigma_k$-curvatures for arbitrary scales as perturbations of the weighted $\sigma_k$-curvatures with scale $\kappa$ through weighted $\sigma_k$-curvatures of lower degree when computed with respect to different values of $m$.  We separate this latter observation into two results so as to highlight the role of weighted conformal classes which are locally conformally flat in the weighted sense for larger values of $k$.

\begin{lem}
 \label{lem:csigma_to_sigma}
 Let $(M^n,m,\mu)$ be a weighted manifold and fix $\kappa\in\bR$.  Then
 \begin{align}
  \label{eqn:csigma1_to_sigma1} \csigma_{1,\phi}^m & = \sigma_{1,\phi}^m + m\kappa v^{-1}, \\
  \label{eqn:csigma2_to_sigma2} \csigma_{2,\phi}^m & = \sigma_{2,\phi}^m + \frac{m(m+n-2)}{m+n-3}\kappa v^{-1}\sigma_{1,\phi}^{m-1} + \frac{m(m-1)}{2}\kappa^2v^{-2}
 \end{align}
 for all $(g,v)\in\kM$.
\end{lem}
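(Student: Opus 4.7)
The plan is to expand $\csigma_{k,\phi}^m$ using the Taylor-type identity from Corollary~\ref{cor:reln_to_genl} with $\lambda_1=m\kappa v^{-1}$ and $\lambda_2=Y_\phi^m$, and then recognize each coefficient as a weighted $\sigma$-curvature.

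For $k=1$, the identity $\sigma_1^m(\lambda;P)=\lambda+\tr P$ gives
\[
\csigma_{1,\phi}^m=Y_\phi^m+m\kappa v^{-1}+\tr P_\phi^m=J_\phi^m+m\kappa v^{-1}=\sigma_{1,\phi}^m+m\kappa v^{-1},
\]
which proves~\eqref{eqn:csigma1_to_sigma1}.

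For $k=2$, I would apply Corollary~\ref{cor:reln_to_genl} directly:
\[
\csigma_{2,\phi}^m=\sum_{j=0}^{2}\binom{m}{j}\left(\kappa v^{-1}\right)^j\sigma_{2-j}^{m-j}\!\left(\tfrac{m-j}{m}Y_\phi^m;\,P_\phi^m\right).
\]
The $j=0$ term is $\sigma_{2,\phi}^m$ by definition, and the $j=2$ term is $\tfrac{m(m-1)}{2}\kappa^2v^{-2}$ since $\sigma_0^{m-2}\equiv 1$. Thus everything hinges on identifying the $j=1$ term, which is
\[
m\kappa v^{-1}\,\sigma_1^{m-1}\!\left(\tfrac{m-1}{m}Y_\phi^m;\,P_\phi^m\right)=m\kappa v^{-1}\!\left(\tfrac{m-1}{m}Y_\phi^m+\tr P_\phi^m\right).
\]

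The main step is to match this with $\sigma_{1,\phi}^{m-1}=J_\phi^{m-1}$. Using $\tr P_\phi^m=J_\phi^m-Y_\phi^m$, the parenthesized expression simplifies to $J_\phi^m-\tfrac{1}{m}Y_\phi^m$. On the other hand, the identity~\eqref{eqn:J-1}, proved as part of Proposition~\ref{prop:we_sigma2} via Lemma~\ref{lem:eval_Yphim} applied at both the parameters $m$ and $m-1$, states
\[
J_\phi^{m-1}=\tfrac{m+n-3}{m+n-2}\!\left(J_\phi^m-\tfrac{1}{m}Y_\phi^m\right).
\]
Hence $\sigma_1^{m-1}\!\left(\tfrac{m-1}{m}Y_\phi^m;P_\phi^m\right)=\tfrac{m+n-2}{m+n-3}\sigma_{1,\phi}^{m-1}$, and assembling the three terms yields~\eqref{eqn:csigma2_to_sigma2}.

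There is no real obstacle here: the content is bookkeeping with the algebraic identity of Corollary~\ref{cor:reln_to_genl} and a single geometric input, the relation between $J_\phi^{m-1}$ and $(J_\phi^m,Y_\phi^m)$. The only care needed is to keep the two roles of $m$ straight, namely that the matrix argument remains $P_\phi^m$ throughout while the formal weight parameter drops from $m$ to $m-1$ in the $j=1$ term.
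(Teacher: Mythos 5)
Your proof is correct and follows essentially the same route as the paper: expand $\csigma_{2,\phi}^m$ via Corollary~\ref{cor:reln_to_genl} and identify the middle term using~\eqref{eqn:J-1}. The only cosmetic difference is that the paper names the $j=1$ coefficient as the Newton scalar $s_{1,\phi}^m$ via Lemma~\ref{lem:s_to_sigma-1} before applying~\eqref{eqn:J-1}, whereas you compute $\sigma_1^{m-1}\bigl(\tfrac{m-1}{m}Y_\phi^m;P_\phi^m\bigr)=J_\phi^m-\tfrac{1}{m}Y_\phi^m$ directly, which is the same calculation.
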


\begin{proof}
 \eqref{eqn:csigma1_to_sigma1} follows immediately from Corollary~\ref{cor:reln_to_genl}.  Applying Corollary~\ref{cor:reln_to_genl} and Lemma~\ref{lem:s_to_sigma-1} implies that
 \begin{equation}
  \label{eqn:csigma_expand2}
  \csigma_{2,\phi}^m = \sigma_{2,\phi}^m + m\kappa v^{-1}s_{1,\phi}^m + \binom{m}{2}\kappa^2v^{-2} .
 \end{equation}
 On the other hand, \eqref{eqn:J-1} implies that
 \begin{equation}
  \label{eqn:sigma-m-1-to-s}
  s_{1,\phi}^m = \frac{m+n-2}{m+n-3}\sigma_{1,\phi}^{m-1} .
 \end{equation}
 Inserting~\eqref{eqn:sigma-m-1-to-s} into~\eqref{eqn:csigma_expand2} yields~\eqref{eqn:csigma2_to_sigma2}.
\end{proof}

\begin{lem}
 \label{lem:csigma_to_sigma_genl}
 Let $(M^n,m,\mu)$ be a weighted manifold and fix a scale $\kappa\in\bR$.  Let $\kC\subset\kM$ be a weighted conformal class which is locally conformally flat in the weighted sense.  Given an integer $k\leq m$, it holds that
 \[ \csigma_{k,\phi}^m = \sum_{j=0}^k \left(\frac{m+n-2}{m+n-2-j}\right)^{k-j}\binom{m}{j}\left(\kappa v^{-1}\right)^j\sigma_{k-j,\phi}^{m-j} \]
 for all $(g,v)\in\kC$.
\end{lem}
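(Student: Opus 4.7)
The plan is to combine Corollary~\ref{cor:reln_to_genl} with a clean simplification of the weighted Schouten tensor valid under the locally-conformally-flat-in-the-weighted-sense hypothesis. First, I apply Corollary~\ref{cor:reln_to_genl} with $\lambda_1=m\kappa v^{-1}$ and $\lambda_2=Y_\phi^m$ to the definition~\eqref{eqn:defn_csigma} of $\csigma_{k,\phi}^m$, obtaining
\[
 \csigma_{k,\phi}^m = \sum_{j=0}^k \binom{m}{j}(\kappa v^{-1})^j\,\sigma_{k-j}^{m-j}\!\left(\frac{m-j}{m}\,Y_\phi^m;\,P_\phi^m\right).
\]
It then remains to identify each summand with the corresponding term $\left(\frac{m+n-2}{m+n-2-j}\right)^{k-j}\binom{m}{j}(\kappa v^{-1})^j\sigma_{k-j,\phi}^{m-j}$ in the claimed formula.

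The heart of the argument is to establish, under the LCF hypothesis, the recursive identities
\[
 P_\phi^{m-j} = \frac{m+n-2-j}{m+n-2}\,P_\phi^m, \qquad Y_\phi^{m-j} = \frac{(m-j)(m+n-2-j)}{m(m+n-2)}\,Y_\phi^m
\]
for all integers $0\leq j\leq k$. I treat $j=1$ first: the vanishing of $A_\phi^m$ forces $\tr A_\phi^m=0$, and Lemma~\ref{lem:div_and_tr}, together with the identity $\nabla^2\phi-\frac{1}{m}d\phi\otimes d\phi=-mv^{-1}\nabla^2 v$ coming from $\phi=-m\ln v$, yields
\[
 v^{-1}\nabla^2 v = -\frac{1}{m+n-2}P_\phi^m - \frac{1}{m(m+n-2)}Y_\phi^m\,g.
\]
Substituting this into~\eqref{eqn:P-1} gives the formula for $P_\phi^{m-1}$, and combining it with~\eqref{eqn:J-1} produces the formula for $Y_\phi^{m-1}$.

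To iterate I observe that the derived relation $P_\phi^{m-1}=\frac{m+n-3}{m+n-2}P_\phi^m$ implies
\[
 A_\phi^{m-1} = \Rm - \frac{1}{m+n-3}P_\phi^{m-1}\wedge g = \Rm - \frac{1}{m+n-2}P_\phi^m\wedge g = A_\phi^m = 0,
\]
so the LCF-in-weighted-sense hypothesis persists for $(M^n,g,v,m-1,\mu)$. Hence the $j=1$ step can be applied recursively, producing the full family of identities for $0\leq j\leq k\leq m$.

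Finally, I will use the scaling property $\sigma_\ell^{m'}(c\lambda;cP)=c^\ell\sigma_\ell^{m'}(\lambda;P)$ for $c\in\bR$, which is immediate from the defining recursion since every entry of the block matrix~\eqref{eqn:informal_block_diagonal} scales by $c$. Setting $c=\frac{m+n-2}{m+n-2-j}$ converts
\[
 \sigma_{k-j}^{m-j}\!\left(\frac{m-j}{m}Y_\phi^m;\,P_\phi^m\right) = \left(\frac{m+n-2}{m+n-2-j}\right)^{k-j}\sigma_{k-j,\phi}^{m-j},
\]
and inserting this into the first display yields the desired identity. The main obstacle is the first instance of the recursion: extracting the clean expression for $v^{-1}\nabla^2 v$, hence for $P_\phi^{m-1}$, from the single trace identity $\tr A_\phi^m=0$ (a strictly weaker consequence of the LCF hypothesis than $A_\phi^m=0$). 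Everything else is a direct algebraic iteration followed by the homogeneity rescaling.
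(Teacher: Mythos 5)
Your proposal is correct and follows essentially the same route as the paper's proof: both reduce to Corollary~\ref{cor:reln_to_genl} together with the scaling relations $P_\phi^{m-j}=\frac{m+n-2-j}{m+n-2}P_\phi^m$ and the corresponding relation for $Y_\phi^{m-j}$, established for $j=1$ from the vanishing of $\tr A_\phi^m$ (Lemma~\ref{lem:div_and_tr}) and~\eqref{eqn:P-1}, then iterated via the persistence $A_\phi^{m-1}=0$. The only differences are expository: you derive the $Y$-relation directly from~\eqref{eqn:J-1} where the paper routes it through~\eqref{eqn:preY-k} (which rests on the same identity), and you make explicit the homogeneity $\sigma_\ell^{m'}(c\lambda;cP)=c^\ell\sigma_\ell^{m'}(\lambda;P)$ and the Hessian computation that the paper leaves implicit.
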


\begin{proof}
 Note that Lemma~\ref{lem:s_to_sigma-1} and~\eqref{eqn:sigma-m-1-to-s} together imply that
 \begin{equation}
  \label{eqn:preY-k}
  \sigma_1^{m-1}\left(\frac{m-1}{m}Y_\phi^m;P_\phi^m\right) = \frac{m+n-2}{m+n-3}\sigma_1^{m-1}\left(Y_\phi^{m-1};P_\phi^{m-1}\right) .
 \end{equation}
 for all $(g,v)\in\kM$.  On the other hand, since $\kC$ is locally conformally flat in the weighted sense, Lemma~\ref{lem:div_and_tr} and~\eqref{eqn:P-1} imply that
 \[ P_\phi^m = \frac{m+n-2}{m+n-3}P_\phi^{m-1} . \]
 It follows that
 \[ A_\phi^{m-1} := \Rm - \frac{1}{m+n-3}P_\phi^{m-1}\wedge g = 0 ; \]
 i.e.\ $(M^n,g,v,m-1,\mu)$ is locally conformally flat in the weighted sense.  Therefore
 \begin{equation}
  \label{eqn:P-k_lcf}
  P_\phi^m = \frac{m+n-2}{m+n-2-k}P_\phi^{m-k}
 \end{equation}
 for all integers $k\leq m$.  Combining this with~\eqref{eqn:preY-k} and an obvious induction argument yields
 \begin{equation}
  \label{eqn:Y-k_lcf}
  Z_\phi^{m} = \frac{m+n-2}{m+n-2-k}Z_\phi^{m-k}
 \end{equation}
 for all integers $k\leq m$.  Inserting~\eqref{eqn:P-k_lcf} and~\eqref{eqn:Y-k_lcf} into Corollary~\ref{cor:reln_to_genl} yields the desired result.
\end{proof}
\section{Variational status of the weighted $\sigma_k$-curvatures}
\label{sec:variational}

By Lemma~\ref{lem:var_to_sa}, answering the question of when the weighted $\sigma_k$-curvatures are conformally variational can be achieved by characterizing when the linearizations $D\csigma_{k,\phi}^m$ are formally self-adjoint.  To that end, we compute $D\csigma_{k,\phi}^m$.

\begin{prop}
 \label{prop:linearization_sigmak}
 Let $(M^n,g,v,m,\mu)$ be a smooth metric measure space and fix $\kappa\in\bR$ and $k\in\bN_0$.  Set $\kC=[g,v]$.  The linearization $D\csigma_{k,\phi}^m\colon T_{(g,v)}\kC\to C^\infty(M)$ is
 \begin{align*}
  D\csigma_{k,\phi}^m[\psi] & = \frac{1}{m+n}\left(2k\csigma_{k,\phi}^m - m\kappa v^{-1}\cs_{k-1,\phi}^m\right)\psi + \frac{m+n-2}{m+n}\delta_\phi\left(\cT_{k-1,\phi}^m(\nabla\psi)\right) \\
   & \quad - \frac{m+n-2}{m+n}\sum_{\ell=0}^{k-3}(-1)^\ell\cT_{k-3-\ell,\phi}^m\left(\nabla\psi,dP_\phi^m\cdot\cQ_{\ell+1,\phi}^m\right) ,
 \end{align*}
 where $\cQ_{\ell,\phi}^m$ and $\cdot$ are as in Proposition~\ref{prop:div_T}.
\end{prop}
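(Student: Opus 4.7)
The plan is to apply the chain rule to $\csigma_{k,\phi}^m = \sigma_k^m\bigl(\cY_\phi^m;\, g^{-1}P_\phi^m\bigr)$, viewing $P_\phi^m$ as an endomorphism so that $\sigma_k^m$ depends only on the eigenvalues of its second argument. Lemma \ref{lem:reln_to_trivial} expresses $\sigma_k^m$ as a polynomial combination of unweighted elementary symmetric polynomials in $P$ with coefficients polynomial in $\lambda/m$, so the standard unweighted identities together with Lemma \ref{lem:vary_kappa} yield $\partial_\lambda \sigma_k^m = s_{k-1}^m$ and $\partial_Q \sigma_k^m = T_{k-1}^m$, where $Q$ denotes the endomorphism argument. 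The recursive definition of $\csigma_{k,\phi}^m$ combined with the identity $\tr\bigl(g^{-1}P_\phi^m\bigr)^j = \cN_{j,\phi}^m - m(\cZ_\phi^m)^j$ also yields the Newton-type trace identity
\begin{equation*}
\tr\bigl(\cT_{k-1,\phi}^m \cdot g^{-1}P_\phi^m\bigr) = k\,\csigma_{k,\phi}^m - \cY_\phi^m\,\cs_{k-1,\phi}^m,
\end{equation*}
which drives the key algebraic cancellation.

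Next, I extract the first-order variations along the curve $\gamma(t) = \bigl(e^{-2t\psi/(m+n)}g,\,e^{-t\psi/(m+n)}v\bigr)$ from Lemma \ref{lem:conf_change} with $f = \tfrac{(m+n-2)t\psi}{m+n} + O(t^2)$. Using $Y_\phi^m = J_\phi^m - \tr_g P_\phi^m$ together with $\dot{g^{-1}} = \tfrac{2\psi}{m+n}g^{-1}$ and $\dot{v^{-1}} = \tfrac{\psi}{m+n}v^{-1}$ gives
\begin{align*}
\dot P_\phi^m &= \frac{m+n-2}{m+n}\nabla^2\psi, \\
\dot \cY_\phi^m &= \frac{\psi}{m+n}\bigl(2\cY_\phi^m - m\kappa v^{-1}\bigr) - \frac{m+n-2}{m+n}\lp \nabla\phi,\nabla\psi\rp, \\
\dot{(g^{-1}P_\phi^m)} &= \frac{2\psi}{m+n}\,g^{-1}P_\phi^m + \frac{m+n-2}{m+n}\,g^{-1}\nabla^2\psi.
\end{align*}
Substituting into the chain rule and applying the trace identity, the two $\cY_\phi^m\cs_{k-1,\phi}^m$ contributions cancel, and the undifferentiated-$\psi$ coefficient collapses exactly to $\tfrac{1}{m+n}\bigl(2k\csigma_{k,\phi}^m - m\kappa v^{-1}\cs_{k-1,\phi}^m\bigr)$.

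For the terms carrying derivatives of $\psi$ I invoke the single integration-by-parts identity
\begin{equation*}
\delta_\phi\bigl(\cT_{k-1,\phi}^m(\nabla\psi)\bigr) = \lp \cT_{k-1,\phi}^m, \nabla^2\psi\rp + \lp \delta_\phi\cT_{k-1,\phi}^m,\nabla\psi\rp
\end{equation*}
and substitute the formula for $\delta_\phi\cT_{k-1,\phi}^m$ from Proposition \ref{prop:div_T}. The leading $-\cs_{k-1,\phi}^m\,d\phi$ term there exactly cancels the $-\cs_{k-1,\phi}^m\lp \nabla\phi,\nabla\psi\rp$ piece produced by $\dot \cY_\phi^m$, while the remaining sum from Proposition \ref{prop:div_T} yields $-\sum_{\ell=0}^{k-3}(-1)^\ell \cT_{k-3-\ell,\phi}^m\bigl(\nabla\psi,\,dP_\phi^m\cdot\cQ_{\ell+1,\phi}^m\bigr)$, all multiplied by the overall prefactor $\tfrac{m+n-2}{m+n}$.

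The main obstacle is purely bookkeeping: the contribution of the variation $\dot{g^{-1}}$ to the endomorphism $g^{-1}P_\phi^m$ is easy to overlook, yet without it the Newton-type trace identity cannot be invoked and the coefficient $2k\csigma_{k,\phi}^m$ fails to appear. Pairing this algebraic cancellation with the analytic cancellation of $\lp \nabla\phi,\nabla\psi\rp$-terms between $\dot \cY_\phi^m$ and $\delta_\phi\cT_{k-1,\phi}^m$ is precisely what makes the statement close up cleanly.
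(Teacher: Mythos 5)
Your proof is correct, and while it lands on exactly the same intermediate pointwise identity as the paper, namely $D\csigma_{k,\phi}^m[\psi] = \frac{1}{m+n}\bigl(2k\csigma_{k,\phi}^m - m\kappa v^{-1}\cs_{k-1,\phi}^m\bigr)\psi + \frac{m+n-2}{m+n}\bigl(\lp \cT_{k-1,\phi}^m,\nabla^2\psi\rp - \cs_{k-1,\phi}^m\lp\nabla\phi,\nabla\psi\rp\bigr)$, it gets there by a genuinely different algebraic engine. The paper proves this formula by induction on $k$: it differentiates the recursion $(k+1)\csigma_{k+1,\phi}^m = \sum_{j=0}^k(-1)^j\csigma_{k-j,\phi}^m\cN_{j+1,\phi}^m$ via the Leibniz rule and computes $D\cN_{k,\phi}^m$ directly from $DP_\phi^m[\psi]=\frac{m+n-2}{m+n}\nabla^2\psi$ and $D\cZ_\phi^m$, never invoking a chain rule in the endomorphism argument. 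You instead use the closed-form derivative identities $\partial_\lambda\sigma_k^m = s_{k-1}^m$ (which is indeed Lemma~\ref{lem:vary_kappa} after rescaling by $m$) and $\partial_Q\sigma_k^m(\lambda;Q) = T_{k-1}^m(\lambda;Q)$, together with the trace identity $\lp \cT_{k-1,\phi}^m, P_\phi^m\rp = k\csigma_{k,\phi}^m - \cY_\phi^m\cs_{k-1,\phi}^m$. Since the paper nowhere states the Newton-derivative identity, you should record its short verification: from Lemma~\ref{lem:reln_to_trivial} one gets $T_{k-1}^m(\lambda;Q) = \sum_j\binom{m}{j}(\lambda/m)^j T_{k-1-j}(Q)$, which combined with the unweighted fact $\partial_Q\sigma_{k-j} = T_{k-j-1}$ gives the claim (alternatively it follows from Lemma~\ref{lem:eigenvalues_T} and Lemma~\ref{lem:reln_to_remove}); your trace identity checks out as you derive it, and also follows from Lemma~\ref{lem:newton_inner_product} plus $\tr T_{k-1}^m = (m+n-k+1)\sigma_{k-1}^m - ms_{k-1}^m$. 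Your variation formulas for $P_\phi^m$, $\cY_\phi^m$, and $g^{-1}P_\phi^m$ along $\gamma(t)$ are all correct, and your conversion to divergence form via Proposition~\ref{prop:div_T} — with the $\cY_\phi^m\cs_{k-1,\phi}^m$-terms cancelling in the zeroth-order coefficient and the $\cs_{k-1,\phi}^m\lp\nabla\phi,\nabla\psi\rp$-terms cancelling against $\delta_\phi\cT_{k-1,\phi}^m$ — coincides exactly with the paper's final step. What your route buys is a one-pass computation in which the two cancellations are structurally transparent; what the paper's induction buys is that it sidesteps both the eigenvalue chain rule and the Newton-derivative identity entirely, at the cost of a longer bookkeeping argument. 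Your closing remark about the $\dot{(g^{-1})}$ contribution is also well placed: it is precisely what produces the $2k\csigma_{k,\phi}^m$ coefficient, and in the paper it enters implicitly through the $2\cN_{k,\phi}^m\psi$ term in $D\cN_{k,\phi}^m$.
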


\begin{proof}
 By Proposition~\ref{prop:div_T}, it suffices to prove that
 \begin{multline}
  \label{eqn:linearization_sigmak_equiv}
  D\csigma_{k,\phi}^m[\psi] = \frac{1}{m+n}\left(2k\csigma_{k,\phi}^m-m\kappa v^{-1}\cs_{k-1,\phi}^m\right)\psi \\ + \frac{m+n-2}{m+n}\left(\lp \cT_{k-1,\phi}^m,\nabla^2\psi\rp - \lp \cs_{k-1,\phi}^m\nabla\phi,\nabla\psi \rp\right)
 \end{multline}
 for all $k\in\bN_0$.  We prove~\eqref{eqn:linearization_sigmak_equiv} by induction.

 Clearly~\eqref{eqn:linearization_sigmak_equiv} holds when $k=0$.  Suppose that~\eqref{eqn:linearization_sigmak_equiv} holds for some $k\in\bN_0$.  From~\eqref{eqn:defn_csigma_expanded}, we note that
 \begin{equation}
  \label{eqn:linearization_leibnitz}
  (k+1)D\csigma_{k+1,\phi}^m = \sum_{j=0}^k (-1)^j\left[ \cN_{j+1,\phi}^mD\csigma_{k-j,\phi}^m + \csigma_{k-j,\phi}^mD\cN_{j+1,\phi}^m \right] .
 \end{equation}
 Using Lemma~\ref{lem:conf_change}, we observe that
 \begin{align*}
  DP_\phi^m[\psi] & = \frac{m+n-2}{m+n}\nabla^2\psi, \\
  D\cZ_\phi^m[\psi] & = \frac{1}{m+n}\left(2\cZ_\phi^m - \kappa v^{-1}\right)\psi - \frac{m+n-2}{m(m+n)}\lp \nabla\phi,\nabla\psi \rp .
 \end{align*}
 In particular,
 \begin{multline}
  \label{eqn:linearization_sigmak_step1}
  D\cN_{k,\phi}^m[\psi] = \frac{k}{m+n}\left(2\cN_{k,\phi}^m - m\kappa v^{-1}\bigl(\cZ_\phi^m\bigr)^{k-1}\right)\psi \\ + \frac{(m+n-2)k}{m+n}\left(\left\lp\left(P_\phi^m\right)^{k-1},\nabla^2\psi\right\rp - \left\lp \bigl(\cZ_\phi^m\bigr)^{k-1}\nabla\phi,\nabla\psi\right\rp \right) .
 \end{multline}
 Inserting~\eqref{eqn:linearization_sigmak_step1} and the inductive hypothesis~\eqref{eqn:linearization_sigmak_equiv} into~\eqref{eqn:linearization_leibnitz} yields
 \begin{multline*}
  D\csigma_{k+1,\phi}^m[\psi] = \frac{1}{m+n}\left(2(k+1)\csigma_{k+1,\phi}^m-m\kappa v^{-1}\cs_{k,\phi}^m\right)\psi \\ + \frac{m+n-2}{m+n}\left(\lp \cT_{k,\phi}^m,\nabla^2\psi\rp - \lp \cs_{k,\phi}^m\nabla\phi,\nabla\psi \rp \right) ,
 \end{multline*}
 as desired.
\end{proof}

Our study of the formal self-adjointness of $D\csigma_{k,\phi}^m$ follows the Riemannian analogue carried about by Branson and Gover~\cite{BransonGover2008}.

\begin{thm}
 \label{thm:variational}
 Let $\kC$ be a weighted conformal class on $(M^n,m,\mu)$.  Fix $\kappa\in\bR$ and $k\in\bN_0$.  If $k\leq 2$ or $\kC$ is locally conformally flat in the weighted sense, then the weighted $\sigma_k$-curvature is conformally variational.  If additionally $k\leq m+n$, then the converse holds.
\end{thm}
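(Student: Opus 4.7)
The plan is to invoke Lemma~\ref{lem:var_to_sa}, reducing conformal variationality of $\csigma_{k,\phi}^m$ to formal self-adjointness of its linearization $D\csigma_{k,\phi}^m$, and then to analyse that linearization using the explicit formula of Proposition~\ref{prop:linearization_sigmak}. I split the formula there into three pieces: a zeroth-order multiplier $L_0\psi$ (the term proportional to $\psi$), the divergence piece $\tfrac{m+n-2}{m+n}\,\delta_\phi(\cT_{k-1,\phi}^m(\nabla\psi))$, and a pure first-order remainder $-\tfrac{m+n-2}{m+n}\,\lp X_\phi^m,\nabla\psi\rp$, where the vector field $X_\phi^m$ is characterised by
\[ \lp X_\phi^m,\nabla\psi\rp := \sum_{\ell=0}^{k-3}(-1)^\ell\, \cT_{k-3-\ell,\phi}^m\bigl(\nabla\psi,\,dP_\phi^m\cdot\cQ_{\ell+1,\phi}^m\bigr). \]
The first piece is manifestly self-adjoint; for the second, one integration by parts against $d\nu$ yields $\int \psi_1\,\delta_\phi(\cT_{k-1,\phi}^m(\nabla\psi_2))\,d\nu = -\int \cT_{k-1,\phi}^m(\nabla\psi_1,\nabla\psi_2)\,d\nu$, which is symmetric in $\psi_1,\psi_2$ by symmetry of $\cT_{k-1,\phi}^m$; and a direct integration-by-parts check shows that the pure first-order operator $\psi\mapsto\lp X,\nabla\psi\rp$ on $(M,d\nu)$ is formally self-adjoint if and only if the vector field $X$ vanishes identically. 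Consequently $D\csigma_{k,\phi}^m$ is formally self-adjoint at every representative of $\kC$ if and only if $X_\phi^m\equiv 0$ on every representative of $\kC$.

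Sufficiency in both stated cases is then immediate. If $k\leq 2$, the sum defining $X_\phi^m$ is empty, so $X_\phi^m\equiv 0$ tautologically. If $\kC$ is locally conformally flat in the weighted sense, the remark following Lemma~\ref{lem:div_and_tr} yields $dP_\phi^m\equiv 0$ on every representative, so again $X_\phi^m\equiv 0$; Lemma~\ref{lem:var_to_sa} then supplies the required functional.

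For the necessity, suppose $3\leq k\leq m+n$ and $\kC$ is not locally conformally flat in the weighted sense, so $A_\phi^m\not\equiv 0$ on every representative; the task is to produce one $(g,v)\in\kC$ and a point $p\in M$ at which $X_\phi^m(p)\neq 0$. I intend to proceed in two stages. First, using the conformal transformation law $\widehat{dP_\phi^m}=dP_\phi^m-A_\phi^m(\cdot,\cdot,\nabla f,\cdot)$ from Lemma~\ref{lem:conf_change} together with the further conformal freedom to prescribe the two-jet of $\hP_\phi^m$ at $p$, I would arrange a representative at which $\widehat{dP_\phi^m}(p)\neq 0$ and at which $\bigl(\hP_\phi^m(p),\hY_\phi^m(p)\bigr)$ is essentially arbitrary, reducing the problem to a pointwise algebraic question subject only to the Bianchi-type constraints of Lemma~\ref{lem:div_and_tr}. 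Second, I would verify algebraic nondegeneracy by specialising to a model at which $\hP_\phi^m(p)$ has distinct eigenvalues, so that Corollary~\ref{cor:weighted_elliptic} makes each weighted Newton tensor $\cT_{k-3-\ell,\phi}^m(p)$ appearing in the sum invertible --- this is where the hypothesis $k\leq m+n$ enters, guaranteeing that the relevant Newton tensors do not identically degenerate --- and then choosing $\widehat{dP_\phi^m}(p)$ so that the $k-2$ summands do not telescope to zero. The main obstacle is this last step of ruling out hidden cancellations; my planned tactic is to reduce to a rank-one perturbation of a weighted Einstein background (as in Example~\ref{ex:model_zero}), where the Newton tensors and scalars become constant multiples of $g$ and $1$, respectively, and the sum collapses to a single explicit contraction of $\widehat{dP_\phi^m}(p)$ which is visibly nonzero for generic choices.
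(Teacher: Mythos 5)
Your reduction and your sufficiency argument are correct and coincide with the paper's. Via Lemma~\ref{lem:var_to_sa} and Proposition~\ref{prop:linearization_sigmak}, conformal variationality is equivalent to the vanishing, on every representative of $\kC$, of the pure first-order term; the paper packages this as the one-form $\cS_{k,\phi}^m:=\sum_{\ell=0}^{k-3}(-1)^\ell\cT_{k-3-\ell,\phi}^m\left(dP_\phi^m\cdot\cQ_{\ell+1,\phi}^m\right)$ and shows $\csigma_{k,\phi}^m$ is variational iff $\cS_{k,\phi}^m\equiv0$, which is the same as your observation that $\psi\mapsto\lp X,\nabla\psi\rp$ is formally self-adjoint in $L^2(d\nu)$ only when $X\equiv0$. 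The empty sum for $k\leq2$ and $dP_\phi^m=0$ in the locally conformally flat case then give sufficiency, exactly as in the paper.

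The necessity half, however, contains a genuine gap, which you yourself flag: you never rule out the ``hidden cancellations,'' and your proposed tactics would not close this. First, your stated mechanism for where $k\leq m+n$ enters is wrong: Corollary~\ref{cor:weighted_elliptic} requires $(\lambda;P)\in\Gamma_k^{m,\pm}$ and $m\geq k-1$, and says nothing about invertibility of Newton tensors at a generic $P(p)$ with distinct eigenvalues. Second, your collapse tactic is problematic: a general non-LCF weighted conformal class contains no weighted Einstein representative to perturb around (and Example~\ref{ex:model_zero} is itself locally conformally flat in the weighted sense), while the pointwise isotropic normalization $\hP_\phi^m(p)=c\,g$ makes every $\cQ_{\ell+1,\phi}^m(p)$ a multiple of $g$, so the sum collapses to a multiple of $\tr\widehat{dP_\phi^m}(p)$ --- which vanishes identically if $c=\cZ_\phi^m(p)$, and is otherwise constrained by Lemma~\ref{lem:div_and_tr} and by $\widehat{dP_\phi^m}=dP_\phi^m-A_\phi^m(\cdot,\cdot,X,\cdot)$, whose trace involves only $\tr A_\phi^m$ and can vanish even though $A_\phi^m\neq0$. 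The paper avoids all genericity issues by a two-step bootstrap: first take $X=tX_0$ with $\lv X_0\rv^2=2(m+n-2)$, $\lp X_0,\nabla\phi\rp=0$, and $\Omega=-\frac{1}{m+n-2}X^\flat\otimes X^\flat$, so that $\bigl(P_\phi^m\bigr)_{\hg}=P_\phi^m-t^2g$, $\bigl(\cZ_\phi^m\bigr)_{\hg}=\cZ_\phi^m-t^2$, and $\cQ_{1,\phi}^m$ is \emph{unchanged}; the leading term of $\bigl(\cS_{k,\phi}^m\bigr)_{\hg}$ is then $(-1)^{k-3}\binom{m+n-3}{k-3}t^{2k-6}\bigl(dP_\phi^m-tA_\phi^m(\cdot,\cdot,X_0,\cdot)\bigr)\cdot\cQ_{1,\phi}^m$ by the identity $\sum_{\ell=0}^k(-1)^\ell(\ell+1)\binom{n}{k-\ell}=\binom{n-2}{k}$, and this is precisely where $k\leq m+n$ enters: it guarantees $\binom{m+n-3}{k-3}\neq0$. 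Letting $t$ vary forces $dP_\phi^m\cdot\left(P_\phi^m-\cZ_\phi^mg\right)=0$ on every representative; a second variation with $X=0$ and $\Omega$ arbitrary upgrades this to $dP_\phi^m\cdot\Omega=0$ for all $\Omega$, hence $dP_\phi^m=0$ on every representative, and Lemma~\ref{lem:conf_change} then yields $A_\phi^m=0$. Without this bootstrap (or an equivalent argument), your necessity direction remains unproved.
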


\begin{proof}
 Denote
 \[ \cS_{k,\phi}^m := \sum_{\ell=0}^{k-3}(-1)^\ell\cT_{k-3-\ell,\phi}^m\left(dP_\phi^m\cdot\cQ_{\ell+1,\phi}^m\right) . \]
 From Lemma~\ref{lem:var_to_sa} and Proposition~\ref{prop:linearization_sigmak}, we see that $\csigma_{k,\phi}^m$ is conformally variational if and only if
 \begin{equation}
  \label{eqn:cS_selfadjoint_test}
  \int_M \left\lp \cS_{k,\phi}^m, \omega\nabla\eta - \eta\nabla\omega \right\rp\,d\nu = 0
 \end{equation}
 for all $\eta,\omega\in C_0^\infty(M)$ and all representatives of $\kC$.  Clearly $\cS_{k,\phi}^m=0$ is sufficient for~\eqref{eqn:cS_selfadjoint_test} to hold.  Suppose instead that~\eqref{eqn:cS_selfadjoint_test} holds.  Taking $\eta=1$ and $\omega\in C_0^\infty(M)$ yields $\delta_\phi\cS_{k,\phi}^m=0$.  Hence $\cS_{k,\phi}^m$ is orthogonal to $\omega\nabla\eta$ in $L^2(d\nu)$ for all $\omega,\eta\in C_0^\infty(M)$.  This implies that $\cS_{k,\phi}^m=0$.  We conclude that $\csigma_{k,\phi}^m$ is conformally variational if and only if $\cS_{k,\phi}^m=0$ for all representatives of $\kC$.
 
 Clearly $\cS_{k,\phi}^m=0$ if $k\leq2$ or $\kC$ is locally conformally flat in the weighted sense.  We show that if $3\leq k\leq m+n$ and $\cS_{k,\phi}^m=0$ for all $(g,v)\in\kC$, then $\kC$ is locally conformally flat in the weighted sense.  Fix a representative $(g,v)\in\kC$, a point $p\in M$, a vector $X\in T_pM$, and a tensor $\Omega\in S^2T_p^\ast M$.  Let $f\in C^\infty(M)$ be such that $f(p)=0$, $(\nabla f)_p=X$, and $(\nabla^2f)_p=\Omega$.  Set $\left(\hg,\hv\right)=\left(e^{-\frac{2f}{m+n-2}}g,e^{-\frac{f}{m+n-2}}v\right)$.  By Lemma~\ref{lem:conf_change}, at $p$ it holds that
 \begin{equation}
  \label{eqn:variational_conf_change}
  \begin{split}
   \bigl(P_\phi^m\bigr)_{\hg} & = P_\phi^m + \Omega + \frac{1}{m+n-2}X^\flat\otimes X^\flat - \frac{1}{2(m+n-2)}\lv X\rv^2g , \\
   \bigl(\cZ_\phi^m\bigr)_{\hg} & = \cZ_\phi^m - \frac{1}{m}\lp X,\nabla\phi\rp - \frac{1}{2(m+n-2)}\lv X\rv^2 , \\
   \bigl(dP_\phi^m\bigr)_{\hg} & = dP_\phi^m - A_\phi^m(\cdot,\cdot,X,\cdot),
  \end{split}
 \end{equation}
 where the left-hand side (resp.\ right-hand side) of each equality is evaluated in terms of $(M^n,\hg,\hv,m,\mu)$ (resp.\ $(M^n,g,v,m,\mu)$).  We shall use~\eqref{eqn:variational_conf_change} with two different choices of $\Omega$ and $X$.

 First, choose $X_0\in T_pM$ such that $\lv X_0\rv^2=2(m+n-2)$ and $\lp X_0,\nabla\phi\rp=0$.  Let $t\in\bR$, set $X=tX_0$, and choose $\Omega=-\frac{1}{m+n-2}X^\flat\otimes X^\flat$.  It follows from~\eqref{eqn:variational_conf_change} that $(\cQ_{\ell+1,\phi}^m)_{\hg}$, $(dP_\phi^m)_{\hg}$ and $(\cT_{\ell,\phi})_{\hg}$ are polynomial in $t$ for all $\ell\in\bN$; indeed,
 \begin{align*}
  (dP_\phi^m)_{\hg} & = -tA_\phi^m(\cdot,\cdot,X_0,\cdot) + dP_\phi^m, \\
  (\cQ_{\ell+1,\phi}^m)_{\hg} & = (-1)^\ell(\ell+1)t^{2\ell} \cQ_{1,\phi}^m + O(t^{2\ell-2}), \\
  (\cT_{\ell,\phi})_{\hg} & = (-1)^\ell\binom{m+n-1}{\ell} t^{2\ell}g + O(t^{2\ell-2}) .
 \end{align*}
 Using the identity $\sum_{\ell=0}^k (-1)^\ell(\ell+1)\binom{n}{k-\ell}=\binom{n-2}{k}$, we compute that
 \[ \bigl(\cS_{k,\phi}^m\bigr)_{\hg} = (-1)^{k-3}\binom{m+n-3}{k-3}t^{2k-6}\left(dP_\phi^m-tA_\phi^m(\cdot,\cdot,X_0,\cdot)\right)\cdot\cQ_{1,\phi}^m + O(t^{2k-1}) . \]
 Since $t$ is arbitrary, $dP_\phi^m\cdot(P_\phi^m-\cZ_\phi^mg)=0$.  Since the representative $(g,v)$ is arbitrary, this holds for all representatives of $\kC$.

 Second, choose $X=0$ and let $\Omega$ be arbitrary.  From~\eqref{eqn:variational_conf_change} we see that, at $p$,
 \[ \left(dP_\phi^m\cdot\bigl(P_\phi^m-\cZ_\phi^mg\bigr)\right)_{\hg} = dP_\phi^m\cdot\bigl(P_\phi^m-\cZ_\phi^mg\bigr) + dP_\phi^m\cdot \Omega . \]
 The conclusion of the previous paragraph implies that $dP_\phi^m\cdot \Omega=0$ for all $(g,v)\in\kC$ and all $\Omega\in S^2T^\ast M$.  Hence $dP_\phi^m=0$ for all representatives $(g,v)\in\kC$.  Lemma~\ref{lem:conf_change} then implies that $A_\phi^m=0$, as desired.
\end{proof}

\subsection{The weighted $\sigma_k$-curvature functionals}
\label{subsec:variational/mY}

Proposition~\ref{prop:linearization_sigmak} enables us to compute the linearization of the total weighted $\sigma_k$-curvature functionals.  Recalling our goal that weighted Einstein metrics be stable with respect to these functionals, some care is needed in defining them.  In the case of scale zero, the definition of the total weighted $\sigma_k$-curvature functionals is the expected one.

\begin{defn}
 Let $(M^n,m,\mu)$ be a closed weighted manifold.  Given $k\in\bN$, the \emph{$\mF_k$-functional} $\mF_k\colon\kM\to\bR$ is defined by
 \[ \mF_k(g,v) := \int_M \sigma_{k,\phi}^m\,d\nu . \]
\end{defn}

From Proposition~\ref{prop:linearization_sigmak} we deduce that for generic values of $m$, the critical points of the restriction of the $\mF_k$-functionals to $\kC_1$ have constant weighted $\sigma_k$-curvature.

\begin{prop}
 \label{prop:mF_critical}
 Fix $k\in\bN_0$ and let $\kC$ be a weighted conformal class on a closed weighted manifold $(M^n,m,\mu)$; if $k\geq3$, assume that $\kC$ is locally conformally flat in the weighted sense.  Then the first variation $D\mF_k\colon T\kC\to\bR$ of the $\mF_k$-functional is
 \begin{equation}
  \label{eqn:mF_critical}
  D\mF_k[\psi] = -\frac{m+n-2k}{m+n}\int_M \sigma_{k,\phi}^m\psi\,d\nu
 \end{equation}
 for all $\psi\in T_{(g,v)}\kC$ and all $(g,v)\in\kC$.  In particular, $(g,v)\in\kC_1$ is a critical point of the restriction $\mF_k\colon\kC_1\to\bR$ if and only if $(M^n,g,v,m,\mu)$ is such that $\sigma_{k,\phi}^m$ is constant.
\end{prop}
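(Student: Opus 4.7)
The plan is a direct computation using Proposition~\ref{prop:linearization_sigmak} with $\kappa=0$ together with the conformal transformation law for the weighted volume element. Along the path $\gamma(t)=(e^{-2t\psi/(m+n)}g,\,e^{-t\psi/(m+n)}v)$ representing $\psi\in T_{(g,v)}\kC$, a direct substitution gives $d\nu(\gamma(t))=e^{-t\psi}\,d\nu(g,v)$, so the Leibniz rule applied to $\mF_k(g,v)=\int_M \sigma_{k,\phi}^m\,d\nu$ yields
\[ D\mF_k[\psi] = \int_M D\sigma_{k,\phi}^m[\psi]\,d\nu - \int_M \sigma_{k,\phi}^m\psi\,d\nu. \]

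Next I would substitute the formula from Proposition~\ref{prop:linearization_sigmak} at $\kappa=0$. The pointwise term $\frac{2k}{m+n}\sigma_{k,\phi}^m\psi$ integrates directly. The middle $\delta_\phi$-term integrates to zero, since $\delta_\phi$ is the formal $L^2(d\nu)$-adjoint of $d$ and $M$ is closed. The remaining Cotton-type sum vanishes under the hypotheses: it is empty for $k\leq 2$, and for $k\geq 3$ the local conformal flatness assumption forces $dP_\phi^m=0$ by Lemma~\ref{lem:div_and_tr}, so the sum vanishes pointwise. Combining everything yields
\[ D\mF_k[\psi] = \frac{2k}{m+n}\int_M \sigma_{k,\phi}^m\psi\,d\nu - \int_M \sigma_{k,\phi}^m\psi\,d\nu = -\frac{m+n-2k}{m+n}\int_M \sigma_{k,\phi}^m\psi\,d\nu, \]
establishing~\eqref{eqn:mF_critical}. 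Equivalently, one may invoke Theorem~\ref{thm:variational} to see that $\sigma_{k,\phi}^m$ is conformally variational and then apply~\eqref{eqn:var_to_sa_comp} from Lemma~\ref{lem:var_to_sa}, noting that $D\sigma_{k,\phi}^m[1]=\frac{2k}{m+n}\sigma_{k,\phi}^m$ since the gradient and Hessian of the constant function $1$ are zero.

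For the critical-point characterization, I would use the identification~\eqref{eqn:TkC1_identification} of $T_{(g,v)}\kC_1$ with mean-free functions in $C^\infty(M)$. Assuming the generic case $m+n\neq 2k$, $(g,v)$ is a critical point of $\mF_k|_{\kC_1}$ if and only if $\int_M \sigma_{k,\phi}^m\psi\,d\nu=0$ for every $\psi\in C^\infty(M)$ with $\int_M \psi\,d\nu=0$, which by $L^2(d\nu)$-duality is equivalent to $\sigma_{k,\phi}^m$ being constant. No real obstacle arises: the proof assembles Proposition~\ref{prop:linearization_sigmak}, the conformal rescaling $d\nu\mapsto e^{-\psi}d\nu$, and an integration by parts. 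The only subtle point is verifying that the ``lower-order'' terms in the linearization integrate to zero, which is precisely what the conformally variational hypothesis ($k\leq 2$ or local conformal flatness in the weighted sense) supplies.
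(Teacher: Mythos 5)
Your proof is correct and follows essentially the same route as the paper: both reduce to the linearization formula of Proposition~\ref{prop:linearization_sigmak} with $\kappa=0$, kill the Cotton-type sum via the hypothesis ($k\leq2$ or local conformal flatness in the weighted sense, exactly as in the proof of Theorem~\ref{thm:variational}), and then combine the Leibniz rule for $d\nu\mapsto e^{-t\psi}d\nu$ with integration by parts --- which is precisely what~\eqref{eqn:var_to_sa_comp} packages, as your ``equivalently'' remark acknowledges. Your explicit caveat that the critical-point characterization requires $m+n\neq 2k$ (when $m+n=2k$ the variation vanishes identically and $\mF_k$ is constant on $\kC$) is a small but legitimate refinement of the statement as written.
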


\begin{proof}
 Proposition~\ref{prop:linearization_sigmak} and the proof of Theorem~\ref{thm:variational} imply that
 \[ D\sigma_{k,\phi}^m[\psi] = \frac{2k}{m+n}\sigma_{k,\phi}^m\psi + \frac{m+n-2}{m+n}\delta_\phi\left(T_{k-1,\phi}^m(\nabla\psi)\right) . \]
 The conclusion~\eqref{eqn:mF_critical} follows from this and~\eqref{eqn:var_to_sa_comp}.  The characterization of the critical points of $\mF_k\colon\kC_1\to\bR$ follows from~\eqref{eqn:TkC1_identification} and~\eqref{eqn:mF_critical}.
\end{proof}

In the case of positive scale, the total weighted $\sigma_k$-curvature functionals are the $\mY_k$-functionals defined below.

\begin{defn}
 Let $(M^n,m,0)$ be a closed weighted manifold.  Given $k\in\bN$, the \emph{$\mZ_k$-functional} $\mZ_k\colon\kM\times\bR_+\to\bR$ and the \emph{$\mY_k$-functional} $\mY_k\colon\kM\times\bR_+\to\bR$ are
 \begin{align*}
  \mZ_k(g,v,\kappa) & := \kappa^{-\frac{2mk(m+n-1)}{(m+n)(2m+n-2)}}\int_M \csigma_{k,\phi}^m\,d\nu, \\
  \mY_k(g,v,\kappa) & := \mZ_k(g,v,\kappa)\left(\int_M v^{-1}\,d\nu\right)^{-\frac{2mk}{(m+n)(2m+n-2)}}\left(\int_M d\nu\right)^{-\frac{m+n-2k}{m+n}}
 \end{align*}
 for all $(g,v)\in\kM$ and all $\kappa\in\bR_+$.
\end{defn}

The $\mY_k$-functionals are invariant with respect to both the natural homothetic rescalings in a weighted conformal class and the homothetic rescalings within a Riemannian conformal class.

\begin{lem}
 \label{lem:mY_scaling}
 Let $(M^n,m,0)$ be a closed weighted manifold and let $k\in\bN$.  Then
 \begin{align*}
  \mY_k(c^2g,cv,c^{-1}\kappa) & = \mY_k(g,v,\kappa), \\
  \mY_k(c^2g,v,c^{-2}\kappa) & = \mY_k(g,v,\kappa)
 \end{align*}
 for all $(g,v)\in\kM$ and all $c,\kappa\in\bR_+$.
\end{lem}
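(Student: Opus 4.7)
The plan is to verify each identity by a direct rescaling computation: determine how $\csigma_{k,\phi}^m$, $d\nu$, $v^{-1}$, and $\kappa$ each transform under the given rescaling, and then check that the exponents in the definition of $\mY_k$ are precisely calibrated to cancel the resulting powers of $c$.

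For the first identity, the map $(g,v)\mapsto(c^2g,cv)$ is a pointwise conformal change with constant conformal factor $u=c^{-1}$; applying Lemma~\ref{lem:conf_change} with the constant function $f=-(m+n-2)\log c$ (so $\nabla f=0$) yields that $P_\phi^m$ is unchanged as a $(0,2)$-tensor and that $\hJ_\phi^m=c^{-2}J_\phi^m$. The crucial observation is that $\sigma_k^m(\lambda;P_\phi^m)$ depends on $P_\phi^m$ as an endomorphism $g^{-1}P_\phi^m$, whose eigenvalues are therefore multiplied by $c^{-2}$. Since $\hY_\phi^m+m\hat\kappa\hv^{-1}=c^{-2}(Y_\phi^m+m\kappa v^{-1})$ as well, the $k$-homogeneity of $\sigma_k^m$ in all of its arguments gives that the weighted $\sigma_k$-curvature computed with respect to $(c^2g,cv,c^{-1}\kappa)$ equals $c^{-2k}\csigma_{k,\phi}^m$. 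Finally, $d\nu$ rescales as $c^{m+n}d\nu$ and $v^{-1}$ as $c^{-1}v^{-1}$.

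For the second identity, the metric is homothetically rescaled but $v$ (and hence $\phi=-m\log v$) is fixed. Direct computation gives $\hDelta\phi=c^{-2}\Delta\phi$, $|\hnabla\phi|^2=c^{-2}|\nabla\phi|^2$, and $\widehat{R}=c^{-2}R$. The assumption $\mu=0$ is used precisely here: it removes the $m(m-1)\mu e^{2\phi/m}$ term from $R_\phi^m$, so that $\widehat{R_\phi^m}=c^{-2}R_\phi^m$ and hence $\hJ_\phi^m=c^{-2}J_\phi^m$, $\hY_\phi^m=c^{-2}Y_\phi^m$. Combined with $\hat\kappa\hv^{-1}=c^{-2}\kappa v^{-1}$ and the same endomorphism-eigenvalue reasoning as before, the weighted $\sigma_k$-curvature again rescales by $c^{-2k}$. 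This time $d\nu$ picks up only a factor of $c^n$, and $v^{-1}$ is unchanged.

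Substituting these scalings into the definition
\[ \mY_k=\kappa^{-\alpha}\left(\int_M\csigma_{k,\phi}^m\,d\nu\right)\left(\int_M v^{-1}\,d\nu\right)^{-\beta}\left(\int_M d\nu\right)^{-\gamma}, \]
with $\alpha=\frac{2mk(m+n-1)}{(m+n)(2m+n-2)}$, $\beta=\frac{2mk}{(m+n)(2m+n-2)}$, and $\gamma=\frac{m+n-2k}{m+n}$, the two invariances reduce to the algebraic identities
\[ (m+n-2k)+\alpha-\beta(m+n-1)-\gamma(m+n)=0, \qquad (n-2k)+2\alpha-n\beta-n\gamma=0, \]
both of which follow immediately from the formulas for $\alpha$, $\beta$, $\gamma$ (noting, for instance, that $\beta(m+n-1)=\alpha$ and $\gamma(m+n)=m+n-2k$ dispatch the first identity at once, while the second reduces to $2\alpha-n\beta=\tfrac{2mk}{m+n}$ via $2(m+n-1)-n=2m+n-2$). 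There is no substantive obstacle; the only points deserving care are the distinction between $P_\phi^m$ as a $(0,2)$-tensor and as an endomorphism (the former is unchanged under a constant conformal change, but the latter's eigenvalues scale by $c^{-2}$), and the use of $\mu=0$ to handle the warping term in $R_\phi^m$ under the second rescaling.
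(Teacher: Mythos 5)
Your proof is correct and takes essentially the same approach as the paper: the paper's proof simply records the scalings $\csigma_{k,\phi}^m\mapsto c^{-2k}\csigma_{k,\phi}^m$ together with $d\nu\mapsto c^{m+n}d\nu$ (first rescaling) and $d\nu\mapsto c^{n}d\nu$ (second rescaling), and asserts that the conclusion ``readily follows.'' You have merely filled in the details the paper omits --- deriving the curvature scaling via Lemma~\ref{lem:conf_change} (and via $\mu=0$ for the homothety fixing $v$) and checking the exponent identities explicitly --- all of which is accurate.
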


\begin{proof}
 Observe that
 \begin{align*}
  \csigma_{k,\phi}^m(c^2g,cv,c^{-1}\kappa) & = c^{-2k}\csigma_{k,\phi}^m(c^2g,cv,c^{-1}\kappa), & d\nu(c^2g,cv,c^{-1}\kappa) & = c^{m+n}d\nu(g,v,\kappa), \\
  \csigma_{k,\phi}^m(c^2g,v,c^{-2}\kappa) & = c^{-2k}\csigma_{k,\phi}^m(g,v,\kappa), & d\nu(c^2g,v,c^{-2}\kappa) & = c^nd\nu(g,v,\kappa) .
 \end{align*}
 The conclusion readily follows.
\end{proof}

In the remainder of this section, we begin to explore the variational properties of the $\mY_k$-functionals within a weighted conformal class $\kC$.  Our interest is in the cases when the weighted $\sigma_k$-curvatures are variational, hence we assume that $\kC$ is locally conformally flat in the weighted sense if $k\geq3$.  The scale-invariance of the $\mY_k$-functionals implies that it suffices to consider the \emph{$\mY_k$-functionals $\mY_k\colon\kC\to\bR$ with scale $\kappa>0$} defined by $\mY_k(g,v):=\mY_k(g,v,\kappa)$.

\begin{remark}
 The scale-invariance of the $\mY_k$-functionals implies that minimizers of $\mY_k\colon\kC\times\bR_+\to\bR$ are in one-to-one correspondence with minimizers of the functionals $\cmY_k\colon\kC\to\bR$ defined by
 \[ \cmY_k(g,v) = \inf_{\kappa>0}\mY_k(g,v,\kappa) . \]
 Up to composition with a monotone function depending only on $m$ and $n$, $\cmY_1$ is equivalent to the functional $\mQ_1$ introduced by the author~\cite{Case2013y} to study the weighted scalar curvature.
\end{remark}

In order to compute the linearizations of the $\mY_k$-functionals, we first consider the linearizations of the $\mZ_k$-functionals through variations of the scale $\kappa$.

\begin{lem}
 \label{lem:mZ_vary_kappa}
 Let $(M^n,g,v,m,0)$ be a closed smooth metric measure space, fix $k\in\bN$, and define $Z\colon\bR_+\to\bR$ by $Z(\kappa):=\mZ_k(g,v,\kappa)$.  Then
 \begin{equation}
  \label{eqn:mZ_vary_kappa}
  \kappa Z^\prime(\kappa) = A(\kappa)\int_M \left[ \csigma_{k,\phi}^m - \frac{(m+n)(2m+n-2)}{2k(m+n-1)}\kappa v^{-1}\cs_{k-1,\phi}^m \right] d\nu ,
 \end{equation}
 where $A(\kappa)=-\frac{2mk(m+n-1)}{(m+n)(2m+n-2)}\kappa^{-\frac{2mk(m+n-1)}{(m+n)(2m+n-2)}}$.  In particular, if $(M^n,g,v,m,0)$ is a weighted Einstein manifold with scale $\kappa>0$, then $Z^\prime(\kappa)=0$.
\end{lem}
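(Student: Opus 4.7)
The plan is direct: compute $Z'(\kappa)$ by differentiating the product $\kappa^{-c}\int_M \csigma_{k,\phi}^m\,d\nu$, where $c:=\frac{2mk(m+n-1)}{(m+n)(2m+n-2)}$, and then specialize to the weighted Einstein case using Proposition~\ref{prop:we_sigma2} together with Proposition~\ref{prop:we_integral_relation}.

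First I would observe that $d\nu$, $P_\phi^m$, and $Y_\phi^m$ are independent of the parameter $\kappa$, so the only $\kappa$-dependence inside the integral is through the scale in $\csigma_{k,\phi}^m = \sigma_k^m(Y_\phi^m + m\kappa v^{-1};\,P_\phi^m)$. Applying Lemma~\ref{lem:vary_kappa} with $\lambda = Y_\phi^m$ and with the role of $\kappa$ played by $\kappa v^{-1}$, and then multiplying by $\partial(\kappa v^{-1})/\partial\kappa = v^{-1}$, gives
\[
\frac{\partial}{\partial\kappa}\csigma_{k,\phi}^m \;=\; m v^{-1}\cs_{k-1,\phi}^m.
\]
Therefore
\[
\kappa Z'(\kappa) \;=\; -c\,\kappa^{-c}\int_M \csigma_{k,\phi}^m\,d\nu \;+\; m\,\kappa^{-c}\int_M \kappa v^{-1}\cs_{k-1,\phi}^m\,d\nu.
\]
Factoring out $A(\kappa) = -c\kappa^{-c}$, the claim~\eqref{eqn:mZ_vary_kappa} reduces to the algebraic identity $c\cdot\frac{(m+n)(2m+n-2)}{2k(m+n-1)} = m$, which is immediate from the definition of $c$.

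For the second assertion, suppose $(M^n,g,v,m,0)$ is weighted Einstein with $P_\phi^m=\lambda g$ and scale $\kappa>0$. By Proposition~\ref{prop:we_sigma2}, both $\csigma_{k,\phi}^m = \binom{m+n}{k}\lambda^k$ and $\cs_{k-1,\phi}^m = \binom{m+n-1}{k-1}\lambda^{k-1}$ are constant, so
\begin{align*}
\int_M \csigma_{k,\phi}^m\,d\nu &= \binom{m+n}{k}\lambda^k \int_M d\nu, \\
\int_M \kappa v^{-1}\cs_{k-1,\phi}^m\,d\nu &= \binom{m+n-1}{k-1}\lambda^{k-1}\kappa \int_M v^{-1}\,d\nu.
\end{align*}
Now I would invoke Proposition~\ref{prop:we_integral_relation}, which gives $\kappa\int_M v^{-1}\,d\nu = \tfrac{2(m+n-1)}{2m+n-2}\lambda\int_M d\nu$, and the Pascal-style identity $\tfrac{m+n}{k}\binom{m+n-1}{k-1} = \binom{m+n}{k}$. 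Substituting these into the bracket in~\eqref{eqn:mZ_vary_kappa} makes the two terms cancel exactly, so $Z'(\kappa)=0$.

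There is no serious obstacle here; this is a computation and a constant-chasing exercise. The only place requiring a little care is the chain-rule factor $v^{-1}$ when applying Lemma~\ref{lem:vary_kappa}, since that lemma is stated for a scalar parameter while our scale enters $\csigma_{k,\phi}^m$ through the product $\kappa v^{-1}$; getting that factor right is what makes the final identity $\kappa v^{-1}\cs_{k-1,\phi}^m$ appear in the correct place inside the integral.
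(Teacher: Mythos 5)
Your proposal is correct and follows exactly the paper's route: the paper's own proof is precisely ``apply Lemma~\ref{lem:vary_kappa}, then Proposition~\ref{prop:we_sigma2} and Proposition~\ref{prop:we_integral_relation},'' and you have simply written out the chain-rule factor $v^{-1}$, the constant check $c\cdot\tfrac{(m+n)(2m+n-2)}{2k(m+n-1)}=m$, and the binomial identity $\tfrac{m+n}{k}\binom{m+n-1}{k-1}=\binom{m+n}{k}$ (valid for real $m$) that the paper leaves implicit. All steps check out, including the pointwise application of Lemma~\ref{lem:vary_kappa} with $\lambda=Y_\phi^m$ and parameter $\kappa v^{-1}$.
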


\begin{proof}
 \eqref{eqn:mZ_vary_kappa} follows immediately from Lemma~\ref{lem:vary_kappa}.  If $(M^n,g,v,m,0)$ is a weighted Einstein manifold with scale $\kappa$, applying Proposition~\ref{prop:we_sigma2} and Proposition~\ref{prop:we_integral_relation} yields $Z^\prime(\kappa)=0$.
\end{proof}

\begin{remark}
 \label{rk:Zp}
 Let $p\in\bR$ and let $(M^n,g,v,m,0)$ be a weighted Einstein manifold with scale $\kappa>0$ and nonvanishing weighted Schouten tensor.  The same argument shows that the function $Z_p\colon\bR_+\to\bR$ defined by $Z_p(\kappa):=\kappa^p\int\csigma_{k,\phi}^m\,d\nu$ satisfies $Z_p^\prime(\kappa)=0$ if and only if $p=-\frac{2mk(m+n-1)}{(m+n)(2m+n-2)}$.
\end{remark}

We now compute the linearizations of the $\mY_k$-functionals.  Lemma~\ref{lem:mY_scaling} implies that~\eqref{eqn:mZ_vary_kappa} is proportional to the linearization of $\mY_k\colon\kC\to\bR$ (with scale $\kappa$) when restricted to homotheties.  The critical points of the $\mY_k$-functionals are characterized as follows:

\begin{prop}
 \label{prop:mY_critical}
 Let $k\in\bN$ and let $\kC$ be a weighted conformal class on a closed weighted manifold $(M^n,m,\mu)$; if $k\geq3$, assume that $\kC$ is locally conformally flat in the weighted sense.  Fix a scale $\kappa>0$.  Then $(g,v)\in\kC$ is a critical point of $\mY_k\colon\kC\to\bR$ if and only if
 \begin{multline}
  \label{eqn:mY_critical_curvature}
  \csigma_{k,\phi}^m + \frac{m}{m+n-2k}\kappa v^{-1}\cs_{k-1,\phi}^{m} \\ = \frac{\int\csigma_{k,\phi}^m\,d\nu}{\int d\nu} + \frac{m}{m+n-2k}\left(\frac{\int \cs_{k-1,\phi}^mv^{-1}\,d\nu}{\int v^{-1}\,d\nu}\right)\kappa v^{-1}
 \end{multline}
 and
  \begin{equation}
   \label{eqn:mY_critical_integral}
   \int_M\csigma_{k,\phi}^m\,d\nu = \frac{(m+n)(2m+n-2)}{2k(m+n-1)}\int_M \kappa v^{-1}\cs_{k-1,\phi}^{m}\,d\nu .
  \end{equation}
\end{prop}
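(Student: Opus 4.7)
The plan is to compute the linearization $D\mY_k[\psi]$ along a conformal path $\gamma(t) = (e^{-2t\psi/(m+n)}g, e^{-t\psi/(m+n)}v)$ in $\kC$ with the scale $\kappa > 0$ held fixed, set it to zero for all $\psi \in C^\infty(M)$, and extract both the pointwise equation~\eqref{eqn:mY_critical_curvature} and the integral identity~\eqref{eqn:mY_critical_integral}. Write $I_1 := \int_M \csigma_{k,\phi}^m\,d\nu$, $I_2 := \int_M v^{-1}\,d\nu$, and $I_3 := \int_M d\nu$, and let $\alpha = \frac{2mk(m+n-1)}{(m+n)(2m+n-2)}$, $\beta = \frac{2mk}{(m+n)(2m+n-2)}$, and $\gamma = \frac{m+n-2k}{m+n}$ denote the exponents in the definition, so that $\mY_k = \kappa^{-\alpha} I_1 I_2^{-\beta} I_3^{-\gamma}$.

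First I would record the three component linearizations. The conformal scalings $d\nu_t = e^{-t\psi}\,d\nu$ and $v_t^{-1}d\nu_t = e^{-t\psi(m+n-1)/(m+n)}v^{-1}\,d\nu$ give directly
\[
DI_3[\psi] = -\int_M \psi\,d\nu, \qquad DI_2[\psi] = -\frac{m+n-1}{m+n}\int_M v^{-1}\psi\,d\nu.
\]
For $DI_1[\psi]$, the hypothesis that $k\leq 2$ or $\kC$ is locally conformally flat in the weighted sense guarantees via Theorem~\ref{thm:variational} that $\csigma_{k,\phi}^m$ is conformally variational. Hence Lemma~\ref{lem:var_to_sa} applies, and combining formula~\eqref{eqn:var_to_sa_comp} with the evaluation of $D\csigma_{k,\phi}^m[1]$ read off from Proposition~\ref{prop:linearization_sigmak} (the divergence and $dP_\phi^m$ terms drop out when the test function is constant) yields
\[
DI_1[\psi] = -\int_M \left( \frac{m+n-2k}{m+n}\csigma_{k,\phi}^m + \frac{m\kappa}{m+n}\, v^{-1}\cs_{k-1,\phi}^m\right)\psi\,d\nu.
\]

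Next I would assemble these through the logarithmic derivative $D\mY_k/\mY_k = DI_1/I_1 - \beta\,DI_2/I_2 - \gamma\,DI_3/I_3$. Since $\mY_k$ is nonzero and $\psi \in C^\infty(M)$ is unconstrained (we work in $\kC$, not $\kC_1$), $D\mY_k[\psi] = 0$ for all $\psi$ is equivalent via the fundamental lemma of the calculus of variations to the pointwise identity
\[
\frac{m+n-2k}{m+n}\csigma_{k,\phi}^m + \frac{m\kappa}{m+n}v^{-1}\cs_{k-1,\phi}^m = \gamma\,\frac{I_1}{I_3} + \frac{\beta(m+n-1)}{m+n}\,\frac{I_1}{I_2}\,v^{-1}.
\]
Integrating this identity against $d\nu$ and using $\gamma\,(m+n)/(m+n-2k)=1$ produces $m\kappa\int_M v^{-1}\cs_{k-1,\phi}^m\,d\nu = \beta(m+n-1)\,I_1$, which upon substituting the value of $\beta$ is precisely~\eqref{eqn:mY_critical_integral}. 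Dividing the pointwise identity through by $(m+n-2k)/(m+n)$ and using~\eqref{eqn:mY_critical_integral} to rewrite $\frac{\beta(m+n-1)I_1}{(m+n-2k)I_2}$ as $\frac{m\kappa}{m+n-2k}\cdot\frac{\int_M v^{-1}\cs_{k-1,\phi}^m\,d\nu}{I_2}$ yields~\eqref{eqn:mY_critical_curvature}. Conversely, substituting~\eqref{eqn:mY_critical_integral} back into~\eqref{eqn:mY_critical_curvature} reverses the chain and recovers the pointwise identity, hence $D\mY_k[\psi] = 0$ for every $\psi$.

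There is no serious obstacle beyond bookkeeping. The key feature is that the exponents $\alpha,\beta,\gamma$ were chosen (cf.\ Lemma~\ref{lem:mZ_vary_kappa} and Remark~\ref{rk:Zp}) precisely so that the pointwise Euler equation takes the form of $\csigma_{k,\phi}^m + \tfrac{m\kappa}{m+n-2k}v^{-1}\cs_{k-1,\phi}^m$ equal to a specific affine function of $v^{-1}$ whose coefficients are the two natural weighted averages in~\eqref{eqn:mY_critical_curvature}, and that the unique consistency condition between those coefficients and their own averages against $d\nu$ is exactly~\eqref{eqn:mY_critical_integral}. (The statement implicitly assumes $m+n\neq 2k$, since $\frac{m}{m+n-2k}$ appears in~\eqref{eqn:mY_critical_curvature}; this is the same exceptional case in which $\mF_k$ is constant, as noted after Theorem~\ref{thm:intro/stable_qe}.)
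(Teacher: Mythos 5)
Your proposal is correct and follows essentially the same route as the paper: the paper likewise computes $D\mZ_k$ from Proposition~\ref{prop:linearization_sigmak} and Theorem~\ref{thm:variational} (its formula~\eqref{eqn:DmZ} is your $DI_1$ up to the constant $\kappa^{-\alpha}$), combines it with the linearizations of $\int v^{-1}\,d\nu$ and $\int d\nu$, extracts the same pointwise Euler equation~\eqref{eqn:mY_critical_curvature_pf}, and integrates it against $d\nu$ to split it into~\eqref{eqn:mY_critical_curvature} and~\eqref{eqn:mY_critical_integral}. The one cosmetic improvement to adopt from the paper: differentiate the product $\mV_{-1}^{-\beta}\mV_0^{-\gamma}\mZ_k$ with the product rule rather than taking a logarithmic derivative, which makes your hypothesis ``$\mY_k$ is nonzero'' unnecessary and covers the degenerate case $\int_M\csigma_{k,\phi}^m\,d\nu=0$ as well.
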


\begin{proof}
 Using Proposition~\ref{prop:linearization_sigmak} and Theorem~\ref{thm:variational}, we compute that
 \begin{equation}
  \label{eqn:DmZ}
  D\mZ_k[\psi] = -B(\kappa)\int_M\left(\frac{m+n-2k}{m+n}\csigma_{k,\phi}^m + \frac{m}{m+n}\kappa v^{-1}\cs_{k-1,\phi}^m\right)\psi\,d\nu
 \end{equation}
 for all $\psi\in T_{(g,v)}\kC$ and all $(g,v)\in\kC$, where $B(\kappa)=\kappa^{-\frac{2mk(m+n-1)}{(m+n)(2m+n-2)}}$.  It follows that
 \begin{multline*}
  \left(\int_M v^{-1}\,d\nu\right)^{\frac{2mk}{(m+n)(2m+n-2)}}\left(\int_M d\nu\right)^{\frac{m+n-2k}{m+n}}D\mY_k[\psi] \\ = D\mZ_k[\psi] + \frac{2mk(m+n-1)}{(m+n)^2(2m+n-2)}\mZ_k\frac{\int \psi v^{-1}\,d\nu}{\int v^{-1}\,d\nu} + \frac{m+n-2k}{m+n}\mZ_k\frac{\int \psi\,d\nu}{\int d\nu}
 \end{multline*}
 for all $\psi\in T_{(g,v)}\kC$ and all $(g,v)\in\kC$.  Combining these two observations, we see that $(g,v)\in\kC$ is a critical point of $\mY_k\colon\kC\to\bR$ if and only if
 \begin{multline}
  \label{eqn:mY_critical_curvature_pf}
  \csigma_{k,\phi}^m + \frac{m}{m+n-2k}\kappa v^{-1}\cs_{k-1,\phi}^{m} \\ = \left(\frac{1}{\int d\nu} + \frac{2mk(m+n-1)}{(m+n)(m+n-2k)(2m+n-2)}\frac{v^{-1}}{\int v^{-1}\,d\nu}\right)\int_M\csigma_{k,\phi}^m\,d\nu .
 \end{multline}
 Integrating with respect to $d\nu$ yields that~\eqref{eqn:mY_critical_curvature_pf} is equivalent to~\eqref{eqn:mY_critical_curvature} and~\eqref{eqn:mY_critical_integral}.
\end{proof}
\section{Stability results for the $\mF$- and $\mY$-functionals}
\label{sec:stable}

It is known that closed Einstein metrics and closed gradient Ricci solitons are stable with respect to the total $\sigma_k$-curvature functionals~\cite{Viaclovsky2000} and the total weighted $\sigma_k$-curvature functionals~\cite{Case2014sd}, respectively.  In this section we show that the same is true for quasi-Einstein metrics.  Based on these results and their usual proofs via the Lichnerowicz--Obata Theorem~\cite{Lichnerowicz1958,Obata1962}, we conjecture a Poincar\'e-type inequality for weighted Einstein manifolds which would imply that such manifolds are stable with respect to the $\mY_k$-functionals.

\subsection{Stability for quasi-Einstein manifolds}
\label{subsec:stable/qe}

We begin by computing the second variation of the $\mF_k$-functional in the cases when the weighted $\sigma_k$-curvature is variational.

\begin{prop}
 \label{prop:second_var_no_scale}
 Let $k\in\bN$ and let $\kC$ be a weighted conformal class on a closed weighted manifold $(M^n,m,\mu)$; if $k\geq3$, assume additionally that $\kC$ is locally conformally flat in the weighted sense.  Suppose that $(g,v)\in\kC_1$ is a critical point of the $\mF_k$-functional.  Then the second variation $D^2\mF_k\colon T_{(g,v)}\kC_1\to\bR$ is given by
 \begin{multline*}
  D^2\mF_k[\psi] = \frac{(m+n-2)(m+n-2k)}{(m+n)^2}\int_M T_{k-1,\phi}^m(\nabla\psi,\nabla\psi)\,d\nu \\ - \frac{2k(m+n-2k)}{(m+n)^2}\int_M\sigma_{k,\phi}^m\psi^2\,d\nu
 \end{multline*}
 for all $\psi\in T_{(g,v)}\kC_1$.
\end{prop}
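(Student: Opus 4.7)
The plan is to differentiate the first variation formula from Proposition~\ref{prop:mF_critical}, carefully accounting for the unit-weighted-volume constraint. Concretely, I would parameterize a general path $\gamma\colon(-\eps,\eps)\to\kC_1$ through $(g,v)$ by a single conformal potential, writing $\gamma(t)=(e^{-2f(t)/(m+n)}g,e^{-f(t)/(m+n)}v)$ with $f(0)=0$. A direct check using~\eqref{eqn:TkM_geometric} shows that the tangent at time $t$, when identified via $\Psi_{\gamma(t)}^m$ with an element of $C^\infty(M)$, is simply $\psi(t)=f'(t)$; expanding $f(t)=t\psi+\tfrac{t^2}{2}\chi+O(t^3)$ and imposing the volume constraint $\int_M e^{-f(t)}\,d\nu(g,v)=1$ to first and second order yields the compatibility conditions $\int_M\psi\,d\nu=0$ and $\int_M\chi\,d\nu=\int_M\psi^2\,d\nu$.

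Next, starting from the first variation formula
\[
  D\mF_k\big|_{\gamma(t)}[\psi(t)] = -\frac{m+n-2k}{m+n}\int_M\sigma_{k,\phi}^m(\gamma(t))\,\psi(t)\,d\nu(\gamma(t))
\]
of Proposition~\ref{prop:mF_critical}, I would differentiate in $t$ at $t=0$. This produces three terms: a derivative of $\sigma_{k,\phi}^m$ along the path, a derivative of the tangent parameter $\psi(t)$ (equal to $\chi$ at $t=0$), and a derivative of the weighted volume element (equal to $-\psi\,d\nu$ by~\eqref{eqn:TkM_geometric_psi}). Because the critical point satisfies $\sigma_{k,\phi}^m\equiv c$ constant, the $\chi$-dependent contribution equals $c\int_M(\chi-\psi^2)\,d\nu$, which vanishes by the second-order volume constraint. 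What remains is
\[
  D^2\mF_k[\psi] = -\frac{m+n-2k}{m+n}\int_M D\sigma_{k,\phi}^m[\psi]\,\psi\,d\nu.
\]

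Then I would invoke the linearization formula derived in the proof of Proposition~\ref{prop:mF_critical} (and hence ultimately of Proposition~\ref{prop:linearization_sigmak}), namely
\[
  D\sigma_{k,\phi}^m[\psi] = \frac{2k}{m+n}\sigma_{k,\phi}^m\psi + \frac{m+n-2}{m+n}\delta_\phi\bigl(T_{k-1,\phi}^m(\nabla\psi)\bigr),
\]
which holds in the variational regime (that is, when $k\leq 2$ or $\kC$ is locally conformally flat in the weighted sense). Integration by parts against the weighted measure $d\nu$ converts the divergence term into $-\int_M T_{k-1,\phi}^m(\nabla\psi,\nabla\psi)\,d\nu$, since $\int\psi\,\delta_\phi\omega\,d\nu=-\int\langle d\psi,\omega\rangle\,d\nu$ for any $1$-form $\omega$. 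Substituting and collecting signs yields exactly the stated formula.

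The main obstacle is mostly bookkeeping: one must verify that the constraint correction $\chi$ enters only through the combination $\int_M(\chi-\psi^2)\,d\nu$, which vanishes thanks both to the unit-volume constraint and to the critical point equation $\sigma_{k,\phi}^m=\mathrm{const}$. Once this cancellation is noted, the remaining computation is a routine application of the previously established linearization and integration by parts, so there is no serious analytic difficulty.
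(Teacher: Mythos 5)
Your proposal is correct and follows essentially the same route as the paper: differentiate the first variation formula of Proposition~\ref{prop:mF_critical} at the critical point, use the constancy of $\sigma_{k,\phi}^m$ there, and then substitute the linearization $D\sigma_{k,\phi}^m[\psi] = \frac{2k}{m+n}\sigma_{k,\phi}^m\psi + \frac{m+n-2}{m+n}\delta_\phi\bigl(T_{k-1,\phi}^m(\nabla\psi)\bigr)$ and integrate by parts against $d\nu$. The only difference is that you make explicit the second-order volume-constraint bookkeeping (the cancellation of the $\chi$-terms via $\int_M(\chi-\psi^2)\,d\nu=0$), which the paper's terse proof leaves implicit; this is a correct and harmless elaboration.
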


\begin{proof}
 By Proposition~\ref{prop:mF_critical}, since $(g,v)$ is a critical point of $\mF_k\colon\kC_1\to\bR$, the weighted $\sigma_k$-curvature $\sigma_{k,\phi}^m$ is constant.  Therefore
 \[ D^2\mF_k[\psi] = -\frac{m+n-2k}{m+n}\int_M \psi\,D\sigma_{k,\phi}^m[\psi]\,d\nu \]
 for all $\psi\in T_{(g,v)}\kC_1$.  The conclusion now follows from Proposition~\ref{prop:linearization_sigmak}.
\end{proof}

Applying Proposition~\ref{prop:second_var_no_scale} in the case of a closed quasi-Einstein manifold yields the desired stability result.

\begin{thm}
 \label{thm:stable_qe}
 Let $k\in\bN$ and let $(M^n,g,v,m,\mu)$ be a closed quasi-Einstein manifold such that $P_\phi^m=\lambda g>0$; if $k\geq3$, assume additionally that $(g,v)$ is locally conformally flat in the weighted sense.  Then
 \begin{enumerate}
  \item if $k<\frac{m+n}{2}$, then $D^2\mF_k\colon T_{(g,v)}\kC_1\to\bR$ is positive definite; and
  \item if $\frac{m+n}{2}<k\leq m+n$, then $D^2\mF_k T_{(g,v)}\kC_1\to\bR$ is negative definite.
 \end{enumerate}
\end{thm}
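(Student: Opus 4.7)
The plan is to reduce the positive/negative definiteness of $D^2\mF_k$ to a single spectral-gap estimate for the weighted Laplacian. First I would substitute the closed-form values on the quasi-Einstein metric given by Proposition~\ref{prop:we_sigma2} (at scale $\kappa=0$), namely $\sigma_{k,\phi}^m=\binom{m+n}{k}\lambda^k$ and $T_{k-1,\phi}^m=\binom{m+n-1}{k-1}\lambda^{k-1}g$, into the second variation formula of Proposition~\ref{prop:second_var_no_scale}. Using the identity $2k\binom{m+n}{k}=2(m+n)\binom{m+n-1}{k-1}$, this should collapse for every $\psi\in T_{(g,v)}\kC_1$ to
\[
D^2\mF_k[\psi] = \frac{(m+n-2k)\binom{m+n-1}{k-1}\lambda^{k-1}}{(m+n)^2}\left[(m+n-2)\int_M |\nabla\psi|^2\,d\nu - 2(m+n)\lambda\int_M \psi^2\,d\nu\right].
\]

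Next I would extract the required lower bound on $\Ric_\phi^m$ from the quasi-Einstein hypothesis: combining $P_\phi^m=\lambda g$ with $\sigma_{1,\phi}^m=J_\phi^m=(m+n)\lambda$ forces $\Ric_\phi^m=\frac{2(m+n-1)}{m+n-2}\lambda g$, which is strictly positive. The weighted Lichnerowicz--Obata theorem of Bakry--Qian then yields $\lambda_1(-\Delta_\phi)\geq \frac{2(m+n)}{m+n-2}\lambda$ for the first nonzero eigenvalue, and this lower bound is strict on a closed smooth metric measure space (the equality case would require a warped-product model whose warping factor degenerates on a hypersurface, incompatible with $v$ being smooth and positive everywhere, cf.\ Example~\ref{ex:model_zero}). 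Consequently, for every mean-zero $\psi\in T_{(g,v)}\kC_1$ with $\psi\not\equiv 0$, the Poincar\'e inequality $\int|\nabla\psi|^2\,d\nu\geq\lambda_1\int\psi^2\,d\nu$ makes the bracket in the displayed formula strictly positive.

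Finally I would read off the sign: since $\lambda>0$ and $\binom{m+n-1}{k-1}>0$ throughout the range $1\leq k\leq m+n$, the sign of $D^2\mF_k[\psi]$ is exactly the sign of $(m+n-2k)$. This is positive when $k<(m+n)/2$ and negative when $(m+n)/2<k\leq m+n$, giving both cases.

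The main obstacle I expect is making the strict inequality in the Lichnerowicz--Obata bound rigorous: Bakry--Qian's theorem itself produces only a non-strict bound, so one must either analyze the equality case directly or invoke a rigidity statement showing that no closed quasi-Einstein manifold with smooth positive weight attains the eigenvalue bound. The remaining steps are routine manipulations of formulas already established in the paper.
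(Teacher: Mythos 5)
Your proposal is correct and follows essentially the same route as the paper: substitute the quasi-Einstein values $\sigma_{k,\phi}^m=\binom{m+n}{k}\lambda^k$ and $T_{k-1,\phi}^m=\binom{m+n-1}{k-1}\lambda^{k-1}g$ from Proposition~\ref{prop:we_sigma2} into Proposition~\ref{prop:second_var_no_scale}, then combine $\Ric_\phi^m=\frac{2(m+n-1)}{m+n-2}\lambda g>0$ with the Bakry--Qian weighted Lichnerowicz--Obata theorem to get $\lambda_1(-\Delta_\phi)>\frac{2(m+n)}{m+n-2}\lambda$ on mean-zero variations, so the sign of $D^2\mF_k$ is that of $m+n-2k$. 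Your only deviation is cosmetic: the paper keeps the two-term form of the second variation and attributes the strict eigenvalue inequality directly to \cite[Theorem~14]{BakryQian2000} (the Obata-type rigidity being exactly your observation that equality would force the degenerate hemisphere-type model, which is excluded since $v$ is smooth and positive on a closed manifold).
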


\begin{proof}
 It follows readily from Proposition~\ref{prop:we_sigma2} and Proposition~\ref{prop:second_var_no_scale} that
 \begin{multline}
  \label{eqn:stable_qe_second_var}
  D^2\mF_k[\psi] = \binom{m+n-1}{k-1}\frac{(m+n-2)(m+n-2k)}{(m+n)^2}\lambda^{k-1}\int_M\lv\nabla\psi\rv^2\,d\nu \\ - \binom{m+n}{k}\frac{2k(m+n-2k)}{(m+n)^2}\lambda^k\int_M \psi^2\,d\nu
  \end{multline}
 for all $\psi\in T_{(g,v)}\kC_1$.  Since $P_\phi^m=\lambda g>0$, we see that $\Ric_\phi^m=\frac{2(m+n-1)}{m+n-2}\lambda g>0$.  The weighted Lichnerowicz theorem~\cite[Theorem~14]{BakryQian2000} implies that $\lambda_1(-\Delta_\phi)>\frac{2(m+n)}{m+n-2}$.  Inserting this into~\eqref{eqn:stable_qe_second_var} yields the result.
\end{proof}

\begin{remark}
 We only consider the case of quasi-Einstein manifolds with positive weighted Schouten tensor because any closed quasi-Einstein manifold with nonpositive weighted Schouten tensor is an Einstein manifold~\cite{Kim_Kim}.
\end{remark}

\subsection{Stability for weighted Einstein metrics}
\label{subsec:stable/we}

Proposition~\ref{prop:we_sigma2} and Proposition~\ref{prop:mY_critical} imply that weighted Einstein manifolds with $\mu=0$ and scale $\kappa$ are critical points of the $\mY_k$-functional.  In this subsection we conjecture a Lichnerowicz--Obata-type result which, if true, implies that such manifolds are infinitesimal minimizers of the $\mY_k$ functionals.  To that end, we compute the linearization of $\cs_{k-1,\phi}^m$ at a weighted Einstein manifold.

\begin{lem}
 \label{lem:we_csk-1_linearization}
 Let $k\in\bN$ and let $(M^n,g,v,m,0)$ be a weighted Einstein manifold with $P_\phi^m=\lambda g>0$ and scale $\kappa>0$; if $k\geq3$, assume additionally that $\kC=[g,v]$ is locally conformally flat in the weighted sense.  Then
 \begin{multline}
  \label{eqn:we_csk-1_linearization}
  D\cs_{k-1,\phi}^m = \binom{m+n-2}{k-2}\lambda^{k-2}\biggl[\frac{2(m+n-1)}{m+n}\lambda\psi - \frac{m-1}{m+n}\kappa\psi v^{-1} \\ + \frac{m+n-2}{m+n}v\delta_\phi\left(v^{-1}\nabla\psi\right)\biggr] .
 \end{multline}
\end{lem}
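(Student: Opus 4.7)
\emph{Proof plan.} The starting point is Lemma~\ref{lem:s_to_sigma-1}, which lets me rewrite
\[ \cs_{k-1,\phi}^m=\sigma_{k-1}^{m-1}\Bigl(\tfrac{m-1}{m}\cY_\phi^m;\,P_\phi^m\Bigr). \]
I plan to linearize this right-hand side in $(g,v)\in\kC$ and then evaluate at the weighted Einstein point, where by~\eqref{eqn:we_cY} one has $P_\phi^m=\lambda g$ and $\cY_\phi^m=m\lambda$.

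Viewing $\sigma_{k-1}^{m-1}(\sigma;\mathcal{P})$ as a smooth function of a scalar and a symmetric endomorphism $\mathcal{P}=g^{-1}P_\phi^m$, the block-diagonal interpretation spelled out immediately after Lemma~\ref{lem:eigenvalues_T} combined with Lemma~\ref{lem:vary_kappa} and the standard identity $d\sigma_{k-1}[h]=\lp T_{k-2},h\rp$ gives the two partial derivatives
\[ \partial_\sigma\sigma_{k-1}^{m-1}=s_{k-2}^{m-1}\quad\text{and}\quad \partial_\mathcal{P}\sigma_{k-1}^{m-1}\cdot h=\lp T_{k-2}^{m-1},h\rp. \]
At the weighted Einstein point, the corresponding block matrix is $\lambda$ times the identity on $\bR^{m+n-1}$, so Lemma~\ref{lem:eigenvalues_T} together with the alternating-sum manipulation used in the proof of Proposition~\ref{prop:we_sigma2} shows that both partials equal $\binom{m+n-2}{k-2}\lambda^{k-2}$, with $T_{k-2}^{m-1}$ being that scalar times the identity. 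In particular, the contribution of the $\mathcal{P}$-variation collapses to a trace.

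Next, I will use the identification $T_{(g,v)}\kC\cong C^\infty(M)$ from Section~\ref{sec:moduli}: the element $\psi$ corresponds to the curve $t\mapsto(e^{-2t\psi/(m+n)}g,e^{-t\psi/(m+n)}v)$, equivalently to the conformal factor $f_t=\tfrac{m+n-2}{m+n}t\psi$. Invoking Lemma~\ref{lem:conf_change} together with $Dg^{-1}[\psi]=\tfrac{2\psi}{m+n}g^{-1}$ (to convert between $(0,2)$- and $(1,1)$-versions of $P_\phi^m$) and $Dv^{-1}[\psi]=\tfrac{\psi}{(m+n)v}$, a direct computation at the Einstein point produces
\[ \tr\bigl(D\mathcal{P}[\psi]\bigr)=\tfrac{2n\lambda}{m+n}\psi+\tfrac{m+n-2}{m+n}\Delta\psi, \]
\[ D\cY_\phi^m[\psi]=\tfrac{2m\lambda}{m+n}\psi-\tfrac{m\kappa}{(m+n)v}\psi+\tfrac{m(m+n-2)}{m+n}v^{-1}\lp\nabla v,\nabla\psi\rp, \]
where the second line uses $Y_\phi^m=m\lambda-m\kappa v^{-1}$ from~\eqref{eqn:we_cY}.

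Assembling the two contributions with the chain-rule prefactor $\tfrac{m-1}{m}$ in front of $D\cY_\phi^m[\psi]$, the $m$'s cancel; the coefficient of $\psi$ aggregates to $\tfrac{2(m+n-1)}{m+n}\lambda$, the coefficient of $\psi v^{-1}$ to $-\tfrac{m-1}{m+n}\kappa$, and the remaining gradient-and-Laplacian terms gather as $\tfrac{m+n-2}{m+n}\bigl(\Delta\psi+(m-1)v^{-1}\lp\nabla v,\nabla\psi\rp\bigr)$. The final step is to recognize this bracket as $v\,\delta_\phi(v^{-1}\nabla\psi)$, which follows in one line from the paper's convention $\delta_\phi\alpha=\nabla^i\alpha_i-\alpha(\nabla\phi)$ (inherited from its definition of the weighted Bach tensor) and $\phi=-m\ln v$. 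I expect the main bookkeeping obstacle to be the trace of $D\mathcal{P}$, in which the pure-$\psi$ piece from $Dg^{-1}$ must be correctly combined with the Laplacian piece from $DP_\phi^m$; once that is right, the rest is pure substitution.
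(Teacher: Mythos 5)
Your proposal is correct, and every coefficient checks out: the chain rule through $\cs_{k-1,\phi}^m=\sigma_{k-1}^{m-1}\bigl(\tfrac{m-1}{m}\cY_\phi^m;P_\phi^m\bigr)$ is legitimate (both slots are polynomial, the scalar-slot partial is $s_{k-2}^{m-1}$ by Lemma~\ref{lem:vary_kappa}, and the matrix-slot identity $\partial_P\sigma_{k-1}^{m-1}[h]=\lp T_{k-2}^{m-1},h\rp$ follows from the unweighted identity via Lemma~\ref{lem:reln_to_trivial}), your variations $DP_\phi^m[\psi]$, $D\cY_\phi^m[\psi]$, $Dv^{-1}[\psi]$ agree with Lemma~\ref{lem:conf_change} and the convention~\eqref{eqn:TkM}, and the final bracket does assemble to $\tfrac{m+n-2}{m+n}v\delta_\phi(v^{-1}\nabla\psi)$. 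However, your route is genuinely different from the paper's. The paper first upgrades the algebraic identity of Lemma~\ref{lem:s_to_sigma-1} to the geometric identity $\cs_{k-1,\phi}^m=\bigl(\tfrac{m+n-2}{m+n-3}\bigr)^{k-1}\csigma_{k-1,\phi}^{m-1}$ (with the modified scale $\kappa^{(m-1)}=\tfrac{m+n-3}{m+n-2}\kappa$), i.e.\ it re-expresses everything through the weighted invariants $P_\phi^{m-1}$, $Y_\phi^{m-1}$ of $(M^n,g,v,m-1,0)$ --- a step that needs $P_\phi^{m-1}\propto P_\phi^m$ and hence the locally-conformally-flat hypothesis when $k\geq3$ --- and then invokes the general linearization formula of Proposition~\ref{prop:linearization_sigmak} transported by Corollary~\ref{cor:change_TkM}, evaluating at the Einstein point only at the end. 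Your argument instead stays entirely at the $m$-level and linearizes pointwise; since $T_{k-2}^{m-1}$ is a multiple of the identity at the weighted Einstein point, the matrix variation collapses to a trace and no $dP_\phi^m$-type correction terms ever appear, so your proof in fact never uses the local conformal flatness assumption --- it proves a slightly stronger statement. What the paper's detour buys is a formula valid away from the Einstein point, displaying the operator $v\delta_\phi\bigl(v^{-1}\cT_{k-2,\phi}^{m-1}(\nabla\psi)\bigr)$ of~\eqref{eqn:linearization_sk}, which is reused (e.g.\ for the ellipticity argument in Proposition~\ref{prop:mYk_elliptic}); your computation, being anchored at the Einstein point, yields only the evaluated formula~\eqref{eqn:we_csk-1_linearization}, which is all the lemma claims.
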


\begin{proof}
 By Lemma~\ref{lem:s_to_sigma-1},
 \[ \cs_{k-1,\phi}^m = \sigma_{k-1}^{m-1}\left(\frac{m-1}{m}\left(Y_\phi^m+m\kappa v^{-1}\right);P_\phi^m\right) . \]
 Set $\kappa^{(m-1)}:=\frac{m+n-3}{m+n-2}\kappa$.  Arguing as in the proof of Lemma~\ref{lem:csigma_to_sigma_genl}, we observe that for any weighted conformal class $\kC$, assumed to be locally conformally flat in the weighted sense if $k\geq3$, it holds that
 \begin{equation}
  \label{eqn:csk_to_csigmak-1}
  \cs_{k-1,\phi}^m = \left(\frac{m+n-2}{m+n-3}\right)^{k-1}\csigma_{k-1,\phi}^{m-1},
 \end{equation}
 where $\csigma_{k-1,\phi}^{m-1}$ is defined in terms of the scale $\kappa^{(m-1)}$.  It follows from Corollary~\ref{cor:change_TkM} and Proposition~\ref{prop:linearization_sigmak} that the linearization $D\csigma_{k-1,\phi}^{m-1}\colon T_{(g,v)}\kC(M,m,0)\to C^\infty(M)$ is
 \begin{multline}
  \label{eqn:linearization_sk}
  D\csigma_{k-1,\phi}^{m-1}[\psi] = \frac{1}{m+n}\left(2(k-1)\csigma_{k-1,\phi}^{m-1} - (m-1)\kappa^{(m-1)} v^{-1}\cs_{k-2,\phi}^{m-1}\right)\psi \\ + \frac{m+n-3}{m+n}v\delta_\phi\left(v^{-1}\cT_{k-2,\phi}^{m-1}\left(\nabla\psi\right)\right)
 \end{multline}
 for
 \begin{align*}
  \cs_{k-2,\phi}^{m-1} & := s_{k-2}^{m-1}\left(Y_\phi^{m-1} + (m-1)\kappa^{(m-1)}v^{-1};P_\phi^{m-1}\right), \\
  \cT_{k-2,\phi}^{m-1} & := T_{k-2}^{m-1}\left(Y_\phi^{m-1} + (m-1)\kappa^{(m-1)}v^{-1};P_\phi^{m-1}\right) .
 \end{align*}
 Inserting this into~\eqref{eqn:csk_to_csigmak-1}, using~\eqref{eqn:we_csk} and~\eqref{eqn:csk_to_csigmak-1} to evaluate $\csigma_{k-1,\phi}^{m-1}$, and using~\eqref{eqn:P-k_lcf} to evaluate $\cs_{k-2,\phi}^{m-1}$ and $\cT_{k-2,\phi}^{m-1}$ when $k\geq3$ yields~\eqref{eqn:we_csk-1_linearization}.
\end{proof}

Lemma~\ref{lem:we_csk-1_linearization} enables us to compute the second variation of the $\mY_k$-functional at a weighted Einstein manifold.

\begin{prop}
 \label{prop:second_var_scale}
 Let $k\in\bN$ and let $(M^n,g,v,m,0)$ be a closed weighted Einstein manifold with $P_\phi^m=\lambda g>0$ and scale $\kappa>0$; if $k\geq3$, assume additionally that $\kC=[g,v]$ is locally conformally flat in the weighted sense.  Then $(g,v)$ is a critical point of $\mY_k\colon\kC\to\bR$ and the second variation $D^2\mY_k\colon T_{(g,v)}\kC\to\bR$ is given by
 \begin{multline*}
  D^2\mY_k[\psi] = \frac{(m+n-2)(m+n-2k)}{(m+n)^2}\binom{m+n-1}{k-1}\mV_{-1}^{-a}\mV_0^{-b}\lambda^{k-1}I_1[\psi] \\ + \frac{m(m+n-2)}{(m+n)^2}\binom{m+n-2}{k-2}\mV_{-1}^{-a}\mV_0^{-b}\lambda^{k-2}\kappa I_2[\psi]
 \end{multline*}
 for all $\psi\in T_{(g,v)}\kC$, where $\mV_{-1}$ and $\mV_0$ denote the functionals $\mV_{-1}:=\int v^{-1}d\nu$ and $\mV_0:=\int d\nu$; $a$ and $b$ denote the constants $a:=\frac{2mk}{(m+n)(2m+n-2)}$, $b:=\frac{m+n-2k}{m+n}$; and
 \begin{subequations}
  \label{eqn:we_eigenvalues}
  \begin{multline}
   \label{eqn:we_eigenvalues0}
   I_1[\psi] := \int_M \biggl[ \lv\nabla\psi\rv^2 - \frac{2(m+n)}{m+n-2}\left(\psi-\opsi\right)^2 \\ + \frac{m}{m+n-2}\left(\psi^2 - 2\opsi\psi + \frac{(m+n-1)(2m+n)}{(m+n)(2m+n-2)}\left(\frac{\int\psi v^{-1}}{\int v^{-1}}\right)\psi\right)\kappa v^{-1} \biggr]d\nu
  \end{multline}
  for $\opsi=\frac{\int\psi}{\int 1}$ and
  \begin{multline}
   \label{eqn:we_eigenvalues1}
   I_2[\psi] := \int_M \biggl[ \lv\nabla\psi\rv^2 - \frac{2(m+n-1)}{m+n-2}\lambda\psi^2 + \frac{m-1}{m+n-2}\kappa\psi^2v^{-1} \\ + \frac{2(m+n-1)^2}{(m+n-2)(2m+n-2)}\left(\frac{\int\psi v^{-1}}{\int v^{-1}}\right)\lambda\psi\biggr]v^{-1}\,d\nu .
  \end{multline}
 \end{subequations}
\end{prop}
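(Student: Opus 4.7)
The critical point assertion follows from Proposition~\ref{prop:mY_critical} together with the values computed in Proposition~\ref{prop:we_sigma2} and the integral identity of Proposition~\ref{prop:we_integral_relation}. Indeed, since $\csigma_{k,\phi}^m=\binom{m+n}{k}\lambda^k$ and $\cs_{k-1,\phi}^m=\binom{m+n-1}{k-1}\lambda^{k-1}$ are both constant at a weighted Einstein manifold, the pointwise equation~\eqref{eqn:mY_critical_curvature} reduces to $0=0$, while Proposition~\ref{prop:we_integral_relation} is exactly the rearrangement of~\eqref{eqn:mY_critical_integral} that uses $k\binom{m+n}{k}=(m+n)\binom{m+n-1}{k-1}$.

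For the second variation, the plan is to parametrize by $\gamma(t)=(e^{-2t\psi/(m+n)}g,\,e^{-t\psi/(m+n)}v)$ and use the ratio structure of $\mY_k$. Writing $\mY_k(t)=B(\kappa)G(t)\mV_{-1}(t)^{-a}\mV_0(t)^{-b}$ with $G(t)=\int\csigma_{k,\phi}^m(g_t,v_t)\,d\nu_t$, one has $\mV_0(t)=\int e^{-t\psi}\,d\nu$ and $\mV_{-1}(t)=\int e^{-(m+n-1)t\psi/(m+n)}v^{-1}\,d\nu$, so $\mV_0',\mV_0'',\mV_{-1}',\mV_{-1}''$ at $t=0$ are elementary. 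At a critical point the cross terms in the expansion of $(\log\mY_k)''$ vanish because $(\log\mY_k)'=0$, which reduces $\mY_k''(0)$ to a linear combination of $G''(0),\mV_0''(0),\mV_{-1}''(0)$ divided by the appropriate powers of $\mV_0,\mV_{-1}$.

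The main computation is $G''(0)$. The formula for $G'(t)$ comes from the proof of Proposition~\ref{prop:mY_critical}; differentiating it in $t$ produces contributions from $D\csigma_{k,\phi}^m[\psi]$, $D\cs_{k-1,\phi}^m[\psi]$, $\tfrac{d}{dt}v_t^{-1}$, and $\tfrac{d}{dt}d\nu_t$. At the weighted Einstein point, Proposition~\ref{prop:we_sigma2} specializes $\cT_{k-1,\phi}^m=\binom{m+n-1}{k-1}\lambda^{k-1}g$, so Proposition~\ref{prop:linearization_sigmak} yields
\[ D\csigma_{k,\phi}^m[\psi]=\binom{m+n-1}{k-1}\lambda^{k-1}\Bigl[2\lambda\psi-\tfrac{m\kappa v^{-1}}{m+n}\psi-\tfrac{m+n-2}{m+n}\Delta_\phi\psi\Bigr], \]
while $D\cs_{k-1,\phi}^m[\psi]$ is given by Lemma~\ref{lem:we_csk-1_linearization}. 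Integration by parts converts $\int\psi\Delta_\phi\psi\,d\nu=-\int|\nabla\psi|^2\,d\nu$ and $\int\psi\cdot v\delta_\phi(v^{-1}\nabla\psi)\,d\nu=-\int v^{-1}|\nabla\psi|^2\,d\nu$ (after absorbing the factor $v^{-1}$ from the outer integrand), which produces the two gradient-squared terms defining $I_1$ and $I_2$.

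Collecting terms and sorting by the overall scaling $\lambda^{k-1}$ versus $\lambda^{k-2}\kappa$ separates the answer into the two pieces $I_1$ and $I_2$. The constraint corrections $\mV_0''(0)/\mV_0$ and $\mV_{-1}''(0)/\mV_{-1}$, combined with the $\int\psi^2\,d\nu$ and $\int\psi^2 v^{-1}\,d\nu$ terms coming from $\frac{d}{dt}d\nu_t$ and $\frac{d}{dt}v_t^{-1}$, are what generate the mean-centered combination $(\psi-\overline\psi)^2$ in~\eqref{eqn:we_eigenvalues0} and the cross term involving $\int\psi v^{-1}/\int v^{-1}$ in~\eqref{eqn:we_eigenvalues1}; at this stage Proposition~\ref{prop:we_integral_relation} is used to exchange $\mV_{-1}$ for $\mV_0$ and extract the constants $\tfrac{(m+n-1)(2m+n)}{(m+n)(2m+n-2)}$ and $\tfrac{2(m+n-1)^2}{(m+n-2)(2m+n-2)}$ appearing in the statement. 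The main obstacle is purely algebraic: keeping track of the binomial-coefficient identities (such as $2k\binom{m+n}{k}=2(m+n)\binom{m+n-1}{k-1}$ and $(k-1)\binom{m+n-1}{k-1}=(m+n-2)\binom{m+n-2}{k-2}$) and verifying that the many cross terms generated by the ratio structure collapse into the two compact quadratic forms $I_1$ and $I_2$.
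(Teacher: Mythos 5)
Your overall route coincides with the paper's (criticality from Proposition~\ref{prop:mY_critical} together with Propositions~\ref{prop:we_sigma2} and~\ref{prop:we_integral_relation}; then two derivatives of the ratio, evaluated at the weighted Einstein point via Proposition~\ref{prop:linearization_sigmak} and Lemma~\ref{lem:we_csk-1_linearization}), but your reduction step contains a genuine error. Criticality of $(g,v)$ kills only the terms proportional to $D\mY_k$ itself; it does not kill the squared first-derivative terms. With $\mY_k=\mZ_k\mV_{-1}^{-a}\mV_0^{-b}$ one has, at a critical point,
\begin{multline*}
 D^2\mY_k = \frac{1}{\mV_{-1}^a\mV_0^b}D^2\mZ_k - a\frac{\mY_k}{\mV_{-1}}D^2\mV_{-1} - b\frac{\mY_k}{\mV_0}D^2\mV_0 - a(a-1)\frac{\mY_k}{\mV_{-1}^2}\left(D\mV_{-1}\right)^2 \\ - 2ab\frac{\mY_k}{\mV_{-1}\mV_0}\left(D\mV_{-1}\right)\left(D\mV_0\right) - b(b-1)\frac{\mY_k}{\mV_0^2}\left(D\mV_0\right)^2 ,
\end{multline*}
which is the paper's~\eqref{eqn:D2mY}: in $(\log\mY_k)''$ the relation $(\log\mY_k)'=0$ lets you eliminate $G'/G$ in favor of $a\mV_{-1}'/\mV_{-1}+b\mV_0'/\mV_0$ (in your notation $G(t)=\int\csigma_{k,\phi}^m\,d\nu_t$), but the resulting square does not cancel the $(\mV_{-1}'/\mV_{-1})^2$, $(\mV_0'/\mV_0)^2$ and cross terms. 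Your claim that $\mY_k''(0)$ reduces to a combination of $G''(0)$, $\mV_0''(0)$, $\mV_{-1}''(0)$ alone is false for general $\psi\in T_{(g,v)}\kC$, where $\psi$ need not be mean-free, and the omitted terms cannot be recovered later: $D^2\mV_0[\psi]=\int\psi^2\,d\nu$ and $D^2\mV_{-1}[\psi]=\bigl(\tfrac{m+n-1}{m+n}\bigr)^2\int\psi^2v^{-1}\,d\nu$ are local, so they can never produce the nonlocal quantities $\bigl(\int\psi\,d\nu\bigr)^2$, $\bigl(\int\psi v^{-1}\,d\nu\bigr)^2$ and $\bigl(\int\psi\,d\nu\bigr)\bigl(\int\psi v^{-1}\,d\nu\bigr)$ that give rise to $\opsi$ and $\frac{\int\psi v^{-1}}{\int v^{-1}}$ in $I_1$ and $I_2$. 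Hence your later attribution of the mean-centered terms to the ``constraint corrections'' $\mV_0''/\mV_0$ and $\mV_{-1}''/\mV_{-1}$ is also incorrect, and the plan as written would yield $I_1,I_2$ missing their nonlocal terms; it would be valid only on the codimension-two subspace $\int\psi\,d\nu=\int\psi v^{-1}\,d\nu=0$.

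There is also a sign slip that would corrupt the definiteness discussion this proposition feeds into (Conjecture~\ref{conj:lichnerowicz_conjecture}): at the weighted Einstein point, Proposition~\ref{prop:linearization_sigmak} gives
\[ D\csigma_{k,\phi}^m[\psi]=\binom{m+n-1}{k-1}\lambda^{k-1}\Bigl(2\lambda\psi-\frac{m}{m+n}\kappa v^{-1}\psi+\frac{m+n-2}{m+n}\Delta_\phi\psi\Bigr), \]
i.e.\ the paper's~\eqref{eqn:we_Dcsigmak}, with a \emph{plus} sign on $\Delta_\phi\psi$, whereas you wrote a minus sign. Since this term is the sole source of the $\lv\nabla\psi\rv^2$ term in $I_1$ (after pairing with $-\tfrac{m+n-2k}{m+n}\psi$ inside $D^2\mZ_k$ and integrating by parts), your sign would flip the gradient term relative to the zeroth-order terms. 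Finally, keep in mind that $D^2\mZ_k$ acquires additional local zeroth-order contributions from varying $d\nu$ and $v^{-1}$ inside $D\mZ_k$ --- the second line of the paper's~\eqref{eqn:D2mZ}, where $\kappa$ is normalized to $1$ --- and these must be tracked separately from the $(D\mV_{-1})^2$, $(D\mV_0)^2$ family above: both produce $\int\psi^2$-type terms, but only the latter produce the nonlocal ones. Your integration-by-parts identities, including $\int\psi\,v\delta_\phi(v^{-1}\nabla\psi)\,d\nu$ becoming $-\int v^{-1}\lv\nabla\psi\rv^2\,d\nu$ after absorbing the outer $v^{-1}$, are correct.
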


\begin{proof}
 By rescaling if necessary, we may suppose that $\kappa=1$.  It follows immediately from Proposition~\ref{prop:we_sigma2}, Proposition~\ref{prop:we_integral_relation} and Proposition~\ref{prop:mY_critical} that $(g,v)$ is a critical point of the $\mY_k$-functional.  In particular,
 \begin{equation}
  \label{eqn:mY-crit}
  0 = D\mY_k = \frac{1}{\mV_{-1}^a\mV_0^b}D\mZ_k - a\frac{\mY_k}{\mV_{-1}}D\mV_{-1} - b\frac{\mY_k}{\mV_0}D\mV_0 .
 \end{equation}
 It follows from~\eqref{eqn:mY-crit} that
 \begin{multline}
  \label{eqn:D2mY}
  D^2\mY_k = \frac{1}{\mV_{-1}^a\mV_0^b}D^2\mZ_k - a\frac{\mY_k}{\mV_{-1}}D^2\mV_{-1} - b\frac{\mY_k}{\mV_0}D^2\mV_0 - a(a-1)\frac{\mY_k}{\mV_{-1}^2}\left(D\mV_{-1}\right)^2 \\ - 2ab\frac{\mY_k}{\mV_{-1}\mV_0}\left(D\mV_{-1}\right)\left(D\mV_0\right) - b(b-1)\frac{\mY_k}{\mV_0^2}\left(D\mV_0\right)^2 .
 \end{multline}
 Next, observe that
 \begin{equation}
  \label{eqn:linearize_volumes}
  \begin{aligned}
   D\mV_{-1}[\psi] & = -\frac{m+n-1}{m+n}\int_M \psi v^{-1}\,d\nu, & D\mV_0[\psi] & = -\int_M \psi\,d\nu, \\
   D^2\mV_{-1}[\psi] & = \left(\frac{m+n-1}{m+n}\right)^2\int_M \psi^2v^{-1}\,d\nu , & D^2\mV_0[\psi] & = \int_M \psi^2\,d\nu .
  \end{aligned}
 \end{equation}
 From~\eqref{eqn:DmZ} we note that
 \begin{equation}
  \label{eqn:D2mZ}
  \begin{aligned}
   D^2\mZ_k[\psi] & = -\int_M \left(\frac{m+n-2k}{m+n}D\csigma_{k,\phi}^m[\psi] + \frac{m}{m+n} v^{-1} D\cs_{k-1,\phi}^m[\psi]\right)\psi\,d\nu \\ & \quad + \int_M \left(\frac{m+n-2k}{m+n}\csigma_{k,\phi}^m + \frac{m(m+n-1)}{(m+n)^2} v^{-1}\cs_{k-1,\phi}^m\right)\psi^2\,d\nu .
  \end{aligned}
 \end{equation}
 Since $(M^n,g,v,m,0)$ is weighted Einstein, Proposition~\ref{prop:we_sigma2} and Proposition~\ref{prop:linearization_sigmak} imply that
 \begin{equation}
  \label{eqn:we_Dcsigmak}
  D\csigma_{k,\phi}^m[\psi] = \binom{m+n-1}{k-1}\left(2\lambda^k - \frac{m}{m+n}\lambda^{k-1} v^{-1} + \frac{m+n-2}{m+n}\Delta_\phi\right)\psi .
 \end{equation}
 Inserting~\eqref{eqn:we_csk-1_linearization}, \eqref{eqn:linearize_volumes}, \eqref{eqn:D2mZ} and~\eqref{eqn:we_Dcsigmak} into~\eqref{eqn:D2mY} yields the desired conclusion.
\end{proof}

Based on similar stability results for quasi-Einstein manifolds, we expect that weighted Einstein manifolds are stable with respect to the $\mY_k$-functionals in the cases when the weighted $\sigma_k$-curvatures are variational.  Indeed, based on the proofs of those results, we expect the following Poincar\'e-type inequalities for weighted Einstein manifolds.

\begin{conj}
 \label{conj:lichnerowicz_conjecture}
 Let $(M^n,g,v,m,0)$ be a closed weighted Einstein manifold with $P_\phi^m=\lambda g>0$ and scale $\kappa>0$.  Let $I_1,I_2\colon C^\infty(M)\to\bR$ be as in~\eqref{eqn:we_eigenvalues}.  Then
 \[ \inf\left\{ I_j[\psi] \suchthat \int_M \psi\,d\nu = 1 \right\} > 0 \]
 for $j\in\{1,2\}$.  In particular, $D^2\mY_k\colon T_{(g,v)}\kC\to\bR$ is positive definite.
\end{conj}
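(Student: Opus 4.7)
\medskip

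\noindent\textbf{Proof proposal.}
The plan is to follow the strategy that succeeded in the quasi-Einstein case (Theorem~\ref{thm:stable_qe}), replacing the Bakry--Qian weighted Lichnerowicz inequality by two new Poincar\'e-type inequalities adapted to the weighted Einstein structure with nonzero scale. First I would exploit the fact, visible already in the derivation of Lemma~\ref{lem:we_csk-1_linearization} and~\eqref{eqn:P-k_lcf}, that the measures $d\nu$ and $v^{-1}\,d\nu$ are precisely the weighted volume elements of $(M^n,g,v,m,0)$ and $(M^n,g,v,m-1,0)$.  Thus $I_2$ should be recast as a quadratic form on the $(m-1)$-smooth metric measure space: the identity $\int \psi\,\delta_\phi(v^{-1}\nabla\psi)\,d\nu = -\int \langle\nabla\psi,\nabla\psi\rangle v^{-1}\,d\nu$ together with~\eqref{eqn:J-1} and Lemma~\ref{lem:eval_Yphim} should let me rewrite $I_2[\psi]$ as $\int \lv\nabla\psi\rv^2\,d\nu^{(m-1)}$ minus a zeroth-order form with potential of order $\lambda$ plus nonlocal terms involving $\int \psi v^{-1}\,d\nu$, in effect packaging $I_2$ as a shifted Rayleigh quotient for $-\Delta_{\phi}^{(m-1)}$ acting on functions on the weighted Einstein space $(M^n,g,v,m-1,0)$.

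Next I would attack $I_1$ directly.  The nonlocal Lagrange-multiplier terms $\overline{\psi}=\int\psi\,d\nu/\int d\nu$ and $\int \psi v^{-1}\,d\nu/\int v^{-1}\,d\nu$ suggest decomposing $\psi = c_0 + c_1 v^{-1}\chi + \eta$, where $c_0,c_1$ are scalars, $\chi$ is the unique solution of an auxiliary linear problem (for instance $v\Delta_\phi(v^{-1}\chi)$ equal to an appropriate multiple of $v^{-1}$), and $\eta$ is doubly orthogonal in the sense $\int \eta\,d\nu = 0 = \int \eta v^{-1}\,d\nu$.  On the orthogonal complement of $\{1,v^{-1}\chi\}$ in $L^2(d\nu)$, the nonlocal terms disappear and $I_1$ becomes the usual Rayleigh quotient $\int\lv\nabla\eta\rv^2\,d\nu-C\int\eta^2\,d\nu-C^\prime\int\eta^2v^{-1}\,d\nu$.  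For these three coefficient vectors and for $\eta$ I would apply a weighted Bochner identity together with the weighted Einstein identity $P_\phi^m=\lambda g$ (equivalently $\Ric_\phi^m = \frac{2(m+n-1)}{m+n-2}\lambda g$, modified by $v^{-1}$-terms via Lemma~\ref{lem:eval_Yphim} and~\eqref{eqn:we_cY}) to produce a pointwise identity of the form $\frac{1}{2}\Delta_\phi\lv\nabla\psi\rv^2 = \lv\nabla^2\psi\rv^2 + \frac{1}{m}(\nabla\phi\cdot\nabla\psi)^2 + \langle\nabla\psi,\nabla\Delta_\phi\psi\rangle + $ curvature terms that are controlled purely by $\lambda$ and $\kappa v^{-1}$.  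Integrating against appropriate test functions, one then replaces $\lv\nabla^2\psi\rv^2$ by its Newton inequality lower bound $\frac{1}{m+n}(\Delta_\phi\psi)^2$ and closes the estimate, with sharpness expected at the model family of Example~\ref{ex:model_positive}.

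The two constants $c_0$ and $c_1$ then only contribute through a two-by-two quadratic form in $(c_0,c_1)$ whose positivity must be verified by hand; the integral identity~\eqref{eqn:we_integral} of Proposition~\ref{prop:we_integral_relation} should provide exactly the relation among $\int d\nu,\ \int v^{-1}\,d\nu,\ \lambda$ and $\kappa$ needed to make this finite-dimensional block positive definite.

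The main obstacle will be step two: producing a genuinely new weighted Bochner argument that controls the $v^{-1}$-weighted potential $\int \psi^2\kappa v^{-1}\,d\nu$ and simultaneously handles the nonlocal coupling $\int \psi v^{-1}\,d\nu$.  The classical Bakry--\'Emery framework gives only the first-order term $\frac{2(m+n)}{m+n-2}\lambda$, which already accounts for the ``$\lambda$-part'' of $I_1$ but leaves the $\kappa v^{-1}$-part completely unaddressed.  I would therefore expect the proof to require a two-parameter Bochner identity, perhaps along the lines of the rigidity arguments in~\cite{Case2013y}, and the sharpness calibration against Example~\ref{ex:model_positive} (whose symmetry group is $(n{+}2)$-dimensional, matching the expected dimension of the kernel of the quadratic form in the borderline case $\kappa\to 0$) will be the key test of whether the coefficients in~\eqref{eqn:we_eigenvalues0} and~\eqref{eqn:we_eigenvalues1} are correct as stated.
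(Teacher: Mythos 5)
The statement you are asked about is not a theorem of the paper: it is Conjecture~\ref{conj:lichnerowicz_conjecture}, and the paper offers no proof of it.  What the paper does prove is Proposition~\ref{prop:second_var_scale}, which computes $D^2\mY_k$ at a weighted Einstein manifold and expresses it as a positive linear combination of the two quadratic forms $I_1$ and $I_2$ of~\eqref{eqn:we_eigenvalues}; the positivity of $I_1$ and $I_2$ themselves is left open precisely because the mechanism that worked in the quasi-Einstein case (Theorem~\ref{thm:stable_qe}) breaks down here.  Your proposal is therefore not comparable to a paper proof; it is a research plan for an open problem, and it should be judged on whether it closes the gap.  It does not.

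The genuine gap is the one you yourself flag as ``step two,'' and it is not a technical obstacle to be smoothed over but the entire content of the conjecture.  Your reductions are sensible bookkeeping and consistent with the paper's own manipulations: the identification $v^{-1}\,d\nu = d\nu^{(m-1)}$ and the conjugation $\delta_\phi^{(m-1)} = v\circ\delta_\phi\circ v^{-1}$ appear in the proof of Proposition~\ref{prop:obata/we/div}, the relation~\eqref{eqn:J-1} lets you move between $m$ and $m-1$, and the doubly-orthogonal decomposition $\psi = c_0 + c_1v^{-1}\chi + \eta$ does kill the nonlocal terms on the complement.  But after these reductions you still need, for the doubly-orthogonal piece $\eta$, an eigenvalue bound of Lichnerowicz type that is stronger than $\lambda_1(-\Delta_\phi) > \frac{2(m+n)}{m+n-2}\lambda$ \emph{in the presence of the indefinitely-signed curvature}: as the paper notes in the introduction (cf.\ Example~\ref{ex:model_positive}), weighted Einstein manifolds with positive scale need not have positive $m$-Bakry--\'Emery Ricci curvature, since $\Ric_\phi^m = \lambda g + \frac{1}{m+n-2}\bigl((m+n)\lambda - m\kappa v^{-1}\bigr)g$ can fail to be bounded below by a favorable constant where $v$ is small.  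So the Bakry--Qian theorem~\cite{BakryQian2000} is unavailable, and your appeal to ``a two-parameter Bochner identity'' along the lines of~\cite{Case2013y} is a placeholder for exactly the inequality being conjectured --- the argument is circular as written.  The same deferral occurs in your finite-dimensional block: you assert that~\eqref{eqn:we_integral} ``should'' make the $2\times2$ form in $(c_0,c_1)$ positive definite, but~\eqref{eqn:we_integral} is a single scalar relation among $\int d\nu$, $\int v^{-1}d\nu$, $\lambda$, $\kappa$, and you have not exhibited the block or checked its discriminant (note, e.g., that a direct computation gives $I_1[c] = \frac{mn\,\kappa c^2}{(m+n)(m+n-2)(2m+n-2)}\int v^{-1}d\nu > 0$ for constants, so the block is at least nondegenerate in one direction --- this kind of explicit verification is what the plan needs but does not contain).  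In short: your scaffolding is reasonable and your diagnosis of where the difficulty lies agrees with the paper's, but no proof has been produced, and none exists in the paper to compare against.
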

\section{Ellipticity and some Obata-type theorems}
\label{sec:obata}

The results of Section~\ref{sec:stable} prove that quasi-Einstein manifolds are infinitesimally rigid with respect to the $\mF_k$-functionals within a volume-normalized weighted conformal class in the cases when the weighted $\sigma_k$-curvatures are variational.  It is natural to ask if \emph{global} rigidity holds.  Global rigidity is known in the Riemannian~\cite{Obata1971,Viaclovsky2000} and infinite-dimensional~\cite{Case2014sd} cases when $k=1$ or within the weighted conformal class of the respective flat model.  One expects similar results for general $m\in\bR_+$.  In this section we prove the analogous global rigidity result for quasi-Einstein metrics.  We also prove a result which is expected to play a key role in establishing the analogous global rigidity result for weighted Einstein metrics.  These results hold within the positive weighted elliptic $k$-cones, so named because the Euler equations of the $\mF_k$- and $\mY_k$-functions are elliptic within these cones.  Based on these results, we conjecture certain sharp fully nonlinear Sobolev inequalities.

Our strategy is modeled on Obata's proof~\cite{Obata1971} that on a compact manifold, every conformally Einstein constant scalar curvature metric is itself Einstein.  There are two key ingredients in his proof.  First, the variational structure of the scalar curvature yields a particular trace-free tensor which is divergence-free for every constant scalar curvature metric.  Second, if a metric is conformally Einstein, then the trace-free part of the Schouten tensor is a positive multiple of an element of the range of the conformal Killing operator; i.e.\ the trace-free part of the Lie derivative on vector fields.  In our setting, when the weighted $\sigma_k$-curvature is variational, Proposition~\ref{prop:div_T} effectively identifies the desired weighted trace-free tensor which is divergence-free for critical points of the $\mF_k$- and $\mY_k$-functionals (cf.\ Section~\ref{sec:full}).  The analogous formula for the weighted Schouten tensor of a metric-measure structure which is conformal to a weighted Einstein metric is as follows (cf.\ \cite{Case2010b,Case2013y}):

\begin{lem}
 \label{lem:2we_to_wKe}
 Let $(M^n,g,v,m,\mu)$ be a smooth metric measure space and fix a scale $\kappa\in\bR$.  Suppose that $u\in C^\infty(M;\bR_+)$ is such that the smooth metric measure space $(M^n,\hg,\hv,m,\mu)$ with metric-measure structure $(\hg,\hv):=(u^{-2}g,u^{-1}v)$ is a weighted Einstein manifold with scale $\hkappa\in\bR$.  Then
 \begin{equation}
  \label{eqn:2we_to_wKe}
  u\left(P_\phi^m-\cZ_\phi^mg\right) = -(m+n-2)\left(\nabla^2u+\frac{1}{m}\lp\nabla u,\nabla\phi\rp\,g\right) + \left(\hkappa-\kappa u\right)v^{-1}\,g .
 \end{equation}
\end{lem}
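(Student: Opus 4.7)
The plan is to reduce everything to a direct application of Lemma~\ref{lem:conf_change}, using the substitution $f=(m+n-2)\ln u$ so that $\hg=e^{-2f/(m+n-2)}g$. The key simplification I would exploit is the algebraic identity
\[ \nabla^2 f + \frac{1}{m+n-2}\,df\otimes df = (m+n-2)\,u^{-1}\nabla^2 u, \]
which follows from $df=(m+n-2)u^{-1}du$. Plugging this into the conformal change formula for $\hP_\phi^m$ in Lemma~\ref{lem:conf_change} yields
\[ \hP_\phi^m - P_\phi^m = (m+n-2)\,u^{-1}\nabla^2 u - \frac{m+n-2}{2}\,u^{-2}\lv\nabla u\rv^2 g. \]

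Next I would invoke the weighted Einstein hypothesis $\hP_\phi^m=\hat\lambda\,\hg$ with $\hat\lambda:=\hcZ_\phi^m$, giving $\hP_\phi^m=\hat\lambda u^{-2}g$, and then multiply the previous display by $u$ to obtain
\[ u P_\phi^m = u^{-1}\hat\lambda\,g - (m+n-2)\nabla^2 u + \frac{m+n-2}{2}\,u^{-1}\lv\nabla u\rv^2 g. \tag{$\ast$}\]
Separately, using the definition $\hcY_\phi^m=\hY_\phi^m+m\hkappa\hv^{-1}=m\hat\lambda$ and $\hv=u^{-1}v$, I would rewrite $\hkappa v^{-1}=u^{-1}\bigl(\hat\lambda-\tfrac{1}{m}\hY_\phi^m\bigr)$. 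Thus proving~\eqref{eqn:2we_to_wKe} reduces, after cancelling the common $\kappa u v^{-1}g$ term and using~$(\ast)$, to verifying
\begin{equation}
\label{eqn:plan_Yidentity}
u Y_\phi^m - u^{-1}\hY_\phi^m = (m+n-2)\lp\nabla\phi,\nabla u\rp + \frac{m(m+n-2)}{2}\,u^{-1}\lv\nabla u\rv^2 .
\end{equation}

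To establish~\eqref{eqn:plan_Yidentity}, I would combine two conformal change computations: first, the formula for $\hJ_\phi^m$ in Lemma~\ref{lem:conf_change}, giving
\[ \hJ_\phi^m = u^2 J_\phi^m + (m+n-2)(u\Delta_\phi u - \lv\nabla u\rv^2) - \frac{(m+n-2)^2}{2}\lv\nabla u\rv^2; \]
second, the trace of the conformal change of $\hP_\phi^m$ derived above, which yields $\tr_{\hg}\hP_\phi^m=u^2\tr_gP_\phi^m+(m+n-2)u\Delta u-\frac{n(m+n-2)}{2}\lv\nabla u\rv^2$. Subtracting these and using $u\Delta_\phi u-u\Delta u=-u\lp\nabla\phi,\nabla u\rp$, the $\lv\nabla u\rv^2$ coefficient collapses to $-m(m+n-2)/2$, producing precisely~\eqref{eqn:plan_Yidentity} after multiplying by $u^{-1}$.

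The computation is essentially pure bookkeeping, so there is no genuine obstacle; the one point requiring care is verifying that the coefficient of $\lv\nabla u\rv^2$ in $u^{-1}\hY_\phi^m$ works out to $-m(m+n-2)/2$ rather than something dimensionally similar. This hinges on the cancellation $-\tfrac{(m+n-2)^2}{2}+\tfrac{n(m+n-2)}{2}-(m+n-2) = -\tfrac{m(m+n-2)}{2}$, which is the only place the structural parameter $m$ (rather than $m+n$) enters, and it is exactly what one needs for the $\tfrac{1}{m}\lp\nabla u,\nabla\phi\rp g$ term on the right-hand side of~\eqref{eqn:2we_to_wKe} to acquire the correct coefficient.
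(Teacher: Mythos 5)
Your proposal is correct and takes essentially the same route as the paper: the paper's proof likewise rewrites the weighted Einstein condition via~\eqref{eqn:we_cY} as $\hP_\phi^m-\hZ_\phi^m\hg=\hkappa\hv^{-1}\hg$ and then applies Lemma~\ref{lem:conf_change} with the conformal factor $u$, which is exactly your substitution $f=(m+n-2)\ln u$; you simply carry out explicitly the bookkeeping the paper leaves to the reader, including deriving the transformation of $Y_\phi^m$ from those of $J_\phi^m$ and $\tr_g P_\phi^m$. All of your coefficients check out, in particular the cancellation $-\tfrac{(m+n-2)^2}{2}+\tfrac{n(m+n-2)}{2}-(m+n-2)=-\tfrac{m(m+n-2)}{2}$ needed for the $\tfrac{1}{m}\lp\nabla u,\nabla\phi\rp\,g$ term.
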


\begin{proof}
 Since $(M^n,\hg,\hv,m,\mu)$ is weighted Einstein with scale $\hkappa$, it follows from~\eqref{eqn:we_cY} that
 \[ \hP_\phi^m-\hZ_\phi^m\hg = \hkappa\hv^{-1}\hg . \]
 Using Lemma~\ref{lem:conf_change} to evaluate $\hP_\phi^m$ and $\hZ_\phi^m$ in terms of $(M^n,g,v,m,\mu)$ and $u$ yields the desired result.
\end{proof}

In the cases of interest to us, only the flat models discussed in Section~\ref{sec:smms} admit multiple quasi-Einstein or weighted Einstein metric-measure structures within the given weighted conformal class (cf.\ \cite[Proposition~9.5]{Case2013y}).

\begin{prop}
 \label{prop:wKe_rigidity}
 Let $\kC$ be a weighted conformal class on a weighted manifold $(M^n,m,\mu)$.  Suppose that $(g,v),(\hg,\hv)\in\kC$ are weighted Einstein metric-measure structures with scale $\kappa,\hkappa\in\bR$, respectively.  Set $u=v\hv^{-1}$.  Then either $u$ is constant or $(M^n,g,v,m,\mu)$ splits isometrically as a warped product.  In particular:
 \begin{enumerate}
  \item If $(M^n,g,v,m,\mu)$ is closed and $\kappa=\hkappa=0$, then $u$ is constant.
  \item If $(M^n,g,v,m,\mu)=(S_+^n,d\theta^2,\cos r,m,1)$ and $\hkappa=\kappa=0$, then there exist a constant $c>0$ and a point $\xi\in\bR^n=x_{n+1}^{-1}(0)$ such that
  \[ u(\zeta)=c\left(\sqrt{1+\lv\xi\rv^2}+\xi\cdot\zeta\right) . \]
  \item If $(M^n,g,v,m,\mu)=(\bR^n,dx^2,1,m,0)$ and $\hkappa>0$, then there exist a constant $c>0$ and a point $x_0\in\bR^n$ such that
  \[ u(x) = \frac{\hkappa}{2(m+n-2)}\lv x-x_0\rv^2 + c . \]
 \end{enumerate}
\end{prop}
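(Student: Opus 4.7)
The plan is to combine Lemma~\ref{lem:2we_to_wKe} with a classical Obata--Tashiro splitting argument. Since $(g,v)$ itself is weighted Einstein with scale $\kappa$, equation~\eqref{eqn:we_cY} gives $P_\phi^m - \cZ_\phi^m g = \kappa v^{-1}g$. Substituting this into~\eqref{eqn:2we_to_wKe} and solving for $\nabla^2 u$ yields
\[ (m+n-2)\nabla^2 u = \left[ (\hkappa - 2\kappa u)v^{-1} - \frac{m+n-2}{m}\lp \nabla u,\nabla\phi\rp \right] g. \]
In other words, $\nabla^2 u = Fg$ for some smooth scalar $F$, i.e., $\tf\nabla^2 u = 0$.

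This is the classical Obata--Tashiro equation. By Tashiro's theorem, if $u$ is non-constant then on the open set of its regular points the level sets of $u$ are totally umbilic and $g$ decomposes in adapted coordinates as a warped product $ds^2 + h(s)^2 g_N$ with $\nabla u/\lv\nabla u\rv$ as the unit normal direction; since $u$, and hence $v$, is then a function only of $s$, the entire smooth metric measure space $(M^n,g,v,m,\mu)$ inherits this warped product structure. This establishes the dichotomy in the first assertion and reduces the remaining three cases to ODE analysis along the $s$-direction.

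For case~(3), $(\bR^n,dx^2,1,m,0)$ has $\phi\equiv 0$ and $\kappa=0$, so the equation collapses to $\nabla^2 u = \frac{\hkappa}{m+n-2}g$, whose solutions are exactly $u(x) = \frac{\hkappa}{2(m+n-2)}\lv x-x_0\rv^2 + c$. For case~(2), on the $m$-Gaussian $(S_+^n,d\theta^2,\cos r,m,1)$ with $\kappa=\hkappa=0$, the possible warping profiles produced by Tashiro's theorem on the round hemisphere are first spherical harmonics plus a constant; substituting the ansatz $u(\zeta) = a + \xi\cdot\zeta$ with $\xi_{n+1}=0$ (so that $\hv = u^{-1}\cos r$ extends smoothly) into the equation and using $\nabla^2(\xi\cdot\zeta) = -(\xi\cdot\zeta)\,d\theta^2$ then fixes $a = c\sqrt{1+\lv\xi\rv^2}$ after comparing traces.

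Case~(1) is the main obstacle, as one must rule out the warped product conclusion on a closed manifold. The approach is to feed $\lv\nabla^2 u\rv^2 = nF^2$, $\Delta_\phi u = -\frac{m+n}{m}\lp\nabla u,\nabla\phi\rp$, and $\Ric_\phi^m = \frac{2(m+n-1)}{m+n-2}\lambda g$ into the weighted Bochner identity for $u$ and integrate against $d\nu$ over the closed manifold~$M$. When $\lambda\leq 0$ this computation directly forces $\nabla u\equiv 0$; when $\lambda>0$ one additionally invokes the Bakry--Qian weighted Lichnerowicz estimate $\lambda_1(-\Delta_\phi)\geq\frac{2(m+n)}{m+n-2}\lambda$ applied to $u$ minus its weighted mean to obtain the complementary inequality and conclude that $u$ is constant. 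Alternatively, case~(1) can be deduced directly from~\cite[Proposition~9.5]{Case2013y}.
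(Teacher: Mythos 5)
Your skeleton matches the paper's: the proof there likewise feeds the weighted Einstein condition into Lemma~\ref{lem:2we_to_wKe} to obtain a pure-trace Hessian equation, quotes the classical splitting theorem (the paper cites Cheeger--Colding; Tashiro is the same tool), and handles case~(3) exactly as you do.  The genuine methodological difference is case~(1): the paper integrates the Hessian equation along the warped-product direction to the ODE~\eqref{eqn:integrate_wp_u}, concludes $u=a+b\cos t$ so that $(M^n,g)$ is the round sphere, and then uses that a closed quasi-Einstein structure there has $v$ constant to force $\nabla^2u=0$, a contradiction; your integrated Bochner argument avoids the ODE and the sphere identification, which would be a nice alternative --- but it has a gap as written (see below).

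First, a bookkeeping error in your starting equation.  Under the paper's tilde convention $\cZ_\phi^m$ already incorporates the scale, so the weighted Einstein condition with scale $\kappa$ together with~\eqref{eqn:we_cY} gives $P_\phi^m-\cZ_\phi^mg=0$; it is the scale-zero quantity that satisfies $P_\phi^m-Z_\phi^mg=\kappa v^{-1}g$.  The correct equation is therefore $(m+n-2)\bigl(\nabla^2u+\tfrac1m\lp\nabla u,\nabla\phi\rp g\bigr)=(\hkappa-\kappa u)v^{-1}g$; your coefficient $(\hkappa-2\kappa u)$ double-counts $\kappa$.  Sanity check: taking $(\hg,\hv)=(g,v)$, $u\equiv1$, forces $\hkappa=\kappa$ by the uniqueness in Lemma~\ref{lem:we_kappa}, whereas your equation would then give $\kappa v^{-1}\equiv0$.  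This slip is harmless for the dichotomy and for all three enumerated cases (each has $\kappa=0$), but it must be corrected.

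Second, in case~(2) your justification of $\xi_{n+1}=0$ is wrong: $\hv=u^{-1}\cos r$ is smooth and positive for suitable $a$ even when $\xi_{n+1}\neq0$; what fails in that situation is not smoothness but the hypothesis $\hkappa=0$ --- such $(\hg,\hv)$ is weighted Einstein with \emph{nonzero} scale, as noted at the end of Example~\ref{ex:model_zero}.  The correct derivation, as in the paper, is to substitute the general ansatz $u=a+\xi\cdot\zeta$, $\xi\in\bR^{n+1}$, into $\nabla^2u=x_{n+1}^{-1}\lp\nabla u,\nabla x_{n+1}\rp\,d\theta^2$ (your own displayed equation with $\kappa=\hkappa=0$, since $\phi=-m\ln x_{n+1}$), which forces $\xi_{n+1}=0$.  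Moreover, once $\xi_{n+1}=0$ that equation is satisfied identically for \emph{every} $a$, so ``comparing traces'' fixes nothing: the form $a=c\sqrt{1+\lv\xi\rv^2}$ is just the reparametrization of $\{a+\xi\cdot\zeta : a>\lv\xi\rv\}$ imposed by positivity of $u$ on $\overline{S_+^n}$ (equivalently $\hlambda=a^2-\lv\xi\rv^2>0$); in the paper it drops out of~\eqref{eqn:integrate_wp_u} after normalizing $\hP_\phi^m$.

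Third, case~(1) stops one step short.  With $\nabla^2u=-\tfrac1m\lp\nabla u,\nabla\phi\rp g$ and $\Ric_\phi^m=\tfrac{2(m+n-1)}{m+n-2}\lambda g$, the integrated Bochner identity yields \emph{exactly} $\int(\Delta_\phi u)^2\,d\nu=\tfrac{2(m+n)}{m+n-2}\lambda\int\lv\nabla u\rv^2\,d\nu$: you land precisely on the borderline, so the non-strict estimate $\lambda_1\geq\tfrac{2(m+n)}{m+n-2}\lambda$ that you quote produces no contradiction when $\lambda>0$.  (Indeed it cannot: nonconstant solutions exist on the hemisphere model, which is excluded only by closedness and $v>0$.)  You need the strict inequality $\lambda_1>\tfrac{2(m+n)}{m+n-2}\lambda$ on a closed smooth metric measure space --- the rigidity half of the Bakry--Qian weighted Lichnerowicz--Obata theorem, which is exactly what the paper invokes in proving Theorem~\ref{thm:stable_qe} --- or else your fallback citation of \cite[Proposition~9.5]{Case2013y}.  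With the strict version your argument closes (and the subcase $\lambda\leq0$ is fine as stated); with ``$\geq$'' it terminates at an equality, not a contradiction.
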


\begin{remark}
 Recall that the assumption that $(M^n,g,v,m,\mu)$ is closed means that $M^n$ is a closed manifold, $v\in C^\infty(M;\bR_+)$ is a positive function on $M$, and $m\in\bR_+$.
\end{remark}

\begin{proof}
 Suppose that $u$ is nonconstant.  Then~\eqref{eqn:2we_to_wKe} implies that $\nabla^2u=\frac{1}{n}\Delta u\,g$.  It is well-known (cf.\ \cite{CheegerColding96}) that this condition implies that $(M^n,g)$ splits as a warped product over a one-dimensional base and that $u$ depends only on the base.  Indeed, if $P_\phi^m=\frac{m+n-2}{2}\lambda g$ and $\hP_\phi^m=\frac{m+n-2}{2}\hlambda\hg$, then Lemma~\ref{lem:conf_change} implies that
 \[ \nabla^2u = \frac{1}{2}\left(\hlambda u^{-1} - \lambda u + u^{-1}\lv\nabla u\rv^2\right)g . \]
 Integrating this yields a constant $c\in\bR$ such that
 \begin{equation}
  \label{eqn:integrate_wp_u}
  \left(u^\prime\right)^2 = -\lambda u^2 + cu - \hlambda
 \end{equation}
 (cf.\ \cite{Case2013y}).
 
 Suppose now that $(M^n,g,v,m,\mu)$ is closed and $\kappa=\hkappa=0$.  Solving~\eqref{eqn:integrate_wp_u} implies that $u$ is of the form $u(t)=a+b\cos t$ for $b\not=0$ and $a>\lv b\rv$.  Hence $(M^n,g)$ is homothetic to the round $n$-sphere.  By rescaling and changing coordinates if necessary, we may thus suppose that $u(x)=a+bx_{n+1}$ for $a>b>0$.  Since $(M^n,g,v,m,\mu)$ is quasi-Einstein and closed, $v$ must be constant.  Thus~\eqref{eqn:2we_to_wKe} yields $\nabla^2u=0$, a contradiction.  Thus $u$ is constant in this case.
 
 Next, suppose that $\hkappa=0$ and $(M^n,g,v,m,\mu)$ is the positive elliptic $m$-Gaussian.  By homothetically scaling if necessary, we may suppose that $\hP_\phi^m=\frac{m+n-2}{2}\hg$.  From~\eqref{eqn:integrate_wp_u} we conclude that there is a point $\xi\in\bR^{n+1}$ such that
 \[ u(\zeta) = \sqrt{1+\lv\xi\rv^2} + \xi\cdot\zeta . \]
 On the other hand, \eqref{eqn:2we_to_wKe} implies that
 \[ \nabla^2u = \frac{\lp\nabla u,\nabla x_{n+1}\rp}{x_{n+1}}\,d\theta^2 , \]
 from which we conclude that $\xi_{n+1}=0$.
 
 Finally, suppose that $(M^n,g,v,m,\mu)=(\bR^n,dx^2,1,m,0)$ and $\hkappa>0$.  From~\eqref{eqn:2we_to_wKe} we conclude that
 \[ \nabla^2u = \frac{\hkappa}{m+n-2}\,dx^2 . \]
 Hence $u$ is a quadratic polynomial on $\bR^n$ with leading order term $\frac{\hkappa}{2(m+n-2)}\lv x\rv^2$, as desired.
\end{proof}

\subsection{An Obata-type theorem for quasi-Einstein manifolds}
\label{subsec:obata/qe}

When the scale vanishes $\kappa$, the tensor field $E_{k,\phi}^m$ defined below is the desired weighted analogue of the trace-free tensor which is divergence-free for Riemannian metrics with constant $\sigma_k$-curvature.

\begin{lem}
 \label{lem:divE}
 Let $k\in\bN$ and let $(M^n,g,v,m,\mu)$ be a smooth metric measure space; if $k\geq2$, assume additionally that $(g,v)$ is locally conformally flat in the weighted sense.  Define
 \[ E_{k,\phi}^m := T_{k,\phi}^m - \frac{m+n-k}{m+n}\sigma_{k,\phi}^mg . \]
 Then
 \[ \delta_\phi E_{k,\phi}^m - \frac{1}{m}\tr E_{k,\phi}^m\,d\phi = -\frac{m+n-k}{m+n}d\sigma_{k,\phi}^m . \]
\end{lem}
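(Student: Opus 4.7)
The plan is to derive the identity as a direct consequence of Proposition~\ref{prop:div_T}, a trace identity for the weighted Newton transform, and the elementary formula $\delta_\phi(fg) = df - f\,d\phi$ (consistent with the convention fixed by Lemma~\ref{lem:div_T1}).

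First, under the stated hypothesis the correction sum in Proposition~\ref{prop:div_T} vanishes, yielding $\delta_\phi T_{k,\phi}^m = -s_{k,\phi}^m\,d\phi$: for $k=1$ the sum is empty, and for $k\geq 2$ Lemma~\ref{lem:div_and_tr} forces $dP_\phi^m = 0$ whenever $(g,v)$ is locally conformally flat in the weighted sense.

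Second, we establish the trace identity
\[
 \tr T_{k,\phi}^m = (m+n-k)\sigma_{k,\phi}^m - m\,s_{k,\phi}^m,
\]
which yields $\tfrac{1}{m}\tr E_{k,\phi}^m = \tfrac{m+n-k}{m+n}\sigma_{k,\phi}^m - s_{k,\phi}^m$. To prove this, expand the defining formula $T_k^m = \sum_{j=0}^k(-1)^j\sigma_{k-j}^m (P_\phi^m)^j$, substitute $\tr (P_\phi^m)^j = N_j^m(Y_\phi^m;P_\phi^m) - m(Y_\phi^m/m)^j$, and then invoke the recursion $k\sigma_k^m = \sum_{j=0}^{k-1}(-1)^j\sigma_{k-1-j}^m N_{j+1}^m$ together with the defining formula for $s_k^m$. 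Equivalently, the identity is immediate from the block-matrix picture $T_k = T_k^m \oplus s_k^m\Id_m$ combined with the classical identity $\tr T_k = (N-k)\sigma_k$ in $N=m+n$ variables.

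Third, combining these ingredients,
\[
 \delta_\phi E_{k,\phi}^m = -s_{k,\phi}^m\,d\phi - \tfrac{m+n-k}{m+n}\bigl(d\sigma_{k,\phi}^m - \sigma_{k,\phi}^m\,d\phi\bigr),
\]
and subtracting $\tfrac{1}{m}\tr E_{k,\phi}^m\,d\phi = \tfrac{m+n-k}{m+n}\sigma_{k,\phi}^m\,d\phi - s_{k,\phi}^m\,d\phi$ cancels every $d\phi$ contribution, leaving exactly $-\tfrac{m+n-k}{m+n}\,d\sigma_{k,\phi}^m$. The argument amounts to bookkeeping once the $dP_\phi^m$ correction is eliminated; the one step with genuine content is the trace identity, and that is a routine consequence of the recursions defining $\sigma_k^m$ and $s_k^m$.
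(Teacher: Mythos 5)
Your proof is correct and follows essentially the same route as the paper: the paper's proof likewise consists of the trace identity $\tr T_{k,\phi}^m=(m+n-k)\sigma_{k,\phi}^m-ms_{k,\phi}^m$ (hence $\tr E_{k,\phi}^m=\frac{m(m+n-k)}{m+n}\sigma_{k,\phi}^m-ms_{k,\phi}^m$) followed by an appeal to Proposition~\ref{prop:div_T}, with the correction sum vanishing exactly as you observe (empty for $k=1$, and $dP_\phi^m=0$ via Lemma~\ref{lem:div_and_tr} in the locally conformally flat case). Your expansion via $\tr (P_\phi^m)^j=N_j^m-m(Y_\phi^m/m)^j$ and the recursion, and the explicit cancellation of the $d\phi$ terms, merely spell out what the paper calls a straightforward computation.
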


\begin{proof}
 A straightforward computation yields
 \[ \tr T_{k,\phi}^m = (m+n-k)\sigma_{k,\phi}^m - ms_{k,\phi}^m . \]
 In particular, it holds that
 \[ \tr E_{k,\phi}^m = \frac{m(m+n-k)}{m+n}\sigma_{k,\phi}^m - ms_{k,\phi}^m . \]
 The conclusion now follows from Proposition~\ref{prop:div_T}.
\end{proof}

We only expect global rigidity within the positive weighted elliptic $k$-cones.

\begin{defn}
 Fix $k\in\bN_0$.  The \emph{positive weighted elliptic $k$-cone $\Gamma_k^+$} on a weighted manifold $(M^n,m,\mu)$ is the set
 \[ \Gamma_k^+ := \left\{ (g,v)\in\kM \suchthat (g(p),v(p))\in\Gamma_k^+ \text{ for all $p\in M$} \right\} . \]
\end{defn}

Note that Euler equation of the $\mF_k$-functional is elliptic within the positive weighted elliptic $k$-cone.

\begin{prop}
 \label{prop:mFk_elliptic}
 Let $\kC$ be a weighted conformal class on $(M^n,m,\mu)$.  Fix $k\in\bN_0$ and a representative $(g,v)\in\kC$.  Identify
 \begin{equation}
  \label{eqn:Gammak_fn}
  \kC\cap\Gamma_k^+ = \left\{ u\in C^\infty(M;\bR) \suchthat (u^{-2}g,u^{-1}v) \in \Gamma_k^+ \right\} .
 \end{equation}
 Then the operator $D\colon\kC\cap\Gamma_k^+\to C^\infty(M)$ defined by $D(u):=\sigma_{k,\phi}^m(u^{-2}g,u^{-1}v)$ is elliptic.
\end{prop}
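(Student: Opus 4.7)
The plan is to linearize $D$ at $u \in \kC\cap\Gamma_k^+$, extract the principal symbol, and verify its positive-definiteness via Corollary~\ref{cor:weighted_elliptic}. Fix $u$ and set $(\hg,\hv):=(u^{-2}g,u^{-1}v)$. For $\eta\in C^\infty(M)$, observe that
\[ (u+t\eta)^{-2}g = (1+t\eta/u)^{-2}\hg, \qquad (u+t\eta)^{-1}v = (1+t\eta/u)^{-1}\hv. \]
Matching this path to the curve $\gamma(t)=(e^{-2t\psi/(m+n)}\hg,e^{-t\psi/(m+n)}\hv)$ used in Section~\ref{sec:moduli} to identify $T_{(\hg,\hv)}\kC\cong C^\infty(M)$, one reads off that the variation of $u$ in direction $\eta$ corresponds to the conformal tangent $\psi = (m+n)\eta/u$ at $(\hg,\hv)$.

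Applying Proposition~\ref{prop:linearization_sigmak} at $(\hg,\hv)$ with $\kappa=0$ gives
\begin{multline*}
 \left.\tfrac{d}{dt}\right|_{t=0} D(u+t\eta) = \tfrac{2k}{m+n}\,\hsigma_{k,\phi}^m\,\psi + \tfrac{m+n-2}{m+n}\,\hdelta_\phi\bigl(\hT_{k-1,\phi}^m(\hnabla\psi)\bigr) \\
 - \tfrac{m+n-2}{m+n}\sum_{\ell=0}^{k-3}(-1)^\ell\,\hT_{k-3-\ell,\phi}^m\bigl(\hnabla\psi,\widehat{dP_\phi^m}\cdot\hQ_{\ell+1,\phi}^m\bigr),
\end{multline*}
with $\psi = (m+n)\eta/u$. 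The final sum depends on $\eta$ solely through $\hnabla\psi$, hence contributes only first derivatives of $\eta$. Expanding the divergence yields $\hdelta_\phi\bigl(\hT_{k-1,\phi}^m(\hnabla\psi)\bigr) = \lp\hT_{k-1,\phi}^m,\hnabla^2\psi\rp + \lots$, and the substitution $\hnabla^2\psi = (m+n)u^{-1}\hnabla^2\eta + \lots$ gives
\[ \left.\tfrac{d}{dt}\right|_{t=0} D(u+t\eta) = \tfrac{m+n-2}{u}\,\lp\hT_{k-1,\phi}^m,\hnabla^2\eta\rp + \text{terms of order $\leq 1$ in $\eta$}. \]
Consequently, the principal symbol of $D$ at a cotangent vector $\xi\in T_p^*M$ is
\[ \sigma_{\mathrm{princ}}(D)(\xi) = -\,\tfrac{m+n-2}{u(p)}\,\hT_{k-1,\phi}^m\bigl(\xi^\sharp,\xi^\sharp\bigr), \]
where $\xi^\sharp$ is dual to $\xi$ with respect to $\hg$.

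Since $(\hg,\hv)\in\Gamma_k^+$, Corollary~\ref{cor:weighted_elliptic} asserts that $\hT_{k-1,\phi}^m$ is a positive-definite symmetric endomorphism; hence the quadratic form $\xi\mapsto\hT_{k-1,\phi}^m(\xi^\sharp,\xi^\sharp)$ is positive-definite, and $\sigma_{\mathrm{princ}}(D)(\xi)\neq 0$ for $\xi\neq 0$. This proves ellipticity of $D$ at every $u\in\kC\cap\Gamma_k^+$. The main obstacle is bookkeeping: ensuring that the curvature terms from Proposition~\ref{prop:linearization_sigmak} and the chain-rule contributions produced by the substitution $\psi=(m+n)\eta/u$ — in particular from expanding $\hdelta_\phi\bigl(u^{-1}\hT_{k-1,\phi}^m(\hnabla\eta) - u^{-2}\eta\,\hT_{k-1,\phi}^m(\hnabla u)\bigr)$ — yield second-order contributions in $\eta$ only through the claimed $\hT_{k-1,\phi}^m$ piece.
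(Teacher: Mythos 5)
Your proof is correct and takes essentially the same route as the paper, which likewise reads the principal symbol off Proposition~\ref{prop:linearization_sigmak} --- the paper records it as $\frac{m+n-2}{m+n}T_{k-1,\phi}^m$ of $(u^{-2}g,u^{-1}v)$, differing from your expression only by the positive factor $(m+n)/u(p)$ coming from your parametrization by $u$ rather than by the conformal tangent $\psi$ --- and then concludes via Corollary~\ref{cor:weighted_elliptic}. Your extra chain-rule bookkeeping (the terms from $\hnabla(\eta/u)$ and from the sum involving $\widehat{dP_\phi^m}$) correctly lands in the lower-order part, so nothing is lost.
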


\begin{proof}
 Proposition~\ref{prop:linearization_sigmak} implies that the principal symbol of the linearization of $D$ at $u\in\kC\cap\Gamma_k^+$ is $\frac{m+n-2}{m+n}T_{k-1,\phi}^m$, where $T_{k-1,\phi}^m$ is the $(k-1)$-th weighted Newton tensor of $(u^{-2}g,u^{-1}v)$.  Corollary~\ref{cor:weighted_elliptic} then implies that $D$ is elliptic at $u$.
\end{proof}

We now adapt Obata's argument~\cite{Obata1971} to closed quasi-Einstein manifolds.

\begin{thm}
 \label{thm:obata/qe}
 Let $k\in\bN$ and let $(M^n,\hg,\hv,m,\mu)$ be a closed quasi-Einstein manifold such that $\int d\widehat{\nu}=1$ and $P_\phi^m>0$; if $k\geq2$, assume additionally that $\kC:=[\hg,\hv]$ is locally conformally flat in the weighted sense.  Then $(g,v)\in\kC_1\cap\Gamma_k^+$ is a critical point of $\mF_k\colon\kC_1\to\bR$ if and only if $(g,v)=(\hg,\hv)$.
\end{thm}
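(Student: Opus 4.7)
The plan is to follow the Obata--Viaclovsky argument, adapted to the weighted setting, by pairing a weighted trace-free tensor that is ``weighted divergence-free'' at critical points of $\mF_k$ against an expression that measures how far $(g,v)$ is from being quasi-Einstein, and then using integration by parts to show that this pairing integrates to zero.

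First, since $(g,v)$ is a critical point of $\mF_k\colon\kC_1\to\bR$, Proposition~\ref{prop:mF_critical} implies that $\sigma_{k,\phi}^m$ is constant.  Lemma~\ref{lem:divE} (whose hypotheses hold: if $k\geq2$ then $\kC$ is locally conformally flat in the weighted sense) then gives
\[ \delta_\phi E_{k,\phi}^m = \frac{1}{m}(\tr E_{k,\phi}^m)\,d\phi. \]
On the other hand, writing $(\hg,\hv)=(u^{-2}g,u^{-1}v)$ for some $u\in C^\infty(M;\bR_+)$ and applying Lemma~\ref{lem:2we_to_wKe} with both scales equal to zero yields
\[ u\bigl(P_\phi^m-Z_\phi^m g\bigr) = -(m+n-2)\Bigl(\nabla^2u+\tfrac{1}{m}\langle\nabla u,\nabla\phi\rangle g\Bigr). \]

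The main computation is to evaluate $I:=\int_M u\,\langle E_{k,\phi}^m,\,P_\phi^m-Z_\phi^m g\rangle\,d\nu$. Substituting the previous display expresses $I$ as a constant multiple of $\int_M\langle E_{k,\phi}^m,\nabla^2u\rangle\,d\nu+\tfrac{1}{m}\int_M(\tr E_{k,\phi}^m)\langle\nabla u,\nabla\phi\rangle\,d\nu$. Integrating the Hessian term by parts and using the formula $\delta_\phi E_{k,\phi}^m=\frac{1}{m}(\tr E_{k,\phi}^m)\,d\phi$ produces exactly the negative of the second term, so $I=0$. Because $(g,v)\in\Gamma_k^+$, Corollary~\ref{cor:inner_product_maclaurin} gives the pointwise sign $\langle E_{k,\phi}^m,\,P_\phi^m-Z_\phi^m g\rangle\leq0$ with equality precisely when $P_\phi^m=Z_\phi^m g$. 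Combined with $u>0$ and $I=0$, this forces $P_\phi^m=Z_\phi^m g$ at every point of $M$.

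The pointwise identity $P_\phi^m=Z_\phi^m g$ reduces the weighted $\sigma_k$-curvature to $\sigma_{k,\phi}^m=\binom{m+n}{k}(Z_\phi^m)^k$, so the constancy of $\sigma_{k,\phi}^m$ and the positivity of $Z_\phi^m$ in the positive $k$-cone give that $Z_\phi^m$ is a positive constant. Hence $(g,v)$ is itself a closed quasi-Einstein manifold, so Proposition~\ref{prop:wKe_rigidity}(1) forces $u=v\hv^{-1}$ to be constant. Finally, the relation $d\widehat{\nu}=u^{-(m+n)}\,d\nu$ combined with $(g,v),(\hg,\hv)\in\kC_1$ pins $u=1$, and hence $(g,v)=(\hg,\hv)$.

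The step I expect to require the most care is the integration-by-parts cancellation in the evaluation of $I$. This is precisely where the locally conformally flat hypothesis enters for $k\geq2$, namely through Lemma~\ref{lem:divE}, so that the extra $dP_\phi^m\cdot\cQ_{\ell+1,\phi}^m$ terms in the divergence formula of Proposition~\ref{prop:div_T} drop out. All the other ingredients---the sharp weighted Maclaurin inequality with its equality case (Corollary~\ref{cor:inner_product_maclaurin}) and the conformal rigidity of quasi-Einstein structures on closed manifolds (Proposition~\ref{prop:wKe_rigidity})---are already available from the preceding sections.
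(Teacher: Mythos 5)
Your proof is correct and follows essentially the same route as the paper's: constancy of $\sigma_{k,\phi}^m$ from Proposition~\ref{prop:mF_critical}, the divergence identity of Lemma~\ref{lem:divE} paired against Lemma~\ref{lem:2we_to_wKe} and killed by integration by parts, the sign and equality case of Corollary~\ref{cor:inner_product_maclaurin}, and finally Proposition~\ref{prop:wKe_rigidity} with the unit-volume normalization; your extra observation that $P_\phi^m=Z_\phi^m g$ plus constancy of $\sigma_{k,\phi}^m=\binom{m+n}{k}\bigl(Z_\phi^m\bigr)^k$ forces $Z_\phi^m$ to be a positive constant makes explicit a step the paper leaves implicit. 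The only omission is the trivial ``if'' direction---that $(\hg,\hv)$ itself lies in $\Gamma_k^+$ and is a critical point---which follows at once from Proposition~\ref{prop:we_sigma2} and Proposition~\ref{prop:mF_critical}.
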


\begin{proof}
 First, suppose that $(g,v)=(\hg,\hv)$.  It follows from Proposition~\ref{prop:we_sigma2} and Proposition~\ref{prop:mF_critical} that $(g,v)$ is in the weighted elliptic cone $\Gamma_k^+$ and is a critical point of the $\mF_k$-functional $\mF_k\colon\kC_1\to\bR$.
 
 Conversely, suppose that $(g,v)$ is a critical point of the $\mF_k$-functional.  It follows from Proposition~\ref{prop:mF_critical} that $\sigma_{k,\phi}^m$ is constant.  Let $E_{k,\phi}^m$ be as in Lemma~\ref{lem:divE}.  Then
 \begin{equation}
  \label{eqn:divE_vanishes}
  \delta_\phi E_{k,\phi}^m - \frac{1}{m}\tr E_{k,\phi}^m\,d\phi = 0 .
 \end{equation}
 Let $u=v\hv^{-1}$.  Using Lemma~\ref{lem:2we_to_wKe} and~\eqref{eqn:divE_vanishes}, we compute that
 \begin{align*}
  0 & = \int_M \left\lp E_{k,\phi}^m, \nabla^2u + \frac{1}{m}\lp\nabla u,\nabla\phi\rp\,g\right\rp\,d\nu \\
  & = -\frac{1}{m+n-2}\int_M u\left\lp E_{k,\phi}^m, P_\phi^m - Z_\phi^m\,g\right\rp\,d\nu .
 \end{align*}
 It follows from Corollary~\ref{cor:inner_product_maclaurin} that $(g,v)$ is quasi-Einstein.  Proposition~\ref{prop:wKe_rigidity} and the normalization $(g,v),(\hg,\hv)\in\kC_1$ then imply that $(g,v)=(\hg,\hv)$.
\end{proof}

We expect that the assumption that $M$ is closed in Theorem~\ref{thm:obata/qe} can be removed; i.e.\ that one can use the assumption that $d\nu$ is a finite measure to still carry out the integration by parts (cf.\ \cite{ChangGurskyYang2003b,Gonzalez2006c}).  We further expect that, with a lot of work, one can show that the $\mF_k$-functional realizes its infimum under suitable geometric assumptions on the background smooth metric measure space (cf.\ \cite{GuanWang2003b,ShengTrudingerWang2007}).  These expectations motivate the following conjecture (cf.\ \cite{GuanWang2004}).

\begin{conj}
 \label{conj:sobolev/qe}
 Fix $k\in\bN$ and let $\kC$ be the weighted conformal class of the weighted elliptic $m$-Gaussian $(S_+^n,d\theta^2,\cos r,m,1)$.  It holds that
 \begin{equation}
  \label{eqn:sobolev/qe}
  \int_M \sigma_{k,\phi}^m\,d\nu \geq C\left(\int_M d\nu\right)^{\frac{m+n-2k}{m+n}}
 \end{equation}
 for all $(g,v)\in\kC\cap\Gamma_k^+$, where $C=\mY_k(d\theta^2,\cos r)$.  Moreover, equality holds in~\eqref{eqn:sobolev/qe} if and only if $(S_+^n,g,v,m,1)$ is homothetic to a weighted elliptic $m$-Gaussian.
\end{conj}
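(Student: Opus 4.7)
The plan is to combine an Obata-type rigidity on the hemisphere with existence of a minimizer in the positive weighted elliptic $k$-cone. Since the inequality~\eqref{eqn:sobolev/qe} is invariant under the homothety $(g,v)\mapsto(c^2g,cv)$, which multiplies both sides by $c^{m+n-2k}$, it suffices to minimize $\mF_k$ over $\kC_1\cap\Gamma_k^+$ and show that every minimizer is homothetic to $(d\theta^2,\cos r)$. The ellipticity of the Euler equation on $\Gamma_k^+$, guaranteed by Proposition~\ref{prop:mFk_elliptic}, and the variational characterization from Proposition~\ref{prop:mF_critical} (which applies since $k\leq 2$ or $\kC$ is locally conformally flat in the weighted sense on a spaceform) together will reduce the sharp inequality to classifying critical points.

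For the rigidity step, I would extend Theorem~\ref{thm:obata/qe} to the hemisphere $(S_+^n,d\theta^2,\cos r,m,1)$. Let $(g,v)\in\kC_1\cap\Gamma_k^+$ be a critical point and set $u=v\hv^{-1}$ relative to the background $(\hg,\hv)=(d\theta^2,\cos r)$. By Proposition~\ref{prop:mF_critical} the weighted $\sigma_k$-curvature is constant, so Lemma~\ref{lem:divE} gives $\delta_\phi E_{k,\phi}^m-\tfrac{1}{m}\tr E_{k,\phi}^m\,d\phi=0$; pairing against $\nabla^2u+\tfrac{1}{m}\lp\nabla u,\nabla\phi\rp g$, integrating against $d\nu$, and invoking Lemma~\ref{lem:2we_to_wKe} as in the proof of Theorem~\ref{thm:obata/qe} will reduce matters to showing
\[
\int_{S_+^n} u\left\lp E_{k,\phi}^m, P_\phi^m - Z_\phi^m g\right\rp d\nu = 0,
\]
at which point Corollary~\ref{cor:inner_product_maclaurin} forces $(g,v)$ to be quasi-Einstein. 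Proposition~\ref{prop:wKe_rigidity}(ii) then identifies $(g,v)$ with a conformal rescaling of $(\hg,\hv)$ by a function of the form $u(\zeta)=c(\sqrt{1+|\xi|^2}+\xi\cdot\zeta)$ with $\xi\in\bR^n=x_{n+1}^{-1}(0)$, and every such rescaling lies in the zero-set of the functional $\mF_k-C(\int d\nu)^{(m+n-2k)/(m+n)}$ by scale invariance.

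For existence, I would employ the continuity method along the path $\sigma_{k,\phi}^m=t\binom{m+n}{k}(\tfrac{m+n-2}{2})^k+(1-t)f_0$ starting from the model, restricted to $\Gamma_k^+$. Openness follows from ellipticity (Proposition~\ref{prop:mFk_elliptic}) after quotienting by the $(n+1)$-parameter conformal symmetry group given by Example~\ref{ex:model_zero}; closedness reduces to $C^{2,\alpha}$ a priori estimates for the weighted $\sigma_k$ equation, which should be obtained by adapting to the weighted setting the techniques of Guan--Wang~\cite{GuanWang2003b} and Sheng--Trudinger--Wang~\cite{ShengTrudingerWang2007} via the formal warped product~\eqref{eqn:intro/wp} and the weighted conformal invariance of $\Gamma_k^+$. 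Alternatively, a direct variational approach minimizing $\mF_k$ over $\kC_1\cap\Gamma_k^+$, with concentration-compactness accounting for the $(n+1)$-dimensional conformal symmetry, should succeed provided boundary concentration on $\partial S_+^n$ can be excluded.

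The principal obstacle is the boundary behavior on $\partial S_+^n=\{v=0\}$. In the rigidity step, the integration by parts that discards boundary terms must be justified across the locus where $d\nu=v^m\,\dvol$ vanishes to order $m$; one expects this decay to absorb the relevant flux, but the admissible functions $u$ may themselves degenerate, and the quantitative analysis needs care. In the existence step, uniform $C^{2,\alpha}$ estimates for fully nonlinear weighted elliptic equations with a weight degenerating to first order on a codimension-one set are not presently available, and one must also preclude concentration of minimizing sequences at $\partial S_+^n$ in spite of the noncompact symmetry group. A secondary obstacle is to verify, via Proposition~\ref{prop:we_sigma2} and the conformal invariance of the problem, that the minimum value equals $\mY_k(d\theta^2,\cos r)$ rather than some degenerate infimum attained only in the limit.
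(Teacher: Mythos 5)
The statement you were asked to prove is not a theorem of the paper: it is Conjecture~\ref{conj:sobolev/qe}, which the paper explicitly leaves open. The surrounding text makes this plain: the author writes that he \emph{expects} the closedness hypothesis in Theorem~\ref{thm:obata/qe} can be removed ``by developing some additional integral estimates'' (cf.\ the discussion citing \cite{ChangGurskyYang2003b,Gonzalez2006c}) and that ``with a lot of work'' existence of a minimizer should follow by adapting \cite{GuanWang2003b,ShengTrudingerWang2007}; the conjecture is offered as the expected outcome of that unfinished program. Your proposal reproduces exactly this program --- rigidity via the divergence-free tensor $E_{k,\phi}^m$ of Lemma~\ref{lem:divE} paired against $\nabla^2u+\tfrac{1}{m}\lp\nabla u,\nabla\phi\rp g$ as in Theorem~\ref{thm:obata/qe}, followed by Corollary~\ref{cor:inner_product_maclaurin} and Proposition~\ref{prop:wKe_rigidity}, plus existence via continuity/variational methods in $\Gamma_k^+$ --- so as a \emph{strategy} it is faithful to the paper's intent. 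But it is not a proof, and to your credit you say so: every step you defer is precisely a step the paper also leaves open.

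Concretely, the gaps are these. First, Theorem~\ref{thm:obata/qe} genuinely does not apply to the hemisphere model: the paper notes explicitly that $(S_+^n,d\theta^2,\cos r,m,1)$ is \emph{not} a closed smooth metric measure space in its sense, because $v=\cos r$ vanishes on $\partial S_+^n$. The weighted divergence theorem $\int_M\delta_\phi\omega\,d\nu=\int_{\partial M}v^m\,\omega(\nu)$ kills the boundary flux only if the integrand --- which involves $E_{k,\phi}^m$, hence $k$-fold products of $P_\phi^m$ and $Y_\phi^m$ for an arbitrary competitor $(g,v)\in\kC\cap\Gamma_k^+$ --- stays suitably bounded as $v\to0$; no such estimate is established here or in the paper, and this is exactly the ``additional integral estimates'' the author flags. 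Second, the existence half rests on $C^{2,\alpha}$ a priori bounds for a fully nonlinear equation whose natural measure degenerates to order $m$ on a codimension-one set, together with exclusion of concentration at $\partial S_+^n$ modulo the $(n+1)$-dimensional symmetry family of Example~\ref{ex:model_zero}; neither is available, and your openness step additionally presupposes (without proof) that the kernel of the linearized operator at the model consists exactly of the symmetry directions. Third, even the equality-case bookkeeping --- that all representatives produced by Proposition~\ref{prop:wKe_rigidity}(ii) attain the same value $C=\mY_k(d\theta^2,\cos r)$ --- requires an argument (e.g.\ via the conformal group action and scale invariance) rather than the bare appeal to scale invariance you make. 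In short: correct identification of the intended route, but the analytically hard content is missing, which is why the statement stands in the paper as a conjecture rather than a theorem.
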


\subsection{Towards an Obata theorem for the $\mY_k$-functional on $S^n$}
\label{subsec:obata/we}

For smooth metric measure spaces with positive scale, the analogue of Lemma~\ref{lem:divE} is as follows:

\begin{prop}
 \label{prop:obata/we/div}
 Fix $k\in\bN$ and let $(M^n,g,v,m,\mu)$ be a smooth metric measure space; if $k\geq2$, assume additionally that $(g,v)$ is locally conformally flat in the weighted sense.  Define
 \begin{align}
  \label{eqn:cEk} \cE_{k,\phi}^m & := \cT_{k,\phi}^m - \frac{m+n-k}{m+n}\csigma_{k,\phi}^mg, \\
  \label{eqn:cUk} \cU_{k-1,\phi}^m & := T_{k-1}^{m-1}\left(\frac{m-1}{m}\cY_\phi^m;P_\phi^m\right) - \frac{m+n-k}{m+n}\cs_{k-1,\phi}^mg, \\
  \label{eqn:hsigmak} \hsigma_{k,\phi}^m & := \csigma_{k,\phi}^m + \frac{m}{m+n-2k}\left(\cs_{k-1,\phi}^m - \frac{\int\cs_{k-1,\phi}^mv^{-1}}{\int v^{-1}}\right)\kappa v^{-1}, \\
  \label{eqn:hEk} \hE_{k,\phi}^m & := \cE_{k,\phi}^m + \frac{m}{m+n-2k}\kappa v^{-1}\cU_{k-1,\phi}^m \\
  \notag & \qquad - \frac{m(m+n-k)}{(m+n)(m+n-1)(m+n-2k)}\left(\frac{\int\cs_{k-1,\phi}^mv^{-1}}{\int v^{-1}}\right)\kappa v^{-1}g .
 \end{align}
 Then it holds that
 \[ \delta_\phi\hE_{k,\phi}^m - \frac{1}{m}\tr\hE_{k,\phi}^m\,d\phi = -\frac{m+n-k}{m+n}d\hsigma_{k,\phi}^m . \]
\end{prop}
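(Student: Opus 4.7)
The plan is to extend Lemma~\ref{lem:divE} to arbitrary scale $\kappa$, refine it by differentiating in the scale parameter, and absorb the resulting error via a multiple of $v^{-1}g$. First, I would establish the scale-$\kappa$ analogue of Lemma~\ref{lem:divE}, namely
\[
\delta_\phi\cE_{k,\phi}^m - \frac{1}{m}\tr\cE_{k,\phi}^m\,d\phi = -\frac{m+n-k}{m+n}\,d\csigma_{k,\phi}^m ,
\]
by a line-by-line adaptation of that proof: under the hypothesis ($k=1$ trivially, or $k\geq 2$ with $\kC$ locally conformally flat in the weighted sense so that $dP_\phi^m=0$ by Lemma~\ref{lem:div_and_tr}), Proposition~\ref{prop:div_T} collapses to $\delta_\phi\cT_{k,\phi}^m = -\cs_{k,\phi}^m\,d\phi$; combining this with the trace identity $\tr\cT_{k,\phi}^m = (m+n-k)\csigma_{k,\phi}^m - m\cs_{k,\phi}^m$ and the formula $\delta_\phi(fg) = df - f\,d\phi$ (extracted from the proof of Lemma~\ref{lem:div_T1}), the result follows from a brief algebraic rearrangement.

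Next, I would differentiate this identity in the scale $\kappa$. The crucial algebraic facts are
\[
\frac{\partial}{\partial\lambda}\sigma_k^m(\lambda;P) = s_{k-1}^m(\lambda;P), \qquad \frac{\partial}{\partial\lambda}T_k^m(\lambda;P) = T_{k-1}^{m-1}\bigl(\tfrac{m-1}{m}\lambda;P\bigr) ,
\]
the first following from Lemma~\ref{lem:vary_kappa} and the second from the first after using Lemma~\ref{lem:s_to_sigma-1} to rewrite each $s_{k-1-j}^m$ as $\sigma_{k-1-j}^{m-1}$ at the shifted argument. Since $\partial_\kappa\cY_\phi^m = mv^{-1}$, the chain rule gives $\partial_\kappa\csigma_{k,\phi}^m = mv^{-1}\cs_{k-1,\phi}^m$ and, once the trace parts cancel, $\partial_\kappa\cE_{k,\phi}^m = mv^{-1}\cU_{k-1,\phi}^m$. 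Differentiating the previous identity in $\kappa$ and scaling by $\kappa/(m+n-2k)$ then yields the analogous divergence identity for $\frac{m\kappa}{m+n-2k}v^{-1}\cU_{k-1,\phi}^m$, with right-hand side $-\frac{m+n-k}{m+n}\cdot\frac{m\kappa}{m+n-2k}\,d\bigl(v^{-1}\cs_{k-1,\phi}^m\bigr)$.

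Adding the two identities, the combined right-hand side equals $-\frac{m+n-k}{m+n}d\hsigma_{k,\phi}^m + \frac{m(m+n-k)C\kappa}{(m+n)(m+n-2k)}\,dv^{-1}$, where $C := (\int \cs_{k-1,\phi}^m v^{-1}\,d\nu)/(\int v^{-1}\,d\nu)$. To cancel the error term $\frac{m(m+n-k)C\kappa}{(m+n)(m+n-2k)}\,dv^{-1}$, I would add a multiple $Dv^{-1}g$ of $v^{-1}g$: using $\delta_\phi(fg)=df-f\,d\phi$, $\tr g=n$, and $v^{-1}d\phi=m\,dv^{-1}$ (from $\phi=-m\ln v$), one computes
\[
\delta_\phi(Dv^{-1}g) - \frac{1}{m}\tr(Dv^{-1}g)\,d\phi = -D(m+n-1)\,dv^{-1} ,
\]
and matching coefficients forces $D=-\tfrac{m(m+n-k)C\kappa}{(m+n)(m+n-1)(m+n-2k)}$, which is precisely the coefficient appearing in the definition~\eqref{eqn:hEk} of $\hE_{k,\phi}^m$. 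The main obstacle is the algebraic identification $\partial_\kappa\cE_{k,\phi}^m = mv^{-1}\cU_{k-1,\phi}^m$ in the second step, which hinges on the non-obvious differentiation formula $\partial_\lambda T_k^m(\lambda;P) = T_{k-1}^{m-1}(\tfrac{m-1}{m}\lambda;P)$; once this is in place, the rest of the proof reduces to the coefficient bookkeeping described above.
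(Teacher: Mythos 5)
Your proof is correct, and while it shares the paper's overall skeleton --- combine three divergence identities, one for $\cE_{k,\phi}^m$, one for $v^{-1}\cU_{k-1,\phi}^m$, and one for $v^{-1}g$ --- your derivation of the middle identity is genuinely different. The paper obtains $\delta_\phi(v^{-1}\cU_{k-1,\phi}^m)-\frac{1}{m}\tr(v^{-1}\cU_{k-1,\phi}^m)\,d\phi=-\frac{m+n-k}{m+n}d(v^{-1}\cs_{k-1,\phi}^m)$ by a dimensional shift: it rewrites $\cU_{k-1,\phi}^m$ as $\bigl(\frac{m+n-2}{m+n-3}\bigr)^{k-1}\bigl(\cE_{k-1,\phi}^{m-1}+\frac{m+n-k}{(m+n)(m+n-1)}\csigma_{k-1,\phi}^{m-1}g\bigr)$ using the locally conformally flat relation $P_\phi^m=\frac{m+n-2}{m+n-3}P_\phi^{m-1}$ and the shifted scale $\kappa^{(m-1)}=\frac{m+n-3}{m+n-2}\kappa$, then applies the $\cE$-identity at dimensional parameter $m-1$ and conjugates divergences via $\delta_\phi^{(m-1)}=v\circ\delta_\phi\circ v^{-1}$. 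You instead differentiate the $\cE$-identity at parameter $m$ with respect to the scale, and your two key algebraic inputs check out: $\partial_\lambda\sigma_k^m=s_{k-1}^m$ is exactly Lemma~\ref{lem:vary_kappa}, and $\partial_\lambda T_k^m(\lambda;P)=T_{k-1}^{m-1}\bigl(\frac{m-1}{m}\lambda;P\bigr)$ follows in two lines by differentiating $T_k^m=\sum_j(-1)^j\sigma_{k-j}^m P^j$ termwise and invoking Lemma~\ref{lem:s_to_sigma-1}. This gives $\partial_\kappa\cE_{k,\phi}^m=mv^{-1}\cU_{k-1,\phi}^m$, which incidentally explains why the trace coefficient in~\eqref{eqn:cUk} is $\frac{m+n-k}{m+n}$ rather than $\frac{m+n-k+1}{m+n}$. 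Since the $\cE$-identity holds identically in the scale parameter (it only needs $dP_\phi^m=0$, a $\kappa$-independent condition) and both sides are polynomial in the scale, differentiation is legitimate and commutes with $\delta_\phi$, $\tr$, and $d$. What your route buys: a single application of the flatness hypothesis at level $k$, no $m\to m-1$ bookkeeping, and no conjugation of weighted divergences; what the paper's route buys is the explicit relation~\eqref{eqn:cU_to_cE}, which it reuses elsewhere (e.g.\ in Lemma~\ref{lem:we_csk-1_linearization} and Proposition~\ref{prop:mYk_elliptic}).

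One small slip: after adding the two identities, the combined right-hand side is $-\frac{m+n-k}{m+n}d\hsigma_{k,\phi}^m-\frac{m(m+n-k)C\kappa}{(m+n)(m+n-2k)}\,dv^{-1}$, with a minus sign on the error term, not the plus sign you wrote (since $\csigma_{k,\phi}^m+\frac{m\kappa}{m+n-2k}v^{-1}\cs_{k-1,\phi}^m=\hsigma_{k,\phi}^m+\frac{m\kappa C}{m+n-2k}v^{-1}$). This does not propagate: the value $D=-\frac{m(m+n-k)C\kappa}{(m+n)(m+n-1)(m+n-2k)}$ you extract from matching against $\delta_\phi(Dv^{-1}g)-\frac{1}{m}\tr(Dv^{-1}g)\,d\phi=-D(m+n-1)\,dv^{-1}$ is the one consistent with the correct sign, and it agrees with the coefficient in~\eqref{eqn:hEk}, so the conclusion stands.
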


\begin{proof}
 A straightforward computation yields
 \[ \tr\cE_{k,\phi}^m = \frac{m(m+n-k)}{m+n}\csigma_{k,\phi}^m - m\cs_{k,\phi}^m . \]
 Combining this with Proposition~\ref{prop:div_T} yields
 \begin{equation}
  \label{eqn:div_E}
  \delta_\phi\cE_{k,\phi}^m - \frac{1}{m}\tr\cE_{k,\phi}^m\,d\phi = -\frac{m+n-k}{m+n}d\csigma_{k,\phi}^m .
 \end{equation}
 
 Next we show that
 \begin{equation}
  \label{eqn:div_U}
  \delta_\phi\left(v^{-1}\cU_{k-1,\phi}^m\right) - \frac{1}{m}\tr\left(v^{-1}\cU_{k-1,\phi}^m\right)\,d\phi = -\frac{m+n-k}{m+n}d\left(v^{-1}\cs_{k-1,\phi}^m\right) .
 \end{equation}
 This is clear if $k=1$, so suppose $k\geq2$.  Arguing as in the proof of Lemma~\ref{lem:csigma_to_sigma_genl}, we see that
 \begin{equation}
  \label{eqn:cU_to_cE}
  \cU_{k-1,\phi}^m = \left(\frac{m+n-2}{m+n-3}\right)^{k-1}\left(\cE_{k-1,\phi}^{m-1}+\frac{m+n-k}{(m+n)(m+n-1)}\csigma_{k-1,\phi}^{m-1}g\right)
 \end{equation}
 (cf.\ Lemma~\ref{lem:we_csk-1_linearization}), where $\cE_{k-1,\phi}^{m-1}$ is defined by~\eqref{eqn:cEk} in terms of $(M^n,g,v,m-1,\mu)$ and the scale $\kappa^{(m-1)}:=\frac{m+n-3}{m+n-2}\kappa$.  Let $\delta_\phi^{(m-1)}$ denote the divergence with respect to the weighted measure $d\nu^{(m-1)}$ of $(M^n,g,v,m-1,\mu)$.  Note that $\delta_\phi^{(m-1)}=v\circ\delta_\phi^{(m)}\circ v^{-1}$, where $v$ and $v^{-1}$ act as multiplication operators.  In particular, \eqref{eqn:div_E} and~\eqref{eqn:cU_to_cE} together yield~\eqref{eqn:div_U}.
 
 Finally, a simple calculation yields
 \begin{equation}
  \label{eqn:simple_calculation/obata}
  \delta_\phi\left(v^{-1}g\right) - \frac{1}{m}\tr\left(v^{-1}g\right)\,d\phi = -(m+n-1)dv^{-1}.
 \end{equation}
 Combining~\eqref{eqn:div_E}, \eqref{eqn:div_U} and~\eqref{eqn:simple_calculation/obata} yields the conclusion.
\end{proof}

By Proposition~\ref{prop:mY_critical}, the Euler equation of the $\mY_k$-functional with scale $\kappa$ is completely determined by $\hsigma_{k,\phi}^m$.  The Euler equation is also elliptic within the positive weighted elliptic $k$-cone.

\begin{prop}
 \label{prop:mYk_elliptic}
 Fix $k\in\bN_0$ and $\kappa\in\bR_+$.  Let $\kC$ be a weighted conformal class on $(M^n,m,0)$ and fix a representative $(g,v)\in\kC$.  If $k\geq 3$, assume additionally that $\kC$ is locally conformally flat in the weighted sense.  In terms of~\eqref{eqn:Gammak_fn}, the operator $D\colon\kC\cap\Gamma_k^+\to C^\infty(M)$ defined by $D(u):=\hsigma_{k,\phi}^m(u^{-2}g,u^{-1}v)$ is elliptic.
\end{prop}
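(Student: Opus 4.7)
The argument will parallel that of Proposition~\ref{prop:mFk_elliptic}, with an extra contribution from the $\cs_{k-1,\phi}^m$ piece of $\hsigma_{k,\phi}^m$. First, identify tangent vectors along the curve $t\mapsto((u+t\dot u)^{-2}g,(u+t\dot u)^{-1}v)$ through $\Psi_{(u^{-2}g,u^{-1}v)}^m$: since $v_t^{-2}g_t=v^{-2}g$ for all $t$, one has $h=0$, and a short computation shows $\psi=(m+n)\dot u/u$. Thus the linearization of $D(u)$ in the direction $\dot u$ is, up to a positive scalar, the linearization $D\hsigma_{k,\phi}^m[\psi]$ with purely conformal $\psi$, and so the principal symbol of $D$ at $\xi\in T^\ast M$ is a positive multiple of the principal symbol of $D\hsigma_{k,\phi}^m$.

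Next, split $\hsigma_{k,\phi}^m$ into its three terms per~\eqref{eqn:hsigmak}. The nonlocal constant term $\frac{\int\cs_{k-1,\phi}^mv^{-1}\,d\nu}{\int v^{-1}\,d\nu}\cdot\kappa v^{-1}$ only produces lower-order contributions under linearization, so it is invisible at the symbol level. For $\csigma_{k,\phi}^m$, Proposition~\ref{prop:linearization_sigmak}, together with the locally conformally flat hypothesis when $k\geq 3$ (which kills the terms involving $dP_\phi^m\cdot\cQ_{\ell+1,\phi}^m$; see the proof of Theorem~\ref{thm:variational}), gives principal symbol
\[
\frac{m+n-2}{m+n}\,\cT_{k-1,\phi}^m(\xi,\xi).
\]
For the remaining term, invoke Lemma~\ref{lem:s_to_sigma-1}, together with the identities~\eqref{eqn:P-k_lcf} and~\eqref{eqn:Y-k_lcf} established in the proof of Lemma~\ref{lem:csigma_to_sigma_genl}, to rewrite
\[
\cs_{k-1,\phi}^m=\left(\frac{m+n-2}{m+n-3}\right)^{k-1}\csigma_{k-1,\phi}^{m-1},
\]
where $\csigma_{k-1,\phi}^{m-1}$ is computed in the $(m-1)$-setup with scale $\kappa^{(m-1)}=\frac{m+n-3}{m+n-2}\kappa$. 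Apply Proposition~\ref{prop:linearization_sigmak} in that setup and translate back to the $m$-identification of the tangent space using Corollary~\ref{cor:change_TkM}, which on purely conformal variations sends $\psi\mapsto\frac{m+n-1}{m+n}\psi$; this yields a principal symbol proportional to $v^{-1}\cT_{k-2,\phi}^{m-1}(\xi,\xi)$ with a definite positive numerical coefficient.

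Combining, the principal symbol of $D$ at $\xi$ is a linear combination of $\cT_{k-1,\phi}^m(\xi,\xi)$ and $\kappa v^{-1}\cT_{k-2,\phi}^{m-1}(\xi,\xi)$ with explicit nonzero coefficients whose common sign is determined by $\sgn(m+n-2k)$. On $\Gamma_k^+$, Corollary~\ref{cor:weighted_elliptic} applied in the $m$-setup gives $\cT_{k-1,\phi}^m>0$; moreover, the same corollary gives $\cs_{j,\phi}^m>0$ for $0\leq j\leq k-1$, and by Lemma~\ref{lem:s_to_sigma-1} this is precisely the condition $(\tfrac{m-1}{m}\cY_\phi^m;P_\phi^m)\in\Gamma_{k-1}^{m-1,+}$, whence a second application of Corollary~\ref{cor:weighted_elliptic}, now in the $(m-1)$-setup, gives $\cT_{k-2,\phi}^{m-1}>0$. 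Since $\kappa v^{-1}>0$, the two contributions have the same sign and the principal symbol is definite, establishing ellipticity.

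The main obstacle is not conceptual but bookkeeping: carefully tracking the dimensional factors introduced by passing between the $m$- and $(m-1)$-identifications of $T_{(g,v)}\kC$, verifying that the reduction from $\cs_{k-1,\phi}^m$ to $\csigma_{k-1,\phi}^{m-1}$ indeed respects the locally conformally flat assumption (which is needed to use~\eqref{eqn:P-k_lcf} and~\eqref{eqn:Y-k_lcf}), and checking that the second application of Corollary~\ref{cor:weighted_elliptic} is legitimate with the shifted scale parameter $\kappa^{(m-1)}$.
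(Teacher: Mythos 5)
Your proposal is essentially the paper's proof, written out in more detail.  The paper likewise reads off the principal symbol
\[ \frac{m+n-2}{m+n}\left(\cT_{k-1,\phi}^m + \frac{m}{m+n-2k}\kappa v^{-1}\cT_{k-2,\phi}^{m-1}\right) \]
from Proposition~\ref{prop:linearization_sigmak} and~\eqref{eqn:linearization_sk} (the latter encoding your reduction $\cs_{k-1,\phi}^m=\bigl(\tfrac{m+n-2}{m+n-3}\bigr)^{k-1}\csigma_{k-1,\phi}^{m-1}$ with the shifted scale $\kappa^{(m-1)}$, via Lemma~\ref{lem:s_to_sigma-1} and the argument of Lemma~\ref{lem:csigma_to_sigma_genl}), and it deduces $\cT_{k-1,\phi}^m>0$ and $\cT_{k-2,\phi}^{m-1}>0$ from Corollary~\ref{cor:weighted_elliptic}.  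Your explicit chain --- $\cs_{j,\phi}^m>0$ for $0\leq j\leq k-1$ on $\Gamma_k^+$, hence $\bigl(\tfrac{m-1}{m}\cY_\phi^m;P_\phi^m\bigr)\in\Gamma_{k-1}^{m-1,+}$, hence positivity of the $(m-1)$-Newton tensor --- is exactly what the paper compresses into one sentence, and your bookkeeping (the identification $h=0$, $\psi=(m+n)\dot u/u$ along radial curves, and the conformal factor $\tfrac{m+n-1}{m+n}$ from Corollary~\ref{cor:change_TkM}) checks out.

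The one flaw is your concluding sentence about signs.  In the displayed symbol the coefficient of $\cT_{k-1,\phi}^m$ is $\tfrac{m+n-2}{m+n}>0$ unconditionally; only the coefficient of $\kappa v^{-1}\cT_{k-2,\phi}^{m-1}$ carries the factor $\tfrac{m}{m+n-2k}$ and hence the sign $\sgn(m+n-2k)$.  So the two contributions do \emph{not} have a ``common sign determined by $\sgn(m+n-2k)$'': they are both positive precisely when $m+n>2k$, while for $m+n<2k$ the symbol is a difference of two positive-definite tensors, and definiteness does not follow from the positivity you (correctly) established.  To be fair, the paper's own proof is equally terse on this point --- it writes the same symbol, cites the same positivity, and says ``this yields the conclusion'' --- so this is a caveat shared with the source rather than a defect of your reduction; but as written your claim of a common sign is false, and to be rigorous one should either restrict to $m+n>2k$ (note $m+n=2k$ is already excluded by the definition of $\hsigma_{k,\phi}^m$) or supply a further argument when $m+n<2k$.
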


\begin{proof}
 By Proposition~\ref{prop:linearization_sigmak} and~\eqref{eqn:linearization_sk}, the principal symbol of the linearization of $D$ at $u\in\kC\cap\Gamma_k^+$ is
 \[ \frac{m+n-2}{m+n}\left(\cT_{k-1,\phi}^m + \frac{m}{m+n-2k}\kappa v^{-1}\cT_{k-2,\phi}^{m-1}\right), \]
 where $\cT_{k-1,\phi}^m$ and $\cT_{k-2,\phi}^m$ are defined in terms of $(M^n,u^{-2}g,u^{-1}v,m,0)$ with scale $\kappa$ and $(M^n,u^{-2}g,u^{-1}v,m-1,0)$ with scale $\frac{m+n-3}{m+n-2}\kappa$, respectively.  Corollary~\ref{cor:weighted_elliptic} implies that $\cT_{k-1,\phi}^m>0$.  Lemma~\ref{lem:s_to_sigma-1}, Corollary~\ref{cor:weighted_elliptic}, and an argument as in the proof of Lemma~\ref{lem:csigma_to_sigma_genl} imply that $\cT_{k-2,\phi}^m>0$.  This yields the conclusion.
\end{proof}

The form of the tensor $\hE_{k,\phi}^m$ makes it difficult to deduce a general Obata-type theorem for conformally weighted Einstein manifolds $(M^n,g,v,m,0)$ with scale $\kappa\in\bR_+$ for which $\hsigma_{k,\phi}^m$ is constant.  In this setting, it is still the case that an integral pairing with $\hE_{k,\phi}^m$ vanishes, but it is not apparent how to deduce that $(g,v)$ is a weighted Einstein metric-measure structure.  This difficulty is even apparent in the standard conformal class of the $m$-weighted $n$-sphere, as we illustrate below:

\begin{cor}
 \label{cor:obata/we/div}
 Fix $k\in\bN$ and a scale $\kappa>0$.  Let $\kC=[g_0,v_0]$ be the standard weighted conformal class on the $m$-weighted $n$-sphere $(S^n,m,0)$.  Suppose that $(g,v)\in\kC$ is a critical point of $\mY_k\colon\kC\to\bR$.  Then
 \begin{equation}
  \label{eqn:obata/we/div}
  \int_{S^n} \left\lp \hE_{k,\phi}^m, v\left(P_\phi^m - \cZ_\phi^mg\right) + \kappa g\right\rp \, d\nu= 0 .
 \end{equation}
\end{cor}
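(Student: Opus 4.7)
The plan is to convert the critical-point condition, via Proposition~\ref{prop:obata/we/div}, into an integration-by-parts identity, then apply Lemma~\ref{lem:2we_to_wKe} along the explicit family of weighted Einstein structures on $S^n$ from Example~\ref{ex:model_positive}. The resulting parametrized identity is affine in the parameter $\xi\in\bR^{n+1}$, and~\eqref{eqn:obata/we/div} will appear as the specialization at the boundary point $\xi=e_{n+1}$.

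First, Proposition~\ref{prop:mY_critical} together with the defining formula~\eqref{eqn:hsigmak} shows that at any critical point of $\mY_k\colon\kC\to\bR$ the invariant $\hsigma_{k,\phi}^m$ is constant. Proposition~\ref{prop:obata/we/div} then gives $\delta_\phi\hE_{k,\phi}^m-\tfrac{1}{m}(\tr\hE_{k,\phi}^m)\,d\phi=0$, and the standard integration by parts with respect to $d\nu$ yields
\[ \int_{S^n}\langle \hE_{k,\phi}^m,\,\nabla^2 u+\tfrac{1}{m}\langle\nabla u,\nabla\phi\rangle g\rangle\,d\nu=0 \]
for every $u\in C^\infty(S^n)$.

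Second, by Example~\ref{ex:model_positive} the weighted Einstein structures in $\kC$ are precisely $(u_{a,\xi}^{-2}g_0,u_{a,\xi}^{-1}v_0)$, where $(g_0,v_0)=(d\theta^2,1+\cos r)$, the conformal factor is $u_{a,\xi}(\zeta)=a(1+\xi\cdot\zeta)/\sqrt{1-|\xi|^2}$ for $a>0$ and $|\xi|<1$, and the associated scale is $\hkappa_{a,\xi}=(m+n-2)a(1-\xi_{n+1})/\sqrt{1-|\xi|^2}$. Since $(g,v)\in\kC$, the positive smooth function $u:=u_{a,\xi}v/v_0$ satisfies $(u^{-2}g,u^{-1}v)=(u_{a,\xi}^{-2}g_0,u_{a,\xi}^{-1}v_0)$. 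Plugging this $u$ into Lemma~\ref{lem:2we_to_wKe}, pairing with $\hE_{k,\phi}^m$, integrating against $d\nu$, and invoking the display in the previous paragraph yields
\[ \int_{S^n}\langle \hE_{k,\phi}^m,u(P_\phi^m-\cZ_\phi^m g)+\kappa u v^{-1}g\rangle\,d\nu=\hkappa_{a,\xi}\int_{S^n}v^{-1}\tr(\hE_{k,\phi}^m)\,d\nu. \]

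Third, using $u v^{-1}=u_{a,\xi}/v_0$ one factors the common scalar $u_{a,\xi}/v_0=a(1+\xi\cdot\zeta)/[\sqrt{1-|\xi|^2}(1+\zeta_{n+1})]$ out of the left-hand integrand, and cancelling $a/\sqrt{1-|\xi|^2}$ from both sides produces
\[ \int_{S^n}\frac{1+\xi\cdot\zeta}{1+\zeta_{n+1}}\langle \hE_{k,\phi}^m,v(P_\phi^m-\cZ_\phi^m g)+\kappa g\rangle\,d\nu=(m+n-2)(1-\xi_{n+1})\int_{S^n}v^{-1}\tr(\hE_{k,\phi}^m)\,d\nu. \]
Both sides are affine functions of $\xi\in\bR^{n+1}$, so the identity extends from $\{|\xi|<1\}$ to all of $\bR^{n+1}$; at $\xi=e_{n+1}$ the ratio $(1+\xi\cdot\zeta)/(1+\zeta_{n+1})$ is identically $1$ and $1-\xi_{n+1}=0$, which is exactly~\eqref{eqn:obata/we/div}. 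The main obstacle is that $(v^{-2}g,1)$ is not generally a weighted Einstein structure, so Lemma~\ref{lem:2we_to_wKe} cannot be applied directly with $u=v$; the affine extrapolation to the boundary point $\xi=e_{n+1}$ sidesteps this by extracting the identity from the genuine $(n+2)$-parameter family of weighted Einstein structures in $\kC$.
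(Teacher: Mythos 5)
Your proof is correct, but it takes a genuinely different and considerably longer route than the paper's. The paper's proof is essentially two lines: since the compactification of the flat metric-measure structure $(dx^2,1)$ on $(\bR^n,m,0)$ belongs to $\kC$, every representative $(g,v)\in\kC$ satisfies $v^{-2}g=dx^2$ identically, so $(v^{-2}g,v^{-1}v)=(dx^2,1)$ is itself a weighted Einstein structure, with $\lambda=0$ and scale $\hkappa=0$. Applying Lemma~\ref{lem:2we_to_wKe} once, with $u=v$, therefore yields the pointwise identity
\begin{equation*}
 v\left(P_\phi^m-\cZ_\phi^m g\right)+\kappa g=-(m+n-2)\left(\nabla^2v+\frac{1}{m}\lp\nabla v,\nabla\phi\rp g\right),
\end{equation*}
and pairing with $\hE_{k,\phi}^m$ and integrating by parts against $\delta_\phi\hE_{k,\phi}^m-\frac{1}{m}\tr\hE_{k,\phi}^m\,d\phi=0$ gives~\eqref{eqn:obata/we/div} at once. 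You instead apply Lemma~\ref{lem:2we_to_wKe} along the whole $(n+2)$-parameter family of round weighted Einstein representatives from Example~\ref{ex:model_positive} and recover the conclusion by affine extrapolation in $\xi$. Your individual steps check out: the critical-point condition of Proposition~\ref{prop:mY_critical} is precisely the constancy of $\hsigma_{k,\phi}^m$, so Proposition~\ref{prop:obata/we/div} gives the divergence identity and hence your integration-by-parts identity for all $u$; the scale $\hkappa_{a,\xi}$ is read off correctly from the example; the conformal factor $u=u_{a,\xi}v/v_0$ is the right one; and the resulting identity is indeed affine in $\xi$ and specializes at $\xi=(0,\dotsc,0,1)$ to~\eqref{eqn:obata/we/div}.

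One caveat: your closing justification misdiagnoses the situation. $(v^{-2}g,1)$ \emph{is} a weighted Einstein structure---it is the flat one, with scale zero---so Lemma~\ref{lem:2we_to_wKe} does apply directly with $u=v$; the only blemish is that $v$ vanishes at the south pole, so strictly one applies the lemma on the punctured sphere and extends by smoothness. Your construction does not escape this degeneracy: the measures $u_{a,\xi}^{-1}v_0$ in your family vanish at the same point, and your boundary limit $\xi\to(0,\dotsc,0,1)$ is exactly the degenerate flat structure. Moreover, your affine-extrapolation step requires convergence of the coefficient integrals $\int\frac{\zeta_i}{1+\zeta_{n+1}}\lp\hE_{k,\phi}^m,v(P_\phi^m-\cZ_\phi^mg)+\kappa g\rp\,d\nu$, whose integrands carry an extra factor of $(1+\zeta_{n+1})^{-1}$ blowing up precisely where $v$ vanishes; for marginal values of $(m,n,k)$ this is a strictly stronger integrability demand than the final identity itself. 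In the formal spirit in which the paper treats this degenerate setting both arguments are fine, but the direct argument is shorter and less demanding, and the ``obstacle'' you cite to rule it out is not actually there.
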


\begin{remark}
 When $k=1$, one can check that if $(g,v)\in\kC\cap\Gamma_1^+$, then~\eqref{eqn:obata/we/div} has a sign; cf.\ \cite[Proposition~9.7]{Case2013y}.  It is unclear whether the analogous statement holds for $k\geq2$.
\end{remark}

\begin{proof}
 Since the compactification of the flat metric-measure structure $(dx^2,1)$ on $(\bR^n,m,0)$ is an element of $\kC$, it holds that $v^{-2}g=dx^2$.  In particular, Lemma~\ref{lem:2we_to_wKe} implies that
 \begin{equation}
  \label{eqn:sphere_veqn}
  v\left(P_\phi^m - \cZ_\phi^mg\right) + \kappa g = -(m+n-2)\left(\nabla^2v + \frac{1}{m}\lp\nabla v,\nabla\phi\rp\,g\right) .
 \end{equation}
 Since $(g,v)$ is a critical point of the $\mY_k$-functional, Proposition~\ref{prop:mY_critical} implies that $\hsigma_{k,\phi}^m$ is constant.  In particular, Proposition~\ref{prop:obata/we/div} yields that
 \begin{equation}
  \label{eqn:sphere_diveqn}
  \delta_\phi\hE_{k,\phi}^m - \frac{1}{m}\tr\hE_{k,\phi}^m\,d\phi = 0 .
 \end{equation}
 Combining~\eqref{eqn:sphere_veqn} and~\eqref{eqn:sphere_diveqn} yields~\eqref{eqn:obata/we/div}.
\end{proof}

Motivated both by the fully nonlinear Sobolev-type inequality known in Riemannian geometry~\cite{GuanWang2004} and our discussion surrounding Conjecture~\ref{conj:sobolev/qe}, we expect the following fully nonlinear Gagliardo--Nirenberg inequality:

\begin{conj}
 \label{conj:sobolev/we}
 Fix $k\in\bN$ and let $\kC=[dx^2,1]$ be the standard weighted conformal class on the $m$-weighted Euclidean space $(\bR^n,m,0)$.  It holds that
 \begin{equation}
  \label{eqn:we_ksobolev}
  \kappa^{-\frac{2mk(m+n-1)}{(m+n)(2m+n-2)}}\int_{\bR^n} \csigma_{k,\phi}^m \geq C\left(\int_{\bR^n} v^{-1}\,d\nu\right)^{\frac{2mk}{(m+n)(2m+n-2)}}\left(\int_{\bR^n} d\nu\right)^{\frac{m+n-2k}{m+n}}
 \end{equation}
 for all $(g,v)\in\kC\cap\Gamma_k^+$ and all $\kappa>0$, where $\csigma_{k,\phi}^m$ is defined in terms of the scale $\kappa$ and $C = \mY_k(g_0,v_0)$ is the $\mY_k$-functional evaluated at the standard $m$-weighted $n$-sphere with scale $\kappa=m+n-2$.  Moreover, equality holds in~\eqref{eqn:we_ksobolev} if and only if $(\bR^n,g,v,m,0)$ is homothetic to the standard $m$-weighted $n$-sphere with the point $(0,\dotsc,0,1)$ removed.
\end{conj}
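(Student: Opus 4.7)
The plan is to prove the sharp inequality by identifying its extremals via an Obata-type argument and then verifying the sharp constant by direct computation on the model. First, observe that the left-hand side of~\eqref{eqn:we_ksobolev} divided by the right-hand side factor is precisely $\mY_k(g,v,\kappa)$, so by Lemma~\ref{lem:mY_scaling} (scale invariance) it suffices to prove that
\[ \inf\bigl\{ \mY_k(g,v,\kappa) \suchthat (g,v)\in\kC\cap\Gamma_k^+,\ \kappa>0 \bigr\} = \mY_k(g_0,v_0,m+n-2), \]
with the infimum attained precisely on the orbit of $(g_0,v_0)$ under the conformal and homothetic symmetries identified in Proposition~\ref{prop:wKe_rigidity}(3). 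In this step one notes that the pullback of the standard $m$-weighted $n$-sphere minus a point lies in $\kC$, so this infimum is meaningful.

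The first major step is existence of a minimizer. Here I would argue in two stages: a concentration-compactness analysis adapted to the weighted volume and $v^{-1}$-moment terms shows that, modulo the conformal group acting on $\kC$, a minimizing sequence converges in the appropriate weighted Sobolev sense; standard ellipticity from Proposition~\ref{prop:mYk_elliptic} then promotes the weak limit to a smooth solution of the Euler equation~\eqref{eqn:mY_critical_curvature}--\eqref{eqn:mY_critical_integral} within the positive weighted elliptic $k$-cone. The second step is to classify such critical points. This is where the Obata strategy in Subsection~\ref{subsec:obata/we} enters: if $(g,v)\in\kC\cap\Gamma_k^+$ is critical, then Corollary~\ref{cor:obata/we/div} yields the identity
\[ \int_{\bR^n}\left\lp \hE_{k,\phi}^m,\, v\bigl(P_\phi^m-\cZ_\phi^m g\bigr)+\kappa g\right\rp d\nu = 0. \]
The goal is to use Corollary~\ref{cor:inner_product_maclaurin} together with the definition~\eqref{eqn:hEk} of $\hE_{k,\phi}^m$ to conclude that $P_\phi^m-\cZ_\phi^m g = -\kappa v^{-1}g$, i.e.\ that $(g,v)$ is weighted Einstein with scale $\kappa$. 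Once this is established, Proposition~\ref{prop:wKe_rigidity}(3) identifies $(g,v)$, up to homothety, with the standard $m$-weighted $n$-sphere minus a point, and direct substitution gives the sharp constant $C=\mY_k(g_0,v_0,m+n-2)$.

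The hard part will be the Obata step for $k\geq 2$. In the case $k=1$ the decomposition of $\hE_{1,\phi}^m$ reduces the integrand to a manifestly signed expression (as in~\cite[Proposition~9.7]{Case2013y}), but for $k\geq 2$ the correction terms in~\eqref{eqn:hEk}---namely the $\kappa v^{-1}\cU_{k-1,\phi}^m$ and the global mean-value correction---do not obviously combine with the principal term $\cE_{k,\phi}^m$ to produce an integrand with a sign against $v(P_\phi^m-\cZ_\phi^m g)+\kappa g$. My proposal is to group the principal term $\lp\cE_{k,\phi}^m,v(P_\phi^m-\cZ_\phi^m g)\rp$ (which has a sign by Corollary~\ref{cor:inner_product_maclaurin}) with $\kappa\tr\cE_{k,\phi}^m$, and separately to rewrite the $\cU_{k-1,\phi}^m$ contribution using~\eqref{eqn:cU_to_cE} as a combination of a Newton-type inner product on $(M^n,g,v,m-1,0)$ and a scalar correction, where again Corollary~\ref{cor:inner_product_maclaurin} applies at the level $m-1$. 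The global mean-value correction should then be absorbed via the critical point identity~\eqref{eqn:mY_critical_integral}, which relates the integral of $\csigma_{k,\phi}^m$ to that of $\kappa v^{-1}\cs_{k-1,\phi}^m$. Verifying that the resulting signed combination is nonpositive (respectively nonnegative) with equality only at weighted Einstein metric-measure structures, for each $k\geq 2$ and each admissible $m$, is the analytic crux and may well require a new weighted Maclaurin-type inequality refining Proposition~\ref{prop:weighted_maclaurin}. Finally, once the Obata classification is in hand, evaluating $\mY_k$ at the standard $m$-weighted $n$-sphere via Proposition~\ref{prop:we_sigma2}, Proposition~\ref{prop:we_integral_relation}, and~\eqref{eqn:cosr_facts} produces the explicit sharp constant, and the rigidity follows from Proposition~\ref{prop:wKe_rigidity}(3).
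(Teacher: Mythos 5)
You have not proven this statement, and neither does the paper: it is stated as a conjecture, with the paper recording that only the case $k=1$ is known, by Del Pino and Dolbeault~\cite{DelPinoDolbeault2002}. Your outline faithfully reproduces the program the paper itself sets up in Subsection~\ref{subsec:obata/we} --- reduce~\eqref{eqn:we_ksobolev} to minimizing $\mY_k$ via Lemma~\ref{lem:mY_scaling}, use ellipticity from Proposition~\ref{prop:mYk_elliptic}, pass to the divergence identity of Proposition~\ref{prop:obata/we/div} and the integral identity~\eqref{eqn:obata/we/div} of Corollary~\ref{cor:obata/we/div}, and classify via Proposition~\ref{prop:wKe_rigidity}(3) --- but both pillars of your argument are asserted rather than established. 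For existence, no concentration-compactness theory is available for these fully nonlinear weighted functionals; even in the quasi-Einstein setting the paper only says it ``expects'' attainment ``with a lot of work'' (cf.\ the discussion preceding Conjecture~\ref{conj:sobolev/qe}, citing~\cite{GuanWang2003b,ShengTrudingerWang2007}). Worse, the conjectured extremal has $v$ vanishing at the removed point, so the minimizer lives at the boundary of the class of smooth metric measure spaces as defined in the paper, and the integration by parts underlying Proposition~\ref{prop:obata/we/div} and Corollary~\ref{cor:obata/we/div} is only justified for closed manifolds with $v>0$; the paper flags exactly this as requiring ``additional integral estimates.''

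The deeper gap is the sign step for $k\geq 2$, which the paper explicitly identifies as open: ``it is not apparent how to deduce that $(g,v)$ is a weighted Einstein metric-measure structure'' from~\eqref{eqn:obata/we/div}, ``even in the standard conformal class of the $m$-weighted $n$-sphere,'' and the remark after Corollary~\ref{cor:obata/we/div} notes a sign is only checkable for $k=1$. Your proposed grouping does not close this. Applying Corollary~\ref{cor:inner_product_maclaurin} at level $m-1$ to the $\cU_{k-1,\phi}^m$ term via~\eqref{eqn:cU_to_cE} runs into a normalization mismatch: the trace correction in~\eqref{eqn:cUk} carries the coefficient $\frac{m+n-k}{m+n}$, not the level-$(m-1)$ coefficient $\frac{m+n-k}{m+n-1}$ that the Maclaurin-type inequality requires, leaving residual trace terms with no pointwise sign; similarly $\kappa\tr\cE_{k,\phi}^m$ has no pointwise sign, and the identity $\int\tr\hE_{k,\phi}^m\,d\nu=0$ is only proved for $k=2$ and merely ``suggested'' in general (Remark~\ref{rk:Y2_scaling}). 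There is also a slip in your target identity: with the scaled quantity $\cZ_\phi^m$, a weighted Einstein structure with scale $\kappa$ satisfies $P_\phi^m=\cZ_\phi^m g$ (equivalently $P_\phi^m-\frac{1}{m}Y_\phi^m g=+\kappa v^{-1}g$), not $P_\phi^m-\cZ_\phi^m g=-\kappa v^{-1}g$. You concede that a ``new weighted Maclaurin-type inequality'' may be required; that missing inequality, together with Conjecture~\ref{conj:lichnerowicz_conjecture} (needed even for infinitesimal stability), is precisely the unresolved content of the conjecture. What you have is a reasonable roadmap aligned with the paper's stated expectations, not a proof.
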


Note that Del Pino and Dolbeault~\cite{DelPinoDolbeault2002} have proven Conjecture~\ref{conj:sobolev/we} in the case $k=1$ (cf.\ \cite{Case2013y}).
\section{Critical points of the $\mY$-functional}
\label{sec:full}

In this section we compute the linearizations of the total weighted $\sigma_1$- and $\sigma_2$-curvatures with the goal of showing that weighted Einstein manifolds are among their critical points.  Specifically, in the case of scale zero, we show that quasi-Einstein manifolds are critical points of the restriction $\mF_k\colon\kM_1\to\bR$ of the $\mF_k$-functional to metric-measure structures of fixed volume when $k\in\{1,2\}$.  In the case when the scale $\kappa$ is positive, we show that weighted Einstein manifolds with $\mu=0$ and scale $\kappa$ are among the critical points of the $\mY_k$-functional $\mY_k\colon\kM\to\bR$ when $k\in\{1,2\}$.

In order to achieve our goal, we compute the linearizations of the $\mF_k$-functionals for $k\in\{0,1,2\}$.  This is enough to compute the linearizations of the $\mY_k$-functionals for $k\in\{0,1,2\}$.  Indeed, fix a scale $\kappa\in\bR$ and define functionals $\cmF_k\colon\kM\to\bR$ by
\[ \cmF_k(g,v) := \int_M \csigma_{k,\phi}^m\,d\nu, \]
where $\csigma_{k,\phi}^m$ is determined by $(g,v)\in\kM$ and the scale $\kappa$.  When $\kappa=0$, it holds that $\cmF_k=\mF_k$.  When $\kappa>0$, it holds that
\[ \mY_k(g,v) = \kappa^{-\frac{2mk(m+n-1)}{(m+n)(2m+n-2)}}\cmF_k(g,v)\left(\int_M v^{-1}\,d\nu\right)^{-\frac{2mk}{(m+n)(2m+n-2)}}\mF_0(g,v)^{-\frac{m+n-2k}{m+n}} . \]
Below we compute the linearizations of $\cmF_k$ and of $\int_M v^{-1}d\nu$ in terms of the linearizations of $\mF_j$, $j\in\{0,1,2\}$.

We compute the linearizations of the $\mF_k$-functionals by first computing the linearizations of the weighted $\sigma_k$-curvatures as functions of $\kM$.  While this level of generality is not necessary for our computations, we include it with the expectation that it will be useful for other purposes, such as computing the second variations of the weighted $\sigma_k$-curvature functionals or computing the linearizations of the weighted $\sigma_k$-curvature functionals for larger values of $k$.

\subsection{The first variation of $J_\phi^m$}
\label{subsec:full/1}

We begin by computing the first variation of the weighted scalar curvature.  As we illustrate below, this readily yields the first variation of integrals of powers of $J_\phi^m$.

\begin{lem}
 \label{lem:Rdot}
 Let $(M^n,g,v,m,\mu)$ be a smooth metric measure space.  The first variation $DJ_\phi^m\colon T_{(g,\psi)}\kM\to C^\infty(M)$ is given by
 \begin{align*}
  DJ_{\phi}^m[h,\psi] & = -\frac{m+n-2}{2(m+n-1)}\left[ \left\lp \tf_\phi P_\phi^m,h\right\rp - \delta_\phi^2h + \frac{1}{m+n}\Delta_\phi\tr h \right] \\
   & \quad + \frac{2}{m+n}J_\phi^m\psi + \frac{m+n-2}{m+n}\Delta_\phi\psi .
 \end{align*}
\end{lem}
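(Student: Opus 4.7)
The plan is to compute $DJ_\phi^m[h,\psi]$ by splitting $(h,\psi)\in T_{(g,v)}\kM$ into the ``weighted conformal'' piece $(0,\psi)$ and the ``transverse'' piece $(h,0)$, and handling each by complementary means; by linearity the full variation is the sum.

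For the conformal piece, the curve $\gamma(t)=\bigl(e^{-2t\psi/(m+n)}g,\,e^{-t\psi/(m+n)}v\bigr)$ has tangent $\bigl(\Psi_{(g,v)}^m\bigr)^{-1}(0,\psi)$ at $t=0$. Setting $f_t:=\tfrac{(m+n-2)t\psi}{m+n}$, this curve fits the framework of Lemma~\ref{lem:conf_change}. Differentiating the identity $e^{-2f_t/(m+n-2)}\hat{J}_\phi^m = J_\phi^m + \Delta_\phi f_t - \tfrac{1}{2}\lv\nabla f_t\rv^2$ at $t=0$ yields
\[
DJ_\phi^m[(0,\psi)] = \frac{2}{m+n}J_\phi^m\psi + \frac{m+n-2}{m+n}\Delta_\phi\psi,
\]
which matches the last two terms in the stated formula.

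For the transverse piece $(h,0)$, formula~\eqref{eqn:TkM} gives $\dot g = h-\tfrac{\tr h}{m+n}g$ and $\dot\phi = \tfrac{m}{2(m+n)}\tr h$, and a quick check confirms that $d\nu$ is preserved to first order. I would then differentiate
\[
R_\phi^m = R + 2\Delta\phi - \frac{m+1}{m}\lv\nabla\phi\rv^2 + m(m-1)\mu e^{2\phi/m}
\]
using the classical Lichnerowicz formula for $\dot R$ in terms of $\dot g$, together with the standard expressions for $\dot{(\Delta\phi)}$ and $\dot{(\lv\nabla\phi\rv^2)}$ in terms of $\dot g$ and $\dot\phi$. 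The Ricci contraction then combines with the Hessian-of-$\phi$ terms to produce $\langle\Ric_\phi^m,h\rangle$, and the unweighted divergences get converted to weighted ones via the identity $v^{-m}\nabla^i(v^m X_i) = -\delta_\phi X$. Using Lemma~\ref{lem:eval_Yphim} to convert freely between $J_\phi^m$, $\tr P_\phi^m$, and $Y_\phi^m$, the pure-trace remainders from the decomposition $\dot g = h - \tfrac{\tr h}{m+n}g$ and from $\dot\phi$ should organize themselves precisely into the combination $\tfrac{1}{m+n}\Delta_\phi\tr h$, while the coefficient of $h$ rearranges into $\langle\tf_\phi P_\phi^m,h\rangle$, giving
\[
DJ_\phi^m[(h,0)] = -\frac{m+n-2}{2(m+n-1)}\left[\lp\tf_\phi P_\phi^m,h\rp - \delta_\phi^2 h + \frac{1}{m+n}\Delta_\phi\tr h\right].
\]

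The main obstacle is the bookkeeping in the transverse direction: many individual terms---contractions of $h$ and $\nabla^2\phi$, $d\phi\otimes d\phi$, $\Ric$, and their covariant derivatives---must conspire to produce the clean expression above. The delicate consistency check is the emergence of the weighted trace-free operator $\tf_\phi$ rather than the ordinary trace-free part, together with the replacement of the naive $\tfrac{1}{n}$ coefficient (which would arise in purely Riemannian geometry) by $\tfrac{1}{m+n}$; this is the signature of the computation having been carried out correctly in the $(m+n)$-dimensional weighted sense, and in practice will require carefully commuting all $\nabla$'s past the multiplication-by-$v^m$ in order to assemble the correct weighted operators.
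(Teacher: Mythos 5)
Your proposal is correct, and it takes a genuinely different route from the paper. The paper does not split the variation: it quotes from \cite[(4.8)]{Case2010a} the first variation of the weighted scalar curvature for an arbitrary variation $(\dot g,\dot\phi)$,
\[ \left.\frac{\partial}{\partial t}\right|_{t=0} R_\phi^m = -\lp\Ric_\phi^m,\dot g\rp + \delta_\phi^2\dot g + 2\Delta_\phi\Bigl(\dot\phi - \tfrac{1}{2}\tr\dot g\Bigr) - \tfrac{2}{m}\lp\nabla\phi,\nabla\dot\phi\rp + 2(m-1)\mu v^{-2}\dot\phi , \]
and then converts to the $(h,\psi)$ coordinates using \eqref{eqn:TkM} and Lemma~\ref{lem:eval_Yphim}. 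Your decomposition buys a cleaner structure: the conformal direction is immediate from Lemma~\ref{lem:conf_change} (your computation there is complete and correct), while in the transverse direction the normalization \eqref{eqn:TkM_geometric_psi} forces $d\nu$ to be preserved to first order, i.e.\ $\dot\phi=\tfrac{1}{2}\tr\dot g=\tfrac{m}{2(m+n)}\tr h$, so the second-order term $2\Delta_\phi\bigl(\dot\phi-\tfrac{1}{2}\tr\dot g\bigr)$ drops out identically---this is the structural reason your ``bookkeeping'' closes. And it does close: with $\dot g=h-\tfrac{1}{m+n}(\tr h)g$ one has $\delta_\phi^2\bigl((\tr h)g\bigr)=\Delta_\phi\tr h-\lp\nabla\tr h,\nabla\phi\rp-(\tr h)\Delta_\phi\phi$, the gradient terms cancel against $-\tfrac{2}{m}\lp\nabla\phi,\nabla\dot\phi\rp$, and the remaining pure-trace terms combine through the identity $R_\phi^m=\tr\Ric_\phi^m+\Delta_\phi\phi+m(m-1)\mu v^{-2}$ from the proof of Lemma~\ref{lem:eval_Yphim} (so the variation $2(m-1)\mu v^{-2}\dot\phi$ of the $\mu$-term, which your sketch passes over quickly, is essential) to give $\tfrac{1}{m+n}R_\phi^m\tr h$; together with $-\lp\Ric_\phi^m,h\rp$ and $\tfrac{1}{m+n-2}+\tfrac{1}{m+n}=\tfrac{2(m+n-1)}{(m+n-2)(m+n)}$ this is exactly $-\lp\tf_\phi P_\phi^m,h\rp$, confirming your expected grouping. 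One sign slip to fix: with the paper's conventions $\delta_\phi$ is the \emph{positive} weighted divergence, $v^{-m}\nabla^i(v^mX_i)=+\delta_\phi X$ (it is the negative of the formal adjoint of $\nabla$ with respect to $d\nu$, cf.\ the proof of Corollary~\ref{cor:R2dot}), so your conversion identity as written would flip the signs of all divergence terms.
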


\begin{proof}
 Let $\gamma\colon\bR\to\kM$ be a smooth curve with $\gamma(0)=(g,\phi)$ and denote $\gamma^\prime(0)=(\dot g,-\frac{1}{m}v\dot\phi)$.  By~\cite[(4.8)]{Case2010a},
 \[ \left.\frac{\partial R_\phi^m}{\partial t}\right|_{t=0} = -\left\lp\Ric_\phi^m,\dot g\right\rp + \delta_\phi^2\dot g + 2\Delta_\phi\left(\dot\phi-\frac{1}{2}\tr_g \dot g\right) - \frac{2}{m}\lp\nabla\phi,\nabla\dot\phi\rp + 2(m-1)\mu v^{-2}\dot\phi . \]
 The result then follows by using Lemma~\ref{lem:eval_Yphim} and~\eqref{eqn:TkM} to write this in terms of $h$, $\psi$, the weighted Schouten tensor, and its trace.
\end{proof}

An immediate consequence of~\eqref{eqn:TkM_geometric_psi} and Lemma~\ref{lem:Rdot} is the first variation of the total weighted scalar curvature functional.

\begin{cor}
 \label{cor:R1dot}
 Let $(M^n,g,v,m,\mu)$ be a closed smooth metric measure space.  Then
 \[ D\left(\int_M J_\phi^m\,d\nu \right)[h,\psi] = -\int_M \left[\frac{m+n-2}{2(m+n-1)} \left\lp \tf_\phi P_\phi^m, h\right\rp + \frac{m+n-2}{m+n} J_\phi^m\psi\right]\,d\nu . \]
\end{cor}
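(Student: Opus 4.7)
The plan is to derive Corollary~\ref{cor:R1dot} directly from Lemma~\ref{lem:Rdot} by computing the first variation of the product $J_\phi^m\,d\nu$ under a tangent vector $(h,\psi)\in T_{(g,v)}\kM$. By the Leibniz rule,
\[ D\left(\int_M J_\phi^m\,d\nu\right)[h,\psi] = \int_M DJ_\phi^m[h,\psi]\,d\nu + \int_M J_\phi^m\cdot D(d\nu)[h,\psi]. \]
The second factor is handled immediately by the definition~\eqref{eqn:TkM_geometric_psi}, which is set up precisely so that $D(d\nu)[h,\psi]=-\psi\,d\nu$. The first factor is given explicitly by Lemma~\ref{lem:Rdot}.

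Next, I would dispose of the derivative terms in Lemma~\ref{lem:Rdot} by integration by parts against the weighted measure. On a closed manifold, the identity $\int_M \Delta_\phi f\,d\nu=0$ for $f\in C^\infty(M)$ and, more generally, $\int_M \delta_\phi \omega\,d\nu = 0$ for any tensor $\omega$, follows immediately from the fact that $\nabla^\ast$ (and hence $\delta_\phi=-\nabla^\ast$) is the formal $L^2(d\nu)$-adjoint of $\nabla$ paired against the constant function $1$. Applying this observation to $\Delta_\phi\psi$, $\Delta_\phi\tr h$, and $\delta_\phi^2 h$ kills the three bracketed derivative terms in the expression for $DJ_\phi^m[h,\psi]$.

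Combining what remains with the contribution $-J_\phi^m\psi\,d\nu$ from varying the measure gives, as coefficient of $\psi$,
\[ \frac{2}{m+n}J_\phi^m - J_\phi^m = -\frac{m+n-2}{m+n}J_\phi^m, \]
while the coefficient of $h$ is unchanged at $-\frac{m+n-2}{2(m+n-1)}\tf_\phi P_\phi^m$. This yields the stated formula.

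There is no real obstacle here: the only subtlety is bookkeeping the sign conventions in~\eqref{eqn:TkM_geometric_psi} and verifying that the vanishing of boundary terms in the integration by parts really does eliminate the $\delta_\phi^2 h$, $\Delta_\phi\tr h$, and $\Delta_\phi\psi$ contributions so cleanly. Both are essentially automatic once the definitions of $\Psi_{(g,v)}^m$ from Section~\ref{sec:moduli} and of $\Delta_\phi=-\nabla^\ast\nabla$ (with $\nabla^\ast$ the adjoint relative to $d\nu$) from Section~\ref{sec:smms} are in hand.
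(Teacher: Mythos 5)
Your proposal is correct and is precisely the argument the paper intends: Corollary~\ref{cor:R1dot} is stated as an immediate consequence of~\eqref{eqn:TkM_geometric_psi} and Lemma~\ref{lem:Rdot}, with the divergence terms killed by the same weighted integration by parts that the paper spells out in the proof of Corollary~\ref{cor:R2dot} (take $u=1$ in $\int_M u\,\delta_\phi^2h\,d\nu=\int_M\lp\nabla^2u,h\rp\,d\nu$, and use $\int_M\Delta_\phi f\,d\nu=0$). Your bookkeeping of the $\psi$-coefficient, $\frac{2}{m+n}J_\phi^m-J_\phi^m=-\frac{m+n-2}{m+n}J_\phi^m$, matches the stated formula exactly.
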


Lemma~\ref{lem:Rdot} also yields the first variation of $\int\bigl(J_\phi^m\bigr)^2$.

\begin{cor}
 \label{cor:R2dot}
 Let $(M^n,g,v,m,\mu)$ be a closed smooth metric measure space.  Then
 \begin{multline*}
  D\left( \int_M \left(J_\phi^m\right)^2\,d\nu \right)[h,\psi] = \frac{m+n-2}{m+n-1}\int_M \left\lp \tf_\phi\left(\nabla^2J_\phi^m - J_\phi^mP_\phi^m\right),h\right\rp\,d\nu \\ + \int_M \left(\frac{2(m+n-2)}{m+n}\Delta_\phi J_\phi^m - \frac{m+n-4}{m+n}\left(J_\phi^m\right)^2\right)\psi\,d\nu .
 \end{multline*}
\end{cor}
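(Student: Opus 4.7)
The plan is to derive this as a direct consequence of Lemma~\ref{lem:Rdot}. By the product rule, together with the identity $D(d\nu)/d\nu = -\psi$ coming from~\eqref{eqn:TkM_geometric_psi}, we have
\[
D\left(\int_M (J_\phi^m)^2 d\nu\right)[h,\psi] = \int_M \Bigl( 2J_\phi^m\, DJ_\phi^m[h,\psi] - (J_\phi^m)^2\,\psi\Bigr)\,d\nu .
\]
Substituting the formula from Lemma~\ref{lem:Rdot} yields terms involving $\lp \tf_\phi P_\phi^m, h\rp$, $\delta_\phi^2 h$, $\Delta_\phi \tr h$, $(J_\phi^m)^2\psi$, and $\Delta_\phi \psi$, each multiplied by $J_\phi^m$ or $(J_\phi^m)^2$.

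The next step is to integrate by parts to move derivatives off $h$ and $\psi$. Since $\Delta_\phi$ and $\delta_\phi$ are formally self-adjoint and adjoint, respectively, with respect to $d\nu$, this converts $\int J_\phi^m \delta_\phi^2 h\,d\nu$ into $\int \lp \nabla^2 J_\phi^m, h\rp d\nu$, converts $\int J_\phi^m \Delta_\phi \tr h\,d\nu$ into $\int (\Delta_\phi J_\phi^m) \lp g, h\rp\, d\nu$, and converts $\int J_\phi^m \Delta_\phi \psi\,d\nu$ into $\int (\Delta_\phi J_\phi^m) \psi\,d\nu$. Collecting the $\psi$-terms gives the coefficient
\[
\frac{4}{m+n}(J_\phi^m)^2 + \frac{2(m+n-2)}{m+n}\Delta_\phi J_\phi^m - (J_\phi^m)^2 = -\frac{m+n-4}{m+n}(J_\phi^m)^2 + \frac{2(m+n-2)}{m+n}\Delta_\phi J_\phi^m,
\]
as required.

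For the $h$-terms, the combined coefficient is
\[
\frac{m+n-2}{m+n-1}\left[ -J_\phi^m P_\phi^m + \nabla^2 J_\phi^m - \frac{1}{m+n}\bigl(\Delta_\phi J_\phi^m - (J_\phi^m)^2\bigr)g\right].
\]
The final identification uses the definitions $\tf_\phi P_\phi^m = P_\phi^m - \frac{1}{m+n}J_\phi^m\,g$ (since $N_{1,\phi}^m = J_\phi^m$) and $\tf_\phi \nabla^2 J_\phi^m = \nabla^2 J_\phi^m - \frac{1}{m+n}\Delta_\phi J_\phi^m\,g$ together with $C^\infty(M)$-linearity of $\tf_\phi$, so that the bracketed expression equals $\tf_\phi\bigl(\nabla^2 J_\phi^m - J_\phi^m P_\phi^m\bigr)$. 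There is no real obstacle; the only subtlety is keeping track of the combinatorial coefficient $\frac{m+n-2}{m+n-1}$ and verifying that the pure-trace terms produced by integration by parts combine correctly with those inherent in the weighted trace-free projection, which is automatic once the definitions are unpacked.
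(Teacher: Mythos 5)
Your proof is correct and follows the same route as the paper: differentiate under the integral using $D(d\nu)=-\psi\,d\nu$, substitute Lemma~\ref{lem:Rdot}, and integrate by parts via the self-adjointness of $\Delta_\phi$ and the identity $\int_M u\,\delta_\phi^2h\,d\nu=\int_M\lp\nabla^2u,h\rp\,d\nu$. The coefficient bookkeeping and the identification of the $h$-terms with $\tf_\phi\left(\nabla^2J_\phi^m-J_\phi^mP_\phi^m\right)$ (using $N_{1,\phi}^m=J_\phi^m$) all check out.
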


\begin{proof}
 Recall that $\Delta_\phi$ is formally self-adjoint with respect to $d\nu$ and that $\delta_\phi$ is the negative of the formal adjoint with respect to $d\nu$ of the Levi-Civita connection.  In particular,
 \[ \int_M u\delta_\phi^2h\,d\nu = \int_M \left\lp \nabla^2u,h\right\rp\,d\nu \]
 for all $u\in C^\infty(M)$ and all $h\in\Gamma(S^2T^\ast M)$.  The conclusion follows readily from Lemma~\ref{lem:Rdot}.
\end{proof}

\subsection{The first variation of $N_{2,\phi}^m$}
\label{subsec:full/2}

The first step in computing the first variation of $N_{2,\phi}^m$ is to compute the first variation $DP_\phi^m\colon T\kM\to\Gamma\left(S^2T^\ast M\right)$ of the weighted Schouten tensor.  To that end, we require some additional notation.

Given sections $A\in\Gamma\left(S^2\Lambda^2T^\ast M\right)$ and $T\in\Gamma\left(S^2T^\ast M\right)$, define $A\cdot T\in\Gamma\left(S^2T^\ast M\right)$ by
\[ (A\cdot T)(x,y) := \lp A(\cdot,x,\cdot,y), T\rp \]
for all $x,y\in T_pM$ and all $p\in M$.  Denote by $T\hash$ the extension of the natural action of $g^{-1}T\in\Gamma\left(T^\ast M\otimes TM\right)$ on vector fields to a derivation on tensor fields.  In particular, given $S\in\Gamma\left(S^2T^\ast M\right)$, the section $T\hash S\in\Gamma\left(S^2T^\ast M\right)$ is given by
\[ \left(T\hash S\right)(x,y) := -S\left(T(x),y\right) - S\left(x,T(y)\right) . \]
Denote by $dT\in\Gamma\left(\Lambda^2T^\ast M\otimes T^\ast M\right)$ the twisted exterior derivative
\[ dT(x,y,z) := \nabla_x T(y,z) - \nabla_y T(x,z) \]
and denote by $\delta_\phi dT\in\Gamma\left(T^\ast M\otimes T^\ast M\right)$ the composition with the weighted divergence
\[ \left(\delta_\phi dT\right)(x,y) := \sum_{i=1}^n \nabla_{e_i}dT(e_i,x,y) - dT(\nabla\phi,x,y) , \]
where $\{e_i\}_{i=1}^n$ is an orthonormal basis for $T_pM$.

\begin{lem}
 \label{lem:Pdot}
 Let $(M^n,g,v,m,\mu)$ be a smooth metric measure space.  Then
 \begin{align*}
  DP_\phi^m[h,\psi] & = -\frac{1}{2}\delta_\phi dh + \frac{1}{4}L_{\delta_\phi h}g - \frac{1}{2(m+n-1)}\left(\delta_\phi^2h-\Delta_\phi\tr h\right)g \\
   & \quad - \frac{1}{m+n}\nabla^2\tr h - \frac{1}{2}A_\phi^m\cdot h - \frac{1}{2(m+n-2)}J_\phi^m\,h \\
   & \quad + \frac{1}{2(m+n-1)(m+n-2)}\lp T_{1,\phi}^m,h\rp\,g - \frac{m+n}{4(m+n-2)}P_\phi^m\hash h \\
   & \quad - \frac{1}{2(m+n-2)}(\tr h)P_\phi^m - \frac{1}{4m}\left(d\phi\otimes d\phi\right)\hash h + \frac{m+n-2}{m+n}\nabla^2\psi .
 \end{align*}
\end{lem}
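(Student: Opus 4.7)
The plan is to exploit the decomposition
\[ P_\phi^m = \Ric + \nabla^2\phi - \frac{1}{m}d\phi\otimes d\phi - \frac{1}{m+n-2}J_\phi^m\,g \]
and linearize each summand independently. Using the identification~\eqref{eqn:TkM} I write $\dot g = h - \frac{2}{m+n}\chi\,g$ and $\dot\phi = \frac{m}{m+n}\chi$ where $\chi := \psi + \tfrac{1}{2}\tr_g h$, so that every intermediate quantity is expressed in terms of $h$, $\chi$, and $\nabla^2\psi$.

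First I would apply the classical Riemannian formulas for the first variations of the Christoffel symbols and the Ricci tensor, obtaining $D\Ric$ in terms of $\Delta\dot g$, divergences of $\dot g$, the Hessian of $\tr\dot g$, and a curvature contraction $\Rm\cdot\dot g$. Second, I would compute $D(\nabla^2\phi)[h,\psi]$ and $D(d\phi\otimes d\phi)[h,\psi]$ directly, using $\dot\Gamma$ for the former and differentiation of the product for the latter. Third, Lemma~\ref{lem:Rdot} delivers $DJ_\phi^m$, which combined with $\dot g$ yields $D(J_\phi^m\,g)$.

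The crucial regrouping step converts each unweighted $\delta$ and $\Delta$ appearing above into its weighted counterpart $\delta_\phi$ and $\Delta_\phi$; the resulting $\nabla\phi$-correction terms absorb exactly the $\dot\Gamma\cdot\nabla\phi$ contribution of $D(\nabla^2\phi)$ and the cross-terms from $D(d\phi\otimes d\phi)$, packaging everything as $\delta_\phi dh$ and $L_{\delta_\phi h}g$. The normalization $-\tfrac{1}{m}$ in the definition of $\Ric_\phi^m$ is precisely what makes this alignment succeed. The Riemann-tensor piece $\Rm\cdot\dot g$ is then rewritten via $A_\phi^m = \Rm - \frac{1}{m+n-2}P_\phi^m\wedge g$, producing the $-\tfrac12 A_\phi^m\cdot h$ term together with lower-order pieces proportional to $J_\phi^m\,h$, $(\tr h)P_\phi^m$, and $P_\phi^m\hash h$ via the algebra of the Kulkarni--Nomizu product.

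The main obstacle will be the coefficient bookkeeping. The $\chi$-parts of $\dot g$ and $\dot\phi$ generate many contributions whose cancellation (leaving only the clean $\frac{m+n-2}{m+n}\nabla^2\psi$ term) must encode the weighted conformal transformation law from Lemma~\ref{lem:conf_change}; the $h=0$ case provides a useful internal consistency check, since Lemma~\ref{lem:conf_change} applied with $f = \frac{m+n-2}{m+n}\psi$ predicts exactly this leading term. In particular, the coefficient $\frac{1}{2(m+n-1)(m+n-2)}$ of the $\lp T_{1,\phi}^m,h\rp\,g$ term emerges only after balancing the $g$-trace contribution from $-\frac{1}{m+n-2}DJ_\phi^m\,g$ against the trace piece of the Riemann-tensor rewrite and the $\nabla^2\tr h$ term from $\nabla^2\tr\dot g$.
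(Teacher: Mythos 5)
Your proposal is correct and follows essentially the same route as the paper: both start from the classical variational formulas for the Ricci tensor and the Hessian (Besse), convert to $(h,\psi)$ via the identification~\eqref{eqn:TkM} (your explicit $\dot g = h - \frac{2}{m+n}\chi\,g$, $\dot\phi = \frac{m}{m+n}\chi$ is exactly right), use Lemma~\ref{lem:Rdot} to handle the $-\frac{1}{m+n-2}J_\phi^m\,g$ part, and rewrite the curvature contraction via $A_\phi^m$; your $h=0$ consistency check against Lemma~\ref{lem:conf_change} is also valid. The only difference is presentational: the paper accomplishes your ``crucial regrouping step'' in one stroke by citing a weighted Weitzenb\"ock formula, $\Delta_\phi T = \delta_\phi dT + \tfrac{1}{2}L_{\delta_\phi T}g - \Rm\cdot T - \tfrac{1}{2}\left(\Ric_\phi^m+\tfrac{1}{m}d\phi\otimes d\phi\right)\hash T$, so be aware that passing from rough-Laplacian terms to $\delta_\phi dh$ requires these second-derivative commutations (not merely absorbing first-order $\nabla\phi$-corrections), with the resulting curvature terms feeding into your $A_\phi^m$ bookkeeping.
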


\begin{proof}
 Let $\gamma\colon\bR\to\kM$ be a smooth curve such that $\gamma(0)=(g,\phi)$ and denote $\gamma^\prime(0)=(\dot g,-\frac{1}{m}v\dot\phi)$.  It follows readily from well-known variational formulae for the Ricci tensor and the Hessian (cf.\ \cite[Section~1.K]{Besse}) that
 \begin{multline*}
  \left.\frac{\partial}{\partial t}\right|_{t=0}\Ric_\phi^m = -\frac{1}{2}\Delta_\phi\dot g + \frac{1}{2}L_{\delta_\phi\dot g}g - \frac{1}{2}\left(\Ric_\phi^m+\frac{1}{m}d\phi\otimes d\phi\right)\hash\dot g - \Rm\cdot\dot g \\ - \frac{2}{m}d\phi\odot d\dot\phi + \nabla^2\left(\dot\phi-\frac{1}{2}\tr\dot g\right) .
 \end{multline*}
 It follows from~\eqref{eqn:TkM} that
 \begin{multline*}
  D\Ric_\phi^m[h,\psi] = -\frac{1}{2}\Delta_\phi h + \frac{1}{2}L_{\delta_\phi h}g - \frac{1}{m+n}\nabla^2\tr h + \frac{1}{2(m+n)}\Delta_\phi(\tr h)g \\ - \frac{1}{2}\left(\Ric_\phi^m+\frac{1}{m}d\phi\otimes d\phi\right)\hash h - \Rm\cdot h + \frac{m+n-2}{m+n}\nabla^2\psi + \frac{1}{m+n}\Delta_\phi\psi\,g .
 \end{multline*}
 The final conclusion follows from Lemma~\ref{lem:Rdot} and the Weitzenb\"ock formula
 \[ \Delta_\phi T = \delta_\phi dT + \frac{1}{2}L_{\delta_\phi T}g - \Rm\cdot T - \frac{1}{2}\left(\Ric_\phi^m+\frac{1}{m}d\phi\otimes d\phi\right)\hash T \]
 which holds for all $T\in\Gamma\left(S^2T^\ast M\right)$ (cf.\ \cite[Lemma~5.6]{Case2014sd}).
\end{proof}

This allows us to compute the linearization $DY_\phi^m\colon T\kM\to C^\infty(M)$.

\begin{cor}
 \label{cor:Ydot}
 Let $(M^n,g,v,m,\mu)$ be a smooth metric measure space.  Then
 \begin{align*}
  DY_\phi^m[h,\psi] & = -\frac{m}{2(m+n-1)}\left(\delta_\phi^2h-\Delta_\phi h\right) - \frac{1}{2}\delta_\phi\left(h(\nabla\phi)\right) - \frac{1}{2}\lp\delta_\phi h,\nabla\phi\rp \\
   & \quad + \frac{1}{m+n}\lp\nabla\phi,\nabla\tr h\rp + \frac{1}{2}\lp\tr A_\phi^m,h\rp - \frac{1}{2m}h(\nabla\phi,\nabla\phi) \\
   & \quad + \frac{m}{2(m+n-1)(m+n-2)}\lp T_{1,\phi}^m,h\rp + \frac{m+n-4}{2(m+n)(m+n-2)}Y_\phi^m\tr h \\
   & \quad + \frac{2}{m+n}\psi Y_\phi^m - \frac{m+n-2}{m+n}\lp\nabla\phi,\nabla\psi\rp .
 \end{align*}
\end{cor}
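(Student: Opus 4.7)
The plan is to differentiate the defining identity $Y_\phi^m = J_\phi^m - \tr_g P_\phi^m$ using Lemma~\ref{lem:Rdot} for the first term and the trace of Lemma~\ref{lem:Pdot} for the second; the only subtlety is that $\tr_g$ itself depends on $g$.

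First, I would use the identification \eqref{eqn:TkM} to record that the geometric variation of the metric along a curve representing $(h,\psi)$ is $\dot g = h - \frac{2}{m+n}(\psi+\frac{1}{2}\tr_g h)\,g$. From this, the product rule applied to the raised index gives
\[
D(\tr_g P_\phi^m)[h,\psi] = \tr_g DP_\phi^m[h,\psi] - \lp P_\phi^m, h\rp + \frac{2}{m+n}\left(\psi + \tfrac{1}{2}\tr_g h\right)\tr_g P_\phi^m.
\]
Rewriting $\tr_g P_\phi^m = J_\phi^m - Y_\phi^m$ via Lemma~\ref{lem:eval_Yphim} already extracts the $Y_\phi^m\,\psi$ and $Y_\phi^m\,\tr h$ contributions that appear in the target formula.

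Second, I would take the trace of Lemma~\ref{lem:Pdot} term by term. The differential traces are standard: $\tr_g \delta_\phi dh = \delta_\phi^2 h - \Delta_\phi \tr h$ by a Bianchi-type commutation, $\tr_g L_{\delta_\phi h} g = 2\delta_\phi^2 h + 2\lp\delta_\phi h, \nabla\phi\rp$ from $\tr L_X g = 2\,\divsymb X$ combined with the weighted divergence convention, and $\tr_g \nabla^2 \psi = \Delta\psi = \Delta_\phi\psi + \lp\nabla\phi,\nabla\psi\rp$. The algebraic traces are $\tr_g(P_\phi^m \hash h) = -2\lp P_\phi^m, h\rp$, $\tr_g((d\phi\otimes d\phi)\hash h) = -2 h(\nabla\phi,\nabla\phi)$, and $\tr_g(A_\phi^m \cdot h) = \lp \tr A_\phi^m, h\rp$.

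Third, I would subtract this trace from $DJ_\phi^m[h,\psi]$ given by Lemma~\ref{lem:Rdot}. Many of the $\delta_\phi^2 h$, $\Delta_\phi \tr h$, and Hessian contributions partially cancel, and the $\lp \tf_\phi P_\phi^m, h\rp$ term in $DJ_\phi^m$ combines with the $(\tr h)P_\phi^m$, $P_\phi^m\hash h$, and $\lp P_\phi^m, h\rp$ contributions arising above to yield the stated $\lp T_{1,\phi}^m, h\rp$ coefficient together with the appropriate multiple of $J_\phi^m\, \tr h$. Finally one applies Lemma~\ref{lem:eval_Yphim} again to convert residual $J_\phi^m$-factors into the $Y_\phi^m\,\tr h$ contribution advertised in the statement.

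The principal obstacle is bookkeeping. Lemma~\ref{lem:Pdot} contains ten tensorial terms and Lemma~\ref{lem:Rdot} contains five, so after tracing and subtracting one must reconcile several competing fractions with denominators $m+n$, $m+n-1$, and $m+n-2$. The delicate step is verifying that the "curvature trace" fragments consolidate into precisely $\frac{1}{2}\lp \tr A_\phi^m, h\rp + \frac{m}{2(m+n-1)(m+n-2)}\lp T_{1,\phi}^m, h\rp$ rather than leaving residual pieces that still depend on the full $A_\phi^m$; this is forced by the identity $\tr A_\phi^m = \frac{m}{m+n-2}P_\phi^m - \nabla^2\phi + \frac{1}{m}d\phi\otimes d\phi + \frac{1}{m+n-2}Y_\phi^m g$ from Lemma~\ref{lem:div_and_tr}, and handling this cleanly is what makes the remaining coefficient collapse transparent.
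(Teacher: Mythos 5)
Your plan is exactly the paper's proof: the paper proves the corollary by writing $DY_\phi^m[h,\psi] = DJ_\phi^m[h,\psi] - \tr DP_\phi^m[h,\psi] + \lp P_\phi^m, h - \frac{2}{m+n}(\psi+\frac{1}{2}\tr h)g\rp$ — which is precisely your first display, after moving $\tr_g P_\phi^m$ to the other side — and then traces Lemma~\ref{lem:Pdot} term by term against Lemma~\ref{lem:Rdot}.  Your algebraic traces are correct: $\tr_g(L_{\delta_\phi h}g)=2\delta_\phi^2h+2\lp\delta_\phi h,\nabla\phi\rp$, $\tr_g\nabla^2\psi=\Delta_\phi\psi+\lp\nabla\phi,\nabla\psi\rp$, $\tr_g(P_\phi^m\hash h)=-2\lp P_\phi^m,h\rp$, $\tr_g((d\phi\otimes d\phi)\hash h)=-2h(\nabla\phi,\nabla\phi)$, and $\tr_g(A_\phi^m\cdot h)=\lp\tr A_\phi^m,h\rp$.

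There is, however, a genuine error in the one trace you call ``standard'': the identity $\tr_g\delta_\phi dh = \delta_\phi^2h - \Delta_\phi\tr h$ is false.  No Bianchi-type commutation enters at all (no curvature is generated by this contraction); computing directly from $g^{bc}\,dh_{abc} = \nabla_a\tr h - \nabla^b h_{ab}$ and $\nabla^b h_{ab} = (\delta_\phi h)_a + h(\nabla\phi)_a$, one finds
\[
\tr_g\left(\delta_\phi dh\right) = \Delta_\phi\tr h - \delta_\phi^2 h - \delta_\phi\bigl(h(\nabla\phi)\bigr) = \Delta_\phi\tr h - \delta_\phi^2 h - \lp\nabla^2\phi,h\rp - \lp\delta_\phi h,\nabla\phi\rp .
\]
Your version has the opposite sign on the leading pair and omits the $\delta_\phi(h(\nabla\phi))$ contribution.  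This is not absorbed elsewhere: with the correct identity, the $\delta_\phi^2h$-coefficients assemble to $-\frac{m}{2(m+n-1)}(\delta_\phi^2h-\Delta_\phi\tr h)$ as stated (this also confirms that ``$\Delta_\phi h$'' in the corollary means $\Delta_\phi\tr h$), and the term $-\frac{1}{2}\delta_\phi(h(\nabla\phi))$ in the corollary comes precisely from this trace; with your version, that term never appears and the $\delta_\phi^2h$-coefficient comes out as $\frac{m+2n-2}{2(m+n-1)}$, so the bookkeeping cannot close.  Two smaller corrections: $\tr_g P_\phi^m = J_\phi^m - Y_\phi^m$ is the \emph{definition} of $Y_\phi^m$, not Lemma~\ref{lem:eval_Yphim}; and no consolidation via the formula for $\tr A_\phi^m$ in Lemma~\ref{lem:div_and_tr} is needed or wanted — the summand $\frac{1}{2}\lp\tr A_\phi^m,h\rp$ arises directly from tracing the single term $-\frac{1}{2}A_\phi^m\cdot h$ in Lemma~\ref{lem:Pdot} and is left unexpanded in the final formula, while $-\frac{1}{2m}h(\nabla\phi,\nabla\phi)$ comes directly from $-\frac{1}{4m}(d\phi\otimes d\phi)\hash h$.
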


\begin{proof}
 It follows from~\eqref{eqn:TkM} and the definition of $Y_\phi^m$ that
 \[ DY_\phi^m[h,\psi] = DJ_\phi^m[h,\psi] - \tr DP_\phi^m[h,\psi] + \left\lp P_\phi^m, h - \frac{2}{m+n}\left(\psi+\frac{1}{2}\tr h\right)g \right\rp . \]
 The final conclusion follows from Lemma~\ref{lem:Rdot} and Lemma~\ref{lem:Pdot}.
\end{proof}

Combining Lemma~\ref{lem:Pdot} and Corollary~\ref{cor:Ydot} yields a formula for the first variation $DN_{2,\phi}^m\colon T\kM\to C^\infty(M)$.  To that end, recall that given $A\in\Gamma\left(\Lambda^2T^\ast M\otimes T^\ast M\right)$ and $T\in\Gamma\left(S^2T^\ast M\right)$, we denote by $A\cdot T\in\Gamma\left(T^\ast M\right)$ the contraction
\[ \left(A\cdot T\right)(x) := \sum_{i=1}^n A\left(e_i,x,T(e_i)\right) . \]

\begin{lem}
 \label{lem:P2dot}
 Let $(M^n,g,v,m,\mu)$ be a smooth metric measure space.  Then
 \[ DN_{2,\phi}^m[h,\psi] = -\left\lp \Psi_{2,\phi}^m, h\right\rp + \Upsilon_{2,\phi}^m\psi + \delta_\phi
 \left(\Xi_{2,\phi}^m[h,\psi]\right) , \]
 where
 \begin{align*}
  \Psi_{2,\phi}^m & = \tf_\phi\Bigl(B_\phi^m + \frac{m+n-4}{m+n-2}\left(P_\phi^m\right)^2 - \frac{m+n-2}{m+n-1}\nabla^2J_\phi^m \\
   & \qquad + \frac{m+n}{(m+n-1)(m+n-2)}J_\phi^mP_\phi^m\Bigr), \\
  \Upsilon_{2,\phi}^m & = \frac{2(m+n-2)}{m+n}\Delta_\phi J_\phi^m + \frac{4}{m+n}N_{2,\phi}^m, \\
  \Xi_{2,\phi}^m[h,\psi] & = dh\cdot P_\phi^m - dP_\phi^m\cdot h + P_\phi^m\left(\delta_\phi h\right) - \frac{2}{m+n}P_\phi^m(\nabla\tr h) - \frac{1}{m}Y_\phi^mh(\nabla\phi) \\
   & \quad - \frac{m+n-2}{m+n-1}h\left(\nabla J_\phi^m\right) + \frac{m+n-2}{(m+n)(m+n-1)}(\tr h)dJ_\phi^m \\
   & \quad + \frac{1}{m+n-1}\left(J_\phi^m(d\tr h-\delta_\phi h)\right) + \frac{2(m+n-2)}{m+n}\left(P_\phi^m(\nabla\psi) - \psi\,dJ_\phi^m\right) .
 \end{align*}
\end{lem}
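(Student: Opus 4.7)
The plan is to compute $DN_{2,\phi}^m[h,\psi]$ by differentiating the defining formula
\[ N_{2,\phi}^m = \lp P_\phi^m, P_\phi^m\rp + m\bigl(Z_\phi^m\bigr)^2 \]
term by term. Since the pointwise inner product on $(0,2)$-tensors involves two inverse-metric factors, the Leibniz rule gives
\[ DN_{2,\phi}^m[h,\psi] = 2\lp P_\phi^m, DP_\phi^m[h,\psi]\rp - 2\lp (P_\phi^m)^2, h\rp + \frac{2}{m}Y_\phi^m\,DY_\phi^m[h,\psi]. \]
Substituting the explicit formulas for $DP_\phi^m$ and $DY_\phi^m$ supplied by Lemma~\ref{lem:Pdot} and Corollary~\ref{cor:Ydot} yields a linear combination of second-order operators applied to $(h,\psi)$ paired with $P_\phi^m$ or $Y_\phi^m$, first-order operators paired with weighted curvatures, and zeroth-order terms.

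I would then reorganize the expression as a weighted divergence plus an undifferentiated remainder. Each derivative falling on $h$ or $\psi$ is transferred onto the accompanying curvature via the Leibniz rule; for instance $\lp P_\phi^m,\delta_\phi dh\rp$ becomes $\lp\delta_\phi dP_\phi^m, h\rp$ plus a divergence, and analogous rearrangements handle the $L_{\delta_\phi h}g$, $\nabla^2\tr h$, $\Delta_\phi\tr h$, $\delta_\phi^2 h$, $\nabla^2\psi$, and $\lp\nabla\phi,\nabla\psi\rp$ contributions. The divergence pieces are collected into $\Xi_{2,\phi}^m[h,\psi]$, the undifferentiated coefficient of $h$ becomes $-\Psi_{2,\phi}^m$, and the undifferentiated coefficient of $\psi$ becomes $\Upsilon_{2,\phi}^m$.

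The hard part will be recognising the weighted Bach tensor $B_\phi^m$ in the final expression for $\Psi_{2,\phi}^m$. Three disparate contributions must combine correctly: the $\delta_\phi dP_\phi^m$ piece arises from integration by parts of $\lp P_\phi^m,\delta_\phi dh\rp$; the Weyl contraction $\lp A_\phi^m, P_\phi^m - \frac{1}{m}Y_\phi^m g\rp$ is assembled from the $-\tfrac{1}{2}A_\phi^m\cdot h$ term of Lemma~\ref{lem:Pdot} together with the $\tfrac{1}{2}\lp\tr A_\phi^m, h\rp$ term (weighted by $\tfrac{2}{m}Y_\phi^m$) from Corollary~\ref{cor:Ydot}; and the $\tfrac{1}{m}d\phi$-coupled correction in $B_\phi^m$ is produced by combining the $\tfrac{1}{4m}(d\phi\otimes d\phi)\hash h$ term with the identity $\tr dP_\phi^m = P_\phi^m(\nabla\phi)+dY_\phi^m-\tfrac{1}{m}Y_\phi^m d\phi$ from Lemma~\ref{lem:div_and_tr}. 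The remaining coefficients $\tfrac{m+n-4}{m+n-2}(P_\phi^m)^2$, $-\tfrac{m+n-2}{m+n-1}\nabla^2 J_\phi^m$, and $\tfrac{m+n}{(m+n-1)(m+n-2)}J_\phi^m P_\phi^m$ arise from $-2\lp(P_\phi^m)^2,h\rp$, from integration by parts of the scalar $\tr h$ Laplacian terms using Lemma~\ref{lem:Rdot}, and from the $J_\phi^m h$, $(\tr h)P_\phi^m$, and $\lp T_{1,\phi}^m,h\rp$ contributions in Lemma~\ref{lem:Pdot} after using $T_{1,\phi}^m = J_\phi^m g - P_\phi^m$.

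Finally, the appearance of $\tf_\phi$ in $\Psi_{2,\phi}^m$ reflects the freedom to absorb pure-trace contributions from $\tr h$ into the $\psi$-coefficient under the identification~\eqref{eqn:TkM}. Verifying the trace-normalization amounts to tracking the trace of each bracketed contribution and checking that the aggregated pure-trace pieces combine with the explicit $\psi$-terms---notably the $\tfrac{2}{m}Y_\phi^m\cdot\tfrac{2}{m+n}\psi Y_\phi^m$ and $\tfrac{2(m+n-2)}{m+n}\lp P_\phi^m,\nabla^2\psi\rp$ contributions, the latter producing $\Delta_\phi J_\phi^m$ after integration by parts---into the claimed $\Upsilon_{2,\phi}^m = \tfrac{2(m+n-2)}{m+n}\Delta_\phi J_\phi^m + \tfrac{4}{m+n}N_{2,\phi}^m$.
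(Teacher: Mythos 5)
Your overall route is the same as the paper's: differentiate $N_{2,\phi}^m=\lv P_\phi^m\rv^2+m\bigl(Z_\phi^m\bigr)^2$ by the Leibniz rule, substitute Lemma~\ref{lem:Pdot} and Corollary~\ref{cor:Ydot}, and push every derivative of $(h,\psi)$ into an exact weighted divergence (collected in $\Xi_{2,\phi}^m$) via the identities of Lemma~\ref{lem:div_and_tr}; your account of how $B_\phi^m$ is assembled from the $\delta_\phi dP_\phi^m$, $A_\phi^m$, and $d\phi$-coupled pieces matches the paper's computation.  However, your opening Leibniz formula contains a genuine error.  Under the identification~\eqref{eqn:TkM}, the tangent vector $(h,\psi)$ corresponds to the metric variation $\dot g=h-\frac{2}{m+n}\left(\psi+\frac{1}{2}\tr h\right)g$, and the two inverse-metric factors in $\lv P_\phi^m\rv^2$ must be differentiated along $\dot g$, not along $h$.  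The correct first step is
\begin{multline*}
 DN_{2,\phi}^m[h,\psi] = 2\lp P_\phi^m,DP_\phi^m[h,\psi]\rp + \frac{2}{m}Y_\phi^m\,DY_\phi^m[h,\psi] \\
 - 2\left\lp \left(P_\phi^m\right)^2, h-\frac{2}{m+n}\left(\psi+\frac{1}{2}\tr h\right)g\right\rp ,
\end{multline*}
which is your formula plus the term $\frac{4}{m+n}\left(\psi+\frac{1}{2}\tr h\right)\tr\left(P_\phi^m\right)^2$ that you dropped.

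This omission is not cosmetic: it makes the final bookkeeping fail at exactly the consistency check you propose.  The $\psi$-terms available from $2\lp P_\phi^m,DP_\phi^m\rp$ and $\frac{2}{m}Y_\phi^m\,DY_\phi^m$ alone yield, modulo divergences, only $\frac{2(m+n-2)}{m+n}\Delta_\phi J_\phi^m+\frac{4}{m(m+n)}\left(Y_\phi^m\right)^2\psi$; since $N_{2,\phi}^m=\tr\left(P_\phi^m\right)^2+\frac{1}{m}\left(Y_\phi^m\right)^2$, it is precisely the missing $\frac{4}{m+n}\psi\,\tr\left(P_\phi^m\right)^2$ that completes the claimed $\Upsilon_{2,\phi}^m=\frac{2(m+n-2)}{m+n}\Delta_\phi J_\phi^m+\frac{4}{m+n}N_{2,\phi}^m$.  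Likewise the $\frac{2}{m+n}(\tr h)\tr\left(P_\phi^m\right)^2$ piece is needed for the pure-trace part of the $h$-coefficient to come out as the weighted trace-free projection $\tf_\phi(\cdots)$, recalling that $\tf_\phi\left(P_\phi^m\right)^2=\left(P_\phi^m\right)^2-\frac{1}{m+n}N_{2,\phi}^m\,g$.  With that single correction to the first display, your plan coincides with the paper's proof.
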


\begin{proof}
 From the definition of $N_{2,\phi}^m$ we see that
 \begin{multline*}
  DN_{2,\phi}^m[h,\psi] = 2\lp P_\phi^m,DP_\phi^m[h,\psi]\rp + \frac{2}{m}Y_\phi^m\,DY_\phi^m[h,\psi] \\ - 2\left\lp \left(P_\phi^m\right)^2, h - \frac{2}{m+n}\left(\psi+\frac{1}{2}\tr h\right)g\right\rp .
 \end{multline*}
 Lemma~\ref{lem:div_and_tr} implies that
 \begin{multline*}
  \left\lp \left(P_\phi^m-\frac{Y_\phi^m}{m}g\right)(\nabla\phi),h(\nabla\phi)\right\rp - Y_\phi^m\delta_\phi\left(h(\nabla\phi)\right) \\ = \left\lp \tr dP_\phi^m\otimes d\phi, h\right\rp - \delta_\phi\left(Y_\phi^mh(\nabla\phi)\right) .
 \end{multline*}
 Lemma~\ref{lem:div_and_tr} also implies that
 \begin{align*}
  \frac{1}{2}\left\lp P_\phi^m, L_{\delta_\phi h}g\right\rp - \frac{1}{m}\left\lp Y_\phi^m\nabla\phi,\delta_\phi h\right\rp & = \left\lp \nabla^2J_\phi^m,h\right\rp + \delta_\phi\left(P_\phi^m(\delta_\phi h) - h(\nabla J_\phi^m)\right) , \\
  \left\lp P_\phi^m,\nabla^2\tr h\right\rp - \frac{1}{m}\left\lp Y_\phi^m\nabla\phi,\nabla\tr h\right\rp & = (\tr h)\Delta_\phi J_\phi^m + \delta_\phi\left(P_\phi^m(\nabla\tr h) - (\tr h)dJ_\phi^m\right), \\
  \left\lp P_\phi^m,\nabla^2\psi\right\rp - \frac{1}{m}\left\lp Y_\phi^m\nabla\phi,\nabla\psi\right\rp & = \psi\Delta_\phi J_\phi^m + \delta_\phi\left(P_\phi^m(\nabla\psi) - \psi\,dJ_\phi^m\right) .
 \end{align*}
 Finally, straightforward computations yield
 \begin{align*}
  \left\lp \delta_\phi dh, P_\phi^m\right\rp & = \left\lp \delta_\phi dP_\phi^m,h\right\rp - \delta_\phi\left(dh\cdot P_\phi^m - dP_\phi^m\cdot h\right) , \\
  J_\phi^m\left(\delta_\phi^2h-\Delta_\phi\tr h\right) & = \left\lp \nabla^2J_\phi^m - \Delta_\phi J_\phi^m\,g,h\right\rp \\
   & \quad + \delta_\phi\left(J_\phi^m\left(\delta_\phi h - d\tr h\right) + (\tr h)dJ_\phi^m - h(\nabla J_\phi^m)\right) .
 \end{align*}
 Combining these facts with Lemma~\ref{lem:Pdot} and Corollary~\ref{cor:Ydot} yields the desired result.
\end{proof}

An immediate consequence of~\eqref{eqn:TkM_geometric_psi} and Lemma~\ref{lem:P2dot} is the first variation of the total $N_{2,\phi}^m$-curvature functional.

\begin{cor}
 \label{cor:N2dot}
 Let $(M^n,g,v,m,\mu)$ be a smooth metric measure space.  Then
 \begin{multline*}
  D\left(\int_M N_{2,\phi}^m\,d\nu\right)[h,\psi] = -\int_M\left\lp\Psi_{2,\phi}^m,h\right\rp\,d\nu \\ + \int_M \left(\frac{2(m+n-2)}{m+n}\Delta_\phi J_\phi^m - \frac{m+n-4}{m+n} N_{2,\phi}^m\right)\psi\,d\nu .
 \end{multline*}
\end{cor}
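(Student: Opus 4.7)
The plan is to derive Corollary~\ref{cor:N2dot} directly from Lemma~\ref{lem:P2dot} by integrating against $d\nu$ and accounting for the variation of the weighted volume element itself.

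First, I would write
\[ D\left(\int_M N_{2,\phi}^m\,d\nu\right)[h,\psi] = \int_M DN_{2,\phi}^m[h,\psi]\,d\nu + \int_M N_{2,\phi}^m\,D(d\nu)[h,\psi] \]
and recall that by the definition~\eqref{eqn:TkM_geometric_psi} of the identification $T_{(g,v)}\kM \cong \Gamma(S^2T^\ast M) \oplus C^\infty(M)$, the variation of the weighted volume element is $D(d\nu)[h,\psi] = -\psi\,d\nu$. Substituting the expression for $DN_{2,\phi}^m[h,\psi]$ from Lemma~\ref{lem:P2dot} then gives
\[ D\left(\int_M N_{2,\phi}^m\,d\nu\right)[h,\psi] = \int_M\Bigl(-\lp\Psi_{2,\phi}^m,h\rp + \Upsilon_{2,\phi}^m\psi + \delta_\phi\bigl(\Xi_{2,\phi}^m[h,\psi]\bigr) - N_{2,\phi}^m\psi\Bigr)\,d\nu . \]

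Next, since $M$ is closed and $\delta_\phi$ is (the negative of) the formal adjoint of $\nabla$ with respect to the $L^2$-inner product induced by $d\nu$, the weighted divergence term integrates to zero:
\[ \int_M \delta_\phi\bigl(\Xi_{2,\phi}^m[h,\psi]\bigr)\,d\nu = 0 . \]

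Finally, I would combine the two scalar contributions paired with $\psi$. Using the formula for $\Upsilon_{2,\phi}^m$ from Lemma~\ref{lem:P2dot},
\[ \Upsilon_{2,\phi}^m - N_{2,\phi}^m = \frac{2(m+n-2)}{m+n}\Delta_\phi J_\phi^m + \frac{4}{m+n}N_{2,\phi}^m - N_{2,\phi}^m = \frac{2(m+n-2)}{m+n}\Delta_\phi J_\phi^m - \frac{m+n-4}{m+n}N_{2,\phi}^m, \]
which yields the claimed identity. There is no real obstacle here: once Lemma~\ref{lem:P2dot} is in hand, the corollary is a two-line bookkeeping argument whose only subtle point is remembering that $d\nu$ itself varies and contributes the $-N_{2,\phi}^m\psi$ term that shifts the coefficient of $N_{2,\phi}^m$ from $\frac{4}{m+n}$ to $-\frac{m+n-4}{m+n}$.
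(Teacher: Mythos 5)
Your proof is correct and matches the paper's argument exactly: the paper derives Corollary~\ref{cor:N2dot} as an ``immediate consequence'' of the volume-element variation~\eqref{eqn:TkM_geometric_psi} (i.e.\ $D(d\nu)[h,\psi]=-\psi\,d\nu$) and Lemma~\ref{lem:P2dot}, with the weighted divergence term discarded upon integration and the coefficient of $N_{2,\phi}^m$ shifting from $\frac{4}{m+n}$ to $-\frac{m+n-4}{m+n}$, just as you compute. The only remark worth making is that you invoke closedness of $M$ to kill $\int_M \delta_\phi\bigl(\Xi_{2,\phi}^m[h,\psi]\bigr)\,d\nu$; the corollary's statement omits this hypothesis, but it is clearly intended (compare the explicitly ``closed'' hypotheses in Corollaries~\ref{cor:R1dot} and~\ref{cor:R2dot}), so this is a defect of the statement rather than of your argument.
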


An immediate consequence of Corollary~\ref{cor:R2dot} and Corollary~\ref{cor:N2dot} is the first variation of the total $\sigma_2$-curvature functional $\mF_2$.

\begin{cor}
 \label{cor:S2dot}
 Let $(M^n,g,v,m,\mu)$ be a smooth metric measure space.  Then
 \[ D\mF_2[h,\psi] = \frac{1}{2}\int_M \left\lp E_{2,\phi}^m, h\right\rp\,d\nu - \frac{m+n-4}{m+n}\int_M \sigma_{2,\phi}^m\psi\,d\nu, \]
 where
 \[ E_{2,\phi}^m := \tf_\phi\left( B_\phi^m + \frac{m+n-4}{m+n-2}T_{2,\phi}^m\right) . \]
\end{cor}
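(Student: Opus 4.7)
The plan is to combine Corollary~\ref{cor:R2dot} and Corollary~\ref{cor:N2dot} via the identity
\[
\sigma_{2,\phi}^m = \frac{1}{2}\left((J_\phi^m)^2 - |P_\phi^m|^2 - \frac{1}{m}(Y_\phi^m)^2\right) = \frac{1}{2}\left((J_\phi^m)^2 - N_{2,\phi}^m\right),
\]
which follows from the definition $N_{2,\phi}^m = \tr(P_\phi^m)^2 + m(Z_\phi^m)^2$ when $\kappa=0$. Hence
\[
D\mF_2[h,\psi] = \frac{1}{2}D\left(\int_M (J_\phi^m)^2\,d\nu\right)[h,\psi] - \frac{1}{2}D\left(\int_M N_{2,\phi}^m\,d\nu\right)[h,\psi],
\]
and I would substitute the two corollaries directly.

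First I would handle the $\psi$-part, which is easy: the $\Delta_\phi J_\phi^m$ terms cancel between the two variational formulas and the remaining terms collapse to $-\frac{m+n-4}{m+n}\cdot\frac{1}{2}\left((J_\phi^m)^2 - N_{2,\phi}^m\right)\psi = -\frac{m+n-4}{m+n}\sigma_{2,\phi}^m\psi$, matching the claimed coefficient.

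Next I would tackle the $h$-part. Writing the combined tensor as
\[
\Phi := \frac{m+n-2}{m+n-1}\tf_\phi\left(\nabla^2J_\phi^m - J_\phi^m P_\phi^m\right) + \Psi_{2,\phi}^m,
\]
the $\nabla^2 J_\phi^m$ contributions cancel exactly, and the $(P_\phi^m)^2$ and $B_\phi^m$ terms are inherited directly from $\Psi_{2,\phi}^m$. The only non-trivial cancellation is that of the $J_\phi^m P_\phi^m$ terms: one must verify the algebraic identity
\[
-\frac{m+n-2}{m+n-1} + \frac{m+n}{(m+n-1)(m+n-2)} = -\frac{m+n-4}{m+n-2},
\]
which I expect to be the main (though still routine) obstacle. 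This is obtained by clearing denominators and observing $-(m+n-2)^2 + (m+n) = -(m+n-1)(m+n-4)$. With this the combined tensor becomes
\[
\Phi = \tf_\phi\left(B_\phi^m + \frac{m+n-4}{m+n-2}\left((P_\phi^m)^2 - J_\phi^m P_\phi^m\right)\right).
\]

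Finally, I would use the Newton recursion $T_{2,\phi}^m = \sigma_{2,\phi}^m g - J_\phi^m P_\phi^m + (P_\phi^m)^2$, which follows from~\eqref{eqn:T_recursive}, to rewrite $(P_\phi^m)^2 - J_\phi^m P_\phi^m = T_{2,\phi}^m - \sigma_{2,\phi}^m g$. Since $\tf_\phi g = 0$, the pure-trace term drops out and $\Phi = \tf_\phi\bigl(B_\phi^m + \tfrac{m+n-4}{m+n-2}T_{2,\phi}^m\bigr) = E_{2,\phi}^m$, completing the derivation of the $h$-part and hence the corollary.
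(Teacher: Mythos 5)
Your proposal is correct and follows exactly the paper's route: the paper obtains Corollary~\ref{cor:S2dot} as an immediate consequence of Corollary~\ref{cor:R2dot} and Corollary~\ref{cor:N2dot} via $\sigma_{2,\phi}^m=\tfrac{1}{2}\bigl((J_\phi^m)^2-N_{2,\phi}^m\bigr)$, and your computation simply fills in the routine algebra the paper leaves implicit. Your cancellation of the $\Delta_\phi J_\phi^m$ and $\nabla^2 J_\phi^m$ terms, the coefficient identity $-(m+n-2)^2+(m+n)=-(m+n-1)(m+n-4)$, and the rewriting $(P_\phi^m)^2-J_\phi^m P_\phi^m=T_{2,\phi}^m-\sigma_{2,\phi}^m g$ together with $\tf_\phi g=0$ are all accurate.
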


\subsection{The first variation of the $\mY$-functionals}
\label{subsec:full/3}

Corollary~\ref{cor:R1dot} and Corollary~\ref{cor:S2dot} give formulae for the linearizations of the $\mF_1$- and $\mF_2$-functionals.  In particular, combining these results with Proposition~\ref{prop:we_sigma2} immediately shows that quasi-Einstein manifolds are critical points of the volume-normalized $\mF_k$-functionals for $k\in\{1,2\}$.

\begin{prop}
 \label{prop:we_kappa0_crit}
 Let $(M^n,g,v,m,\mu)$ be a closed quasi-Einstein manifold.  Assume that $(g,v)\in\kM_1$.  Then $(g,v)$ is a critical point of $\mF_k\colon\kM_1\to\bR$ for $k\in\{1,2\}$.  Moreover, if $(g,v)$ is a critical point of $\mF_1\colon\kM_1\to\bR$, then it is a quasi-Einstein metric-measure structure.
\end{prop}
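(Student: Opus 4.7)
My plan is to deduce both implications directly from the linearization formulas computed earlier in this section, namely Corollary~\ref{cor:R1dot} and Corollary~\ref{cor:S2dot}, combined with the algebraic identities for quasi-Einstein structures in Proposition~\ref{prop:we_sigma2} and the scale characterization in Lemma~\ref{lem:we_kappa}. The identification $T_{(g,v)}\kM_1 = \{(h,\psi) \suchthat \int_M \psi\,d\nu = 0\}$, which follows from~\eqref{eqn:TkM_geometric_psi} and the volume normalization, is the only structural input beyond these.

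For the forward direction, I suppose $(M^n,g,v,m,\mu)$ is a closed quasi-Einstein manifold with $P_\phi^m = \lambda g$ and scale $\kappa = 0$. Proposition~\ref{prop:we_sigma2} then furnishes $J_\phi^m = (m+n)\lambda$ and $\sigma_{2,\phi}^m = \binom{m+n}{2}\lambda^2$ (both constants), $T_{2,\phi}^m = \binom{m+n-1}{2}\lambda^2 g$, and $B_\phi^m = 0$. Because $\tf_\phi$ is $C^\infty(M)$-linear with $\tf_\phi g = 0$, while $\tf_\phi P_\phi^m = P_\phi^m - \frac{J_\phi^m}{m+n}g$ vanishes and $\tf_\phi(P_\phi^m)^2 = (P_\phi^m)^2 - \frac{N_{2,\phi}^m}{m+n}g$ also vanishes, we get $\tf_\phi P_\phi^m = 0$ and $E_{2,\phi}^m = 0$. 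Substituting into Corollary~\ref{cor:R1dot} and Corollary~\ref{cor:S2dot} collapses the tensor terms, leaving
\[ D\mF_1[h,\psi] = -(m+n-2)\lambda\int_M \psi\,d\nu, \qquad D\mF_2[h,\psi] = -\tfrac{(m+n-4)}{m+n}\binom{m+n}{2}\lambda^2\int_M \psi\,d\nu, \]
each of which vanishes on $T_{(g,v)}\kM_1$.

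For the converse, I start from Corollary~\ref{cor:R1dot} and test against two families of tangent vectors in $T_{(g,v)}\kM_1$. First, taking $\psi = 0$ and $h \in \Gamma(S^2T^\ast M)$ arbitrary (which automatically satisfies the mean-zero constraint), the criticality condition gives $\int_M \langle \tf_\phi P_\phi^m, h\rangle\,d\nu = 0$ for every $h$, hence $\tf_\phi P_\phi^m = 0$; equivalently, $P_\phi^m = \tfrac{J_\phi^m}{m+n}g$. Next, taking $h = 0$ and any $\psi$ with $\int_M \psi\,d\nu = 0$ yields $\int_M J_\phi^m\psi\,d\nu = 0$ for all such $\psi$, so $J_\phi^m$ is a constant $c$. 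Setting $\lambda := c/(m+n)$, we have $P_\phi^m = \lambda g$ with $\sigma_{1,\phi}^m = J_\phi^m = (m+n)\lambda$. Lemma~\ref{lem:we_kappa} then forces the scale to equal $0$, so by Definition~\ref{defn:qe} the structure is quasi-Einstein. The only point needing care is confirming that $\tf_\phi$ annihilates the Schouten and second Newton tensors of a quasi-Einstein structure, which is a short direct verification from the case-by-case definition of $\tf_\phi$; no genuine obstacle arises, since the substantive variational computations were already performed in Lemmas~\ref{lem:Rdot}--\ref{lem:P2dot}.
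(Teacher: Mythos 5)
Your proof is correct and follows essentially the same route as the paper, which deduces Proposition~\ref{prop:we_kappa0_crit} immediately from the linearization formulas in Corollary~\ref{cor:R1dot} and Corollary~\ref{cor:S2dot} together with the quasi-Einstein identities of Proposition~\ref{prop:we_sigma2} (giving $\tf_\phi P_\phi^m=0$, $E_{2,\phi}^m=0$, and constancy of $\sigma_{1,\phi}^m$, $\sigma_{2,\phi}^m$). Your converse—testing $D\mF_1$ against $(h,0)$ and against $(0,\psi)$ with $\int_M\psi\,d\nu=0$, then invoking Lemma~\ref{lem:we_kappa} to identify the scale as $\kappa=0$—is exactly the intended argument, and your computations check out.
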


Corollary~\ref{cor:R1dot} and Corollary~\ref{cor:S2dot} also provide key ingredients for proving that weighted Einstein manifolds with $\mu=0$ and positive scale are critical points of the $\mY_k$-functionals for $k\in\{1,2\}$.  Indeed, combining these results with Corollary~\ref{cor:change_TkM} and Lemma~\ref{lem:csigma_to_sigma} allows us to compute the linearizations of the $\cmF_k$-functionals.  We begin by explaining the relevance of Corollary~\ref{cor:change_TkM}.  Given a closed weighted manifold $(M^n,m,\mu)$ and an integer $k\leq m$, denote by $\mF_0^{(m-k)},\mF_1^{(m-1)}\colon\kM\to\bR$ the functionals
\begin{align*}
 \mF_0^{(m-k)}(g,v) & := \int_M d\nu^{(m-k)} , \\
 \mF_1^{(m-1)}(g,v) & := \int_M \sigma_{1,\phi}^{m-1}\,d\nu^{(m-1)} ,
\end{align*}
where $d\nu^{(m-k)}$ and $\sigma_{1,\phi}^{m-1}$ are the weighted volume element on $\kM(M,m-k,\mu)$ and the weighted scalar curvature on $\kM(M,m-1,\mu)$, respectively.  The linearizations
\begin{align*}
 &D^{(m-k)}\mF_0^{(m-k)}\colon T\kM(M,m-k,\mu)\to\bR, \\
 &D^{(m-1)}\mF_1^{(m-1)}\colon T\kM(M,m-1,\mu) \to \bR
\end{align*}
are readily computed from~\eqref{eqn:TkM_geometric} and Corollary~\ref{cor:R1dot}, respectively.  Applying Corollary~\ref{cor:change_TkM} yields the following result.

\begin{prop}
 \label{prop:var_changem}
 Let $(M^n,g,v,m,\mu)$ be a closed smooth metric measure space and let $k\in\bR$.  Regard $\mF_0^{(m-k)}$ and $\mF_1^{(m-1)}$ as functionals on $\kM(M,m,\mu)$.  Then
 \begin{align*}
  D\mF_0^{(m-k)}[h,\psi] & = \frac{k}{2(m+n)}\int_M \left\lp v^{-k}g, h\right\rp\,d\nu^{(m)} - \frac{m+n-k}{m+n}\int_M \psi v^{-k}\,d\nu^{(m)} , \\
  D\mF_1^{(m-1)}[h,\psi] & = \frac{m+n-3}{2(m+n-2)}\int_M v^{-1}\left\lp T_{1,\phi}^{m-1} - \frac{m+n-2}{m+n}\sigma_{1,\phi}^{m-1}g,h\right\rp\,d\nu^{(m)} \\
   & \quad - \frac{m+n-3}{m+n}\int_M \psi v^{-1}\sigma_{1,\phi}^{m-1}\,d\nu^{(m)} .
 \end{align*}
\end{prop}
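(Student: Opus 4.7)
The plan is to prove each identity by first computing the linearization of the functional in its ``native'' setup — that is, viewing $\mF_0^{(m-k)}$ as a functional on $\kM(M,m-k,\mu)$ and $\mF_1^{(m-1)}$ as a functional on $\kM(M,m-1,\mu)$ — and then transporting to $T\kM(M,m,\mu)$ via the map $\Phi_m^{m-k}$ (resp.\ $\Phi_m^{m-1}$) from Lemma~\ref{lem:change_TkM}, using Corollary~\ref{cor:change_TkM}. Throughout, we will exploit the pointwise identity $d\nu^{(m-k)} = v^{-k}\,d\nu^{(m)}$ to convert the weighted measure that appears.

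For the first identity, the native computation is immediate from~\eqref{eqn:TkM_geometric_psi}: on $\kM(M,m-k,\mu)$ we have $D^{(m-k)}\mF_0^{(m-k)}[h',\psi'] = -\int_M \psi'\,d\nu^{(m-k)}$. Applying Corollary~\ref{cor:change_TkM} with $m' = m-k$ substitutes
\[ \psi' = \frac{m+n-k}{m+n}\psi - \frac{k}{2(m+n)}\tr_g h \]
into the integrand, and rewriting $d\nu^{(m-k)} = v^{-k}\,d\nu^{(m)}$ yields the stated formula after recognizing $\tr_g h = \langle g, h\rangle$.

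For the second identity, the native computation comes from Corollary~\ref{cor:R1dot} applied on $\kM(M,m-1,\mu)$ (recalling $\sigma_{1,\phi}^{m-1} = J_\phi^{m-1}$):
\[ D^{(m-1)}\mF_1^{(m-1)}[h',\psi'] = -\int_M\left[\frac{m+n-3}{2(m+n-2)}\langle \tf_\phi P_\phi^{m-1},h'\rangle + \frac{m+n-3}{m+n-1}J_\phi^{m-1}\psi'\right]d\nu^{(m-1)}, \]
where $\tf_\phi P_\phi^{m-1} = P_\phi^{m-1} - \frac{1}{m+n-1}\sigma_{1,\phi}^{m-1}g$ (in the $(m-1)$-weighted sense). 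We then substitute $\psi' = \frac{m+n-1}{m+n}\psi - \frac{1}{2(m+n)}\tr_g h$ from $\Phi_m^{m-1}$ and collect terms. The $\langle P_\phi^{m-1}, h\rangle$ coefficient is read off directly, while the coefficient of $\sigma_{1,\phi}^{m-1}\tr_g h$ simplifies via the identity
\[ \frac{1}{2(m+n-1)}\left(\frac{1}{m+n-2} + \frac{1}{m+n}\right) = \frac{1}{(m+n-2)(m+n)}. \]
Rewriting the resulting expression using $T_{1,\phi}^{m-1} = \sigma_{1,\phi}^{m-1} g - P_\phi^{m-1}$ identifies it with $\frac{m+n-3}{2(m+n-2)}\langle T_{1,\phi}^{m-1} - \frac{m+n-2}{m+n}\sigma_{1,\phi}^{m-1}g, h\rangle$. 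Finally, using $d\nu^{(m-1)} = v^{-1}d\nu^{(m)}$ gives the stated formula.

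The only subtle step is the bookkeeping in the second identity: combining the $\tr_g h$ contributions coming from the $\tf_\phi$ correction (which uses the $(m-1)$-dimensional trace $\frac{1}{m+n-1}$) with those coming from $\Phi_m^{m-1}$ (which uses the $m$-dimensional factor $\frac{1}{2(m+n)}$) must produce precisely the shift $-\frac{m+n-2}{m+n}\sigma_{1,\phi}^{m-1}g$. This is a purely algebraic reconciliation and presents no essential obstacle; once carried out, the proposition follows.
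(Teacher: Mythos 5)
Your proposal is correct and follows exactly the paper's route: compute the native linearizations via \eqref{eqn:TkM_geometric_psi} and Corollary~\ref{cor:R1dot}, transport them with $\Phi_m^{m-k}$ and $\Phi_m^{m-1}$ via Corollary~\ref{cor:change_TkM}, and convert measures by $d\nu^{(m-j)}=v^{-j}\,d\nu^{(m)}$. Your algebraic reconciliation of the trace terms (including the identity $\frac{1}{2(m+n-1)}\bigl(\frac{1}{m+n-2}+\frac{1}{m+n}\bigr)=\frac{1}{(m+n-2)(m+n)}$ and the rewriting $T_{1,\phi}^{m-1}=\sigma_{1,\phi}^{m-1}g-P_\phi^{m-1}$) checks out and simply makes explicit the bookkeeping the paper leaves implicit.
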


\begin{proof}
 \eqref{eqn:TkM_geometric} implies that
 \[ D^{(m-k)}\mF_0^{(m-k)}[h,\psi] = -\int_M \psi\,d\nu^{(m-k)} . \]
 Corollary~\ref{cor:change_TkM} then yields the formula for $D\mF_0^{(m-k)}$.  On the other hand, Corollary~\ref{cor:R1dot} implies that
 \begin{multline*}
  D^{(m-1)}\mF_1^{(m-1)}[h,\psi] = -\frac{m+n-3}{m+n-1}\int_M\ \sigma_{1,\phi}^{m-1}\psi \,d\nu^{(m-1)} \\ + \frac{m+n-3}{2(m+n-2)}\int_M \left\lp T_{1,\phi}^{m-1} - \frac{m+n-2}{m+n-1}\sigma_{1,\phi}^{m-1}g,h\right\rp\,d\nu^{(m-1)} .
 \end{multline*}
 Corollary~\ref{cor:change_TkM} then yields the formula for $D\mF_1^{(m-1)}$.
\end{proof}

Given a weighted manifold $(M^n,m,\mu)$ and a positive integer $j\leq m$, Proposition~\ref{prop:var_changem} motivates the definitions
\[ \tf_\phi T_{k-j,\phi}^{m-j} := T_{k-j,\phi}^{m-j} - \frac{m+n-k}{m+n}\sigma_{k-j,\phi}^{m-j}g \]
on $\kM(M,m,\mu)$.

By Lemma~\ref{lem:csigma_to_sigma} and Proposition~\ref{prop:var_changem}, one can compute the linearization of $\cmF_k$, $k\in\{0,1,2\}$, in terms of the linearizations of $\mF_{(k-j)}^{(m-j)}$, $0\leq j\leq k$.  We begin by considering the $\cmF_1$-functional, noting in particular that the critical points of the $\mY_1$-functional are exactly weighted Einstein manifolds.

\begin{thm}
 \label{thm:full_Y1}
 Fix $\kappa\in\bR_+$ and let $(M^n,g,v,m,\mu)$ be a smooth metric measure space.  For every $(h,\psi)\in T_{(g,v)}\kM$ it holds that
 \begin{multline}
  \label{eqn:Dcsigma1}
  D\cmF_1[h,\psi] = -\frac{m+n-2}{m+n}\int_M \left(\csigma_{1,\phi}^m + \frac{m}{m+n-2}\kappa v^{-1}\right)\psi\,d\nu \\ + \frac{m+n-2}{2(m+n-1)}\int_M \left\lp \cE_{1,\phi}^m + \frac{m}{m+n-2}\kappa v^{-1}\cU_{0,\phi}^m,h\right\rp\,d\nu ,
 \end{multline}
 where $U_{0,\phi}^m$ is as in~\eqref{eqn:cUk}.
 In particular, $(M^n,g,v,m,0)$ is a critical point of $\mY_1\colon\kM\to\bR$ if and only if it is a weighted Einstein manifold with scale $\kappa$.
\end{thm}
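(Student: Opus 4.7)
The plan is to establish the variational formula first and then deduce the critical point characterization. By Lemma~\ref{lem:csigma_to_sigma}, $\csigma_{1,\phi}^m = J_\phi^m + m\kappa v^{-1}$, hence $\cmF_1 = \mF_1 + m\kappa\,\mF_0^{(m-1)}$, where $\mF_0^{(m-1)}$ is viewed on $\kM(M,m,\mu)$ via the set identification of moduli spaces described in Section~\ref{sec:moduli}. I would then add Corollary~\ref{cor:R1dot} to $m\kappa$ times Proposition~\ref{prop:var_changem} (applied with $k=1$). The $\psi$-coefficient aggregates to $-\tfrac{m+n-2}{m+n}\bigl(\csigma_{1,\phi}^m + \tfrac{m\kappa v^{-1}}{m+n-2}\bigr)$ by direct algebra. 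For the $h$-coefficient I would use $\cT_{1,\phi}^m = \csigma_{1,\phi}^m g - P_\phi^m$ to get $\cE_{1,\phi}^m = \tfrac{1}{m+n}\csigma_{1,\phi}^m g - P_\phi^m$, together with $\cU_{0,\phi}^m = \tfrac{1}{m+n}g$ (which follows immediately from $T_0 = I$, $s_0 = 1$ and the definition~\eqref{eqn:cUk} with $k=1$); a short algebraic check matches the result to the stated integrand, completing the variational identity.

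For the characterization of critical points of $\mY_1\colon\kM\to\bR$ on a closed weighted manifold with $\mu=0$, I would fix a positive scale $\kappa$ (permissible by the homotheticity in Lemma~\ref{lem:mY_scaling}). Writing $A := \int v^{-1}\,d\nu$, $B := \int d\nu$, $a = \tfrac{2m}{(m+n)(2m+n-2)}$, $b = \tfrac{m+n-2}{m+n}$, the critical point equation reads
\[
\frac{D\cmF_1}{\cmF_1} \;=\; a\,\frac{DA}{A} + b\,\frac{DB}{B}.
\]
Since $(h,\psi)$ varies freely in $T_{(g,v)}\kM$, the $h$- and $\psi$-components must separately hold as pointwise identities, producing one symmetric $2$-tensor equation and one scalar equation.

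The scalar equation gives $\csigma_{1,\phi}^m = \alpha v^{-1} + \beta$ for explicit scalars $\alpha$ and $\beta$ depending only on $\kappa$, $A$, $B$, and $\cmF_1$. Integrating against $d\nu$ forces $\alpha A = 0$, hence $\alpha = 0$, so $\csigma_{1,\phi}^m \equiv \cmF_1/B$ is constant; the vanishing of $\alpha$ is equivalent to the identity $(m+n-1)a\cmF_1 = m\kappa A$, which is precisely the integral identity of Proposition~\ref{prop:we_integral_relation}. The tensor equation, after substituting the explicit forms of $\cE_{1,\phi}^m$ and $\cU_{0,\phi}^m$ above, rewrites as $P_\phi^m = \bigl(\tfrac{\csigma_{1,\phi}^m}{m+n} - C_3 v^{-1}\bigr)g$ for a coefficient $C_3$ that collapses to zero precisely because of that same identity. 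Therefore $P_\phi^m = \lambda g$ with $\lambda = \cmF_1/((m+n)B)$ and $\csigma_{1,\phi}^m = (m+n)\lambda$, which is the weighted Einstein condition with scale $\kappa$. Conversely, for a weighted Einstein $(g,v)$ with scale $\kappa$, Proposition~\ref{prop:we_sigma2} yields $\csigma_{1,\phi}^m = (m+n)\lambda$ and $P_\phi^m = \lambda g$, while Proposition~\ref{prop:we_integral_relation} gives $\lambda B = \tfrac{(2m+n-2)\kappa A}{2(m+n-1)}$; substituting these into the first variation formula verifies both Euler--Lagrange equations.

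The main obstacle is purely computational bookkeeping: several expressions involving $m$, $n$, $a$, $b$, $\kappa$, $A$, $B$, and $\cmF_1$ must cancel in just the right way, and one must correctly identify the integrated $\psi$-equation as the relation of Proposition~\ref{prop:we_integral_relation}. This is what forces the pure $v^{-1}$-coefficient $C_3$ in the tensor equation to vanish and thereby converts an \emph{a priori} tensor equation of the form $P_\phi^m = (\text{function})g$ into the genuinely constant Einstein-type relation $P_\phi^m = \lambda g$.
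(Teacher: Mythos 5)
Your proposal is correct and follows essentially the same route as the paper's proof: you derive \eqref{eqn:Dcsigma1} from Lemma~\ref{lem:csigma_to_sigma}, Corollary~\ref{cor:R1dot}, and Proposition~\ref{prop:var_changem} exactly as the paper does, and you then split the criticality of $\mY_1$ into the same pointwise tensor and scalar Euler--Lagrange equations, with your computations of $\cE_{1,\phi}^m=\frac{1}{m+n}\csigma_{1,\phi}^m g-P_\phi^m$ and $\cU_{0,\phi}^m=\frac{1}{m+n}g$ checking out. The only (harmless) divergence is in the converse: you integrate the scalar equation against $d\nu$ to force the $v^{-1}$-coefficient $\alpha$ (equivalently $C_3$) to vanish---the same maneuver used in Proposition~\ref{prop:mY_critical}, and one that also disposes of the degenerate case $\cmF_1=0$ left implicit by your logarithmic-derivative formulation---whereas the paper first extracts $P_\phi^m=\frac{1}{m+n}\bigl(\int\csigma_{1,\phi}^m\,d\nu/\int d\nu\bigr)g$ from the tensor equation and then invokes the general identity \eqref{eqn:we_integral_nocsigma} to conclude that $\csigma_{1,\phi}^m$ is constant; both closings are valid.
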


\begin{proof}
 Lemma~\ref{lem:csigma_to_sigma}, Corollary~\ref{cor:R1dot}, and Proposition~\ref{prop:var_changem} immediately yield~\eqref{eqn:Dcsigma1}.  Combining Proposition~\ref{prop:var_changem} and~\eqref{eqn:Dcsigma1}, we see that $D\mY_1\colon T_{(g,v)}\kM\to\bR$ vanishes if and only if
 \begin{align*}
  \cE_{1,\phi}^m + \frac{m}{m+n-2}\kappa v^{-1}\cU_{0,\phi}^m & = \frac{2m(m+n-1)}{(m+n)^2(m+n-2)(2m+n-2)}\left(\frac{\int\csigma_{1,\phi}^m}{\int v^{-1}}\right)v^{-1}g, \\
  \csigma_{1,\phi}^m + \frac{m}{m+n-2}\kappa v^{-1} & = \frac{2m(m+n-1)}{(m+n)(m+n-2)(2m+n-2)}\left(\frac{\int\csigma_{1,\phi}^m}{\int v^{-1}}\right)v^{-1} \\
   & \quad + \frac{\int\csigma_{1,\phi}^m}{\int 1} .
 \end{align*}
 From the definitions of $\cE_{1,\phi}^m$ and $\cU_{0,\phi}^m$, we conclude that these two conditions are equivalent to
 \begin{align}
  \label{eqn:mY1_crith} P_\phi^m & = \frac{1}{m+n}\left(\frac{\int\csigma_{1,\phi}^m\,d\nu}{\int d\nu}\right)g, \\
  \label{eqn:mY1_critpsi} \hsigma_{1,\phi}^m & = \frac{\int\csigma_{1,\phi}^m\,d\nu}{\int d\nu} .
 \end{align}
 where $\hsigma_{1,\phi}^m$ is as in~\eqref{eqn:hsigmak}.  If $(M^n,g,v,m,0)$ is a weighted Einstein manifold with scale $\kappa$, then Proposition~\ref{prop:we_integral_relation} implies that~\eqref{eqn:mY1_crith} and~\eqref{eqn:mY1_critpsi} hold.  Conversely, if~\eqref{eqn:mY1_crith} holds, then~\eqref{eqn:we_integral_nocsigma} implies that
 \[ \int_M \csigma_{1,\phi}^m\,d\nu = \frac{(m+n)(2m+n-2)}{2(m+n-1)}\kappa\int v^{-1}\,d\nu . \]
 Inserting this into~\eqref{eqn:mY1_critpsi} implies that $\csigma_{1,\phi}^m$ is constant, and hence $(M^n,g,v,m,0)$ is a weighted Einstein manifold with scale $\kappa$.
\end{proof}

We next consider the $\cmF_2$- and $\mY_2$-functionals.  Here the relationship to weighted Einstein manifolds is more subtle.  First, the Euler equation for the $\cmF_2$-functional is fourth-order in the metric, so one cannot expect to characterize the critical points of the $\mY_2$-functional as weighted Einstein manifolds.  Second, the fact that weighted Einstein manifolds with $\mu=0$ and scale $\kappa$ are critical points for the $\mY_2$-functional depends on the subtle cancellation~\eqref{eqn:we_bach} for the weighted Bach tensor of such manifolds.  This latter point is discussed further in Remark~\ref{rk:Y2_unique}.

\begin{thm}
 \label{thm:full_Y2}
 Fix $\kappa\in\bR_+$ and let $(M^n,g,v,m,\mu)$ be a smooth metric measure space.  For every $(h,\psi)\in T_{(g,v)}\kM$ it holds that
 \begin{multline}
  \label{eqn:Dcsigma2}
  D\cmF_2[h,\psi] = -\frac{m+n-4}{m+n}\int_M \left(\csigma_{2,\phi}^m + \frac{m}{m+n-4}\kappa v^{-1}\cs_{1,\phi}^m\right)\psi\,d\nu \\ + \frac{m+n-4}{2(m+n-2)}\int_M\left\lp \cE_{2,\phi}^m + \frac{m+n-2}{m+n-4}\cB_{\phi}^m + \frac{m}{m+n-4}\kappa v^{-1}\cU_{1,\phi}^m, h\right\rp\,d\nu ,
 \end{multline}
 where $\cU_{1,\phi}^m$ is as in~\eqref{eqn:cUk} and $\cB_\phi^m$ is the Bach tensor with scale $\kappa$,
 \[ \cB_\phi^m := \delta_\phi dP_\phi^m - \frac{1}{m}(\tr dP_\phi^m)\otimes d\phi + A_\phi^m\cdot\left(P_\phi^m - \cZ_\phi^mg\right) . \]
 In particular, if $(M^n,g,v,m,0)$ is a weighted Einstein manifold with scale $\kappa$, then it is a critical point of $\mY_2\colon\kM\to\bR$.
\end{thm}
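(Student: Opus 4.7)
The plan is to derive the variation formula~\eqref{eqn:Dcsigma2} by decomposing $\csigma_{2,\phi}^m$ via Lemma~\ref{lem:csigma_to_sigma}, applying the already-computed variation formulas to the pieces, and then reassembling. Specifically, Lemma~\ref{lem:csigma_to_sigma} gives
\[ \cmF_2 = \mF_2 + \frac{m(m+n-2)}{m+n-3}\kappa\,\mF_1^{(m-1)} + \frac{m(m-1)}{2}\kappa^2\,\mF_0^{(m-2)} , \]
where $\mF_1^{(m-1)}$ and $\mF_0^{(m-2)}$ are regarded as functionals on $\kM(M,m,\mu)$ via the set-level identification $\kM(M,m,\mu) = \kM(M,m',\mu)$. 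I will linearize the three summands using Corollary~\ref{cor:S2dot} and the two formulas of Proposition~\ref{prop:var_changem}.

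The substantive work is to recognize the resulting sum as the right-hand side of~\eqref{eqn:Dcsigma2}. On the $\psi$-side this is the easier computation: expanding $\csigma_{2,\phi}^m$ and $\cs_{1,\phi}^m$ (via Lemma~\ref{lem:s_to_sigma-1} together with~\eqref{eqn:sigma-m-1-to-s}) in terms of $\sigma_{2,\phi}^m$, $\sigma_{1,\phi}^{m-1}$, and powers of $\kappa v^{-1}$ produces matching coefficients, and after collecting the $\kappa^0$, $\kappa^1$, $\kappa^2$ pieces both sides agree term-by-term. On the $h$-side one must identify how the trace-free tensor $E_{2,\phi}^m = \tf_\phi(B_\phi^m + \frac{m+n-4}{m+n-2}T_{2,\phi}^m)$ from Corollary~\ref{cor:S2dot}, the tensor $T_{1,\phi}^{m-1} - \frac{m+n-2}{m+n}\sigma_{1,\phi}^{m-1}g$ from Proposition~\ref{prop:var_changem}, and the pure-trace contribution $\frac{1}{m+n}v^{-2}g$ from the $\mF_0^{(m-2)}$ linearization assemble into $\cE_{2,\phi}^m + \frac{m+n-2}{m+n-4}\cB_\phi^m + \frac{m}{m+n-4}\kappa v^{-1}\cU_{1,\phi}^m$. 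The key algebraic step uses that $\cT_{2,\phi}^m$ expands as $T_{2,\phi}^m$ plus $\kappa$-lower-order terms in which $T_{1,\phi}^{m-1}$ naturally appears (via Corollary~\ref{cor:reln_to_genl}), while the part of the Bach tensor $B_\phi^m$ that depends on $\frac{Y_\phi^m}{m}$ combines with the $\kappa v^{-1}$ correction to produce precisely $P_\phi^m - \cZ_\phi^m g$, giving the tensor $\cB_\phi^m$ defined in the statement.

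For the critical-point claim, let $(M^n,g,v,m,0)$ be weighted Einstein with scale $\kappa$, so that $P_\phi^m = \lambda g$ and, by~\eqref{eqn:we_cY}, $\cZ_\phi^m = \lambda$. Proposition~\ref{prop:we_sigma2} yields $\cT_{2,\phi}^m = \binom{m+n-1}{2}\lambda^2 g$ and $\csigma_{2,\phi}^m = \binom{m+n}{2}\lambda^2$, giving $\cE_{2,\phi}^m = 0$ directly. Since $dP_\phi^m = 0$ and $P_\phi^m - \cZ_\phi^m g = 0$, the definition of $\cB_\phi^m$ yields $\cB_\phi^m = 0$. A direct computation of $T_1^{m-1}((m-1)\lambda;\lambda g) = (m+n-2)\lambda g$ combined with $\cs_{1,\phi}^m = (m+n-1)\lambda$ shows $\cU_{1,\phi}^m = \frac{m+n-2}{m+n}\lambda g$. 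Substituting into~\eqref{eqn:Dcsigma2}, both the $\psi$- and $h$-contributions from $D\cmF_2$ become constant multiples of $\int\psi\,d\nu$, $\int v^{-1}\psi\,d\nu$, and $\int v^{-1}\langle g,h\rangle\,d\nu$, with coefficients expressible in $\lambda$ and $\kappa$. These are then compared against the contributions to $D\mY_2$ from $D\mV_0$ and $D\mV_{-1}$ given by Proposition~\ref{prop:var_changem}; Proposition~\ref{prop:we_integral_relation} supplies the scalar identity $\lambda\mV_0 = \frac{2m+n-2}{2(m+n-1)}\kappa\mV_{-1}$ needed for the pieces to cancel exactly, producing $D\mY_2 = 0$.

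The main obstacle is the bookkeeping in Part~1: carefully tracking how $\kappa^j$-order terms rearrange themselves into the tensors $\cE_{2,\phi}^m$, $\cU_{1,\phi}^m$, and $\cB_\phi^m$, especially because $\cB_\phi^m$ differs from $B_\phi^m$ by a very specific $\kappa v^{-1}\tr A_\phi^m$-type correction. The definition of $\cB_\phi^m$ in the statement is essentially forced by the requirement that the resulting formula take a uniform shape in which each summand vanishes on weighted Einstein manifolds; thus the challenge is not any single difficult identity, but rather discovering the correct grouping, for which the guiding principle is the target cancellation at weighted Einstein that establishes the critical-point claim.
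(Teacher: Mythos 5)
Your proposal is correct and follows essentially the same route as the paper: decompose $\cmF_2=\mF_2+\frac{m(m+n-2)}{m+n-3}\kappa\,\mF_1^{(m-1)}+\frac{m(m-1)}{2}\kappa^2\,\mF_0^{(m-2)}$ via Lemma~\ref{lem:csigma_to_sigma}, linearize with Corollary~\ref{cor:S2dot} and Proposition~\ref{prop:var_changem}, and regroup using exactly the cancellation $\cB_\phi^m=B_\phi^m-m\kappa v^{-1}\bigl(P_\phi^{m-1}-\frac{m+n-3}{m+n-2}P_\phi^m\bigr)$ that the paper records in Remark~\ref{rk:Y2_unique}. Your verification of the critical-point claim (computing $\cE_{2,\phi}^m=\cB_\phi^m=0$ and $\cU_{1,\phi}^m=\frac{m+n-2}{m+n}\lambda g$ at a weighted Einstein structure and invoking Propositions~\ref{prop:we_sigma2} and~\ref{prop:we_integral_relation} for the cancellation against the $D\mV_{-1}$ and $D\mV_0$ terms) likewise matches the paper's argument.
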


\begin{proof}
 Lemma~\ref{lem:csigma_to_sigma}, Corollary~\ref{cor:S2dot}, and Proposition~\ref{prop:var_changem} immediately yield~\eqref{eqn:Dcsigma2}.  Combining Proposition~\ref{prop:var_changem} and~\eqref{eqn:Dcsigma2} implies that $D\mY_2\colon T_{(g,v)}\kM\to\bR$ vanishes if and only if
 \begin{multline*}
  \cE_{2,\phi}^m + \frac{m+n-2}{m+n-4}\cB_\phi^m + \frac{m}{m+n-4}\kappa v^{-1}\cU_{1,\phi}^m \\ = \frac{4m(m+n-2)}{(m+n)^2(m+n-4)(2m+n-2)}\left(\frac{\int\csigma_{2,\phi}^m}{\int v^{-1}}\right)v^{-1} g
 \end{multline*}
 and
 \begin{multline*}
  \csigma_{2,\phi}^m + \frac{m}{m+n-4}\kappa v^{-1}\cs_{1,\phi}^m = \frac{\int\csigma_{2,\phi}^m}{\int 1} \\ + \frac{4m(m+n-1)}{(m+n)(m+n-4)(2m+n-2)}\left(\frac{\int\csigma_{2,\phi}^m}{\int v^{-1}}\right)v^{-1} .
 \end{multline*}
 It is clear that if $(M^n,g,v,m,0)$ is a weighted Einstein manifold with scale $\kappa$, then $\cB_\phi^m=0$.  It then follows from Proposition~\ref{prop:we_sigma2} and Proposition~\ref{prop:we_integral_relation} that $D\mY_2\colon T_{(g,v)}\kM\to\bR$ vanishes for such a manifold.
\end{proof}

\begin{remark}
 \label{rk:Y2_unique}
 A key point in the proof of Theorem~\ref{thm:full_Y2} is that, due to the identity
 \[ \cB_\phi^m = B_\phi^m - m\kappa v^{-1}\left(P_\phi^{m-1} - \frac{m+n-3}{m+n-2}P_\phi^m\right), \]
 the summands $B_\phi^m$ and $P_\phi^{m-1}$ in the formulae for $D\mF_2$ and $D\mF_1^{(m-1)}$, respectively, exactly combine to yield the weighted Bach tensor with scale $\kappa$.  In particular, since $P_\phi^{m-1}$ need not be constant for a weighted Einstein manifold $(M^n,g,v,m,0)$, if we add suitable multiples of $\mF_1^{(m-1)}$ and $\mF_0^{(m-2)}$ to $\mF_2$ to obtain a functional with Euler equation given by $\csigma_{2,\phi}^m$, the resulting functional need not admit weighted Einstein manifolds among its critical points.  This is the final justification for our focus on the $\cmF_2$- and $\mY_2$-functionals.
\end{remark}

\begin{remark}
 \label{rk:Y2_scaling}
 It follows from Lemma~\ref{lem:mY_scaling} that
 \[ D\mY_2[g,0] = -\frac{2m+n}{2}D\mY_2[0,1] . \]
 Using~\eqref{eqn:Dcsigma2}, we conclude that if $(M^n,g,v,m,0)$ is a closed smooth metric measure space such that $(g,v)$ is locally conformally flat in the weighted sense and a critical point of $\mY_2\colon\kC\to\bR$, then
 \[ \int_M \tr \hE_{2,\phi}^m = 0 . \]
 This observation further simplifies~\eqref{eqn:obata/we/div} in the case $k=2$.  It also suggests that if $(M^n,g,v,m,0)$ is a closed smooth metric measure space such that $(g,v)$ is locally conformally flat in the weighted sense and a critical point of $\mY_k\colon\kC\to\bR$, then $\int\tr\hE_{k,\phi}^m=0$.
\end{remark}

\bibliographystyle{abbrv}
\bibliography{bib}
\end{document}